\def\showauthornotes{0}
\def\showkeys{0}
\def\showdraftbox{0}
\def\showcolorlinks{1}
\def\usemicrotype{1}
\def\showfixme{0}
\title{Coboundary and cosystolic expansion without dependence on dimension or degree}
\author{Yotam Dikstein\thanks{Institute for Advanced Study, USA. email: yotam.dikstein@gmail.com.} \;and Irit Dinur\thanks{Weizmann Institute of Science, ISRAEL. email: irit.dinur@weizmann.ac.il. Both authors are supported by Irit Dinur's ERC grant 772839.}}
\date{\today}
\providecommand{\RR}{\mathbb{R}}
\providecommand{\NN}{\mathbb{N}}
\newtheorem{theorem}{Theorem}[section]
\newtheorem*{theorem*}{Theorem}
\newtheorem{proposition}[theorem]{Proposition}
\newtheorem*{proposition*}{Proposition}
\newtheorem{lemma}[theorem]{Lemma}
\newtheorem*{lemma*}{Lemma}
\newtheorem{corollary}[theorem]{Corollary}
\newtheorem*{corollary*}{Corollary}
\newtheorem*{conjecture*}{Conjecture}
\newtheorem*{fact*}{Fact}
\newtheorem*{hypothesis*}{Hypothesis}
\theoremstyle{definition}
\newtheorem{definition}[theorem]{Definition}
\newtheorem*{definition*}{Definition}
\newtheorem{algorithm}[theorem]{Algorithm}
\newtheorem{test}[theorem]{Test}
\theoremstyle{remark}
\newtheorem{claim}[theorem]{Claim}
\newtheorem*{claim*}{Claim}
\newtheorem{remark}[theorem]{Remark}
\newtheorem*{remark*}{Remark}
\newtheorem*{observation*}{Observation}
\newcommand{\savehyperref}[2]{\texorpdfstring{\hyperref[#1]{#2}}{#2}}
\newcommand{\Sref}[1]{\hyperref[#1]{\S\ref*{#1}}}
\newcommand{\Authornote}[2]{{\sffamily\small\color{red}{[#1: #2]}}}
\newcommand{\Authornotecolored}[3]{{\sffamily\small\color{#1}{[#2: #3]}}}
\newcommand{\Authorcomment}[2]{{\sffamily\small\color{gray}{[#1: #2]}}}
\newcommand{\Authorstartcomment}[1]{\sffamily\small\color{gray}[#1: }
\newcommand{\Authorfnote}[2]{\footnote{\color{red}{#1: #2}}}
\newcommand{\Authorfixme}[1]{\Authornote{#1}{\textbf{??}}}
\newcommand{\Authormarginmark}[1]{\marginpar{\textcolor{red}{\fbox{\Large #1:!}}}}
\newcommand{\Authornote}[2]{}
\newcommand{\Authornotecolored}[3]{}
\newcommand{\Authorcomment}[2]{}
\newcommand{\Authorstartcomment}[1]{}
\newcommand{\Authorfnote}[2]{}
\newcommand{\Authorfixme}[1]{}
\newcommand{\Authormarginmark}[1]{}
\newcommand{\brac}[1]{[#1]}
\newcommand{\Brac}[1]{\left[#1\right]}
\newcommand{\abs}[1]{\lvert#1\rvert}
\newcommand\sett[2]{\left\{ #1 \left| \; \vphantom{#1 #2} \right. #2  \right\}}
\newcommand{\set}[1]{\{#1\}}
\newcommand{\norm}[1]{\lVert#1\rVert}
\newcommand{\snorm}[1]{\norm{#1}^2}
\newcommand{\iprod}[1]{\langle#1\rangle}
\def\dim{\mathrm{ dim}}
\newcommand{\F}{\mathbb{F}}
\newcommand{\Esymb}{\mathbb{E}}
\newcommand{\Psymb}{\mathbb{P}}
\DeclareMathOperator*{\E}{\Esymb}
\DeclareMathOperator*{\ProbOp}{\Psymb}
\renewcommand{\Pr}{\ProbOp}
\def\one{{\mathbf{1}}}
\newcommand{\prob}[1]{\Pr \left[ {#1} \right] }
\newcommand{\Prob}[2][]{\Pr_{{#1}}\left[#2\right]} 
\newcommand{\cProb}[3]{\Pr_{{#1}}\left[ #2 \left| \; \vphantom{#2 #3} \right. #3  \right]} 
\newcommand{\ex}[1]{\E\brac{#1}}
\newcommand{\Ex}[2][]{\E_{{#1}}\Brac{#2}}
\newcommand{\ve}{\;\hbox{and}\;}
\newcommand{\textparen}[1]{\text{(#1)}}
\newcommand{\because}[1]{\textparen{because #1}}
\renewcommand{\because}[1]{\textparen{because #1}}
\newcommand\bdot\bullet
\DeclareMathOperator{\supp}{supp}
\DeclareMathOperator{\dist}{dist}
\DeclareMathOperator{\sign}{sign}
\newcommand{\Z}{\mathbb Z}
\newcommand{\R}{\mathbb R}
\renewcommand{\leq}{\leqslant}
\renewcommand{\geq}{\geqslant}
\let\epsilon=\varepsilon
\numberwithin{equation}{section}
\newcommand{\MYstore}[2]{%
  \global\expandafter \def \csname MYMEMORY #1 \endcsname{#2}%
}
\newcommand{\MYload}[1]{%
  \csname MYMEMORY #1 \endcsname%
}
\newcommand{\MYnewlabel}[1]{%
  \newcommand\MYcurrentlabel{#1}%
  \MYoldlabel{#1}%
}
\newcommand{\MYdummylabel}[1]{}
\newcommand{\torestate}[1]{%
  \let\MYoldlabel\label%
  \let\label\MYnewlabel%
  #1%
  \MYstore{\MYcurrentlabel}{#1}%
  \let\label\MYoldlabel%
}
\newcommand{\restatetheorem}[1]{%
  \let\MYoldlabel\label
  \let\label\MYdummylabel
  \begin{theorem*}[Restatement of \prettyref{#1}]
    \MYload{#1}
  \end{theorem*}
  \let\label\MYoldlabel
}
\newcommand{\restatelemma}[1]{%
  \let\MYoldlabel\label
  \let\label\MYdummylabel
  \begin{lemma*}[Restatement of \prettyref{#1}]
    \MYload{#1}
  \end{lemma*}
  \let\label\MYoldlabel
}
\newcommand{\restateprop}[1]{%
  \let\MYoldlabel\label
  \let\label\MYdummylabel
  \begin{proposition*}[Restatement of \prettyref{#1}]
    \MYload{#1}
  \end{proposition*}
  \let\label\MYoldlabel
}
\newcommand{\restateclaim}[1]{%
  \let\MYoldlabel\label
  \let\label\MYdummylabel
  \begin{claim*}[Restatement of \prettyref{#1}]
    \MYload{#1}
  \end{claim*}
  \let\label\MYoldlabel
}
\newcommand{\restatecorollary}[1]{%
  \let\MYoldlabel\label
  \let\label\MYdummylabel
  \begin{corollary*}[Restatement of \prettyref{#1}]
    \MYload{#1}
  \end{corollary*}
  \let\label\MYoldlabel
}
\newcommand{\restatefact}[1]{%
  \let\MYoldlabel\label
  \let\label\MYdummylabel
  \begin{fact*}[Restatement of \prettyref{#1}]
    \MYload{#1}
  \end{fact*}
  \let\label\MYoldlabel
}
\newcommand{\restatedefinition}[1]{
\let\MYoldlabel\label 
\let\label\MYdummylabel 
\begin{definition*}[Restatement of \prettyref{#1}] 
    \MYload{#1} 
\end{definition*} 
\let\label\MYoldlabel 
} 
\newcommand{\restate}[1]{%
  \let\MYoldlabel\label
  \let\label\MYdummylabel
  \MYload{#1}
  \let\label\MYoldlabel
}
\let\origparagraph\paragraph
\renewcommand{\paragraph}[1]{\origparagraph{#1.}}
\newcommand{\dunion}{\mathbin{\mathaccent\cdot\cup}}
\let\pref=\prettyref
\newcommand{\dir}[1]{\overset{\to}{#1}}
\newcommand{\bproof}[1]{\begin{proof}[Proof of \pref{#1}]}
\newcommand{\eproof}{\end{proof}}
\newcommand{\coboundary}{{\delta}}
\newcommand{\boundary}{\partial}
\newcommand{\Img}{{\textrm Im}}
\newcommand{\Ker}{{\textrm Ker}}
\begin{document}\clearpage\thispagestyle{empty}

\maketitle
\begin{abstract}
    We give new bounds on the cosystolic expansion constants of several families of high dimensional expanders, and the known coboundary expansion constants of order complexes of homogeneous geometric lattices, including the spherical building of $SL_n(\mathbb{F}_q)$. 
    The improvement applies to the high dimensional expanders constructed by Lubotzky, Samuels and Vishne, and by Kaufman and Oppenheim. 
    
    Our new expansion constants do not depend on the degree of the complex nor on its dimension, nor on the group of coefficients. This implies improved bounds on Gromov's topological overlap constant, and on Dinur and Meshulam's cover stability, which may have applications for agreement testing. 
    
    In comparison, existing bounds decay exponentially with the ambient dimension (for spherical buildings) and in addition decay linearly with the degree (for all known bounded-degree high dimensional expanders). Our results are based on several new techniques:
    \begin{itemize}
    \item We develop a new ``color-restriction'' technique which enables proving dimension-free expansion by restricting a multi-partite complex to small random subsets of its color classes.
    \item We give a new ``spectral'' proof for Evra and Kaufman's local-to-global  theorem, deriving better bounds and getting rid of the dependence on the degree. This theorem bounds the cosystolic expansion of a complex using coboundary expansion and spectral expansion of the links.   
    \item We derive absolute bounds on the coboundary expansion of the spherical building (and any order complex of a homogeneous geometric lattice) by constructing a novel family of very short cones. 
    \end{itemize}
    \end{abstract}
\newpage
\tableofcontents
    
\newpage\pagestyle{plain}
\setcounter{page}{1}
\section{Introduction}
High dimensional expansion, which is a generalization of graph expansion to higher dimensional objects, is an active topic in recent years. The importance of graph expansion across many areas of computer science and mathematics, suggests that high dimensional expansion may also come to have significant impact. So far we have seen several exciting applications including analysis of convergence of Markov chains \cite{ALOV}, and constructions of locally testable codes and quantum LDPC codes \cite{DinurELLM2022, PanteleevK22}. 

Several notions of expansion that are equivalent in graphs, such as convergence of random walks, spectral expansion, and combinatorial expansion, turn out to diverge into two main notions in higher dimensions. 

The first is the notion of local link expansion which has to do with the expansion of the graph underlying each of the links of the complex; where a link is a sub-complex obtained by taking all faces that contain a fixed lower-dimensional face. This notion is qualitatively equivalent to convergence of random walks, it implies agreement testing, and it captures a spectral similarity between a (possibly sparse) high dimensional expander and the dense complete complex. It allows a spectral decomposition of functions on the faces of the complex in the style of Fourier analysis on the Boolean hypercube, see \cite{DiksteinDFH2018,KaufmanO-RW20, GurLL22, BafnaHKL22, GaitondeHKLZ22}. 

The second notion is coboundary and cosystolic expansion. Here we look at the complex not only as a combinatorial object but also as a sequence of linear maps, called coboundary maps, defined by the incidence relations of the complex. The $i$-th coboundary map $\coboundary_i$ maps a function on the $i$-faces to a function on the $i+1$-faces, 
\[ C^0 \stackrel{\coboundary_0}\to C^1  \stackrel{\coboundary_1}\to \cdots \stackrel{\coboundary_{d-1}}\to C^d 
\]
where $C^i = C^i(X,\F_2) = \set{f:X(i)\to\F_2}$ is the space of functions on $i$ faces with coefficients in $\F_2$ (we will consider general groups of coefficients, beyond $\F_2$). These functions are called \(i\)-chains. The coboundary map $\coboundary_i$ is defined in a very natural way: the value of $\coboundary f(s)$ for any $s\in X(i+1)$ is the sum of $f(t)$ for all $s\supset t\in X(i)$ (the precise definition is in \pref{sec:preliminaries}).

Coboundary (or cosystolic\footnote{The difference between coboundary and cosystolic expansion is just whether the cohomology is $0$ or not (i.e. whether $Ker \coboundary_{i+1} = Im \coboundary_i$). This distinction is not important for this exposition and the expansion inequality is the same in both cases.}) expansion captures how well the coboundary map tests its own kernel, in the sense of property testing. Given $f\in C^i$ such that $\coboundary f \approx 0$, coboundary expansion guarantees existence of some $g\in \ker \coboundary_i$ such that $f\approx g$. More precisely, a complex is a $\beta$ coboundary (or cosystolic) expander if
\[wt(\coboundary f)\geq \beta \cdot \min_{g\in Ker \delta}\dist(f,g)  
\]
where \(wt(\coboundary f)\) is the hamming weight of \(\coboundary f\).
We denote by $h^i(X)$ the largest value of $\beta$ that satisfies the above inequality for all $f$. 

Whereas for $i=0$ coboundary expansion coincides with the combinatorial definition of edge expansion, for larger $i$, it may appear at first glance to be quite mysterious. However, this definition is far from being a merely syntactical generalization of the $i=0$ case and turns out to provide a rich connection between topological and cohomological concepts and between several important concepts in TCS, which we describe briefly below. 

The study of coboundary and cosystolic expansion was initiated independently by Linial, Meshulam and Wallach \cite{LinialM2006},  \cite{MeshulamW09} in their study of connectivity of random complexes, and by Gromov \cite{Gromov2003} in his work on the topological overlapping property. Kaufman and Lubotzky \cite{KaufmanL2014} were the first to realize the connection between this definition and property testing.  This point of view is important in the recent breakthroughs constructing locally testable codes and quantum LDPC codes \cite{DinurELLM2022,PanteleevK22} (see also earlier works \cite{EvraKZ20}). 

Moreover, the coboundary maps come from a natural way to associate a (simplicial) complex to a constraint satisfaction problem. Attach a Boolean variable to each \(i\)-face, and view the \((i+1)\)-faces as parity constraints. The value that an assignment \(f:X(i)\to \mathbb{F}_2\)  gives on \(s \in X(i+1)\) is \(\coboundary f(s)\).
This connection to CSPs has been harnessed towards showing that the CSPs derived from certain cosystolic expanders are hard to refute for resolution and for the sum of squares hierarchy, \cite{DinurFHT2020, HopkinsL2022}. 

In addition, cosystolic expansion of $1$-chains (with non-abelian coefficients) of a complex has been connected to the stability of its topological covers \cite{DinurM2019}. Informally, a complex is cover-stable if slightly faulty simplicial covers are always ``fixable'' to valid simplicial covers. Perhaps surprisingly, this is related to agreement testing questions, particularly in the small 1\% regime, which is a basic PCP primitive and part of the initial motivation for this work. We discuss agreement testing and its relation to coboundary expansion in more detail further below in this introduction. 

In light of all of the above, we believe that cosystolic expansion is a fundamental notion that merits a deeper systematic study. Along with the aim of exploring its various implications, a more concrete research goal would be to give strong bounds, and ultimately nail down exactly, the correct expansion values for the most important and well-studied high dimensional expanders. We mention that to the best of our knowledge even for the simplest cases, such as expansion of $k$-chains in the $n$-simplex, exact expansion values are not yet completely determined.

In this work we provide new bounds for the coboundary expansion of the spherical building, and the cosystolic expansion of known bounded-degree high dimensional expanders including the complexes of \cite{LubotzkySV2005a, LubotzkySV2005b,KaufmanO2021}.

Two of the most celebrated results in this area are the works of \cite{KaufmanKL2014} and \cite{EvraK2016} showing that the bounded-degree families of Ramanujan complexes of \cite{LubotzkySV2005b} are cosystolic expanders. These works introduce an elegant local-to-global criterion, showing that if the links are coboundary expanders, and further assuming spectral expansion, then the entire complex is a cosystolic expander. 

The estimates proven by \cite{KaufmanKL2014, EvraK2016} for the coboundary expansion parameters are roughly \(h^k(X) \geq \min \left (\frac 1 Q, (d ! )^{-O(2^k)} \right ).\)
Here $X$ is a $d$ dimensional LSV complex and $Q$ is the maximal degree of a vertex which is roughly equal to $1/\lambda^{O(d^2)}$ in these complexes, where \(\lambda\) is the spectral bound on the expansion of the links. Subsequent works by Kaufman and Mass \cite{KaufmanM2018, KaufmanM2021, KaufmanM2022}, improved this bound to 
\begin{equation} \label{eq:prev-state-of-art}
    h^k(X) \geq \min \left (\frac{1}{Q}, (d!)^{-O(k)} \right ).    
\end{equation}

We completely get rid of the dependence on the ambient dimension $d$ and on the maximal degree $Q$, and prove
\begin{theorem}\label{thm:main-LSV}
 For every integer $d > 1$ and every small enough \(\lambda > 0\) let $X$ be a $d$-dimensional LSV complex whose links are $\lambda$-one-sided expanders. For every group \footnote{The theorem holds for every group $\Gamma$ for which cohomology is defined, namely, abelian groups for $k>1$ and any group for $k=1$.} $\Gamma$, every small enough \(\lambda > 0\) and every integer \(k < d-1\), $h^k(X,\Gamma)\geq \exp(-O(k^6\log k))$.
\end{theorem}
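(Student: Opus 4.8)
The plan is to derive \pref{thm:main-LSV} by feeding two of the paper's main ingredients into one another: the absolute, degree- and dimension-free lower bound on the coboundary expansion of order complexes of homogeneous geometric lattices, and the new spectral form of the Evra--Kaufman local-to-global theorem. Recall that a $d$-dimensional LSV complex is a quotient of the affine Bruhat--Tits building of type $\tilde A_{d-1}$ over a non-archimedean local field with residue field $\mathbb{F}_q$; for the bounded-degree families, once the girth is large enough the quotient does not affect links, so the link of every face of $X$ is isomorphic to the link of a face of $\tilde A_{d-1}$. Such a link is a join of finitely many spherical buildings of type $A$ over $\mathbb{F}_q$, each of which is the order complex of the subspace lattice of some $\mathbb{F}_q^m$ with $m\le d$ --- in particular, an order complex of a homogeneous geometric lattice. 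The spectral input is handed to us directly: by hypothesis all links of $X$ are $\lambda$-one-sided expanders.

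First I would bound the coboundary expansion of every relevant link. Fix $k<d-1$ and a face $\sigma$ of dimension $j\le k-1$. Its link $X_\sigma$ is a join of order complexes of (intervals of) subspace lattices; by the paper's absolute bound each such factor is a $\beta_i$-coboundary expander in every degree $i$ with $\beta_i\ge \exp(-O(i^6\log i))$, uniformly in $q$ and in the ambient dimension. Combining this with the behaviour of coboundary expansion under joins (a K\"unneth-type estimate for the cohomology of a join, losing at most a factor that depends only on the number of levels involved, which is $\le k+1$), $X_\sigma$ is a $\beta$-coboundary expander in every degree $\le k-1-j$ with $\beta\ge \exp(-O(k^6\log k))$, still with no dependence on $q$ or on $d$, and with coefficients in $\Gamma$ (abelian if $k>1$, arbitrary if $k=1$).

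Next I would invoke the new local-to-global theorem: if a $d$-dimensional complex is a $\lambda$-one-sided expander and the links of its faces of dimension $\le k-1$ are $\beta$-coboundary expanders in the appropriate degrees, then $h^k(X,\Gamma)\ge c\,\beta$ for an absolute constant $c>0$, provided $\lambda$ is below a threshold that depends only on $k$ and $\beta$, and --- crucially --- not on $d$ nor on the maximal degree $Q$. Plugging in $\beta\ge\exp(-O(k^6\log k))$ from the previous step, and noting that $\lambda$ may be taken as small as we wish (hence below the required $k$-dependent threshold), we obtain $h^k(X,\Gamma)\ge\exp(-O(k^6\log k))$, which is the claim. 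The non-abelian case $k=1$ goes through verbatim, since both the building bound and the local-to-global theorem hold over an arbitrary coefficient group in that degree.

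The crux is the local-to-global theorem without the $(d!)^{O(k)}$ and $Q$ losses present in the original Evra--Kaufman and Kaufman--Mass arguments. In those proofs one takes a $k$-cochain $f$ of small coboundary weight, restricts it to the vertex links, repairs each restriction using coboundary expansion of the link, and glues the repairs into a global cochain; the error accounting for the gluing is done by union bounds over links and over the $d$ levels of the skeleton, which is exactly where the dimension dependence enters, while $Q$ enters through the passage between the local measure inside a link and the global measure. The new argument must instead treat the family of local disagreements as a cochain in its own right and bound its weight spectrally, via the spectral gap of the high-dimensional walks on the links (an expander-mixing-type estimate), so that the propagation of error is governed by $\lambda$ rather than by the combinatorics of the whole complex. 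Pinning down the quantitative shape of this --- that the threshold on $\lambda$ is a function of $k$ and $\beta$ alone, and that the resulting cosystolic constant loses only a constant factor off $\beta$ --- is the main technical obstacle.
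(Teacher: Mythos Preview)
Your overall plan---bound the coboundary expansion of the links, then feed this into the new local-to-global theorem---is the paper's route, but you misstate the quantitative form of the local-to-global step. You claim it gives $h^k(X,\Gamma)\ge c\,\beta$ for an \emph{absolute} constant $c$; the paper's \pref{thm:cosystolic-expansion-clear} actually yields $h^k(X,\Gamma) \ge \frac{\beta^{k+1}}{(k+2)!\cdot 4} - e\lambda$, so the loss is $\beta^{k+1}/(k+2)!$, not a constant. This is exactly where the exponent $k^6$ is produced: the paper's bound on the spherical building and its links (\pref{thm:lattice-k-coboundary-expansion}) is $\beta=\exp(-O(k^5\log k))$, and it is the $(k+1)$-st power in the local-to-global step that converts this to $\exp(-O(k^6\log k))$. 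Your accounting instead quotes $\exp(-O(i^6\log i))$ for the building itself---off by a power of $i$---and then asserts a constant-factor local-to-global that the paper does not prove; the two errors happen to cancel in the final exponent, but the argument as written does not go through.

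Separately, the K\"unneth-for-joins maneuver is both unjustified and unnecessary. You invoke ``the behaviour of coboundary expansion under joins, losing at most a factor depending on $k$,'' but no such quantitative estimate is proved in the paper or cited, and it is not a standard fact you can simply appeal to. The paper avoids this entirely: links of faces in the spherical building are themselves order complexes of homogeneous geometric lattices (an interval in the subspace lattice is a product of smaller subspace lattices, which is again a homogeneous geometric lattice), so \pref{thm:lattice-k-coboundary-expansion} applies to them directly---indeed the cone construction of \pref{prop:small-cones} is explicitly stated for ``either an order complex of a geometric lattice, or a link of said complex.''
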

Our bounds for $h^k$ only depend on the dimension $k$ of the chains, so for $k=1$ they are absolute constants. For larger $k$ we still suffer an exponential decay. 
We do not know what the correct bound should be and whether dependence on $k$ is at all necessary.

The case of $k=1$ is interesting even in complexes whose dimension is $d \gg 1$, because $h^1$ controls the cover stability of the complex, as shown in \cite{DinurM2019}. 
Our bounds also immediately give an improvement for the topological overlap constants, when plugged into the Gromov machinery \cite{Gromov2010, DotterrerKW2018, EvraK2016}. 
We elaborate on both of these applications later below.

The result is proven by enhancing the local-to-global criterion of \cite{EvraK2016}, and introducing a variant of the local correction algorithm that makes local fixes only if they are sufficiently cost-effective. 
This is inspired by and resembles the algorithms in \cite{EvraK2016,DinurELLM2022,PanteleevK22}. 

Our analysis is novel and departs from previous proofs: instead of relying on the so-called ``fat machinery'' of \cite{EvraK2016} (and its adaptations \cite{KaufmanM2018, KaufmanM2021}), our proof is 100\% fat free and relies on the up/down averaging operators on \emph{real-valued functions}. 
Our main argument is to show that, for a function $h$ that is the indicator of the support of a (locally minimal) $k$-chain, 
\[\snorm {D\cdots D h} \gtrsim\cdots\gtrsim \snorm{DDh} \gtrsim  \snorm {Dh} \gtrsim \snorm h,\] 
where $D$ is the down averaging operator, and we write $a\gtrsim b$ whenever $a\geq \Omega(b)$. From here we easily derive a lower bound on $\snorm h$ showing that either the correction algorithm has found a nearby cocycle, or else the coboundary of our function was initially very large to begin with.

This method gives universal bounds on the cosystolic expansion of any complex whose links have both sufficient coboundary-expansion and sufficient local spectral expansion,
\begin{theorem} \torestate{\label{thm:cosystolic-expansion-clear}
    Let \(\beta, \lambda > 0\) and let \(k>0\) be an integer. Let \(X\) be a \(d\)-dimensional simplicial complex for \(d \geq k+2\) and assume that \(X\) is a \(\lambda\)-one-sided local spectral expander. Let \(\Gamma\) be any group. Assume that for every non-empty \(r \in X\), \(X_r\) is a coboundary expander and that \(h^{k+1-|r|}(X_r,\Gamma) \geq \beta\).
    Then \[h^k(X,\Gamma) \geq \frac{\beta^{k+1}}{(k+2)!\cdot 4} - e\lambda.\]
Here \(e \approx 2.71\) is Euler's number.}
\end{theorem}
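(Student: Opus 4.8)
The plan is to fix $f \in C^k(X,\Gamma)$, run a greedy local correction on it, and argue that the algorithm either terminates with a cocycle close to $f$ — in which case an accounting of the corrections bounds $\dist(f,Z^k(X))$ — or else halts in a configuration that behaves like a weight-minimal representative of its coset, in which case a norm inequality on the down-averaging operators forces $\wt(\coboundary f)$ to be large. The correction step at a face $r$ is: using coboundary expansion of the link $X_r$, replace the localization $f_r$ by the nearby coboundary of $X_r$; this is performed only when it decreases $\wt$ by a definite amount relative to the local discrepancy it removes (the ``cost-effective'' condition). Since each correction adds a coboundary of $X$, $\coboundary f$ is unchanged throughout, so it suffices to prove $\wt(\coboundary f)\ge h^k\cdot\wt(f)$ for the terminal $f$, which we may now treat as locally minimal: $\wt(f)\le\wt(f+\coboundary g)$ for every elementary $(k-1)$-cochain $g$, and in fact no cost-effective local fix exists.

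For such an $f$, put $h=\one_{\supp f}\in C^k(X;\R)$ and let $D$ be the down-averaging operator on real functions, so that $(D^i h)(\tau)$ is the density of $\supp f$ among the $k$-faces containing $\tau\in X(k-i)$; in particular $\snorm h = \wt(f) = \dist(f,Z^k(X))$ and $\snorm{D^k h} = \E_{v\in X(0)}[(D^k h)(v)^2]$. The core of the proof is the chain
\[\snorm h \;\le\; \frac{c_1}{\beta}\,\snorm{Dh} + O(\lambda)\;\le\;\cdots\;\le\;\Bigl(\prod_{i=1}^{k}\frac{c_i}{\beta}\Bigr)\snorm{D^k h} + O(k\lambda),\]
proved one level at a time. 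At level $i$ one localizes $f$ at each face $r$ with $|r|=i$: local minimality descends to $f_r$, and the hypothesis that $X_r$ is a coboundary expander with parameter $\ge\beta$ puts $f_r$ within $\wt(\coboundary f_r)/\beta$ of a cocycle of $X_r$; minimality of $f$ forces that cocycle to be $0$ (otherwise subtracting the corresponding elementary coboundary of $X$ would strictly lower $\wt(f)$), so $\wt(f_r)\le\frac2\beta\wt(\coboundary f_r)$, the factor $2$ absorbing the case where the local coboundary weight is already $\gtrsim\beta$. Plugging this into the total-variance identity $\snorm{D^{i-1}h}=\snorm{D^i h}+\E_\tau[\Var_{\tau'\supset\tau}(D^{i-1}h)]$, and using one-sided spectral expansion of $X$ to control the remaining ``high-frequency'' part of $D^{i-1}h$ and to pass between the measures on consecutive levels, yields the per-level estimate; the combinatorial factors $c_i$ come from the size of the color class being averaged over (so that $\prod_i c_i$ telescopes to $(k+2)!$), and the $\lambda$-errors, once weighted by the subsequent gains, sum to a series bounded by $e=\sum_{j\ge0}1/j!$, which is the source of the dimension-free term $e\lambda$.

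To finish, I would bound the right end of the chain directly: the same local argument at a vertex $v$ gives $(D^k h)(v)=\wt(f_v)\le\frac2\beta\wt(\coboundary f_v)$, so $\snorm{D^k h}\le\frac4{\beta^2}\E_v[\wt(\coboundary f_v)^2]\le\frac4{\beta^2}\E_v[\wt(\coboundary f_v)]=\frac4{\beta^2}\wt(\coboundary f)$, the last equality being the standard fact that the weight of a $(k{+}1)$-cochain equals the average over its localizations. Combining with $\snorm h=\dist(f,Z^k(X))$ and the chain gives $\dist(f,Z^k(X))\le\frac{(k+2)!\cdot4}{\beta^{k+1}}\,\wt(\coboundary f)+e\lambda\cdot\dist(f,Z^k(X))$-type slack, i.e. $h^k(X,\Gamma)\ge\frac{\beta^{k+1}}{(k+2)!\cdot4}-e\lambda$.

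The step I expect to be the real obstacle is the per-level inequality $\snorm{D^{i-1}h}\le\frac{c_i}{\beta}\snorm{D^i h}+O(\lambda)$: the total-variance identity alone is a tautology, so one must genuinely exploit that on links ``dense support forces large local coboundary unless the representative is a nonzero cocycle (excluded by minimality)'', and then propagate this from the fine level $i-1$ to the coarse level $i$ through the spectral gap while keeping the error additive and independent of $\beta$, $k$, and the degree — in particular arranging that the accumulated $\lambda$-error telescopes to exactly $e\lambda$ rather than to something that grows with $k$.
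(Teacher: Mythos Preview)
Your outline has the right shape—local correction followed by a chain of $\ell_2$ inequalities on down-averages—but you run the chain on the wrong function. You set $h=\one_{\supp f}$ on $X(k)$ and try to drive the chain with the link inequality $\wt(f_r)\le\frac1\beta\wt(\coboundary f_r)+\eta$. This breaks because $\coboundary(f_r)\ne(\coboundary f)_r$: already for a vertex $v$ one has $\coboundary f_v(q)=\pm\bigl(f(q)-\coboundary f(v\circ q)\bigr)$, so your asserted equality $\E_v[\wt(\coboundary f_v)]=\wt(\coboundary f)$ is false, and in general $\wt(\coboundary f_r)$ mixes values of $\coboundary f$ with values of $f$ on $k$-faces \emph{not} containing $r$. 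Nothing in your argument converts those off-star $f$-terms into the next level $\snorm{D^{i}h}$; the total-variance identity you invoke is agnostic to the support structure and cannot do this. The obstacle you flag at the end is therefore not a technical wrinkle but a structural one: without a cocycle condition there is no clean identity linking $\wt(\coboundary f_r)$ to the averages $D^i h$.

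The fix, and what the paper does, is to run the chain on $g=\coboundary\tilde f\in Z^{k+1}$ rather than on $f$. The algorithm is arranged so that each step \emph{decreases} $\wt(\coboundary f)$ (the fixes are arbitrary reassignments on $\mathrm{Star}_k(r)$, not additions of global coboundaries), giving $\eta\,\dist(f,\tilde f)\le\wt(\coboundary f)$ and making $g=\coboundary\tilde f$ itself $\eta$-locally minimal. With $h=\one_{g\ne 0}$ on $X(k+1)$ the cocycle identity $\coboundary g=0$ is now available: for $r\in X(j)$ and $t=r\circ p\in X(k+2)$, having $\coboundary g_r(p)\ne 0$ forces some $(k{+}1)$-face $s\subset t$ with $r\not\subset s$ to lie in $\supp g$. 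Combined with coboundary expansion of $X_r$ and $\eta$-local minimality (giving $\wt(\coboundary g_r)\ge\beta(D^{k+1-j}h(r)-\eta)$), this yields the pointwise bound $Nh(r)\ge\frac{\beta}{j+1}(D^{k+1-j}h(r)-\eta)$ for a swap operator $N$; the one-sided spectral comparison $U^{\bullet}N\preceq U^{\bullet}D^{\bullet}+\lambda\,Id$ (not a variance decomposition) then gives the per-level norm inequality. Chaining to $\snorm{D^{k+2}h}=\E[h]^2$ and using $\snorm h=\E[h]$ forces $\wt(g)\ge\tfrac{\beta^{k+1}}{(k+2)!}-e(\eta+\lambda)$; taking $\eta=\tfrac{\beta^{k+1}}{4(k+2)!}$, either $g=0$ (so $\tilde f\in Z^k$ and $\dist(f,Z^k)\le\dist(f,\tilde f)\le\eta^{-1}\wt(\coboundary f)$) or $\wt(\coboundary f)\ge\wt(g)$ was already above the threshold.
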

Armed with an improved local-to-global connection, we derive \pref{thm:main-LSV} from \pref{thm:cosystolic-expansion-clear} by further strengthening the coboundary expansion of the links of the LSV complexes, namely spherical buildings. The best previously known bound on coboundary expansion of \(k\)-cochains in spherical buildings is due to \cite{Gromov2010} and \cite{LubotzkyMM2016}. They proved a lower bound of \( \left (\binom{d+1}{k+1} (d+2)! \right )^{-1} \). This decays exponentially with the ambient dimension \(d\), and with the cochain level $k$. We remove the dependence on $d$ by developing a new technique which we call ``color-restriction''. The $d$-dimensional spherical buildings are colored, namely, they are $d+1$-partite. For a set of \(\ell\) colors \(F\subset[d+1]\), the color restriction \(X^F\) is the complex induced on vertices whose color is contained in \(F\). The restriction to the the colors of \(F\) reduces the dimension of \(X\) from \(d\) to \(\ell-1\). We say that a color restriction \(X^F\) is a \(\beta\)-local coboundary expander, if \(X^F\) is a \(\beta\)-coboundary expander, and the same holds for the intersection of \(X^F\) with links (neighbourhoods) of faces whose color is disjoint from \(F\). We show that if a typical color-restriction is a local coboundary expander, then the entire complex is a coboundary expander, and the expansion is independent of the dimension. Namely,
\begin{theorem}\label{thm:color-restriction}
Let \(k,\ell, d\) be integers so that \(k+2\leq \ell \leq d\) and let \(\beta,p \in (0,1]\). Let \(X\) be a \((d+1)\)-partite $d$-dimensional simplicial complex so that
\[\Prob[F \in \binom{[d+1]}{\ell}]{X^F \text{ is a \(\beta\)-locally coboundary expander}} \geq p.\]
Then \(h^{k}(X) \geq \frac{p \beta^{k+1}}{e(k+2)!}\).
\end{theorem}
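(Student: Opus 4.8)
The bound to prove is equivalent to the assertion that for every $f\in C^k(X)$ there is $e\in C^{k-1}(X)$ with $\dist(f,\delta e)\le \frac{e(k+2)!}{p\beta^{k+1}}\,wt(\delta f)$. My plan is to run a local‑to‑global argument in the spirit of \pref{thm:cosystolic-expansion-clear}, except that the role played there by the \emph{links} $X_r$ (and by local spectral expansion) is played here by the \emph{color restrictions} $X^F$ (and by the partite structure): a good $F$ yields a patch $X^F$ on which $f$ can be corrected using $h^\bullet(X^F)\ge\beta$, and one assembles these local corrections into a single global $e$. The reason this can be done with \emph{no} spectral error term — unlike \pref{thm:cosystolic-expansion-clear}, which loses $e\lambda$ — is that the averaging relating $X$ to its color restrictions is \emph{exact} for a partite complex, at the price of two purely combinatorial losses: a factor $p$ because only a $p$-fraction of restrictions are good, and a residual $1/e$ coming from a $(1-1/n)^n$-type estimate in the covering step.

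First I would fix the coefficient group and, using that a good $X^F$ is a coboundary expander (hence has vanishing cohomology in the relevant range), upgrade the bare inequalities $h^{k+1-|r|}(X^F\cap X_r)\ge\beta$ into an explicit cheap \emph{filling operator} on $X^F$: an operator $\kappa^F$ satisfying $\delta\kappa^F+\kappa^F\delta=\mathrm{id}$ on the relevant cochains and of ``cost'' at most $(k+2)!/\beta^{k+1}$, meaning $f|_{X^F}-\delta\kappa^F(f|_{X^F})$ is supported on at most a $\frac{(k+2)!}{\beta^{k+1}}\,wt_{X^F}(\delta f|_{X^F})$ fraction of $k$-faces of $X^F$. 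The point of working with an operator rather than with the mere existence of a correction is that operators can be averaged while existence statements cannot. Constructing $\kappa^F$ is itself a small induction on dimension inside $X^F$: a dimension-$j$ filling is assembled from dimension-$(j-1)$ fillings in the sub‑links $X^F\cap X_r$, which is exactly where the hypothesis that those sub-links (for $r$ disjoint from $F$) are $\beta$-coboundary expanders is consumed; each level costs a factor $\beta$ (giving $\beta^{k+1}$) and the bookkeeping over the at most $k+2$ colors of a top face contributes the $(k+2)!$.

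Next, the global assembly. Sample $F\in\binom{[d+1]}{\ell}$ uniformly; with probability at least $p$ it is good, and for good $F$ one lifts $\kappa^F$ to $X$ by extending it by zero off $X^F$. Average these lifts over the choice of $F$, reweighting each face $\sigma$ by the probability that a random good $F$ contains $c(\sigma)$, so that every face of dimension $\le k+1$ is covered; here the hypothesis $\ell\ge k+2$ enters, since the cone manipulates the $(k+1)$-faces carrying $\delta f$ and these survive in $X^F$ only when $\ell\ge k+2$. Because $X$ is partite, a uniformly random $(k+1)$-face of $X$ that survives in $X^F$ is, conditionally, a uniformly random $(k+1)$-face of $X^F$ (and similarly in dimensions $k$ and $k-1$), so weights transfer exactly between $X$ and $X^F$; consequently the averaged operator $\kappa$ is again an exact contracting homotopy on $X$, and $e:=\kappa(f)$ satisfies $\dist(f,\delta e)=wt(f-\delta\kappa f)=wt(\kappa(\delta f))$. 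Bounding this by (cost of the local cones)$\times$(loss from conditioning on goodness)$\times wt(\delta f)$ and tracking the three losses $(k+2)!/\beta^{k+1}$, $1/p$, and the residual $1/e$ gives the claimed inequality (and, as a byproduct, that $X$ itself has trivial cohomology in degree $k$).

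The crux is the coherent averaging of the previous paragraph. Since a cone on $X^F$ only ``sees'' the faces of $X$ whose colors lie in $F$, the operators $\kappa^F$ act on overlapping but non-nested families of faces, and two things must be controlled: (a) \emph{coverage} — every face of dimension $\le k+1$ must lie in enough \emph{good} restrictions for the reweighted average to be defined there; a fixed $(k+1)$-face has $k+2\le\ell$ colors, so it lies in some $X^F$, and the probabilistic hypothesis together with the exact transfer of weights guarantees it lies in a $\gtrsim p$ fraction of good ones; and (b) \emph{exactness} — the identity $\delta\kappa+\kappa\delta=\mathrm{id}$ is not preserved by a naive average of patches with differing supports, so one must average the zero-extended lifts and verify the identity face by face, using that the sub-links appearing in the recursion are themselves coboundary expanders, so the discrepancies between overlapping patches are fillable and telescope away. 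Making this bookkeeping yield exactly the clean constant $\frac{p\beta^{k+1}}{e(k+2)!}$ — in particular isolating the honest $1/e$ rather than an unspecified absolute constant — is the technical heart of the theorem, and I expect it to be the main obstacle.
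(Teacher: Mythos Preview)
Your proposed strategy has a genuine structural gap: averaging filling operators $\kappa^F$ over the random restriction $F$ cannot produce a global contracting homotopy, and the paper does not try to do this.

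Concretely, two issues kill the averaging plan. First, the identity $\delta\kappa+\kappa\delta=\mathrm{id}$ is not stable under averaging operators with \emph{different supports}. If $\kappa^F$ is extended by zero off $X^F$, then for a $k$-face $\sigma$ with $\mathrm{col}(\sigma)\not\subseteq F$ we have $\kappa^F\sigma=0$ but $(\delta\kappa^F+\kappa^F\delta)\sigma\neq\sigma$, and no face-by-face reweighting fixes this: the boundary of $\sigma$ may partially lie in $X^F$, so the cross-terms do not vanish. Your sentence ``discrepancies between overlapping patches are fillable and telescope away'' is exactly the hard part, and you have not said how to do it. Second, for a general coefficient group $\Gamma$ (even a general abelian group) there is no notion of averaging $\Gamma$-valued cochains, so the averaged $\kappa$ is not even defined. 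Relatedly, your account of where the factor $e$ comes from (a $(1-1/n)^n$ coverage estimate) is not what actually happens; in the paper the $e$ is the bound $\sum_{j\le k+1}1/j!\le e$ arising from an iterative error propagation, not a coverage probability.

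There is also a mismatch with the hypothesis. The ``local'' part of ``locally coboundary expanding'' means $h^{k-|s|}(X_s^F)\ge\beta$ for faces $s$ with $\mathrm{col}(s)\cap F=\emptyset$, i.e.\ colors \emph{outside} $F$. Your recursion for $\kappa^F$ lives entirely inside $X^F$, where every sub-face has colors \emph{inside} $F$, so that hypothesis is never invoked; you would only be using $h^k(X^F)\ge\beta$, which is not enough.

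What the paper actually does is quite different and avoids all of this. It chooses a \emph{single} good $F$ (by an averaging/Markov argument over the $\varepsilon_{F,j}$), and then builds $g\in C^{k-1}(X)$ in $k{+}1$ stages indexed by how many vertices of a face lie outside $F$: first $g$ is defined on $X^F(k{-}1)$ via $h^k(X^F)\ge\beta$; then, for each face $s$ with colors outside $F$, one forms an auxiliary cochain $h_s$ on $X_s^F$ from $f$ and the already-defined values of $g$, and uses $h^{k-|s|}(X_s^F)\ge\beta$ to find $g_0^s$ close to $h_s$, which extends $g$ to faces containing $s$. The error at stage $i{+}1$ is bounded by $(i{+}1)$ times the stage-$i$ error plus a fresh $\varepsilon$-term, and solving this recursion is what produces $(k{+}2)!\,\beta^{-(k+1)}$ and the $\sum 1/j!\le e$ factor.
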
 

Finally, to prove that the spherical building satisfies the conditions of this theorem, we need to show that a typical random color-restriction is a good coboundary expander. For this we rely on the ``cone machinery'' developed by Gromov \cite{Gromov2010}, Kozlov and Meshulam \cite{KozlovM2019}, and Kaufman and Oppenheim \cite{KaufmanO2021}. We construct in \pref{sec:geometric-lattices-coboundary-expansion}, a novel family of short cones, thus proving the following.
\begin{theorem}\label{thm:cones-intro}
    Let \(k \geq 0\). There is an absolute constant \(\beta_k =\exp(-O(k^5 \log k)) \geq 0 \) so that the following holds. Let \(X\) be the \(SL_n(\mathbb{F}_q)\)-spherical building for any integer \(n \geq k+1\) and prime power \(q\). Let \(\Gamma\) be any group. Then \(X\) is a coboundary expander with constant \(h^k(X,\Gamma) \geq \beta_k\). 
\end{theorem}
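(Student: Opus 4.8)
The plan is to derive the theorem by combining the color‑restriction theorem (Theorem~\ref{thm:color-restriction}) with an explicit short‑cone construction for rank‑selected subposets of projective geometries. Recall that the $SL_n(\mathbb F_q)$‑spherical building $X$ is the order complex of the lattice of proper non‑zero subspaces of $\mathbb F_q^n$; it has dimension $n-2$ and is $(n-1)$‑partite, a vertex being colored by the dimension of the corresponding subspace. The degenerate ranges are immediate: if $n\le k+1$ then $\dim X<k$, so $X(k)=\emptyset$ and $h^k(X,\Gamma)=\infty$; and if $k+2\le n\le k+3$ the complex already has dimension $O(k)$, so the cone bound described below applies to $X$ itself. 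Hence assume $n\ge k+4$, fix a value $\ell$ of order $k$ with $k+2\le \ell\le n-2$ (\eg $\ell=k+2$), and set out to verify the hypothesis of Theorem~\ref{thm:color-restriction}: that a random $F\in\binom{[n-1]}{\ell}$ makes $X^F$ a $\beta$‑locally coboundary expander with probability at least $p$.

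For every $F$, the complex $X^F$ is the order complex of the rank‑$F$ selection $L^F$ of the subspace lattice, and intersecting $X^F$ with a link of a face whose colors avoid $F$ gives the order complex of the same rank selection inside an interval of the lattice — intervals being products of smaller subspace lattices. Thus every complex appearing in the definition of ``$X^F$ is a $\beta$‑locally coboundary expander'' is an order complex of a rank‑selected product of projective geometries, of dimension at most $\ell-1=O(k)$; note these rank selections are in general not themselves geometric lattices (semimodularity fails), so the classical cone fillings for buildings do not apply verbatim, and new cones are needed. The task therefore reduces to establishing a bound $h^j(Y,\Gamma)\ge\beta$ — uniform in $q$, over all such complexes $Y$ of bounded dimension, all $j\le k+1$, and all groups $\Gamma$; once this is in hand one may take $p=1$, since the bound will be uniform over $F$.

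This bound is the job of Section~\ref{sec:geometric-lattices-coboundary-expansion}, via the cone machinery of Gromov, Kozlov--Meshulam and Kaufman--Oppenheim: assigning to every face $\sigma$ of $Y$ a chain $\mathrm{cone}(\sigma)$ forming a combinatorial contraction (a chain‑level homotopy from the identity to a constant map) yields a lower bound on $h^j(Y,\Gamma)$ equal to the reciprocal of a ``cost'' determined only by the sizes of the cones of faces of dimension $\le j+1$, and valid over every coefficient group (non‑abelian included when $j=1$) because the construction is purely combinatorial. I would build these cones by contracting each flag to a fixed reference flag (attached to a fixed basis $e_1,\dots$) through a short canonical sequence of elementary moves along edges of $Y$, keeping every intermediate flag of length $O(k)$; the number of moves — hence the cost — should be bounded in terms of $\ell$ alone, since a bounded‑rank chain in a projective geometry can be transported onto any other one using a number of exchange‑type moves that does not see $n$ or $q$. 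Bookkeeping the simplices swept out gives $\mathrm{cost}=\exp(O(k^4\log k))$, \ie $\beta=\exp(-O(k^4\log k))$; substituting $\beta$ and $p=1$ into Theorem~\ref{thm:color-restriction} yields $h^k(X,\Gamma)\ge \beta^{k+1}/(e(k+2)!)=\exp(-O(k^5\log k))$, uniformly in $n\ge k+1$ and in $q$, which is the assertion.

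The main obstacle is precisely the cone construction of the previous paragraph. Two demands must be met simultaneously: global consistency, $\partial\,\mathrm{cone}(\sigma)=\sigma-\mathrm{cone}(\partial\sigma)$ for all $\sigma$, which forces an inductive compatibility between the cones of faces of consecutive dimensions; and genuine shortness, the number of simplices swept out when contracting a length‑$O(k)$ flag being bounded purely in $k$, with no hidden dependence on $n$ or $q$. A naive contraction — \eg repeatedly intersecting each subspace of a flag with a fixed hyperplane — visits $\Theta(n)$ intermediate subspaces and is worthless here; the novelty is to find moves confined to a bounded window of ranks that also respect the rank selection defining $X^F$ and its links. The remaining points are routine: checking that the rank‑selected complexes and the links occurring in the ``local'' part of Theorem~\ref{thm:color-restriction} all lie in the class the cone lemma covers, and noting that independence of $\Gamma$ is automatic for cone‑based bounds — worth stating explicitly given the theorem is claimed for arbitrary groups.
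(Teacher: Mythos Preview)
Your high-level strategy---color restriction plus short cones in the restrictions---matches the paper's, but two load-bearing details are off, and together they constitute the actual content of the proof.

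First, the paper does \emph{not} take $p=1$. Its cone construction (\pref{prop:small-cones}) works only for ``$k$-suitable'' color sets $F=\{i_1<\cdots<i_\ell\}$ in which the ranks grow essentially geometrically (roughly $i_{j+1}\gtrsim n_k\cdot i_j$). The reason is structural: the cone is built inductively, and at each step one needs a single vertex of the next color that dominates (in the lattice order) the join of all vertices already used; this is possible precisely when the next color exceeds the sum of the previous ranks. For an arbitrary $F$---say $k+2$ consecutive ranks near the top---no such apex exists, and the construction collapses. The fraction of suitable $F$ is $p_k=\exp(-O(k^5\log k))$, and this, not the cone radius, is where the $k^5$ in the final bound comes from. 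Relatedly, the paper takes $\ell=c_k=(k^2+5k+4)/2=O(k^2)$, not $\ell=k+2$: the extra colors are workspace for the ``shifting step'' that replaces any high-rank vertex of $s$ by a low-rank proxy before coning.

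Second, your proposed cone construction---transporting a flag to a reference flag by elementary exchange moves---is essentially the apartment/building-retraction approach of \cite{LubotzkyMM2016}, which you correctly note does not apply verbatim to rank-selected subposets; but you do not supply the replacement, and the heuristic ``a bounded-rank chain can be transported using a number of exchanges independent of $n,q$'' is exactly what needs proof. The paper's replacement is of a completely different flavor: rather than moving along edges toward a fixed flag, it cones off the entire cycle over a single high-rank apex found via joins, after first shifting high-rank vertices down. So the plan is right, but the technical heart---why only special $F$ admit short cones, and how those cones are actually built---is either missing or pointed in the wrong direction.
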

In fact, we prove a more general version of this theorem, that holds for the order complex of any homogeneous geometric lattice, see \pref{thm:lattice-k-coboundary-expansion}.\\

Most earlier works on cosystolic expansion focus on  $\mathbb{F}_2$ coefficients (see \cite{KaufmanM2018} and \cite{DinurM2019} for two exceptions). This is an important case especially in light of Gromov's result connecting $\F_2$-expansion and topological overlap. However, expansion (of $1$-chains) with respect to more general coefficients is necessary for results on topological covers and in turn for agreement testing. The theorems stated above show expansion of $k$-chains with respect to coefficients not only in $\F_2$ but in general abelian groups $\Gamma$, and when $k=1$ also for non abelian groups $\Gamma$. In other words, the theorems hold for all groups of coefficients where the cohomology is defined. 

Finally, we end with an upper bound. While most of our work is focused on lower bounds for coboundary and cosystolic expansion, we show in \pref{app:upper-bounding-cosystolic-expansion} that families of dense simplicial complexes cannot have cosystolic expansion greater than \(1+o(1)\). This implies that high degree, in some weak sense, limits cosystolic expansion. It is interesting to compare this to a result of Kozlov and Meshulam that shows upper bounds on coboundary expansion of complexes with bounded degree \cite{KozlovM2019}.

\subsection{Applications of cosystolic expansion}
We describe two applications of cosystolic expansion for deriving topological properties of simplicial complexes.
\paragraph{Topological overlap}Cosystolic expansion was studied by \cite{Gromov2010} to give a combinatorial criterion for the topological overlapping property. Let \(f:X\to \R^k\) be continuous mapping (with respect to the natural topology on \(X\)), i.e. \(f\) realizes \(X\) in \(\R^k\). A point \(p \in \R^k\) is called \(c\)-heavily covered if
\[ \Prob[s \in X(k)]{p \in f(s)} \geq c.\]
A well known result by \cite{FurediK1981} showed that for every affine map from the complete \(2\)-dimensional complex to the plane, there exists a \(\frac{1}{27}\)-heavily covered point. Gromov's greatly generalized this theorem to all \emph{continuous} functions (instead of only affine functions), all dimensions \(k\) (instead of \(k=2\)) and complexes that are cosystolic expanders (instead of the complete complex), with \(c\) that depends on the dimension of the map \(k\), as well as the cosystolic expansion constant. For a precise statement, see \pref{sec:concrete-complexes}. 

The motivation for \cite{EvraK2016} was to show that there exists families of bounded degree simplicial complexes which have this property. They use \cite{LubotzkySV2005b} complexes and achieve a lower bound of \(c \geq \min (\frac{1}{Q},(d!)^{-O(2^k)} )\), which comes from their bound on cosystolic expansion. This bound has been improved as a corollary of \cite{KaufmanM2022} to \(\min (\frac{1}{Q},(d!)^{-O(k)})\). Here again, \(d\) is the dimension of \(X\), which may be much larger than \(k\), and \(Q\) is the maximal degree of a vertex in \(X\).

Plugging in our bounds into Gromov's theorem gives  the bound \(c \geq \exp(-O(k^7 \log k))\) for the topological overlapping property. This bound is free of the ambient dimension and of the degree. 

\paragraph{Cover stability} The second author and Meshulam studied a topological locally testable property called \emph{cover stability} \cite{DinurM2019}. This property is equivalent to cosystolic expansion of \(1\)-chains. A covering map between two simplicial complexes \(X,Y\) is a surjective \(t\)-to-1 simplicial map\footnote{simplicial means that every \(i\)-face in \(Y\) is sent to an \(i\)-face in \(X\).} \(\rho:Y(0) \to X(0)\) such that for every \(\tilde{u} \in Y(0)\) and \(\rho(\tilde{u})=u \in X(0)\), it holds that the links of \(\tilde{u},u\) are isomorphic \(Y_{\tilde{u}} \cong X_u\).

Graph covers (also known as lifts) have been quite useful in construction of expander graphs. Bilu and Linial showed that random covers of Ramanujan graphs are almost Ramanujan \cite{BiluL2006}. A celebrated result by \cite{MarkusSS2015} used these techniques to construct bipartite Ramanujan graphs of every degree. Recently, \cite{Dikstein2023} showed that random covers could also be applied for constructing new simplicial complexes that are local spectral expanders.

Dinur and Meshulam \cite{DinurM2019} show that there exists a test that for any simplicial complex $X$ and an alleged cover given by a simplicial map $\rho :Y\to X$ samples \(q\) points \((u_i, \rho(u_i))\) and measures how close \(\rho\) is to an actual covering map. The query complexity of the test is \(q=3t\) points. Its soundness is affected by the cosystolic expansion of \(1\)-chains. Using our new bounds on cosystolic expansion, we show that the complexes constructed in \cite{LubotzkySV2005b} or in \cite{KaufmanO2021} are cover-stable, i.e. that there exists some universal constant \(c>0\), such that for every \(\rho:Y(0)\to X(0)\)
    \[\Prob[(u_i,\rho(u_i))_{i=1}^q]{\text{test fails}} \geq c \cdot \min \sett{\dist(\rho, \psi)}{\psi:Y(0)\to X(0) \text{ is a cover}},\]
where the distance is Hamming distance.

\paragraph{Agreement testing} Coboundary expansion found an exciting new application in agreement testing \cite{GotlibK2022,DiksteinD2023agr,BafnaM2023}. An agreement test is a consistency test that originated as a component in low degree testing \cite{RuSu96,ArSu,RaSa}, but has been extensively studied ever since (see e.g.\ \cite{GolSaf97,ImpagliazzoKW2012,DinurK2017}). This test is a crucial component in many PCP constructions \cite{Raz-parrep,GolSaf97,Dinur07, ImpagliazzoKW2012,DinurM11}. Given a set of partial functions on a set, an agreement test is a way to test whether these functions are correlated with some function that defined on the whole vertex set. The works \cite{GotlibK2022,DiksteinD2023agr,BafnaM2023} mentioned above use coboundary expansion to characterize when an agreement test is sound. Via this characterization they analyze agreement tests on high dimensional expanders. Continuing this line of works, \cite{DiksteinD2023swap,DiksteinDL2024,BafnaLM2024} use theorems and tools developed in a preliminary version of this paper, to lower bound coboundary expansion of new high dimensional expanders, and with these lower bounds they obtain new agreement tests. These include the first agreement tests where the underlying complex family is bounded degree in the so called `list decoding regime' (the regime that is relevant to high-soundness PCPs such as the parallel repetition PCP \cite{Raz-parrep,ImpagliazzoKW2012}). 

\subsection{Related work}
Coboundary and Cosystolic expansion was defined by Linial, Meshulam and Wallach \cite{LinialM2006}, \cite{MeshulamW09}, and indpendently by Gromov \cite{Gromov2010}. Gromov studied cosystolic expansion as a proxy for showing the topological overlapping property. Linial, Meshulam and Wallach were interested in analyzing high dimensional connectivity of random complexes.

Kaufman, Kazhdan and Lubotzky \cite{KaufmanKL2014} introduced an elegant local to global argument for proving cosystolic expansion of $1$-chains in the \emph{bounded-degree} Ramanujan complexes of \cite{LubotzkySV2005a,LubotzkySV2005b}. This was significantly extended by Evra and Kaufman \cite{EvraK2016} to cosystolic expansion in all dimensions, thereby resolving Gromov's conjecture about existence of bounded degree simplicial complexes with the topological overlapping property in all dimensions. Kaufman and Mass \cite{KaufmanM2018, KaufmanM2021} generalized the work of Evra and Kaufman from \(\mathbb{F}_2\) to all other groups as well, and used this to construct lattices with good distance. The best previously known bound for LSV complexes \eqref{eq:prev-state-of-art} is by Kaufman and Mass in \cite{KaufmanM2022}. In our proof we define a random walk which is actually a distributional version of the double balanced sets in \cite{KaufmanM2022}; a key component in their lower bound.

Following ideas that appeared implicitly in Gromov's work, Lubotzky Mozes and Meshulam analyzed the expansion of many ``building like'' complexes \cite{LubotzkyMM2016}. Kozlov and Meshulam \cite{KozlovM2019} abstracted the main lower bound in \cite{LubotzkyMM2016} to the definition of cones (which they call chain homotopies), in order to analyze the coboundary expansion of geometric lattices and other complexes. Their work also connects coboundary expansion to other homological notions, and gives an upper bound to the coboundary expansion of bounded degree simplicial complexes.
In \cite{KaufmanO2021}, Kaufman and Oppenheim defined the notion of cones in order to analyze the cosystolic expansion of their high dimensional expanders (see \cite{KaufmanO181}). In addition, they also come up with a criterion for showing that complexes admit short cones. They prove lower bounds on the cosystolic expansion of their complexes for \(0\)- and \(1\)-chains. The case of $k$-chains with \(k\geq 2\) is still open.

Several works tried to define quantum LDPC codes as cohomologies of simplicial complexes. Cosystolic expansion is used for analyzing the distance of the quantum code. Works by Evra, Kaufman and Zémor \cite{EvraKZ20} and by Kaufman and Tessler \cite{KaufmanT2021} used cosystolic expansion in Ramanujan complexes to construct quantum codes that beat the \(\sqrt{n}\)-distance barrier. This continued in a sequence of works \cite{PK1,HastingsHO21,BE} which culminated in the breakthrough work of \cite{PanteleevK22} that construct quantum LDPC codes with constant rate and distance. This later code is a cohomology of a certain chain complex, albeit not a simplicial complex; and it is analyzed essentially through the cosystolic expansion. Developing new techniques for cosystolic expansion can be potentially useful in this domain as well.

\subsection{Open questions}
The works by \cite{LubotzkyMM2016}, \cite{KozlovM2019} and \cite{KaufmanO2021} analyze a variety of symmetric complexes (that support a transitive group action). Could one combine our ``color restriction'' technique with the cone machinery to get lower bounds independent of degree and dimension on these complexes as well?
There are a number of concrete constructions of local spectral high dimensional expanders that have excellent local spectral properties \cite{ChapmanLP2020,LiuMY2020, Golowich2021,ODonnellP2022, Dikstein2023}. Are any of them cosystolic expanders?

Another intriguing direction of research is to develop additional techniques for analyzing coboundary or cosystolic expansion. The current techniques are limited to complexes that either have a lot of symmetry, or have excellent local expansion properties. Are there other complexes with these properties?

Our expansion bounds still have a dependence on the level ($k$) of the chains. In the complete complex, for instance, this is not necessary. The complete complex is a \(\beta = 1+o(1)\) coboundary expander for all \(k\)-chains \cite{LubotzkyMM2016}. It is not clear whether a dependence on \(k\) is necessary even in the spherical building. Which complexes have coboundary expansion that does not decay with the size of the chains? 

Finally, the notion of coboundary and cosystolic expansion is closely related to locally testable codes and quantum LDPC codes. They also have connections to agreement expanders. It is interesting to find additional applications for these expanders.

\subsection{Overview of the proof of \pref{thm:main-LSV}}
We start with a complex \(X\) that is a finite quotient of the affine building, as constructed by \cite{LubotzkySV2005b}. Our goal is to lower bound the cosystolic expansion of \(X\). The proof has three components:
\begin{itemize}
    \item (\pref{thm:cosystolic-expansion-clear}) A new local-to-global argument that derives cosystolic expansion of the complex from coboundary and spectral expansion of its links. 
    \item (\pref{thm:color-restriction}) A general color restriction technique that reduces the task of analyzing the coboundary expansion of a partite complex, to that of analyzing the local coboundary expansion of random color restrictions of it.
    \item (\pref{thm:cones-intro}) Bounds on random color restrictions of (links of) the spherical building. Towards this end we construct a novel family of short cones for the spherical building (not based on apartments as in previous works \cite{LubotzkyMM2016}).
\end{itemize}

Below we give a short overview of each of these steps. For simplicity we assume in this subsection that \(\Gamma = \mathbb{F}_2\), which captures the main ideas.

\paragraph{The local to global argument, \pref{thm:cosystolic-expansion-clear}}

Let \(X\) be our simplicial complex. We describe a correction algorithm, that takes as input a \(k\)-chain \(f:X(k) \to \mathbb{F}_2\), with small coboundary \(\prob{\coboundary f \ne 0} = \varepsilon\) and outputs a \(k\)-chain \(\tilde{f}:X(k) \to \mathbb{F}_2\) close to \(f\) that has no coboundary, i.e. \(\coboundary \tilde{f} = 0\). For this overview, we focus on \(k=1\), i.e. \(f\) is a function on edges, which already exhibits the main ideas. 

Let \(\eta > 0\) be some predetermined parameter. Our algorithm locally fixes ``stars'' of lower dimensional faces, that is, sets \(Star_k(r) = \sett{s \in X(k)}{s \supseteq r}\) for \(r \in X(j)\) (when \(j \leq k\)). The fix takes place only if it is sufficiently useful: whenever it decreases the weight of \(\coboundary f\) by at least \(\eta \prob{Star_k(r)}\). 
In the case at hand, \(k=1\), so $r$ is either a vertex or an edge, so
\begin{enumerate}
    \item If \(r\in X(1)\), \(Star_1(r) = \set{r}\) and a fix just means changing the value of \(f(r)\).
    \item If \(r\in X(0)\), \(Star_1(r)=\set{ru}_{u \sim r} \) are all edges adjacent to \(r\). Here a fix means changing the values of all \(\sett{f(ru)}{u \sim r}\) simultaneously.
\end{enumerate}

\begin{algorithm}  \label{alg:cosystolic-fixing-overview}
~
    \begin{enumerate}
        \item Set \(f_0 := f\). Set \(i=0\).
        \item While there exists a vertex or edge \(r \in X(0) \cup X(1)\) so that \(Star_k(r)\) has an assignment that satisfies a \(\eta \prob{Star_k(r)}\)-fraction of faces more than the current assignment.
        \begin{itemize}
            \item Let \(fix_r:Star_k(r) \to \Gamma\) be an optimal assignment to \(Star_k(r)\).
            \item Set \(f_{i+1}(s) = \begin{cases} f_i(t) & r \not\subseteq s \\
            fix_r(s) & r \subseteq s
            \end{cases}\).
            \item Set i:=i+1.
        \end{itemize}
        \item Output the final function \(\tilde{f}:=f_i\).
    \end{enumerate}
\end{algorithm}
The fact that we correct \(f\) locally only if the fix satisfies $\eta$ fraction more triangles will promise that \(\dist(f,\tilde{f}) \leq \frac{1}{\eta} wt(\coboundary f)\). The output of the algorithm, \(\tilde f\), is \emph{not} necessarily locally minimal in the sense of \cite{KaufmanKL2014,EvraK2016}, but it is \emph{``\(\eta\)-locally-minimal''}.

Notation: For functions \(g,h:X(\ell) \to \RR\) we denote by \(\iprod{g,h} = \Ex[r \in X(\ell)]{g(r)h(r)}\) the usual inner product. For \(\ell=1,2\), denote by \(D^{\ell}\) the \emph{down operator} that takes \(h:X(2) \to \R\) and outputs \(D^\ell h:X(2-\ell) \to \R\) via averaging. Namely \(D^\ell h(r)\) is the average of \(h(s)\) over \(s \supseteq r\), \(\Ex[s\supseteq r]{h(s)}\).

Let $h:X(2)\to\RR$ indicate the support of a \(\coboundary \tilde{f}\), so $h(t)=1$ iff $\delta\tilde f \neq 0$. Our main argument is to show 
\[\snorm {D^{3} h} \gtrsim \snorm{D^2 h} \gtrsim  \snorm {Dh} \gtrsim \snorm h.\] 
Eventually \(D^{3} h =\ex{h}^2\) is just a constant function. This shows that \((\ex{h})^2 = const\cdot \ex{h}\) which implies that either the algorithm corrected \(f\) to a cosystol, i.e. \(h=0\), or that \(h\) has large weight, which implies that \(\coboundary f\) had large weight to begin with.

Let us show for example that \(\snorm {D^{3} h} \gtrsim \snorm{D^2 h}\) given that \(\snorm{D^2 h} \gtrsim \snorm {Dh} \gtrsim \snorm{h}\). To do so, we define an auxiliary averaging operator \(N\) based on a random walk from vertices to triangles, and use the fact that in local spectral expanders, 
\begin{equation} \label{eq:approx-hdx-overview}
    \snorm {D^{3} h} \approx \iprod{N h, D^2 h}.
\end{equation}
 The operator $N:\ell_2(X(2))\to\ell_2(X(0))$ is defined by \(N h(v) = \Ex[s]{h(s)}\), where \(s\) is sampled according to the following walk: Given \(v \in X(0)\), sample some \(t \in X(3)\) such that \(v \in t\), and then go to the triangle \(s = t \setminus \set{v}\).  We mention that the concept of localizing over such a distribution has appeared in \cite{KaufmanM2022}.
The proof of \eqref{eq:approx-hdx-overview} follows by localizing the expectation to the links and relying on the link expansion as in \cite{Oppenheim2018}, \cite[Claim 8.8]{DiksteinDFH2018} and in \cite{KaufmanO-RW20}.

The key lemma in the proof shows that if there are many faces \(s' \supseteq v_0\) such that \(h(s')=1\), then there are many \(s\) such that \(v\notin s, \set{v} \cup s =t \in X(3)\), where \(h(s)=1\). More precisely, we will show that for every \(v \in X(0)\) it holds that
\begin{equation}\label{eq:overview-un-inequality}
Nh(v) \gtrsim \beta (D^2 h(v) - \eta)\footnote{We remark that without the additive error of \(\eta\) (that comes from replacing local minimality with \(\eta\)-local minimality), this inequality is similar to that proven in \cite[Theorem 5.2]{KaufmanM2022}. Relaxing this inequality with \(\eta\), is what allows us to improve upon these previous works.}.
\end{equation}

This immediately implies that
\begin{align*}
  \iprod{N h, D^2 h} &= \Ex[v]{D^2 h(v) Nh(v)}  \\
  &\overset{\eqref{eq:overview-un-inequality}}{\gtrsim} \beta (\Ex[v]{(D^2 h(v))^2} - \eta \Ex[v]{D^2h(v)}) \\
  &\gtrsim \beta \snorm{D^2 h} - \beta \eta \snorm{h} \\
  &\gtrsim \beta \snorm{D^2 h}.
\end{align*}
The second inequality follows from \(\Ex[v]{D^2 h(v)}=\Ex[s]{h(s)}=\snorm{h}\). The last inequality follows from the assumption that \(\snorm{h} = O(\snorm{D^2 h})\). Combining this with \eqref{eq:approx-hdx-overview} gives us the desired inequality.

Let us understand what is written in \eqref{eq:overview-un-inequality}. On the right-hand side, \(D^2 h(v) = \Prob[xy \in X_{v}(1)]{h(v xy) = 1}\) is the fraction of triangles \(v xy\) containing \(v\), such that \(\coboundary \tilde{f}(v xy) \ne 0\).
On the left-hand side, \(N h(v)\) is the fraction of \(s\) that complete \(v\) to some \(t = v \cup s \in X(3)\), so that \(\coboundary \tilde{f}(s) \ne 0\). For such an \(s=uxy\),
\begin{align} \label{eq:cob-cob-equality}
0 = \coboundary \coboundary \tilde{f}(v uxy) = \coboundary \tilde{f}(uxy) + (\coboundary \tilde{f}(v ux) + \coboundary \tilde{f}(v uy) + \coboundary \tilde{f}(v xy)).
\end{align}
Set \(g:X_{v}(1)\to \mathbb{F}_2\) to be \(g(xy)=\coboundary \tilde{f}(v xy)\), and note that \(g\) has the following properties:
\begin{enumerate}
    \item By \eqref{eq:cob-cob-equality}, 
\(\coboundary \tilde{f}(uxy) = 1 \iff \coboundary g(uxy) = 1\).
    \item \(\prob{g \ne 0} = \Prob[s \ni v]{\coboundary \tilde{f}(s) \ne 0}= D^2 h(v)\).
    \item $\eta$-local-minimality: \(\dist(g,B^1(X_{v})) \geq \prob{g \ne 0} - \eta\), where $B^1(X_v) = \sett{\delta \psi}{\psi:X_v(0)\to\F_2}$ is the set of coboundaries.
\end{enumerate}
We explain the third item. Assume towards contradiction that \(\dist(g,B^1(X_{v})) < \prob{g \ne 0} - \eta\) and let \(\coboundary \psi\) be a coboundary closest to \(g\). Then by changing the values of \(\tilde{f}\) on \(Star(v)\) to be \(\tilde{f}'(v u):= \tilde{f}(v u) + \psi(u)\), we have that whenever \(g(xy)= \coboundary \psi(xy)\), then the fixed function satisfies \(\coboundary \tilde{f}'(v xy) = 0\). I.e. 
\[\dist(g, \coboundary \psi) = \Prob[v xy]{\coboundary\tilde{f}'(v xy) = 0} < \Prob[v xy]{\coboundary\tilde{f}(v xy) = 0} -\eta.\]
This is a contradiction to the $\eta$-local minimality of \(\tilde{f}\) which is guaranteed by the algorithm.

Here is where the coboundary expansion of \(X_{v}\) comes into play. By coboundary expansion, we have that \(\Prob{\coboundary g(uxy)=1} \geq \beta \dist(g,B^1(X_{v}))\).
By combining the above we will get that
\[ N h(v) = \Prob[uxy \in X_{v}(2)]{\coboundary \tilde{f}(uxy) \ne 0} \geq \beta (\Prob[xy \in X_{v}(1)]{g(xy) \ne 0} - \eta) = \beta (D^2 h(v) - \eta).\]

\paragraph{The ``color restriction'' technique, \pref{thm:color-restriction}} For this overview, assume that \(k=2\) The full details are in \pref{sec:coboundary-expansion-of-partite-complexes}. 
Let \(Y\) be a \(d\)-dimensional \((d+1)\)-partite complex so that a \(p\)-fraction of its color restrictions \(Y^F\) are \(\beta\)-local-coboundary expanders. We begin with a \(2\)-chain \(f:Y(2)\to \mathbb{F}_2\) with small coboundary, namely \(\Prob[s \in Y(3)]{\coboundary f(s) \ne 0} = \varepsilon\). We need to find a \(1\)-chain \(g:Y(1) \to \mathbb{F}_2\) so that \(\dist(f,\coboundary g) \leq O(\frac{\varepsilon}{\beta^3 p})\). 

We first select a random color restriction, i.e. a set of colors so that \(Y^F\) is a local coboundary expander, that the weight of \(\coboundary f\) when restricted to triangles whose colors are in \(F\) is close to weight of \(\coboundary f\) on all \(Y\). Averaging arguments guarantee that such \(F\) exists.
Using this \(F\), we construct \(g\) in three steps. In the first step we define \(g\) on edges with both endpoints colored in \(F\), \(uv \in Y^F\). In the second step we define \(g\) on edges with one endpoint colored  in \(F\), i.e. \(uv \in Y(1)\) where \(u \in Y^F\) and \(v \notin Y^F\). In the third step we define \(g\) on edges \(uv \in X(1)\) with neither endpoints colored  in \(F\), i.e. where \(u,v \notin Y^F\). Every step uses the values of \(g\) that were constructed in the step before. For \(k>2\) the \((k-1)\)-chain is constructed following a similar sequence of $k+1$ steps.
\begin{enumerate}
    \item We start with the values of \(g\) on edges \(vu \in Y^F(1)\). By the choice of \(F\), the weight of \(\coboundary f\) inside \(Y^F\) is roughly \(\varepsilon\). Local coboundary expansion implies that there exists a \(1\)-chain \(g_0\) whose coboundary is close to \(f\) on \(Y^F\). We set \(g(uv) = g_0(uv)\) for all \(uv \in Y^F(1)\).
    \item Next we define \(g\) on edges \(vu\) so that \(v \notin Y^F\) and \(u \in Y^F\). Fix some \(v \notin Y^F\). Let \(Y^F_v = \sett{s \in Y^F}{s\cup v \in Y}\). This is the color restriction of the link of \(v\). We wish to set values for $g(vu)$ for all edges $vu$ such that $u\in Y_v^F(0)$. We describe a system of equations that we use to set the values of $g$ on the edges $vu$ so as to satisfy a maximal number of equations. 
    For every \(u_1 u_2 \in Y^F_v(1)\), the triangle \(v u_1 u_2\) defines an equation: 
    \begin{equation}\label{eq:lv-eqs}
         f(v u_1 u_2) + g(u_1 u_2) = g(u_1 v) + g(u_2 v).
    \end{equation}
    Note that the left-hand side of the equation is known since we have the values of \(f\) on all triangles, and we already constructed \(g\) for edges \(u_1 u_2 \in Y^F(1)\). So the above is an equation with two unknowns. We set \(g(vu)\) simultaneously for all \(u \in Y_v^F(1)\) to be an assignment that satisfies the largest fraction of equations (ties broken arbitrarily).
    
    The idea behind this step is the following. Obviously, we'd like that \(f(v u_1 u_2) = g(u_1 u_2) + g(u_1 v) + g(u_2 v)\) for as many triangles as possible, so it makes sense to define \(g\) to satisfy the largest amount of equations \eqref{eq:lv-eqs}. Let \(h_v:Y^F_v(1) \to \mathbb{F}_2\) be the left-hand side of \eqref{eq:lv-eqs}, i.e. \(h_v(u_1 u_2) =  f(v u_1 u_2) + g(u_1 u_2)\). We want to find an assignment \(g_v:Y^F_v(0)\to \mathbb{F}_2\) so that \(h_v(u_1 u_2) = g_v(u_1) + g_v(u_2)\) for as many equations \eqref{eq:lv-eqs} as possible (and set \(g(vu) = g_v(u)\)). Finding a solution \(g_v:Y^F_v(0)\to \mathbb{F}_2\) that satisfies \eqref{eq:lv-eqs} is equivalent finding \(g_v\) so that \(h_v(u_1 u_2) = \coboundary g_v(u_1 u_2)\). Hence, to find an assignment that satisfies most of the equations is the same showing that \(h_v\) is close to a coboundary. In the analysis we show that \(\coboundary h_v \approx 0\). This together with the local coboundary expansion of \(Y^F\) (which says that \(h^{1}(Y^F_v,\mathbb{F}_2) \geq \beta\)) will show that indeed we can find satisfying \(\set{g_v}_{v \notin Y^F}\) so that \(f\approx\coboundary g\) where the distance is over edges \(uv\) where \(v \notin Y^F, u \in Y^F\).
    
    \item Finally we need to define the values of \(g\) on edges \(vu\) so that \(v,u \notin Y^F\). Let \(vu\) be such an edge. Every triangle \(uvw\) where \(w \in Y_{vu}^F(0)\) defines a constraint on \(g(vu)\):
     \begin{equation}\label{eq:lv-eqs-2}
         f(uvw) + g(uw) + g(vw) = g(uv).
    \end{equation}
     As in the previous case, \(f(uvw)\) is known, and \(g(uw),g(vw)\) were determined in step 2.
     We set \(g(vu) = maj \sett{f(uvw) + g(uw) + g(vw)}{w \in Y_{uv}^F(0)}\). Ties are broken arbitrarily. Here we use the local coboundary expansion of \(Y^F\) in a way similar to the previous step, to show that indeed \(f\approx \partial g\).
\end{enumerate}

\begin{figure}
    \centering
    \includegraphics[scale=0.35]{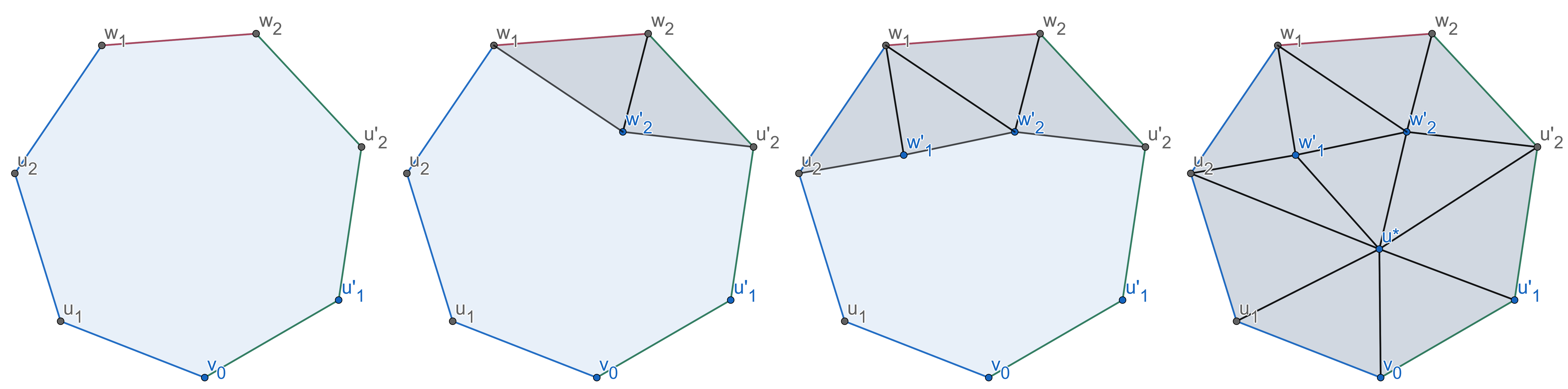}
    \caption{Tiling a cycle}
    \label{fig:general-cycle}
\end{figure}

\paragraph{New bounds on color-restrictions of the spherical building via cones, \pref{thm:cones-intro}}
In order to apply the color restriction technique we need to show that for a \(d\)-dimensional spherical building, many color restrictions are coboundary expanders\footnote{In fact, we need to show that the links of the color restrictions are also coboundary expanders, but we ignore this point in the overview for brevity.}. 
For this overview we assume that \(k=1\) and \(|F|=5\). Let us see how to bound coboundary expansion by constructing short cones.

It turns out easier to do so when the set of colors is a set of colors that are geometrically increasing (e.g. for \(k=1\) we need colors \(F= \set{i_1,i_2,...,i_5}\) so that \(i_j \geq 10 i_{j-1}\)).  The fraction of such sets of colors $F$ is a constant that doesn't depend on \(d\) (it may depend on \(k\)). For example, there is a constant probability that we select colors \(F\) so that for \(j=1,2,..,5\),  \(\frac{d}{10^{16-3j}} \leq i_j < \frac{2d}{10^{16-3j}}\) , since each of these intervals are a constant fraction of the interval \([1,2,...,d]\). When these inequalities hold then \(i_j \geq 10 i_{j-1}\).

Denote by \(Y\) the \(SL_d(\mathbb{F}_q)\)-spherical building. Let \(Y^F\) be a complex induced by the subspaces of dimensions (i.e., colors) \(F =\set{i_1,i_2,...,i_5}\) so that \(i_j \geq 10 i_{j-1}\)). Using the cone technology described in \pref{sec:geometric-lattices-coboundary-expansion}, showing the \(Y^F\) is a coboundary expander reduces to showing that there is a short \(1\)-cone on \(Y^F\). A \(1\)-cone consists of three things: 
\begin{enumerate}
    \item A vertex \(v \in X(0)\) (sometimes called the apex).
    \item For every $u$, a path \(p_u\) from the apex \(v\) to \(u\) in \(Y^F(1)\).
    \item For every edge $uw\in $, a tiling by triangles \(t_{uw}\subset Y^F(2)\)
    of the cycle that consists of the path \(p_u\) from \(v\) to \(u\), the edge \(uw\) and the path \(p_w\) from \(w\) back to \(v\). Denote this cycle by \(p_u \circ uw \circ p_w\). Here a tiling is a set of triangles whose boundary is the edges of the cycle. 
\end{enumerate}
We give a formal and general definition of cones in \pref{sec:geometric-lattices-coboundary-expansion}.
The radius of a cone is \(rad((v,\set{p_u}_{u\in Y^F(0)},\set{t_{uw}}_{uw \in Y^F(1)})) = \max_{uw \in X(1)} \abs{t_{uw}}\).

We start by choosing an apex \(v=v_0\) of dimension \(i_1\) arbitrarily. Next we choose our paths to be as short as possible, and to consist of subspaces of dimension as low as possible. Explicitly we do the following.
\begin{enumerate}
    \item For \(u\) adjacent to \(v_0\), set \(p_u=(v_0,u)\).
    \item For \(u\) of the same dimension as \(v_0\) we find some \(w\) of dimension \(i_2\) so that \(w\) is a neighbour of \(v_0\) and \(u\), and set \(p_u = (v_0,w,u)\). This is always possible since the dimension of \(u+v_0\) is at most \(2i_1\), so we can take any \(w\) of dimension \(i_2 \geq 2 i_1\) that contains the sum of spaces. (Notice how the fact that dimensions are geometrically increasing is important here).
    \item For other \(u \in Y^{F}(0)\), we first take some \(w_2 \subseteq u\) of dimension \(i_1\). Then we find some \(w_1\) who is a neighbour of \(v_0\) and of \(w_2\) and we set \(p_u = (v_0,w_1,w_2,u)\).
\end{enumerate}

Constructing \(t_{w_1 w_2}\) requires more care. Let us first consider the easier case. If \(dim(w_1),dim(w_2) \leq i_4\) then the cycle \(p_{w_1} \circ w_1 w_2 \circ p_{w_2}\) contains at most \(7\) vertices, all of dimension \(\leq i_4\). In particular, the sum of all the vertices/subspaces is of dimension at most \(7i_4 \leq i_5\), so there is a vertex \(u^*\) of dimension \(i_5\) that contains all the vertices in the cycle. The set of triangles \(u^* xy\) for all edges \(xy\) in the cycles is indeed a tiling of the cycle.

In the general case, it could be that the dimension of (say) \(w_1\) is \(i_5\). For example, assume that \(dim(w_1) = i_5, dim(w_2) = i_4\) (in particular \(w_2 \subseteq w_1\). It is useful to read this description while looking at \pref{fig:general-cycle}. In this case, we first find a tiling that ``shifts'' the cycle to a cycle of low dimension vertices. More explicitly, we find some \(w'_2 \subseteq w_2\) of dimension \(i_3\), that is also connected to \(w\)'s neighbours in the cycle. These neighbours are \(w_1\) (and any subspace of \(w_2\) is connected to it), and some \(u'_2\) of dimension \(\leq i_2\), so we can indeed find some \(w'_2\) that is connected to \(u\) and \(u'_2\) of dimension \(i_3\). We tile the cycle with \(w_2w'_2 u'_2, w_2 w'_2 w_1\). This exchanges \(w_2\) with \(w_2'\) in the untiled cycle. We perform a similar vertex-switch, for \(w_1\) as well, finding some \(w'_1\) of dimension \(i_4\) that is connected to \(w_1\) neighbours in the untiled cycle. After these two steps, we can find a \(u^*\) that is connected to all the (now low-dimensional) cycle as in the previous case.

\subsection{Organization of this paper}
\pref{sec:preliminaries} contains preliminaries. We prove \pref{thm:cosystolic-expansion-clear} that connects coboundary expansion in links to cosystolic expansion in \pref{sec:cosystolic-expansion} via the local correction algorithm. We develop the ``color restriction'' technique and prove \pref{thm:color-restriction} in \pref{sec:coboundary-expansion-of-partite-complexes}. We analyze the expansion of the spherical building and other homogeneous geometric lattices in \pref{sec:geometric-lattices-coboundary-expansion}. We tie everything up and prove \pref{thm:main-LSV} in \pref{sec:concrete-complexes}. In this section we present applications of our new bounds for better cover stability and topological overlap. In \pref{app:upper-bounding-cosystolic-expansion} we show an upper bound on the cosystolic expansion of dense complexes.

\subsection{Acknowledgment}
We are deeply grateful to Lewis Bowen for his important feedback on our paper, which greatly improved its clarity and readability.

\section{Preliminaries and notation} \label{sec:preliminaries}
\paragraph{Simplicial complexes} A pure \(d\)-dimensional simplicial complex \(X\) is a set system (or hypergraph) consisting of an arbitrary collection of sets of size \(d + 1\) together with all their subsets. The sets of size \(i+1\) in \(X\) are denoted by \(X(i)\), and in particular, the vertices of \(X\) are denoted by \(X(0)\). We will sometimes omit set brackets and write for example \(uvw\in X(2)\) instead of \(\set{u,v,w}\in X(2)\). As convention \(X(-1) = \set{\emptyset}\). Unless it is otherwise stated, we always assume that \(X\) is finite.
Let \(X\) be a \(d\)-dimensional simplicial complex. Let \(k \leq d\). We denote the set of oriented \(k\)-faces in \(X\) by \(\dir{X}(k) = \sett{(v_0,v_1,...,v_k)}{\set{v_0,v_1,...,v_k} \in X(k)}\). For \(s=(v_0,v_1,...,v_k) \in \dir{X}(k)\) we denote \(set(s) = \set{v_i}_{i=0}^k\), but when its clear from context we abuse notation and write \(s\) for its underlying set instead of \(set(s)\). 
For an oriented face \(s \in \dir{X}(k)\) and an index \(i\in \set{0,1,...,k}\), we denote by \(s_i\) the face obtained by removing the \(i\)-th vertex of \(s\).

Finally, Let \(s = (v_0,...,v_i)\), and \(t=(u_{0},...,u_{j})\). We denote by the concatenation \(s\circ t = (v_0,v_1,...,v_i,u_{0},u_1,...,u_j)\).

\paragraph{Probability over simplicial complexes}
Let \(X\) be a simplicial complex and let \(\Pr_d:X(d)\to (0,1]\) be a density function on \(X(d)\) (i.e. \(\sum_{s \in X(d)}\Pr_d(s)=1\)). This density function induces densities on lower level faces \(\Pr_k:X(k)\to (0,1]\) by \(\Pr_k(t) = \frac{1}{\binom{d+1}{k+1}}\sum_{s \in X(d),s \supset t} \Pr_d(s)\). We can also define a probability over directed faces, where we choose an ordering uniformly at random. Namely, for \(s\in \dir{X}(k)\), \(\Pr_k(s) = \frac{1}{(k+1)!}\Pr_k(set(s))\). When it's clear from the context, we omit the level of the faces, and just write \(\Pr[T]\) or \(\Prob[t \in X(k)]{T}\) for a set \(T \subseteq X(k)\).

\subsection{Coboundary and cosystolic expansion}
\paragraph{Asymmetric functions}
Let \(X\) be a \(d\)-dimensional simplicial complex. Let \(-1 \leq k \leq d\) be an integer. Let \(\Gamma\) be a group. A function \(f:\dir{X}(k)\to \Gamma\) is \emph{asymmetric} if for every \((v_0,v_1,...,v_k) \in \dir{X}(k)\), and every permutation \(\pi:[k]\to [k]\) it holds that
\[f(v_0,v_1,...,v_k) = f(v_{\pi(0)},v_{\pi(1)},...,v_{\pi(k)})^{sign(\pi)}.\]
We denote the set of these functions by \(C^k(X,\Gamma)\). We note that by fixing some order to the vertices \(X(0) = \set{v_0,v_1,...,v_n}\), there is a bijection between functions \(f:X(k)\to \Gamma\) and asymmetric functions \(\dir{f}:\dir{X}(k)\to \Gamma\). Given \(f:X(k)\to \Gamma\) and a set \(s=\set{v_{i_0},v_{i_1},...,v_{i_k}}\) so that \(i_0<i_1<...<i_k\), we set \(\dir{f}(v_{\pi(i_0)},v_{\pi(i_1)},...,v_{\pi(i_k)})=f(s)^{\sign (\pi)}\).

We record the following useful relation.
\begin{claim} \label{claim:algebraic-relation}
Let \(s \in X(j)\). For every \(x \in X_s\) and every asymmetric function \(g:X(k)\to \Gamma\) it holds that \(\sum_{i_1=0}^j \sum_{i_2=0}^{j-1}(-1)^{i_1+i_2}g((s_{i_1})_{i_2}\circ x) = 0.\)
\end{claim}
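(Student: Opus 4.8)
The plan is to show that the $j(j{+}1)$ terms of the double sum cancel in pairs. Fix an orientation $s=(v_0,\dots,v_j)\in\dir X(j)$. The crucial step is to identify, for each pair $(i_1,i_2)$, precisely \emph{which} two vertices of $s$ get deleted in forming $(s_{i_1})_{i_2}$, and in what order the surviving vertices appear.

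First I would unwind the double-deletion notation. Write the vertices of $s_{i_1}$ in their induced order as $w_0,\dots,w_{j-1}$, so that $w_m=v_m$ for $m<i_1$ and $w_m=v_{m+1}$ for $m\ge i_1$. Deleting $w_{i_2}$ then removes the original vertex $v_{i_2}$ when $i_2<i_1$, and removes $v_{i_2+1}$ when $i_2\ge i_1$. In either case the surviving vertices keep their original relative order, so if $a<b$ denote (in the original indexing) the two deleted positions, then $(s_{i_1})_{i_2}=(v_0,\dots,\hat v_a,\dots,\hat v_b,\dots,v_j)$, and hence the oriented face $(s_{i_1})_{i_2}\circ x$ depends only on the unordered pair $\{a,b\}$, not on how it arose.

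Next I would count preimages. For a fixed pair $a<b$ there are exactly two index pairs $(i_1,i_2)$ producing it: the case $i_2<i_1$ forces $(i_1,i_2)=(b,a)$, contributing sign $(-1)^{i_1+i_2}=(-1)^{a+b}$; and the case $i_2\ge i_1$ forces $(i_1,i_2)=(a,b-1)$, contributing sign $(-1)^{a+(b-1)}=-(-1)^{a+b}$. Since both index pairs yield the identical oriented face $(s_{i_1})_{i_2}\circ x$, and hence the identical group element $g((s_{i_1})_{i_2}\circ x)$, their two contributions to the sum are negatives of each other and cancel. As every $(i_1,i_2)\in\{0,\dots,j\}\times\{0,\dots,j-1\}$ falls into exactly one of the two cases — and the pairing $(b,a)\leftrightarrow(a,b-1)$ is a fixed-point-free involution on these index pairs — the whole double sum collapses to $0$.

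I do not anticipate a genuine obstacle: this is the standard ``$\partial\circ\partial=0$'' bookkeeping, pushed into the link of $s$ by appending the fixed face $x$. The only points requiring care are the re-indexing of vertices after the first deletion (getting the split $i_2<i_1$ versus $i_2\ge i_1$ right) and observing that asymmetry of $g$ is not actually used here — the two paired index pairs give the \emph{same} ordered tuple, so equality of the corresponding values of $g$ is automatic.
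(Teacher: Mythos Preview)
Your proof is correct. The paper merely records this claim as a ``useful relation'' in the preliminaries and does not supply a proof, so there is nothing to compare against; your pairing argument is exactly the standard $\partial\circ\partial=0$ bookkeeping and fills in what the paper leaves to the reader. Your closing observation that asymmetry of $g$ plays no role is also accurate.
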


Let \(f:\dir{X}(k) \to \Gamma\). The weight of \(f\) is \(wt(f) = \Prob[t\in X(k)]{f(t)\ne 0}\). For two functions \(f,g:\dir{X}(k)\to \Gamma\) the distance between \(f\) and \(g\) is \(\dist(f,g) = wt(f-g) = \Prob[t\in X(k)]{f(t)\ne g(t)}\). 

\paragraph{Cohomology}
Let \(\Gamma\) be an abelian group. The coboundary operator \(\coboundary_k : C^k(X,\Gamma) \to C^{k+1}(X,\Gamma)\) is defined by 
\[\coboundary_k f(s) = \sum_{i=0}^k (-1)^i f(s_i).\]
It is a direct calculation to verify that for any asymmetric function \(f \in C^{k}\) the function \(\coboundary_k f\) is indeed an asymmetric function, and that \(\coboundary_{k+1} \circ \coboundary_k = 0\).

Let \(B^k(X,\Gamma) = \Img(\coboundary_{k-1})\) be the space of coboundaries. Let \(Z^k(X,\Gamma) = \Ker(\coboundary_{k})\) be the space of cosystols. As \(\coboundary_{k+1} \circ \coboundary_k = 0\), it holds that \(B^k(X,\Gamma) \subseteq Z^k(X,\Gamma)\). The \(k\)-cohomology is \(H^k(X,\Gamma) = Z^k(X,\Gamma) / B^k(X,\Gamma)\). 
\paragraph{Coboundary expansion}
For a function \(f:\dir{X}(k)\to \Gamma\) let   \(\dist(f,B^k) = \min_{g \in C^{k-1}} \dist(f,\coboundary g),\) be the minimal distance between \(f\) and a coboundary.
The \(k\)-th coboundary constant of a complex \(X\) (with respect to an abelian group \(\Gamma\)) is
\[h^k(X,\Gamma) = \min_{f  \in C^k\setminus  B^k}\frac{wt(\coboundary f)}{\dist(f,B^k)}.
\]  where $B^k = B^k(X,\Gamma)$.
Note that \(h^k(X,\Gamma) > 0\) if and only if \(H^k = 0\). 

\paragraph{Cosystolic expansion}
A very related high dimensional notion of expansion is cosystolic expansion. The \(k\)-th cosystolic expansion constant of $X$ (with respect to an abelian group \(\Gamma\)) is
\[h^k(X,\Gamma) = \min_{f \in C^k\setminus  Z^k}\frac{wt(\coboundary f)}{\dist(f,Z^k)},\] where $Z^k = Z^k(X,\Gamma)$.
Notice that when \(B^k(X,\Gamma) = Z^k(X,\Gamma)\), namely, when $H^k=0$, this coincides with the definition of coboundary expansion, and this justifies using the same notation $h^k$, where the term coboundary expansion (as opposed to cosystolic expansion) is taken to indicate $H^k=0$. 

Another useful way to understand the constant is the following.  \(h^k(X,\Gamma) \geq \beta\) if and only if for every \(f:\dir{X}(k)\to \Gamma\) there is some \(h \in Z^k(X,\Gamma)\) so that \(\beta \dist(f, h) \leq wt(\coboundary f)\). We note that in the work of \cite{EvraK2016} cosystolic expanders were also required to have no small weight \(f \in Z^k(X,\Gamma) \setminus B^k(X,\Gamma)\). We don't focus on this notion in our work.

\paragraph{Non abelian coboundary and cosystolic expansion}
For \(k=0,1\) we can define the cohomology with respect to non abelian groups as well. Let \(\Gamma\) be a non abelian group. As before, for every \(k\) we can define \(C^k(X,\Gamma)\). We define the coboundary operators as follows:
\begin{enumerate}
    \item \(\coboundary_{-1}:C^{-1}(X,\Gamma)\to C^{0}(X,\Gamma)\) is \(\coboundary_{-1} h (v) = h(\emptyset)\).
    \item \(\coboundary_{0}:C^{0}(X,\Gamma)\to C^{1}(X,\Gamma)\) is \(\coboundary_{0} h (vu) = h(v)h(u)^{-1}\).
    \item \(\coboundary_{1}:C^{1}(X,\Gamma)\to C^{2}(X,\Gamma)\) is \(\coboundary_{1} h (vuw) = h(vu)h(uw)h(wv)\).
\end{enumerate}
It is easy to check that \(\coboundary_{k+1} \circ \coboundary_k f = e\) where \(e \in \Gamma\) is the unit. The definitions for \(h^k(X,\Gamma)\) and coboundary expansion are the same as in the abelian case for \(k=0,1\).
\paragraph{Edge expander graphs}
\begin{definition}
    Let \(G = (V,E)\) be a graph and let \(\lambda > 0\). We say that \(G\) is a \(\lambda\)-edge expander if for every \(S \subseteq V\) so that \(0 < \prob{S} \leq \frac{1}{2}\), it holds that \(\prob{E(S,V \setminus S)} \geq \lambda \prob{S}\).
\end{definition}
It is well known that any \(\lambda\)-one-sided spectral expander is a \(\frac{1-\lambda}{2}\)-edge expander. The following claim shows that every \(\lambda\)-edge expander graph \(G\) also has \(h^0(G,\Gamma) \geq \frac{\lambda}{2}\) for any group \(\Gamma\). 
\begin{claim} \label{claim:edge-expander-many-sets}
Let \(G = (V,E)\) be a \(\lambda\)-edge expander. Let \(S_1,S_2,...,S_m \subseteq V\) be mutually disjoint sets so that \(V = \bigcup_{j=1}^m S_j\) and so that \(< \varepsilon\) edges cross between the sets for \(\varepsilon \leq \frac{\lambda}{2}\). Then there exists \(j\) so that \(\prob{S_j} \geq 1-\varepsilon/\lambda\).
\end{claim}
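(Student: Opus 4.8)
The plan is to argue by contradiction and to split on whether one of the parts already carries more than half of the vertex mass. Write $c$ for the fraction of edges that cross between the parts, so that the hypothesis reads $c<\varepsilon\le\lambda/2$. Two elementary bookkeeping facts will be used repeatedly: first, $\sum_{j=1}^m\prob{S_j}=1$ because the $S_j$ partition $V$; second, $\sum_{j=1}^m\prob{E(S_j,V\setminus S_j)}=2c$, since an edge between two distinct parts lies in exactly the two boundary sets of those parts, while an edge inside a single part lies in none. Also recall that the isoperimetric inequality defining a $\lambda$-edge expander may only be invoked on a set of measure in $(0,\tfrac12]$.

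First I would rule out the case that $\prob{S_j}\le\tfrac12$ for every $j$. In that case, for each $j$ we have $\prob{E(S_j,V\setminus S_j)}\ge\lambda\prob{S_j}$ — by edge expansion when $\prob{S_j}>0$, and trivially when $\prob{S_j}=0$. Summing over $j$ and using the two facts above gives $2c=\sum_j\prob{E(S_j,V\setminus S_j)}\ge\lambda\sum_j\prob{S_j}=\lambda$, that is, $c\ge\lambda/2$, contradicting $c<\varepsilon\le\lambda/2$. Hence some part $S_{j_0}$ has $\prob{S_{j_0}}>\tfrac12$, and clearly at most one part can.

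It remains to lower bound $\prob{S_{j_0}}$. Put $T:=V\setminus S_{j_0}=\bigcup_{j\ne j_0}S_j$, so $\prob{T}\le\tfrac12$. If $\prob{T}=0$ then $\prob{S_{j_0}}=1\ge 1-\varepsilon/\lambda$ and we are done. Otherwise $0<\prob{T}\le\tfrac12$, so edge expansion applied to $T$ yields $\prob{E(T,S_{j_0})}\ge\lambda\prob{T}$. Every edge of $E(T,S_{j_0})$ has one endpoint in $S_{j_0}$ and the other in a different part, hence is a crossing edge and is counted once in $c$, so $\prob{E(T,S_{j_0})}\le c<\varepsilon$. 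Combining, $\lambda\prob{T}<\varepsilon$, whence $\prob{S_{j_0}}=1-\prob{T}>1-\varepsilon/\lambda$, as desired.

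I do not expect a real obstacle: the argument is a short double-counting estimate. The only points that need care are the identity $\sum_j\prob{E(S_j,V\setminus S_j)}=2c$ and the restriction that the isoperimetric inequality applies only to sets of measure at most $\tfrac12$ — which is exactly why in the last step one passes to the complement $T$ instead of using $S_{j_0}$ directly. The hypothesis $\varepsilon\le\lambda/2$ is used precisely (and only) to derive the contradiction in the first step.
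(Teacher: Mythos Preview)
Your proof is correct and follows essentially the same approach as the paper: first rule out the possibility that every part has measure at most $\tfrac12$ by summing the expansion inequalities and using the double-counting identity $\sum_j\prob{E(S_j,V\setminus S_j)}=2c$, then apply edge expansion to the complement of the large part. If anything, your write-up is slightly more careful with edge cases (empty parts, $\prob{T}=0$) than the paper's own proof.
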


For any group \(\Gamma\) and \(h:X(0)\to \Gamma\), we take as sets \(S_g = h^{-1}(g)\). Edges so that \(\coboundary h \ne 0\) are edges that cross between \(S_{g},S_{g'}\) for some \(g\ne g'\). By \pref{claim:edge-expander-many-sets}, when there are \(\varepsilon\)-edges crossing the cut, then there exists some \(g:X(-1)\to \Gamma\) so that \(\prob{S_g} \geq 1-\frac{\varepsilon}{\lambda}\). Then \(\prob{h \ne \coboundary g} \geq \frac{\varepsilon}{\lambda}\).

\begin{proof}[Proof of \pref{claim:edge-expander-many-sets}]
Denote by \(E' \subseteq E\) the edges that cross between sets. We first show that there must exist some \(j\) so that \(\prob{S_j} > \frac{1}{2}\).

Assume that every \(S_i\) has measure less than \(\frac{1}{2}\). Then
\[\varepsilon > \prob{E} = \frac{1}{2}\sum_{j=1}^m \prob{E(S_j, V\setminus S_j)} \geq \frac{\lambda}{2}\sum_{j=1}^m \prob{S_j} \geq \frac{\lambda}{2}.\]
This contradicts the assumption that \(\varepsilon \leq \frac{\lambda}{2}\).

Hence \(\prob{S_j} >\frac{1}{2}\). Thus 
\[\varepsilon \geq \prob{E(S_j, V \setminus S_j)} \geq \lambda \prob{V \setminus S_j} = \lambda(1-\prob{S_j})\]
and \(\prob{S_j} \geq 1-\frac{\varepsilon}{\lambda}\).
\end{proof}

\subsection{Local properties of simplicial complexes}
\paragraph{Links of faces}
Let \(X\) be a \(d\)-dimensional simplicial complex. Let \(k < d\) and \(s \in X(k)\). The link of \(s\) is a \(d-k-1\)-dimensional simplicial complex defined by
\(X_s = \sett{t \setminus s}{t \in X, t \supseteq s}\). We point out that the link of the empty set is \(X_\emptyset = X\).

Let \(s \in X(k)\) for some \(k \leq d\). The density function \(\Pr_d\) on \(X\) induces on the link is \(\Pr^s_{d-k-1}:X(d-k-1)\to (0,1]\) where 
\({\Pr}_{d-k-1}^s[{t}] = \frac{\prob{t \cup s}}{\prob{s} \binom{d+1}{k+1}}\).
We usually omit \(s\) in the probability, and for \(T \subseteq X_s(k)\) we write \(\Prob[t \in X_s(k)]{T}\) instead.

\paragraph{High dimensional local spectral expanders}
Let \(X\) be a \(d\)-dimensional simplicial complex. Let \(k \leq d\). The \(k\)-skeleton of \(X\) is \(X^{\leq k} = \bigcup_{j=-1}^k X(j)\). In particular, the \(1\)-skeleton of \(X\) is a graph.

\begin{definition}[Spectral expander]
    Let \(G=(V,E)\) be a graph (that is, a \(1\)-dimensional simplicial complex). Let \(A\) be its normalized adjacency operator, i.e. for every \(f:V\to \R\), \(Af:V \to \R\) is the function \(Af(v) = \Ex[uv \in E]{f(u)}\). Let \(1=\lambda_1\geq \lambda_2 \geq ... \geq \lambda_{|V|}\geq -1\) be the eigenvalues of \(A\).
    
    Let \(\lambda \geq 0\). We say that \(G\) is a \(\lambda\)-one sided spectral expander if \(\lambda_2 \leq \lambda\). We say that \(G\) is a \(\lambda\)-two sided spectral expander if \(\lambda_2 \leq \lambda\) and \(\lambda_{|V|}\geq -\lambda\).
\end{definition}

\begin{definition}[high dimensional local spectral expander]
    Let \(X\) be a \(d\)-dimensional simplicial complex. Let \(\lambda \geq 0\). We say that \(X\) is a \(\lambda\)-one sided (two sided) local spectral expander if for every \(s \in X^{\leq d-2}\), the \(1\)-skeleton of \(X_s\) is a \(\lambda\)-one sided (two sided) spectral expander.
\end{definition}

\paragraph{Partite complexes}
A \((d+1)\)-partite simplicial complex is a \(d\)-dimensional complex that has a partition \(X(0)=V_0 \dunion V_1 \dunion ... \dunion V_d \) so that for every \(s \in X(d)\) and every \(i=0,1,...,d\) it holds that \(|s \cap V_i|=1\). Let \(X\) a \((d+1)\)-partite simplicial complex. A color of a face \(t \in X(k)\) is \(col(t) = \sett{i \in [d]}{t \cap V_i \ne \emptyset}\). Let \(F \subseteq [d]\). We denote by \(X[F] = \sett{s \in X}{col(s)=F}\), and by \(X^F = \sett{s \in X}{col(s) \subseteq F}\). A probability density on \(X\) induces a probability density on \(X^F\), \(\Pr^{F}:X^F(|F|-1) \to (0,1]\) by \(\Pr^F(s) = \sum_{t \in X(d), s\subseteq t} \Pr[t]\).

\subsection{Complexes of interest}\label{sec:complexes}
\paragraph{The \(SL_d(\mathbb{F}_q)\)-spherical building} We do not define here spherical buildings in full generality; for a general definition see e.g. \cite{Bjrner1984}. Let \(q\) be a prime power, and let \(\mathbb{F}_q\) be the field with \(q\) elements. Let \(d>1\) be integers. The \(SL_d(\mathbb{F}_q)\)-spherical building is the following \(d-1\)-partite simplicial complex
\[X(0) = \sett{W \subseteq \mathbb{F}_q^d}{W \text{ is a vector subspace} \ve W \ne \set{0}, \mathbb{F}_q^d}.\]
\[ X(d-2) = \sett{\set{W_1,W_2,...,W_{d-1}}}{W_1 \subsetneq W_2 \subsetneq ...\subsetneq W_{d-1}}.\]
The probability over \(X(d-2)\) is uniform. The color of every vertex is its dimension.

\paragraph{Geometric lattices}
Let \((P, \preceq)\) be a finite poset. The order complex of \(P\), denoted \(X_P\), is the simplicial complex on the vertex set \(P\) whose faces are all \(\set{v_0,...,v_d}\) so that \(v_0 \prec v_1 \prec ... \prec v_d\), see \cite{Kozlov2008}. A poset \((P, \preceq)\) is a lattice if any two elements \(x,y \in P\) have a unique minimal upper bound \(x \vee y\) and a unique maximal lower bound \(x \wedge y\). Let \(P\) be a lattice with minimal element \(\hat{0}\) and maximal element \(\hat{1}\). Then it has a rank function \(rk:P \to \NN\), with \(rk(\hat{0}) = 0\) and \(rk(y) = rk(x) + 1\) whenever \(y\) is a minimal element of \(\set{z : z \succ x}\). P is a \emph{geometric lattice} if \(rk(x) + rk(y) \geq rk(x \vee y) + rk(x \wedge y)\) for any \(x,y \in P\), and any element in \(P\) is a join of atoms (i.e., rank 1 elements). An example for a geometric lattice is the lattice whose elements are all subspaces of \(\mathbb{F}_q^d\), with the containment partial order. It's order complex is the \(SL_d(\mathbb{F}_Q)\)-spherical building.

Let \(P\) be a geometric lattice, we denote by \(\bar{P} = P \setminus \set{\hat{0},\hat{1}}\). It is known (see \cite{Kozlov2008}) that if \(P\) is a geometric lattice, then \(X_{\bar{P}}\) is pure and \(d\)-partite, where \(d = rk(\hat{1})-1\). The color of every vertex is its rank. A homogeneous lattice \(P\) is a lattice so that \(Aut(P)\) act transitively on \(X_{\bar{P}}(d)\) (by the action \(\pi(s) = \sett{\pi(v)}{v \in s}\)). We show in \pref{sec:geometric-lattices-coboundary-expansion} that homogeneous geometric lattices have constant coboundary expansion.

Geometric lattices have spectral expansion properties.
\begin{claim}\label{claim:geometric-lattice-is-an-edge-expander}
    Let \(P\) be a homogeneous geometric lattice of of rank \(\geq 2\). Then for every \(i\) and \(j \geq 2i\) it holds that the bipartite graph between \(X_P[i], X_P[j]\) is a \(\frac{1}{\sqrt{2}}\)-one sided spectral expander.
\end{claim}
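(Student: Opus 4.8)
The plan is to pass from the bipartite graph to its associated up–down random walk, reduce to the extremal case $j=2i$, and there exploit the fact that a geometric lattice has complements.

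\emph{Step 1 (reduction to a walk).} Since $P$ is homogeneous, $\mathrm{Aut}(P)$ acts transitively on the rank-$i$ flats, on the rank-$j$ flats, and — by extending to maximal chains — on the comparable pairs of such flats. Hence the bipartite graph $B$ between $X_P[i]$ and $X_P[j]$, with the weights induced from the order complex, is biregular and has the uniform distribution on each side as its stationary measure. For a biregular bipartite graph one has $\lambda_2(B)^2=\lambda_2(P_{ij})$, where $P_{ij}$ is the walk on $X_P[i]$ that goes up to a uniformly random rank-$j$ flat $y\supseteq x$ and then down to a uniformly random rank-$i$ flat $x'\le y$. Writing $U^{*}\colon \ell_2(X_P[i])\to\ell_2(X_P[j])$ for the averaging operator $U^{*}f(y)=\Ex[x\le y,\,rk(x)=i]{f(x)}$ and $U$ for its adjoint $Uh(x)=\Ex[y\supseteq x]{h(y)}$, we have $P_{ij}=UU^{*}$, so it suffices to prove $\lambda_2(P_{ij})\le\tfrac12$.

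\emph{Step 2 (reduction to $j=2i$).} Because $2i\le j$, the rank-$2i$ flats form a legitimate color and we factor ``go up to rank $j$'' through rank $2i$: $U^{*}_{i\to j}=U^{*}_{2i\to j}\,U^{*}_{i\to 2i}$. Using that $U_{2i\to j}U^{*}_{2i\to j}$ is a walk (operator norm $\le 1$) and that $U^{*}_{i\to 2i}$ preserves the zero-mean subspace and is a contraction there,
\[\langle P_{ij}f,f\rangle=\bigl\|U^{*}_{i\to j}f\bigr\|^2=\bigl\langle (U_{2i\to j}U^{*}_{2i\to j})\,U^{*}_{i\to 2i}f,\ U^{*}_{i\to 2i}f\bigr\rangle\le \bigl\|U^{*}_{i\to 2i}f\bigr\|^2=\langle P_{i,2i}f,f\rangle .\]
Hence $\lambda_2(P_{ij})\le\lambda_2(P_{i,2i})$, and we may assume $j=2i$.

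\emph{Step 3 (the case $j=2i$ via complements).} Fix a rank-$2i$ flat $y$. By semimodularity any two rank-$i$ flats $x,x'\le y$ satisfy $rk(x\vee x')\le 2i$; moreover, extending a basis of $x$ inside a basis of the interval $[\hat 0,y]$ shows that every rank-$i$ flat $x\le y$ has a rank-$i$ \emph{complement} $x^{c}\le y$ with $x\vee x^{c}=y$. By homogeneity the number of complements of $x$ inside $[\hat 0,y]$ is a constant independent of $x$ and $y$, so a uniformly random ordered complement pair $(x,x^{c})$ below $y$ has uniform marginals. Consequently $\Ex[x\le y]{f(x)}=\Ex[(x,x^{c})]{\tfrac12(f(x)+f(x^{c}))}$, and convexity of $t\mapsto t^2$ gives $\bigl(\Ex[x\le y]{f(x)}\bigr)^{2}\le\tfrac12\Ex[x\le y]{f(x)^{2}}+\tfrac12\Ex[(x,x^{c})]{f(x)f(x^{c})}$. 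Averaging over $y$, and using that picking a random rank-$2i$ flat and then a random rank-$i$ flat below it gives the uniform distribution on $X_P[i]$,
\[\langle P_{i,2i}f,f\rangle=\Ex[y]{\bigl(\Ex[x\le y]{f(x)}\bigr)^{2}}\le \tfrac12\snorm f+\tfrac12\langle \cC f,f\rangle ,\]
where $\cC$ is the symmetric stochastic ``complement walk'' on $X_P[i]$ sending $x$ to a uniformly random $x^{c}$ with $rk(x\vee x^{c})=2i$.

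The main obstacle is the remaining estimate $\langle \cC f,f\rangle\le 0$ (equivalently, $\cC$ has no nontrivial positive eigenvalue) for zero-mean $f$. When $i=1$ this is immediate: two distinct atoms always join to a rank-$2$ flat, so $\cC$ is the normalized complete graph on the atoms, $\lambda_2(\cC)=-1/(|X_P[1]|-1)<0$, and the claim follows for $i=1$ and all $j\ge 2$. For general $i$ one must control $\langle\cC f,f\rangle$ using the semimodular rank inequality together with homogeneity, via a count of how many rank-$2i$ flats lie above each rank-$r$ flat for $i<r\le 2i$ (i.e.\ the Whitney-type numbers of the intervals $[\hat 0,y]$), together with the slack left by convexity on the non-extremal part of $f$; this is the technical heart of the claim. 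All the structural inputs used — purity, semimodularity, existence of complements, and the behaviour of intervals under contraction — are classical, see \cite{Kozlov2008}.
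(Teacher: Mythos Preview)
Your Steps 1 and 2 are sound, but Step 3 has a genuine gap that cannot be closed along the route you propose. The target inequality $\langle\cC f,f\rangle\le 0$ for zero-mean $f$ (equivalently, that the complement walk has no nontrivial positive eigenvalue) is simply \emph{false} in general. Take $P=B_n$, the Boolean lattice, with $i=2$: inside any rank-$4$ element $y$ the rank-$2$ complement of $x\subset y$ is the unique set $y\setminus x$, so your $\cC$ is exactly the normalized Kneser graph on $2$-subsets of $[n]$, whose nontrivial eigenvalues are $-\tfrac{2}{n-2}$ and $+\tfrac{2}{(n-2)(n-3)}$. The latter is strictly positive, so no amount of ``semimodular rank inequality together with homogeneity'' will yield $\langle\cC f,f\rangle\le 0$; the sketch you offer for ``the technical heart of the claim'' is aiming at a false statement. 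Consequently your intermediate bound $\langle P_{i,2i}f,f\rangle\le\tfrac12\snorm f+\tfrac12\langle\cC f,f\rangle$ is too weak to conclude $\lambda_2(P_{i,2i})\le\tfrac12$.

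The paper's proof is entirely different and avoids both the reduction to $j=2i$ and any complement machinery. It works directly with the given $j$: any two $u,v\in X_P[i]$ share a common neighbour in $X_P[j]$ (take any $w\succeq u\vee v$, which exists because $rk(u\vee v)\le 2i\le j$), and by homogeneity every $w\in X_P[j]$ has the same degree $D\ge 2$ into $X_P[i]$. The paper then asserts that the two-step walk on $X_P[i]$ equals $\tfrac1D\,\mathrm{Id}+(1-\tfrac1D)C$ with $C$ the complete-graph operator, so its second eigenvalue is $\tfrac1D\le\tfrac12$ and the bipartite graph has second eigenvalue at most $\tfrac1{\sqrt 2}$. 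In short, the intended argument is a one-line structural observation (common neighbour plus regularity $\Rightarrow$ lazy complete graph), not an analysis of an auxiliary random walk.
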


\begin{proof}[Proof of \pref{claim:geometric-lattice-is-an-edge-expander}]
    Note that every \(u,v \in X_P[i]\) there is a path of length \(2\) of the form \((u, w, v)\) where \(w \geq u \vee v\). This is because \(rk(u \vee v) \leq 2i \leq j\), and because in a rank \(d\) geometric lattice we can always embed any chain in a chain of length \(d\) (i.e. we can always find \(w \succeq u \vee v\) of rank \(j\)). Moreover, by homogeneity, the degree of every \(w\) is some constant \(D \geq 2\). Thus the operator of taking two steps in this graph (starting at \(X_P[i]\)) is equal to \(\frac{1}{D} Id + (1-\frac{1}{D})C\) where \(C\) is the operator of the complete graph. The second largest eigenvalue of this operator is \(\frac{1}{D} \leq \frac{1}{\sqrt{2}}\). Indeed, this is because taking two steps from any vertex \(w\) has probability of \(\frac{1}{D}\) to traverse in a path of the form \((w,v,w)\), and otherwise the endpoint of is uniform among all \(w' \ne w\).
    
    As this is a \(2\)-step walk according to the bipartite graph's operator, it follows that the second largest eigenvalue of the original graph is at most \(\frac{1}{\sqrt{2}}\).
\end{proof}

\paragraph{\cite{LubotzkySV2005b} complexes}
Lubotzky, Samuels and Vishne constructed the first bounded degree high dimensional expanders. They construct them by taking quotients of Bruhat-Titz buildings.

\begin{theorem}[\cite{LubotzkySV2005b}]\torestate{\label{thm:lsv-complexes}
For any prime power \(q\) and integer \(d>1\), there is a family \(\mathcal{X}_{q,d}=\set{X_n}_{n=1}^\infty\) of connected complexes whose links are (isomorphic copies of) the \(SL_d(\mathbb{F}_q)\)-spherical building. In particular, For every \(\lambda>0\) there is some \(q_0\) so that every \(X_n\) is a \(\lambda\)-one sided high dimensional expander when \(q\geq q_0\).}
\end{theorem}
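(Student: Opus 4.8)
The plan is to realize each \(X_n\) as a deep finite quotient of the affine Bruhat--Tits building of \(PGL_d\) over a non-archimedean local field with residue field \(\mathbb{F}_q\); this is exactly the construction of \cite{LubotzkySV2005a,LubotzkySV2005b}, and I would simply follow it. Fix \(F=\mathbb{F}_q((t))\), whose residue field is \(\mathbb{F}_q\), and let \(\mathcal{B}=\mathcal{B}(PGL_d,F)\) be its affine building of type \(\tilde{A}_{d-1}\). It is a pure \((d-1)\)-dimensional, contractible, \(d\)-partite (colorable by types) locally finite simplicial complex on which \(PGL_d(F)\) acts simplicially, with all vertex degrees bounded by a polynomial in \(q\). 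The structural input I would use is that the link in \(\mathcal{B}\) of every vertex is isomorphic to the \(SL_d(\mathbb{F}_q)\)-spherical building of \pref{sec:complexes}, and, more generally, the link of any face is a spherical building over \(\mathbb{F}_q\) of the appropriate sub-type; in particular the link of any face of codimension \(2\) (i.e.\ dimension \(d-3\)) is a rank-\(2\) spherical building over \(\mathbb{F}_q\) --- either the flag complex of a projective plane (an \(A_2\)-residue) or a complete bipartite graph \(K_{q+1,q+1}\) (an \(A_1\times A_1\)-residue), since all sub-diagrams of \(\tilde{A}_{d-1}\) are of type \(A\).

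Next I would pass to finite quotients. Using the theory of arithmetic lattices --- in \cite{LubotzkySV2005b}, the unit group of an order in a suitable division algebra over a global function field --- one obtains a cocompact lattice \(\Gamma\le PGL_d(F)\) acting on \(\mathcal{B}\). By residual finiteness, choose a descending chain \(\Gamma\supseteq\Gamma_1\supseteq\Gamma_2\supseteq\cdots\) of finite-index normal subgroups with \(\bigcap_n\Gamma_n=\set 1\), and set \(X_n=\Gamma_n\backslash\mathcal{B}\). For \(n\) large enough, \(\Gamma_n\) acts freely, type-preservingly, and with injectivity radius exceeding \(1\); hence \(X_n\) is a genuine finite \(d\)-partite simplicial complex, it is connected because \(\mathcal{B}\) is, and the quotient map is a local isomorphism in a neighbourhood of every face. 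Consequently every link of \(X_n\) is isomorphic to the corresponding link of \(\mathcal{B}\): the vertex links of \(X_n\) are copies of the \(SL_d(\mathbb{F}_q)\)-spherical building (giving the first assertion of the theorem), and the links of its codimension-\(2\) faces are projective-plane flag complexes or complete bipartite graphs over \(\mathbb{F}_q\).

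It then remains to establish the spectral bound, and here I would avoid the deep Ramanujan machinery and argue by descent. The \(1\)-skeleton of a rank-\(2\) spherical building over \(\mathbb{F}_q\) is a generalized \(m\)-gon with \(m\in\set{2,3}\): its second-largest normalized adjacency eigenvalue is \(0\) for \(K_{q+1,q+1}\) and at most \(\sqrt{q}/(q+1)=O(1/\sqrt{q})\) for the point-line incidence graph of a projective plane. So, given \(\lambda>0\), I would fix \(q_0\) large enough that this quantity is small. Since \(X_n\) and all of its links are connected (thick spherical buildings and their finite quotients are), Oppenheim's trickle-down theorem \cite{Oppenheim2018} propagates a second-eigenvalue bound \(\mu\) on the (one-dimensional) links of codimension-\(2\) faces down to a bound \(O_d(\mu)\) on the \(1\)-skeleton of the link of every face \(s\in X_n^{\le d-3}\), including \(s=\emptyset\), i.e.\ on the \(1\)-skeleton of \(X_n\) itself. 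Choosing \(q_0\) large enough to absorb the \(d\)-dependent loss, every \(X_n\) with \(q\ge q_0\) is a \(\lambda\)-one-sided local spectral expander, which is the second assertion.

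The main obstacle is not the spectral bookkeeping but the first step: producing the cocompact arithmetic lattice \(\Gamma\) together with an explicit type-preserving action on \(\mathcal{B}\) and a residual chain whose deep members act freely --- this is the genuine content of \cite{LubotzkySV2005b} and I would cite it wholesale. A minor secondary point is verifying the hypotheses of the trickle-down theorem (connectedness of \(X_n\) and of all its links, which is automatic here). I note that if one wanted the stronger \emph{Ramanujan} bound on the adjacency operators of \(X_n\) --- rather than the one-sided local spectral expansion claimed here --- one would additionally need the Ramanujan--Petersson bounds for \(GL_d\) over function fields, as used in \cite{LubotzkySV2005b}; these are not required for the statement above.
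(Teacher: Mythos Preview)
The paper does not prove this theorem at all: it is stated in \pref{sec:complexes} purely as a citation of \cite{LubotzkySV2005b}, with the one-line remark that the complexes are obtained as quotients of Bruhat--Tits buildings. So there is nothing to compare your proof against in this paper.

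That said, your outline is a correct reconstruction of what is behind the cited result. The construction of \(X_n=\Gamma_n\backslash\mathcal B\) with vertex links isomorphic to the \(SL_d(\mathbb F_q)\)-spherical building is exactly the content of \cite{LubotzkySV2005a,LubotzkySV2005b}, and you are right to cite it wholesale rather than reprove the existence of the cocompact arithmetic lattice and the residual chain. For the spectral clause, your trickle-down argument via the codimension-\(2\) links (generalized triangles with \(\lambda_2=\sqrt q/(q+1)\), or complete bipartite graphs with \(\lambda_2=0\)) combined with Oppenheim's descent is a valid and by-now standard alternative to the Ramanujan bounds actually proved in \cite{LubotzkySV2005b}; it yields precisely the qualitative statement claimed here (for every \(\lambda>0\) there is \(q_0=q_0(d,\lambda)\)), which is all the present paper uses.
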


\paragraph{\cite{KaufmanO2021} complexes}

Kaufman and Oppenheim give another construction of bounded degree partite high dimensional expanders, by a technique called group development \cite{KaufmanO2021}. We state the properties in their construction that are necessary for our needs.
\begin{theorem}[\cite{KaufmanO2021}] \torestate{\label{thm:ko-complexes}
For every \(\lambda > 0\) there exists a family of \(4\)-partite complexes \(\mathcal{Y}_{\lambda} = \set{Y_n}_{n=1}^\infty\) so that
\begin{enumerate}
    \item \(Y_n\) is a \(\lambda\)-one sided high dimensional expander.
    \item There exists a constant \(\beta > 0\) (independent of \(\lambda\)) so that for every abelian group \(\Gamma\) and every \(s \in Y_n(0)\), the link of \(s\) has \(h^{1}(Y_s,\Gamma) \geq \beta\).
\end{enumerate}}
\end{theorem}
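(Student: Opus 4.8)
This statement is taken from \cite{KaufmanO2021}; here is how I would establish it. The family $\mathcal{Y}_\lambda$ is the group-development (coset) complex built from the elementary-matrix groups over $\mathbb{F}_q[t]$: vertices are cosets of the ``parabolic'' subgroups generated by all but one of the standard elementary generators, and faces are tuples of pairwise-intersecting cosets. By construction, the link of every face of codimension two is the bipartite point--line incidence graph of a projective plane over $\mathbb{F}_q$, whose normalized second eigenvalue is $O(1/\sqrt{q})$. Taking $q$ large enough makes each of these a $\lambda$-one-sided spectral expander, and then the standard trickling-down / local-to-global principle (as in \cite{Oppenheim2018}) propagates one-sided $\lambda$-expansion to the $1$-skeletons of all links and hence to $Y_n$ itself. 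This gives the first item and is exactly \cite{KaufmanO2021, KaufmanO181}.

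For the second item, the coset structure makes the link $Y_s$ of every vertex $s\in Y_n(0)$ isomorphic to one fixed complex $L$, independent of $n$: it is a spherical building of type $A$ over $\mathbb{F}_q$, concretely the order complex of the subspace lattice of $\mathbb{F}_q^m$ for a small fixed $m$, and in particular a two-dimensional $3$-partite complex. It therefore remains to prove $h^1(L,\Gamma)\geq\beta$ for a constant $\beta$ that does not depend on $q$ (equivalently, not on $\lambda$, nor on the degree of $Y_n$). I would do this with the cone method: construct a cone on $L$ of radius $O(1)$ and invoke the cone-to-coboundary-expansion lemma of Gromov and of Kozlov--Meshulam (the mechanism also used in \pref{sec:geometric-lattices-coboundary-expansion}), which turns a radius-$r$ cone into the bound $h^1(L,\Gamma)\geq\beta(r)$ \emph{for every group $\Gamma$ at once}, because the tiling argument only manipulates formal sums of triangles and never uses commutativity or the cardinality of $\Gamma$. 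Since the dimension of $L$ is a fixed constant, even the crude apartment-based cones (or the ``take one subspace containing the span of the few subspaces on the cycle'' cone, as in the proof of \pref{thm:cones-intro}) already have bounded radius; one may alternatively cite the classical bound of \cite{Gromov2010, LubotzkyMM2016} for buildings of small fixed dimension, the point being only that it is independent of $q$.

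The one genuine subtlety is exactly this independence of $q$. It holds because the number of triangles needed to fill a cycle passing through a bounded number of subspaces of $\mathbb{F}_q^m$ is bounded purely in terms of $m$: adding a single subspace $u^*$ that contains the (bounded-dimensional) span of all subspaces on the cycle and tiling with the triangles $u^* xy$ costs a number of triangles that does not see $q$. A secondary point is to confirm that the link of a KO vertex is genuinely (isomorphic to) a building and not merely a proper quotient of one; if it were a quotient one would need the cone to descend to the quotient, but the local picture of a coset complex of this type is the honest spherical building, so this does not arise. Finally, the two items are precisely what \pref{thm:cosystolic-expansion-clear} consumes for $k=1$ --- the vertex-link bound is item (2), and the $h^0$ of edge-links it also needs is supplied by item (1) via \pref{claim:edge-expander-many-sets} --- which is how one then concludes cosystolic expansion of the $Y_n$ themselves.
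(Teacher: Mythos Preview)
The paper does not prove this theorem at all; it is quoted in the preliminaries as a result of Kaufman and Oppenheim \cite{KaufmanO2021}, with no argument supplied. Your write-up is therefore not to be compared against a proof in the paper but is rather a (reasonable) sketch of the original KO analysis: coset complexes over elementary-matrix groups, Oppenheim's trickling-down for item (1), and the cone method on the spherical-building links for item (2), with the key observation that the cone radius depends only on the fixed rank and not on $q$. One small inaccuracy: in a $4$-partite coset complex the vertex links are not literally all isomorphic to a single complex $L$ (vertices of different colors have different link types), though each type is a fixed building-like complex whose coboundary expansion is bounded independently of $q$, so your conclusion stands.
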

\section{Cosystolic expansion} \label{sec:cosystolic-expansion}
In this section we prove that local spectral expanders whose links are coboundary expanders are cosystolic expanders.

\restatetheorem{thm:cosystolic-expansion-clear}

In fact, we prove a slightly more general statement, allowing for different coboundary expansion in every level.
\begin{theorem} \label{thm:cosystolic-expansion-different-levels}
    Let \(k >0\) be an integer and let \(\beta_0,\beta_1,\beta_2,...,\beta_{k} \in (0,1]\) and \(\lambda > 0\). Let \(X\) be a \(d\)-dimensional simplicial complex for \(d \geq k+2\) and assume that \(X\) is a \(\lambda\)-one-sided local spectral expander. Let \(\Gamma\) be any group. Assume that for every \(0\leq \ell \leq k\) and \(r \in X(\ell)\), \(X_r\) is a coboundary expander and that \(h^{k-\ell}(X_r,\Gamma) \geq \beta_{k-\ell}\).
    Then \[h^k(X,\Gamma) \geq \frac{\prod_{\ell=0}^{k}\beta_\ell}{(k+2)!\cdot 4} - e\lambda.\]
Here \(e \approx 2.71\) is Euler's number.
\end{theorem}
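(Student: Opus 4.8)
The plan is to run the local-correction algorithm — the natural extension of \pref{alg:cosystolic-fixing-overview} in which we try to fix the star $\mathrm{Star}_k(r)=\sett{s\in X(k)}{s\supseteq r}$ of every face $r\in X(\ell)$, $0\le\ell\le k$, and perform a fix only when it lowers $wt(\coboundary\tilde f)$ by at least an $\eta$-fraction of $\Pr[\mathrm{Star}_k(r)]$ — and to analyze its output, taking $\eta:=\frac{\prod_{\ell=0}^{k}\beta_\ell}{(k+2)!\cdot 4}$. Run on an input $f\in C^k(X,\Gamma)\setminus Z^k$ with $\varepsilon:=wt(\coboundary f)$, the algorithm terminates with $\tilde f$ satisfying $wt(\coboundary\tilde f)\le\varepsilon$, $\dist(f,\tilde f)\le\varepsilon/\eta$, and \emph{$\eta$-local minimality}: for every $\ell\le k$ and $r\in X(\ell)$, no change of $\tilde f$ supported on $\mathrm{Star}_k(r)$ lowers $wt(\coboundary\tilde f)$ by more than $\eta\Pr[\mathrm{Star}_k(r)]$. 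If $\coboundary\tilde f=0$ then $\tilde f\in Z^k$, so $\dist(f,Z^k)\le\varepsilon/\eta\le\varepsilon/(\eta-e\lambda)$ and we are done; thus it suffices to show that whenever $\coboundary\tilde f\neq 0$ one has $wt(\coboundary\tilde f)\ge\eta-e\lambda$, since then $\varepsilon\ge\eta-e\lambda$ and hence $\dist(f,Z^k)\le 1\le wt(\coboundary f)/(\eta-e\lambda)$. Either way $h^k(X,\Gamma)\ge\eta-e\lambda$.

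Let $h:X(k+1)\to\{0,1\}$ indicate the support of $\coboundary\tilde f$, so $\snorm h=\E h=wt(\coboundary\tilde f)$, and let $D$ denote the down (averaging) operator lowering the dimension of a chain by one. The core is the chain of inequalities
\[ \snorm{D^{k+2}h}\ \gtrsim\ \snorm{D^{k+1}h}\ \gtrsim\ \cdots\ \gtrsim\ \snorm{Dh}\ \gtrsim\ \snorm h ,\]
proved by induction on the number of $D$'s, where the hidden multiplicative constants collectively contribute the product $\prod_{\ell=0}^{k}\beta_\ell$ together with combinatorial factors multiplying to $\frac{1}{(k+2)!}$, and each step carries an additive error $O(\lambda)\snorm h$. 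For the step relating $\snorm{D^{m}h}$ to $\snorm{D^{m-1}h}$ I would, following the overview, introduce an auxiliary averaging operator $N=N_m$ given by a short random walk that goes up from a low face $r$ to a top face containing it and back down to the face obtained by deleting $r$'s vertices, and prove two facts. First, a \emph{spectral approximation} $\snorm{D^{m}h}\approx\iprod{N h,\,D^{m-1}h}$ with additive error $O(\lambda)\snorm h$, by localizing the expectation to the links and invoking one-sided $\lambda$-spectral expansion, exactly as in the standard analysis of walks on high dimensional expanders (\eg \cite{Oppenheim2018,DiksteinDFH2018,KaufmanO-RW20}). Second, a \emph{link inequality} $Nh(r)\gtrsim\beta_{k-\ell}\bigl(D^{m-1}h(r)-\eta\bigr)$ for the relevant faces $r\in X(\ell)$, using the hypothesis $h^{k-\ell}(X_r,\Gamma)\ge\beta_{k-\ell}$; this is the heart of the proof.

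To prove the link inequality, fix such an $r$ and consider the restriction $g_r:=(\coboundary\tilde f)_r$, $g_r(t)=\coboundary\tilde f(r\circ t)$, which is a cochain on the link $X_r$. Using \pref{claim:algebraic-relation} one checks that $wt_{X_r}(\coboundary g_r)$ equals exactly what $Nh(r)$ counts — the violated top-level faces of $X$ reached by the walk from $r$ — while $wt_{X_r}(g_r)$ is the value $D^{m-1}h(r)$ appearing on the right-hand side. The crucial observation is that $\eta$-local minimality of $\tilde f$ forces $\dist(g_r, B(X_r))\ge wt_{X_r}(g_r)-\eta$: if some coboundary $\coboundary\psi$ of $X_r$ were closer than this to $g_r$, then replacing $\tilde f$ by $\tilde f+\coboundary\psi$ on $\mathrm{Star}_k(r)$ would lower $wt(\coboundary\tilde f)$ by more than $\eta\Pr[\mathrm{Star}_k(r)]$, contradicting the stopping rule. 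Feeding this into $h^{k-\ell}(X_r,\Gamma)\ge\beta_{k-\ell}$ gives $wt_{X_r}(\coboundary g_r)\ge\beta_{k-\ell}(D^{m-1}h(r)-\eta)$, i.e.\ the link inequality; when $X_r$ is a graph this is edge expansion (\pref{claim:edge-expander-many-sets}), accounting for the $\beta_0$ factor. Taking the inner product of the link inequality with $D^{m-1}h$, using $\E_r[D^{m-1}h(r)]=\E h=\snorm h$ and the inductive hypothesis $\snorm h\lesssim\snorm{D^{m-1}h}$ to absorb the $\eta\snorm h$ term, and combining with the spectral approximation, completes the inductive step.

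Finally, since $D^{k+2}h$ is the constant $\E h$, the two ends of the chain read $(\E h)^2=\snorm{D^{k+2}h}\gtrsim\snorm h=\E h$. Summing the $O(\lambda)\snorm h$ errors, which with their per-step constants accumulate to at most $e\lambda\snorm h$ (this is where Euler's $e$ enters), and tracking the multiplicative constants, yields $(\E h)^2\ge(\eta-e\lambda)\E h$, hence $\E h=0$ or $\E h\ge\eta-e\lambda$, as required. I expect the main obstacle to be the spectral-approximation step: one must handle the whole family of walk operators uniformly across levels and show the error is genuinely $O(\lambda)\snorm h$, with no hidden dependence on the degree or the dimension, since this is exactly what makes the final bound degree- and dimension-free; the secondary delicate point is the bookkeeping that turns the per-level combinatorial losses into the clean $\frac{1}{(k+2)!\cdot 4}$ factor and isolates the additive $e\lambda$ term.
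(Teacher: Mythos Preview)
Your plan is essentially the paper's proof: run the local-correction algorithm with threshold $\eta=\frac{\prod_{\ell=0}^{k}\beta_\ell}{4(k+2)!}$, show the output is $\eta$-locally minimal, and prove that any nonzero $\eta$-locally minimal cocycle $g=\coboundary\tilde f\in Z^{k+1}$ is heavy via the chain $\snorm{D^{k+2}h}\gtrsim\cdots\gtrsim\snorm h$, each step combining a spectral approximation with a pointwise link inequality. Two points of your sketch need sharpening to match the actual argument.

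First, the very first step $\snorm{Dh}\gtrsim\snorm h$ is \emph{not} an instance of your link inequality. For that step you would need $r\in X(k{+}1)$ and $h^{-1}(X_r)$, which is outside the hypothesis. The paper handles this step separately: since $g\in Z^{k+1}$, for every $t\in X(k{+}2)$ the identity $\coboundary g(t)=0$ forces that if one $k{+}1$-subface of $t$ is active then a second one is; this yields $N_{k+1\to k+1}h(s)\ge\frac{1}{k+2}h(s)$ pointwise, and then the spectral inequality gives $\snorm{Dh}\ge\frac{1}{k+2}\snorm h-\lambda\snorm h$. Only the remaining $k{+}1$ steps (for $r\in X(\ell)$, $0\le\ell\le k$) use link coboundary expansion and contribute $\beta_0,\ldots,\beta_k$. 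This is why the product has $k{+}1$ betas but the denominator is $(k{+}2)!$.

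Second, the $\eta$ error is not ``absorbed'' via the inductive hypothesis as you suggest; it is carried through additively at each step, exactly like the $\lambda$ error. The sum of the $\eta$ contributions is at most $e\eta$, so the chain yields $(\E h)^2\ge\bigl(\tfrac{\prod\beta_\ell}{(k+2)!}-e(\eta+\lambda)\bigr)\E h=(4\eta-e\eta-e\lambda)\E h$. The factor $4$ in the definition of $\eta$ is precisely what makes $4\eta-e\eta>\eta$, so that $wt(\coboundary\tilde f)<\eta-e\lambda$ forces $\coboundary\tilde f=0$. Your stated endpoint $(\E h)^2\ge(\eta-e\lambda)\E h$ is then a consequence, but not what the chain gives directly.
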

Obviously, \pref{thm:cosystolic-expansion-clear} follows from \pref{thm:cosystolic-expansion-different-levels} by setting \(\beta_\ell=\beta\) for every \(\ell=0,1,2,...,k\).

The following proposition, that is important for the topological overlapping property will also be proven via similar arguments.
\begin{proposition} \label{prop:heavy-cosystols}
Let \(k >0\) be an integer and let \(\beta_0,\beta_1,\beta_2,...,\beta_{k-1} \in (0,1]\) and \(\lambda > 0\). Let \(X\) be a \(d\)-dimensional simplicial complex for \(d \geq k+1\) and assume that \(X\) is a \(\lambda\)-one-sided local spectral expander. Let \(\Gamma\) be any group. Assume that for every \(0\leq \ell \leq k-1\) and \(r \in X(\ell)\), \(X_r\) is a coboundary expander and that \(h^{k-\ell}(X_r,\Gamma) \geq \beta_{k-\ell-1}\). Then every \(g \in Z^{k}(X,\Gamma) \setminus B^{k}(X,\Gamma)\), has \(wt(g) \geq \frac{\prod_{\ell=0}^{k-1}\beta_\ell}{(k+1)!} - e\lambda\).
\end{proposition}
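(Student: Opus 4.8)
The plan is to run the proof of \pref{thm:cosystolic-expansion-different-levels} with the cocycle $g$ itself playing the role that $\coboundary\tilde f$ played there. Two things simplify: there is no correction algorithm to run (we already have $\coboundary g=0$), and the ad hoc ``$\eta$-local minimality'' of \pref{thm:cosystolic-expansion-different-levels} gets replaced by genuine weight-minimality, which is exactly what removes the factor $4$ and all $\eta$ terms. First I would normalize: replace $g$ by an element of minimal weight in its cohomology class $g+B^k(X,\Gamma)$; this only decreases $wt(g)$ and keeps $g\in Z^k\setminus B^k$, so in particular $g\neq 0$ and $wt(g)>0$. Set $h\colon X(k)\to\set{0,1}$ to be the indicator of $\supp g$, so that $\snorm h=\ex h=wt(g)$. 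Note that $d\geq k+1$ guarantees $X(k+1)\neq\emptyset$ (needed for the up-then-down walks below) and that every $r\in X(\ell)$ with $\ell\leq k-1$ lies in $X^{\leq d-2}$, so the $1$-skeleton of its link is a $\lambda$-one-sided spectral expander.

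The key structural input is that minimality localizes to links. For $r\in X(\ell)$ with $0\leq\ell\leq k-1$, write $g_r\in C^{k-\ell-1}(X_r,\Gamma)$ for the restriction $g_r(t)=g(r\circ t)$. I would show that the nearest coboundary to $g_r$ inside $X_r$ is the zero function, i.e. $\dist(g_r,B^{k-\ell-1}(X_r,\Gamma))=wt_{X_r}(g_r)$: if some $\coboundary\psi$ on $X_r$ were strictly closer to $g_r$, then the $(k-1)$-cochain on $X$ that vanishes off faces containing $r$ and restricts to (the pullback of) $\psi$ on the link of $r$ has a coboundary which, subtracted from $g$, stays in the class $g+B^k$ but has strictly smaller weight -- contradicting minimality. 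This is precisely the $\eta=0$ version of the local-minimality fact exploited in \pref{thm:cosystolic-expansion-different-levels}.

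With this in hand I would reproduce the norm chain of \pref{thm:cosystolic-expansion-different-levels}:
\[ \snorm{D^{k+1}h}\;\gtrsim\;\snorm{D^{k}h}\;\gtrsim\;\cdots\;\gtrsim\;\snorm{Dh}\;\gtrsim\;\snorm h, \]
where $D$ is the down operator. Each step $\snorm{D^{m+1}h}\gtrsim\snorm{D^m h}$ is obtained exactly as there: an auxiliary up-then-down averaging operator $N_m$ for which local spectral expansion gives $\snorm{D^{m+1}h}\approx\iprod{N_m h,D^m h}$ up to an additive error of order $\lambda\snorm{D^m h}$ (localization to links, as in \cite{Oppenheim2018,DiksteinDFH2018,KaufmanO-RW20}), followed by a pointwise inequality $N_m h(r)\gtrsim\beta\cdot D^m h(r)$ on each relevant face $r$, proved by restricting $g$ to the link $X_r$, using that $\coboundary g=0$ makes the relevant link identity an exact cocycle relation, invoking the assumed coboundary expansion of $X_r$, and using the local minimality of the previous paragraph (with no $-\eta$ correction now). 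Multiplying the $k+1$ multiplicative constants and summing the spectral errors exactly as in \pref{thm:cosystolic-expansion-different-levels} -- with one fewer step than there and no factor $4$ -- gives $\snorm{D^{k+1}h}\geq\bigl(\tfrac{\prod_{\ell=0}^{k-1}\beta_\ell}{(k+1)!}-e\lambda\bigr)\snorm h$. Since $D^{k+1}h$ is the constant function $\ex h$ on $X(-1)$, the left side equals $\ex h^2=wt(g)^2$ and $\snorm h=wt(g)$, so $wt(g)^2\geq\bigl(\tfrac{\prod_{\ell=0}^{k-1}\beta_\ell}{(k+1)!}-e\lambda\bigr)wt(g)$; dividing by $wt(g)>0$ yields the claim (vacuous when the bracket is non-positive). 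The case $k=1$ with non-abelian $\Gamma$ runs identically using the non-abelian coboundary operators of \pref{sec:preliminaries}, as in \pref{thm:cosystolic-expansion-different-levels}.

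I expect the main obstacle to be the norm chain -- specifically, checking that the localization estimate $\snorm{D^{m+1}h}\approx\iprod{N_m h,D^m h}$ holds with the claimed $\lambda$-error and that the pointwise link inequality $N_m h(r)\gtrsim\beta\cdot D^m h(r)$ lines up dimensionally at each step, so that the coboundary expansion of $X_r$ is invoked at exactly the level at which the restriction of $g$ lives. All of the genuinely new content over \pref{thm:cosystolic-expansion-different-levels} is the observation that a minimal-weight cocycle is automatically locally minimal in the strong ($\eta=0$) sense and that its restriction to a link sits one dimension lower, so the chain is one step shorter; once that is set up, the remaining work is bookkeeping inherited from \pref{thm:cosystolic-expansion-different-levels}.
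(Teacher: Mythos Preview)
Your proposal is correct and follows essentially the same approach as the paper: replace $g$ by a minimal-weight representative of its cohomology class, observe that this forces $0$-local minimality (your ``minimality localizes to links'' is exactly the paper's argument via the lift $h^{\uparrow}$), and then run the norm chain with $\eta=0$. The paper packages the norm chain into a separate lemma (\pref{lem:loc-min-inequality}) and derives both \pref{thm:cosystolic-expansion-different-levels} and \pref{prop:heavy-cosystols} from it, whereas you rerun the chain inline, but the content is identical.
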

We remark that the when \(\Gamma\) is non abelian, these statements make sense only when \(k=1\).

Turning back to \pref{thm:cosystolic-expansion-different-levels}, we present a correction algorithm. We will show that when \(f \in C^{k}(X,\Gamma)\) has a small coboundary, then the algorithm below returns some \(\tilde{f} \in Z^k(X,\Gamma)\) that is close to \(f\).
\begin{algorithm} \label{alg:cosystolic-fixing}
    Input: A function \(f:\dir{X}(k) \to \Gamma\), a parameter \(\eta \leq 1\).
    Output: A function \(\tilde{f}:\dir{X}(k) \to \Gamma\).
    \begin{enumerate}
        \item Set \(f_0 := f\). Set \(i=0\).
        \item While there exists \(\ell \leq k\), and a face \(r \in \dir{X}(\ell)\) so that \(Star_k(r) = \sett{s \in X(k)}{r \subseteq s}\) has an assignment that satisfies a \(\eta \prob{Star_k(r)}\)-fraction of faces more than the current assignment, do:
        \begin{itemize}
            \item Let \(fix_r:Star_k(r) \to \Gamma\) be an optimal assignment to \(Star_k(r)\), satisfying the maximal number of $k+1$-faces containing \(r\).
            \item Set \(f_{i+1}(s) = \begin{cases} f_i(s) & r \not \subseteq s \\
            fix_r(s) & r \subseteq s
            \end{cases}\).
            \item Set i:=i+1.
        \end{itemize}
        \item Output \(\tilde{f}:=f_i\).
    \end{enumerate}
\end{algorithm}

\subsection{Properties of \pref{alg:cosystolic-fixing}}
Before proving \pref{thm:cosystolic-expansion-different-levels} we record some properties of \pref{alg:cosystolic-fixing}.  

\begin{claim} \label{claim:algorithm-halts}
\pref{alg:cosystolic-fixing} halts on every input.
\end{claim}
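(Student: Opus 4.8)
The plan is to track, along the run of the algorithm, the potential $\Phi(i) := wt(\coboundary f_i) = \Prob[s \in X(k+1)]{\coboundary f_i(s) \ne 0}$, and to show it drops by a fixed positive amount at every iteration of the while-loop. Since $0 \le \Phi(i) \le 1$ throughout, this immediately bounds the number of iterations, hence proves termination.

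First I would record a \emph{locality} observation about a single fixing step. When the algorithm passes from $f_i$ to $f_{i+1}$ it modifies the function only on the faces of $Star_k(r) = \set{s \in X(k) : r \subseteq s}$, for the chosen $r \in \dir X(\ell)$ with $\ell \le k$. For any $s' \in X(k+1)$, the value $\coboundary f(s')$ depends only on the values of $f$ on the $k+2$ facets of $s'$; hence $\coboundary f_{i+1}(s')$ can differ from $\coboundary f_i(s')$ only if some facet of $s'$ lies in $Star_k(r)$, and since such a facet contains $r$ this forces $r \subseteq s'$. Thus every $(k+1)$-face not containing $r$ contributes identically to $\Phi(i)$ and to $\Phi(i+1)$, while on the $(k+1)$-faces containing $r$ the replacement $fix_r$ is, by construction, an assignment maximizing the measure of satisfied faces — so it does at least as well as the assignment witnessing the while-condition, which already satisfies an extra $\eta\,\prob{Star_k(r)}$ fraction of $(k+1)$-faces. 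Putting these together,
\[ \Phi(i+1) \;\le\; \Phi(i) \;-\; \eta\,\prob{Star_k(r)} . \]

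Next I would invoke finiteness of $X$. Since $X$ is pure of dimension $d \ge k+2$, every $r \in X(\ell)$ with $\ell \le k$ is contained in some $s \in X(k)$, so $\prob{Star_k(r)} \ge \Pr_k(s) > 0$; as $X$ has only finitely many faces of dimension at most $k$, the quantity $\rho := \min_{\ell \le k}\ \min_{r \in X(\ell)} \prob{Star_k(r)}$ is a strictly positive constant. Combined with the displayed inequality, each iteration decreases $\Phi$ by at least $\eta \rho > 0$; since $\Phi$ never leaves $[0,1]$, the loop executes at most $1/(\eta\rho)$ times and the algorithm halts. (If one wants to permit $\eta = 0$, note instead that $\Phi(i)$ takes values in the finite set $\set{\Pr_{k+1}(S) : S \subseteq X(k+1)}$ and, by the locality observation applied with the strict-improvement reading of the while-condition, is strictly decreasing, which again forces termination after finitely many steps.)

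I do not expect a real obstacle here; the one point deserving care is the locality observation together with the fact that $fix_r$ is an \emph{optimal} local correction — these are exactly what guarantee that a fixing step cannot undo satisfied $(k+1)$-faces outside $Star_k(r)$, and that its gain on the faces containing $r$ is at least the $\eta\,\prob{Star_k(r)}$ promised by the while-condition. Everything else is bookkeeping and the finiteness of $X$.
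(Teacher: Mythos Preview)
Your proof is correct and follows essentially the same approach as the paper: track $wt(\coboundary f_i)$, use locality (only $(k+1)$-faces containing $r$ are affected) together with optimality of $fix_r$ to show it strictly decreases each iteration, and conclude by finiteness of $X$. Your version is a bit more quantitative (you extract the explicit drop $\eta\,\prob{Star_k(r)}$ and a uniform lower bound $\rho$), whereas the paper simply notes that a strictly decreasing sequence in a finite value set terminates.
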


\begin{claim} \label{claim:distance-is-dominated-by-coboundary-and-beta}
 Let \(f:\dir{X}(k)\to \Gamma\) and let \(\eta \leq 1\). Let \(\tilde{f}:\dir{X}(k)\to \Gamma\) be the output of \pref{alg:cosystolic-fixing} on \((f,\eta )\). Then \(\eta \dist(f,\tilde{f}) \leq wt(\coboundary f)\).
\end{claim}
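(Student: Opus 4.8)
The plan is to telescope over the sequence of functions that \pref{alg:cosystolic-fixing} produces. By \pref{claim:algorithm-halts} the run is finite, say of length $T$; write $f = f_0, f_1, \dots, f_T = \tilde f$ for the successive functions, and let $r_i \in \dir{X}(\ell_i)$ (with $\ell_i \leq k$) be the face fixed at step $i$, so that $f_{i+1}$ agrees with $f_i$ on every $k$-face not containing $r_i$ and equals $fix_{r_i}$ on $Star_k(r_i) = \sett{s \in X(k)}{r_i \subseteq s}$. The whole argument is to bound both $\dist(f, \tilde f)$ and the total drop $wt(\coboundary f_0) - wt(\coboundary f_T)$ by the same quantity $S := \sum_{i=0}^{T-1}\prob{Star_k(r_i)}$.

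For the distance: since $f_i$ and $f_{i+1}$ differ only inside $Star_k(r_i)$, we have $\dist(f_i, f_{i+1}) \leq \Prob[s \in X(k)]{r_i \subseteq s} = \prob{Star_k(r_i)}$, and summing via the triangle inequality for (the normalized Hamming) distance gives $\dist(f, \tilde f) \leq \sum_{i=0}^{T-1}\dist(f_i, f_{i+1}) \leq S$. For the coboundary weight: the key observation is that changing $f_i$ only on $k$-faces containing $r_i$ can alter $\coboundary f_i(s)$ only for those $s \in X(k+1)$ with $r_i \subseteq s$ — because $\coboundary f_i(s)$ depends on $f_i$ solely through the facets of $s$, and a facet of $s$ contains $r_i$ only when $s$ does. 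Hence maximizing the number of satisfied $(k+1)$-faces that contain $r_i$ (which is what $fix_{r_i}$ does by definition) coincides with maximizing the total number of satisfied $(k+1)$-faces over all functions agreeing with $f_i$ off $Star_k(r_i)$. Since the while-condition of the algorithm triggers a fix at $r_i$ only when some such assignment satisfies an extra $\eta\,\prob{Star_k(r_i)}$-fraction of $(k+1)$-faces relative to $f_i$, the chosen $f_{i+1}$ does at least that well:
\[ wt(\coboundary f_{i+1}) \;\leq\; wt(\coboundary f_i) - \eta\,\prob{Star_k(r_i)}. \]

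Telescoping this inequality over $i = 0, \dots, T-1$ and discarding the nonnegative term $wt(\coboundary f_T)$ yields
\[ wt(\coboundary f) \;=\; wt(\coboundary f_0) \;\geq\; wt(\coboundary f_0) - wt(\coboundary f_T) \;\geq\; \eta\,S \;\geq\; \eta\,\dist(f,\tilde f), \]
which is the claim. This is essentially bookkeeping, and I do not expect a genuine obstacle; the one step that deserves care is the locality observation in the previous paragraph, since it is what lets the algorithm's greedy local optimization actually realize the global weight decrease asserted by its while-condition. (A minor technical point, irrelevant to the estimates, is keeping $f_{i+1}$ asymmetric when $\ell_i = k$, i.e.\ when $Star_k(r_i)$ is a single undirected $k$-face; all quantities above are phrased over undirected faces, so this plays no role.)
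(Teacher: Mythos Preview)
Your proof is correct and follows essentially the same telescoping argument as the paper: bound each $\dist(f_i,f_{i+1})$ by $\prob{Star_k(r_i)}$, bound each drop $wt(\coboundary f_i)-wt(\coboundary f_{i+1})$ below by $\eta\,\prob{Star_k(r_i)}$, and sum. Your explicit locality observation (that altering $f_i$ on $Star_k(r_i)$ can only affect $\coboundary f_i(s)$ for $(k+1)$-faces $s \supseteq r_i$) is a nice addition; the paper leaves this implicit in the phrase ``since otherwise the algorithm wouldn't change anything.''
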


\begin{proof}[Proof of \pref{claim:algorithm-halts}]
Denote by \(\varepsilon_j = \Prob[t\in X(k+1)]{\coboundary f_j(t) \ne 0}\). If we show that \(\varepsilon_{j+1} < \varepsilon_j\), then as \(X\) is finite the algorithm must halt. Indeed, fix \(j\) and let \(r\) be the face that was fixed in the \(j\)-th step. If \(t \in X(k+1)\) doesn't contain \(r\) then it holds that \(\coboundary f_j (t) = \coboundary f_{j+1}(t)\) so 
\[\cProb{t\in X(k+1)}{\coboundary f_j(t) \ne 0}{t \not \supseteq r} = \cProb{t\in X(k+1)}{\coboundary f_{j+1}(t) \ne 0}{t \not \supseteq r}.\]

For faces containing $t$, \pref{alg:cosystolic-fixing} changed the values of \(f_j\) to satisfy more of the \(k+1\)-faces containing $t$, so all in all $\varepsilon_{j+1}<\varepsilon_j$.
\end{proof}

\begin{proof}[Proof of \pref{claim:distance-is-dominated-by-coboundary-and-beta}]
Let \(i\) be  so that \pref{alg:cosystolic-fixing} returned \(\tilde{f}=f_i\). By the triangle inequality \(\dist(f,f_i) \leq \sum_{j=0}^{i-1}\dist(f_j,f_{j+1})\). Following the notation of \pref{claim:algorithm-halts}, let \(\varepsilon_j = wt(\coboundary f_j)\). If we show that 
   \[\eta \dist(f_j,f_{j+1}) \leq \varepsilon_j - \varepsilon_{j+1}\]
then
    \[\eta \dist(f,f_i) \leq \sum_{j=0}^{i-1} \varepsilon_j - \varepsilon_{j+1} = \varepsilon_0 - \varepsilon_i= wt(\coboundary f)-\varepsilon_i \leq wt(\coboundary f).\]
Indeed, fix \(j\) and let \(r\) be the \(\ell\)-face that was selected in the \(j\)-th step of the algorithm. Recall that \(Star_k(r)\) is the set of \(k\)-faces that contain \(r\). Then \(\dist(f_j,f_{j+1}) \leq \prob{Star_k(r)}\), since the change between \(f_j,f_{j+1}\) was only on faces in \(Star_k(r)\). However, the difference \(\varepsilon_j - \varepsilon_{j+1}\) is greater or equal to  \(\eta\prob{Star_k(r)}\), since otherwise the algorithm wouldn't change anything. Combine the two inequalities:
\[\eta \dist(f_j, f_{j+1}) \leq \eta \prob{Star_k(r)} \leq \varepsilon_j - \varepsilon_{j+1}.\]
\end{proof}

\subsection{Local minimality}
\begin{definition}[Restriction]
Let \(g \in C^{k}(X,\Gamma)\) and let \(r \in X(\ell)\) for some \(0\leq \ell \leq k-1\). The restriction of \(g\) to \(r\) is the function \(g_r \in C^{k-\ell-1}(X_r,\Gamma)\) is defined by \(g_r(p)=g(r \circ p)\). 
\end{definition}

\begin{definition}[Local minimality]
Let \(\eta \geq 0\) and let  \(g \in C^{k}(X,\Gamma)\). We say that \(g\) is \(\eta\)-locally minimal, if for every \(0\leq \ell \leq k-1\), every \(r \in X(\ell)\), and every \(h \in C^{k-\ell-2}(X_r,\Gamma)\) it holds that
\[wt(g_r) \leq wt(g_r + \coboundary h) + \eta.\]
\paragraph{The non-abelian case}If \(\Gamma\) is non-abelian we need the correct analogy to adding coboundaries. The definition of \(\eta\)-minimality is as follows. If \(k=1\), we say that \(g\) is \(\eta\)-locally minimal if for every \(v \in X(0)\), and every \(\gamma \in \Gamma\), it holds that 
\[wt(g_v) \leq wt(\gamma \cdot g_v) + \eta.\]

If \(k=2\), we say that \(g\) is \(\eta\)-locally minimal if:
\begin{enumerate}
    \item For every edge \(uv\) and every \(\gamma \in \Gamma\), it holds that \(wt(g_{uv})\leq wt(\gamma \cdot g_{uv}) + \eta\).
    \item For every vertex \(v\) and every function \(h:X_v(0)\to \Gamma\), it holds that \(wt(g_v) \leq wt(g_v^h) + \eta\), where
    \(g_v^h(uw) = h^{-1}(u)g_v(uw)h(w)\).
\end{enumerate}
\end{definition}

\begin{claim} \label{claim:output-is-close-to-optimal}
Let \(f:\dir{X}(k)\to \Gamma\) and let \(\eta \leq 1\). Let \(\tilde{f}:\dir{X}(k)\to \Gamma\) be the output of \pref{alg:cosystolic-fixing} on \((f,\eta )\). Then \(\coboundary \tilde{f}\) is \(\eta\)-locally minimal.
\end{claim}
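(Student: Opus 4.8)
The plan is to unwind the halting condition of \pref{alg:cosystolic-fixing} and translate it, through the standard restriction-to-a-link identity, into the definition of $\eta$-local minimality for the $(k+1)$-cochain $\coboundary\tilde f$. Recall that $\coboundary\tilde f\in C^{k+1}(X,\Gamma)$ is $\eta$-locally minimal exactly when, for every $0\le\ell\le k$, every $r\in X(\ell)$, and every $h\in C^{k-\ell-1}(X_r,\Gamma)$, one has $wt\big((\coboundary\tilde f)_r\big)\le wt\big((\coboundary\tilde f)_r+\coboundary h\big)+\eta$; note that the range of $\ell$ here is precisely the range of face-dimensions over which \pref{alg:cosystolic-fixing} attempts local fixes. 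The first ingredient I would record is the algebraic identity: for $r\in X(\ell)$ and $p\in X_r(k-\ell)$,
\[ \coboundary_X\tilde f(r\circ p)\;=\;\sum_{i=0}^{\ell}(-1)^i\,\tilde f(r_i\circ p)\;+\;(-1)^{\ell+1}\,\coboundary_{X_r}(\tilde f_r)(p), \]
which follows by splitting $\sum_j(-1)^j\tilde f\big((r\circ p)_j\big)$ according to whether the deleted vertex lies in $r$ or in $p$ (the orientation bookkeeping is routine; for the non-abelian case $k=1$ one uses instead the product identity $\coboundary\tilde f(vuw)=\tilde f(vu)\tilde f(uw)\tilde f(wv)$). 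The crucial feature is that the ``coboundary-of-$r$'' sum involves only the values of $\tilde f$ on the faces $r_i\circ p$, none of which contains $r$, and hence none of which lies in $Star_k(r)$.

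Next I would observe that one step of \pref{alg:cosystolic-fixing} at a face $r\in X(\ell)$ is precisely the replacement of the restriction $\tilde f_r\in C^{k-\ell-1}(X_r,\Gamma)$ by an arbitrary $\tilde f'_r\in C^{k-\ell-1}(X_r,\Gamma)$, all other values of $\tilde f$ being untouched. By the identity above (and the invariance of its first sum) the resulting cochain $\tilde f'$ satisfies $(\coboundary_X\tilde f')_r=(\coboundary_X\tilde f)_r+(-1)^{\ell+1}\coboundary_{X_r}(\tilde f'_r-\tilde f_r)$; hence, as $\tilde f'$ ranges over all reassignments of $Star_k(r)$, the restriction $(\coboundary_X\tilde f')_r$ ranges over the entire coset $(\coboundary_X\tilde f)_r+B^{k-\ell}(X_r,\Gamma)$, and the choice $\tilde f'_r:=\tilde f_r+(-1)^{\ell+1}h$ realizes $(\coboundary_X\tilde f)_r+\coboundary_{X_r}h$ exactly. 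Consequently $wt\big((\coboundary\tilde f)_r\big)$ is the fraction of $(k+1)$-faces containing $r$ on which $\coboundary\tilde f$ does not vanish, while $wt\big((\coboundary\tilde f)_r+\coboundary h\big)$ is the same fraction for $\coboundary\tilde f'$, with $\tilde f'$ the corresponding reassignment. (In the non-abelian $k=1$ case the same computation shows that changing $\tilde f(uv)$ left-multiplies $g_{uv}$ by a group element, and changing $\{\tilde f(vu)\}_u$ replaces $g_v$ by some $g_v^h$, which matches the two clauses of the non-abelian definition of local minimality.)

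Finally I would invoke halting. Since only $(k+1)$-faces containing $r$ change between $\tilde f$ and $\tilde f'$, one has $wt(\coboundary\tilde f)-wt(\coboundary\tilde f')=\Prob[t\in X(k+1)]{t\supseteq r}\cdot\big(wt((\coboundary\tilde f)_r)-wt((\coboundary\tilde f')_r)\big)$. By \pref{claim:algorithm-halts} the algorithm halts, so on output no reassignment of any $Star_k(r)$ decreases $wt(\coboundary\tilde f)$ by as much as $\eta\,\prob{Star_k(r)}$; dividing, and using the elementary identity
\[ \frac{\prob{Star_k(r)}}{\Prob[t\in X(k+1)]{t\supseteq r}}=\frac{\binom{d-\ell}{k-\ell}\binom{d+1}{k+2}}{\binom{d-\ell}{k+1-\ell}\binom{d+1}{k+1}}=\frac{k+1-\ell}{k+2}\le 1 \]
(a one-line computation from the induced measures of \pref{sec:preliminaries}), gives $wt((\coboundary\tilde f)_r)-wt((\coboundary\tilde f')_r)<\eta$ for every reassignment $\tilde f'$, i.e. $wt((\coboundary\tilde f)_r)\le wt((\coboundary\tilde f)_r+\coboundary h)+\eta$ for every admissible $h$. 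As $\ell,r,h$ were arbitrary, $\coboundary\tilde f$ is $\eta$-locally minimal. The two points that need genuine care are the sign conventions in the restriction identity (and its non-abelian analogue), and checking that the displayed ratio of measures is at most $1$, so that the slack coming out of the halting condition is exactly $\eta$ and not a larger multiple of it; I expect the bulk of the real work to lie in setting up the restriction identity cleanly, after which everything else is bookkeeping.
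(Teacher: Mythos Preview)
Your argument is correct and follows the same route as the paper's proof: both proceed by showing that a violation of \(\eta\)-local minimality at some face \(r\) would yield a reassignment of \(Star_k(r)\) that the algorithm should have taken, contradicting halting. Your version is in fact more careful than the paper's on two points: you spell out the restriction identity \(\coboundary_X\tilde f(r\circ p)=\sum_{i=0}^{\ell}(-1)^i\tilde f(r_i\circ p)+(-1)^{\ell+1}\coboundary_{X_r}(\tilde f_r)(p)\) that underlies the equation \(wt((\coboundary\tilde f)_r+\coboundary h)=\Prob[t\supseteq r]{\coboundary(\tilde f+fix_r)(t)\ne 0}\), and you explicitly verify the measure ratio \(\prob{Star_k(r)}/\Prob[t\in X(k+1)]{t\supseteq r}=(k+1-\ell)/(k+2)\le 1\), which is needed to pass from the algorithm's global threshold \(\eta\,\prob{Star_k(r)}\) to the local bound \(\eta\) appearing in the definition of \(\eta\)-local minimality. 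The paper's proof leaves both of these implicit.
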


\begin{proof}[Proof of \pref{claim:output-is-close-to-optimal}]
Assume towards contradiction that there is some \(r\in X(j)\) and a \(h \in C^{k-|r|-1}(X_r,\Gamma)\) so that \(wt((\coboundary \tilde{f})_r) > wt((\coboundary \tilde{f})_r + \coboundary h) + \eta\). We define \(fix_r:X(k) \to \Gamma\) to be \(fix_r(t) = h(p)\) if \(t=r \circ p\), and zero if \(r \not \subseteq t\).

By definition 
\begin{equation} \label{eq:meaning-of-weight-of-restricted-coboundary}
wt((\coboundary \tilde{f})_r) = \Prob[t \in X(k+1), t \supseteq r]{\coboundary \tilde{f}(t) \ne 0}
\end{equation}
and
\[ wt((\coboundary \tilde{f})_r + \coboundary h) = \Prob[t \in X(k+1), t \supseteq r]{\coboundary (\tilde{f} + fix_r) (t)}.\] 
But then \pref{alg:cosystolic-fixing} would have added \(fix_r\) to \(\tilde{f}\) (or another even better function), and wouldn't return \(\tilde{f}\), a contradiction.

We remark that the same idea holds in the non-abelian case where \(\coboundary \tilde{f} \in C^2(X,\Gamma)\), even though the case analysis is cumbersome. Equation \eqref{eq:meaning-of-weight-of-restricted-coboundary} is still true. Thus,
\begin{enumerate}
    \item Let \(\gamma \in \Gamma\) and \(r=uv \in X(1)\). For every triangle \(uvw \in X(2)\), the value of \(\gamma (\coboundary \tilde{f})_r(w) = (\gamma \cdot \tilde{f}(uv)) \tilde{f}(vw) \tilde{f}(wu)\). By \pref{alg:cosystolic-fixing}, changing the value of \(\tilde{f}(uv)\) to \(\gamma \cdot \tilde{f}(uv)\) cannot decrease the weight of \(\coboundary \tilde{f}\) by more than \(\eta\).
    \item If \(r \in X(0)\) and \(h:X_r(0)\to \Gamma\). Then for every triangle \(rvw\in X(2)\), it holds that
    \[(\coboundary \tilde{f})_r^h(vw) = h(v)\tilde{f}(rv) \tilde{f}(vw) \tilde{f}(wr) h(w)^{-1} = (h(v)\tilde{f}(rv)) \tilde{f}(vw) (h(w)\tilde{f}(rw))^{-1}.\]
    By \pref{alg:cosystolic-fixing}, changing the values of \(\set{\tilde{f}(rx)}\) for the edges \(rx\) adjacent to \(r\), to the values \(h(x)\tilde{f}(rx)\) cannot decrease the weight of \(\coboundary \tilde{f}\) by more than \(\eta\).
\end{enumerate}
\end{proof}

\subsection{Locally minimal cosystols are heavy}
The following lemma states that non-zero functions that are locally minimal must have large weight.

\begin{lemma} \label{lem:loc-min-inequality}
    Let \(\beta_0,...,\beta_{k-1}\) and \(\lambda\) be as in \pref{thm:cosystolic-expansion-different-levels}. Let \(X\) be such that for every \(0\leq \ell \leq k-1\) and every \(s \in X(\ell)\) it holds that \(X_s\) is a coboundary expander and \(h^{k-\ell-1}(X_s,\Gamma) \geq \beta_{k-\ell-1}\). Assume further that \(X\) is a \(\lambda\)-local spectral expander. Let \(g \in Z^{k}(X,\Gamma)\) be non-zero and \(\eta\)-locally minimal. Then
    \begin{equation} \label{eq:loc-min-inequality}
    wt(g) \geq \frac{\prod_{\ell=0}^{k-1}\beta_\ell}{(k+1)!} - e(\eta + \lambda).
    \end{equation}

    Additionally, for the case of non-abelian \(\Gamma\), when $k=2$,  \eqref{eq:loc-min-inequality} holds for \(\eta\)-locally minimal and non-zero \(g = \coboundary f\), for any \(f \in C^1(X,\Gamma)\).
\end{lemma}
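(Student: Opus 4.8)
Let $h\colon X(k)\to\{0,1\}\subseteq\RR$ be the indicator of the support of $g$, so $h(s)=1$ iff $g(s)\ne0$; since $h$ is $0/1$-valued, $\snorm h=\Ex[s\in X(k)]{h(s)}=wt(g)$, and $D^{k+1}h\in C^{-1}(X,\RR)$ is the constant $wt(g)$, where $D$ is the down-averaging operator and $D^{j}$ its $j$-fold iterate. The whole proof reduces to establishing a chain of $k+1$ ``reverse-contraction'' inequalities
\[
\snorm{D^{j}h}\ \ge\ \frac{\gamma_j}{c_j}\,\snorm{D^{j-1}h}\ -\ O(\eta+\lambda)\,\snorm h,\qquad j=1,\dots,k+1,
\]
where $\gamma_1=1$ and $\gamma_j=\beta_{j-2}$ for $j\ge 2$, and the $c_j$ are combinatorial ``return-probability'' integers with $\prod_{j}c_j=(k+1)!$. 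Multiplying the $k+1$ inequalities and using $\snorm{D^{k+1}h}=wt(g)^2$, $\snorm{D^{j}h}\le\snorm h=wt(g)$, together with a careful summation of the telescoped additive errors (a rapidly converging series, whence the clean constant $e$), yields $wt(g)^2\ge\bigl(\tfrac{\prod_{\ell=0}^{k-1}\beta_\ell}{(k+1)!}-e(\eta+\lambda)\bigr)wt(g)$; dividing by $wt(g)>0$ (legitimate because $g\ne0$) gives the claim.

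The first inequality ($j=1$, $\snorm{Dh}\ge\frac1{k+1}\snorm h$) is purely combinatorial: $\snorm{Dh}=\iprod{Dh,Dh}=\iprod{h,UDh}$ where $U$ is the up operator adjoint to $D$, and for any $s\in X(k)$ the operator $UD$ returns to $s$ with probability exactly $\frac1{k+1}$ (remove one of the $k+1$ vertices of $s$, then replace it), so $UDh(s)\ge\frac1{k+1}h(s)+(\text{nonnegative})$ and hence $\iprod{h,UDh}\ge\frac1{k+1}\snorm h$; no minimality or expansion is needed here, which is why only $\beta_0,\dots,\beta_{k-1}$ (and $(k+1)!$) appear. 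For $j\ge 2$ the inequality is the heart of the matter and follows the proof overview: introduce the ``double-balanced'' walk operator $N=N_j\colon\ell_2(X(k))\to\ell_2(X(k-j+1))$, a distributional variant of the double balanced sets of \cite{KaufmanM2022} --- from a face $r\in X(k-j+1)$ walk up to a face of $X(k+1)$ containing $r$ and then down to a $k$-face $s$ by deleting a vertex of $r$, and set $N_jh(r)=\Ex{h(s)}$. Two ingredients combine. First, a \emph{spectral approximation}: because $X$ is a $\lambda$-one-sided local spectral expander, the composite up/down walks decompose over the links in the style of \cite{Oppenheim2018}, \cite[Claim~8.8]{DiksteinDFH2018} and \cite{KaufmanO-RW20}, yielding $\snorm{D^{j}h}\ge\frac1{c_j}\iprod{N_jh,D^{j-1}h}-O(\lambda)\snorm h$. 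Second, a \emph{pointwise inequality} $N_jh(r)\ge\beta_{j-2}\bigl(D^{j-1}h(r)-\eta\bigr)$ for every $r$: by the cocycle relation $\coboundary g=0$ together with \pref{claim:algebraic-relation}, for $s$ in the range of the walk from $r$ the value $g(s)$ equals, up to sign, $\coboundary(g_r)$ evaluated at the corresponding face of the link $X_r$, where $g_r\in C^{j-2}(X_r,\Gamma)$ is the restriction of $g$; hence $N_jh(r)=wt(\coboundary g_r)$ and $D^{j-1}h(r)=wt(g_r)$ exactly. The $\eta$-local minimality of $g$ gives $\dist(g_r,B^{j-2}(X_r,\Gamma))\ge wt(g_r)-\eta$, and the hypothesis $h^{j-2}(X_r,\Gamma)\ge\beta_{j-2}$ converts this to $wt(\coboundary g_r)\ge\beta_{j-2}(wt(g_r)-\eta)$. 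Combining, $\iprod{N_jh,D^{j-1}h}=\Ex[r]{N_jh(r)D^{j-1}h(r)}\ge\beta_{j-2}\Ex[r]{(D^{j-1}h(r))^2}-\beta_{j-2}\eta\,\Ex[r]{D^{j-1}h(r)}=\beta_{j-2}\snorm{D^{j-1}h}-\beta_{j-2}\eta\snorm h$, using $\Ex[r]{D^{j-1}h(r)}=\Ex[s]{h(s)}=\snorm h$; feeding this into the spectral approximation completes step $j$.

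For non-abelian $\Gamma$ and $k=2$ the identical scheme runs with $g=\coboundary f$: the cocycle relation $\coboundary\coboundary f=e$ holds automatically, the restrictions $g_{uv}$ (an edge-restriction, a function on the link's vertices) and $g_v$ (a vertex-restriction, a $1$-cochain on the link) together with the two notions of $\eta$-local minimality from the definition (left-multiplication by $\gamma\in\Gamma$ on $g_{uv}$, and the conjugation action $g_v^h$ on $g_v$) are matched exactly by the two ``down'' steps of the walk, and $h^1$, $h^0$ of the links are defined for any group, so the coboundary expansion of the links and \pref{claim:edge-expander-many-sets}-type reasoning apply verbatim.

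The step I expect to be the main obstacle is the spectral approximation $\snorm{D^{j}h}\gtrsim\frac1{c_j}\iprod{N_jh,D^{j-1}h}-O(\lambda)\snorm h$: one must decompose the composed up/down walk operators through the links with a uniform $O(\lambda)$ error, pin down the combinatorial return constants $c_j$ so that $\prod_jc_j=(k+1)!$, and verify that the particular ``double-balanced'' walk defining $N_j$ factors through the links precisely enough that the pointwise identity $N_jh(r)=wt(\coboundary g_r)$ holds on the nose rather than up to error. A secondary difficulty is organizing the telescoped additive errors so that they sum to the clean $e(\eta+\lambda)$ rather than a cruder $(k+1)(\eta+\lambda)$.
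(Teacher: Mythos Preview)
Your overall architecture is exactly the paper's: define $h=\one_{g\ne0}$, establish a chain $\snorm{D^{j}h}\gtrsim\snorm{D^{j-1}h}$ via an auxiliary ``swap'' operator $N$ together with a PSD inequality from local spectral expansion, feed in coboundary expansion of links plus $\eta$-local minimality for the pointwise bound, and telescope. The error bookkeeping and the non-abelian remarks are also in the right spirit.

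There is, however, a genuine gap in your \emph{first} step. Your argument for $\snorm{Dh}\ge\frac{1}{k+1}\snorm h$ claims that the down--up walk $UD$ on $X(k)$ returns to $s$ with probability exactly $\frac{1}{k+1}$, so that $UDh(s)\ge\frac{1}{k+1}h(s)$. This is false: after removing a vertex from $s$ to reach $r\in X(k-1)$, the probability of going back to $s$ is $\Pr_k(s)\big/\sum_{s'\supset r}\Pr_k(s')$, which depends on the degree of $r$ and is typically tiny (take $h=\one_{\{s\}}$ in the complete complex to see $\snorm{Dh}/\snorm h\to 0$). You have conflated this with the $\frac{1}{k+2}$-laziness of the \emph{up--down} walk $DU$, which in any case does not give what you need. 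Crucially, the hypothesis $g\in Z^k(X,\Gamma)$ is doing real work here and your first step never invokes it: since $\coboundary g(t)=0$ for every $t\in X(k+1)$, any $t$ containing an active $k$-face $s$ must contain a \emph{second} active $k$-face. This is exactly what gives the pointwise bound $N_{k\to k}h(s)\ge\frac{1}{k+1}h(s)$ (at least one of the $k+1$ other sub-faces of $t$ is active), and then the spectral inequality $\iprod{N_{k\to k}h,h}\le\snorm{Dh}+\lambda\snorm h$ converts it to $\snorm{Dh}\ge\frac{1}{k+1}\snorm h-\lambda\snorm h$. So the first step is \emph{not} ``purely combinatorial'': it uses both the cocycle condition and spectral expansion, and it carries a $-\lambda\snorm h$ just like the later steps.

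Two smaller imprecisions in the later steps. First, the combinatorial factor $\frac{1}{c_j}$ (in the paper's indexing, $\frac{1}{j+1}$ for $r\in X(j)$) does not come from the spectral approximation---that inequality is simply $\snorm{D^{k-j+1}h}\ge\iprod{N_{k\to j}h,D^{k-j}h}-\lambda\snorm h$ with no extra constant---but from the pointwise step: knowing that \emph{some} $s\subset t$ with $r\not\subseteq s$ is active only gives that a $\frac{1}{j+1}$-fraction of such $s$ are active. Second, $N_jh(r)=wt(\coboundary g_r)$ is not an identity; the cocycle relation only shows $N_jh(r)\ge\frac{1}{j+1}\,wt(\coboundary g_r)$. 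These don't break the scheme, but you should move the $\frac{1}{c_j}$ into the pointwise inequality and state it as an inequality, not an equality.
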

The last remark regarding $k=2$ is needed since $Z^2(X,\Gamma)$ is not defined for non-abelian groups $\Gamma$.

This lemma implies \pref{thm:cosystolic-expansion-clear} and \pref{prop:heavy-cosystols} directly.

\begin{proof}[Proof of \pref{thm:cosystolic-expansion-different-levels}, given \pref{lem:loc-min-inequality}]
    Fix \(\eta = \frac{\prod_{\ell=0}^{k}\beta_\ell}{4((k+2)!)}\). Let \(\tilde{f}\) be the output of \pref{alg:cosystolic-fixing} for some function \(f\) and \(\eta\). If \(wt(\coboundary f) \geq \frac{\prod_{\ell=0}^{k}\beta_\ell}{4(k+2)!} -e\lambda\) there is nothing to prove, so we assume that \(wt(\coboundary f) < \frac{\prod_{\ell=0}^{k}\beta_\ell}{4(k+2)!} -e\lambda\). Then \(\coboundary \tilde{f} \in Z^{k+1}(X,\Gamma)\) is an \(\eta\)-locally minimal function so that \(wt(\coboundary \tilde{f}) \leq wt(\coboundary f)\). Hence by \pref{lem:loc-min-inequality} (applied with \(k+1\) instead of \(k\)), \(\coboundary \tilde{f} = 0\) and \(\tilde{f}\) is a cosystol. By \pref{claim:distance-is-dominated-by-coboundary-and-beta}, \(\eta \dist(f,\tilde{f}) \leq wt(\coboundary f)\), and we are done.
\end{proof}

\begin{proof}[Proof of \pref{prop:heavy-cosystols}, given \pref{lem:loc-min-inequality}]
    For every \(r \in X(j)\) and \(h \in C^{k-j-1}(X_r,\Gamma)\), we define \(h^{\uparrow}:X(k) \to \Gamma\) by
    \[h^{\uparrow}(s) = \begin{cases}
        h(p) & s=r\circ p \\
        0 & r \not \subseteq s.
    \end{cases}.\]
    It is easy to see that \(g_r + \coboundary h = (g+ \coboundary h^{\uparrow})_r\).
    
    Now let \(0 \ne g \in Z^{k}(X,\Gamma) \setminus B^{k}(X,\Gamma)\) be minimal among all \(Z^{k}(X,\Gamma) \setminus B^{k}(X,\Gamma)\).  
    By the above, \(g\) is also \(0\)-locally minimal (since otherwise we could have found some non-zero coboundary \(\coboundary h^{\uparrow}\) to add to \(g\) and decrease its weight). Thus \(wt(g) \geq \frac{\prod_{\ell=0}^{k-1}\beta_\ell}{(k+1)!}- e \lambda\) as required.

    We remark that the case where \(\Gamma\) is non-abelian and \(k=1\) is similar. Given \(g \in Z^{1}(X,\Gamma) \setminus B^1(X,\Gamma)\) that is non-zero and has minimal weight over all such functions. First we establish that it is locally minimal. Indeed, assume towards contradiction that there is some vertex \(v \in X(0)\) and \(\gamma \in \Gamma \) so that \(wt(g_v) < wt(\gamma g_v)\). Then the function 
    \[g'(xy) = \begin{cases}
        \gamma g(xy) & x=v \\
        g(xy)\gamma^{-1} & y=v \\
        g(xy) & otherwise
    \end{cases}.\]
    is also a cosystol. Taking some triangle \(vuw \in X(2)\) that contains \(v\), the value of \[\coboundary g'(vuw) = \gamma \coboundary g(vuw) \gamma^{-1}=e\] (the identity in \(\Gamma\)). For any triangle \(uwx\) that doesn't contain \(v\) we have that \(\coboundary g'(uwx) = \coboundary g(uwx)=e\). On the other hand, \(wt(g')<wt(g)\) so \(g'\) is trivial, which implies that \(g = \coboundary h\) where \(h(v)=\gamma\) and \(h(u)=e\).  A contradiction to the fact that \(g \notin B^1(X,\Gamma)\).
\end{proof}

The remainder of this section is devoted to proving \pref{lem:loc-min-inequality}. For this we need to define averaging operators that play a crucial role in the theory behind local-spectral expanders. We will only define what we need so for a more thorough exposition see e.g. \cite{DiksteinDFH2018}. Let \(\ell_2(X(j))\) be the Hilbert space of all functions \(f:X(j) \to \RR\) where the inner product is \(\iprod{f,g} = \Ex[r \in X(j)]{f(r)g(r)}\). Let \(D_k: \ell_2(X(k)) \to \ell_2(X(k-1))\) be the following operator
\[D_kf(s) = \Ex[t \supseteq s]{f(t)}.\]
This operator's adjoint is \(U_{k-1}: \ell_2(X(k-1)) \to \ell_2(X(k))\) that is defined by
\[U_{k-1}f(t) = \Ex[s \subseteq t]{f(s)}.\]
As a shorthand we write \(D_k^\ell = D_{k-\ell+1} D_{k-\ell+2} ... D_k\) for \(\ell \geq 1\) (and the same for \(U\)). For \(\ell=0\) \(D_k^0=U_k^0=Id\). We record that \(D_k^{\ell}f\) is a function whose domain is \(X(k-\ell)\), and that \(U_k^\ell f\) is a function whose domain is \(X(k+\ell)\).

Let \(j \leq k < d\). The operator \(N_{k\to j}:\ell_2(X(k)) \to \ell_2(X(j))\) is defined by 
\[N_{k\to j} f(r) = \Ex[t \in X(k+1), t \supseteq r]{\Ex[s \subseteq t, r\not \subseteq s]{f(s)}}.\]
Let us spell out this expression. We average over \(f(s)\) where \(s\) is chosen according to the following rule. We first sample some \(t \supseteq r\) in \(X(k+1)\), and then we sample \(s \subseteq t\) given that it does not contain \(r\).

When \(j,k\) is clear from the context we simply write \(D,U,N\).

The following is an operator norm inequality that is similar to \cite{DiksteinDFH2018}, but for the one-sided case. We prove it in the end of this section.
\begin{claim} \label{claim:psd-inequality-for-hdx}
    Let \(X\) be a \(\lambda\)-one-sided local spectral expander. Then \(U_j^{k-j}N_{k \to j} \preceq U_{j-1}^{k-j+1}D_k^{k-j+1} + \lambda Id\) for every \(j\leq k\).
\end{claim}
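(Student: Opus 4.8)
The statement to prove is an operator inequality on $\ell_2(X(k))$, so I would first pass to quadratic forms: the goal becomes
$\langle f, U_j^{k-j}N_{k\to j}f\rangle \le \langle f, U_{j-1}^{k-j+1}D_k^{k-j+1}f\rangle + \lambda\langle f,f\rangle$
for all $f\in\ell_2(X(k))$. Note that $U_{j-1}^{k-j+1}D_k^{k-j+1}=(D_k^{k-j+1})^\ast D_k^{k-j+1}$ is positive semidefinite, and in fact $\langle f, U_{j-1}^{k-j+1}D_k^{k-j+1}f\rangle=\snorm{D_k^{k-j+1}f}$, so what we are really claiming is that the ``$N$-walk'' operator sits below the ``down-then-up'' operator up to an additive $\lambda\cdot Id$. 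The plan is to prove this by \emph{localizing} both sides to the links of $(j-1)$-dimensional faces; this is the only structural step, and the HDX hypothesis is used exactly once, through the one-sided spectral gap of the $1$-skeleton of a link. This mirrors the two-sided walk-operator decompositions of \cite{DiksteinDFH2018}, but tailored to give a one-sided (PSD) conclusion.

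For the localization, for $w\in X(j-1)$ let $g_w\in\ell_2(X_w(k-j))$ be the restriction $g_w(\sigma)=f(w\circ\sigma)$, and write $\overline{g_w}=\Ex[\sigma\in X_w(k-j)]{g_w(\sigma)}$ for its mean. The combinatorial observation that drives everything is that in the $N$-walk --- from $r\in X(j)$ one goes up to some $t\in X(k+1)$ with $t\supseteq r$, drops a vertex $v\in r$ to reach $s=t\setminus v\in X(k)$, and finally $U$-lifts to $\rho\supseteq r$ --- the faces $\rho$ and $s$ always share the $(j-1)$-face $w:=r\setminus v$, and conditioned on $w$ the walk is exactly the analogous $N$-walk inside the link $X_w$ starting from the vertex $v\in X_w(0)$. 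Using the tower property of the induced measures one then checks the three identities
$\langle f, U_j^{k-j}N_{k\to j}f\rangle=\Ex[w\in X(j-1)]{\langle g_w, U_0^{k-j}N_{(k-j)\to 0}\,g_w\rangle_{X_w}}$,
$\langle f, U_{j-1}^{k-j+1}D_k^{k-j+1}f\rangle=\Ex[w\in X(j-1)]{\overline{g_w}^{\,2}}$,
and $\langle f,f\rangle=\Ex[w\in X(j-1)]{\snorm{g_w}}$.
Since every link $X_w$ is again a $\lambda$-one-sided local spectral expander, it therefore suffices to prove the special case $j=0$: for every $\lambda$-one-sided local spectral expander $Y$ and every $0\le \ell<\dim Y$,
\[ U_0^{\ell}N_{\ell\to 0}\ \preceq\ J_Y+\lambda\, Id \qquad\text{on }\ell_2(Y(\ell)), \]
where $J_Y$ is the orthogonal projection onto the constant functions. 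Note that because the links $X_w$ partition faces at level $j-1$, the $\lambda\snorm{g_w}$ terms aggregate to exactly $\lambda\langle f,f\rangle$ --- the error does not inflate by a factor of $j$.

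It remains to prove the $j=0$ inequality. Equivalently, since the operator fixes constants with eigenvalue $1$ (that is the $J_Y$ part), one must show $\langle D_\ell^\ell f, N_{\ell\to 0}f\rangle\le \lambda\snorm f$ for every mean-zero $f\in\ell_2(Y(\ell))$; both $D_\ell^\ell f$ and $N_{\ell\to 0}f$ live on $Y(0)$ and have mean zero, and although each has norm at most $\norm f$, the point is that they read $f$ through ``orthogonal channels'' (faces through a vertex vs.\ faces avoiding it), so the local spectral gap forces the cross term down to $O(\lambda\snorm f)$. I would prove this by induction on $\ell$: the base case $\ell=0$ is $N_{0\to 0}=A$, the normalized adjacency operator of the $1$-skeleton, and $A\preceq J_Y+\lambda Id$ is verbatim $\lambda_2(A)\le\lambda$; for $\ell\ge 1$ one localizes again, now to the links of vertices $v\in Y(0)$, where the ``drop a vertex'' step of the walk either stays inside a single link $Y_v$ --- bounded by the inductive hypothesis applied to $Y_v$ --- or crosses between the two restrictions of $f$ at $v$, which is controlled by the one-sided spectral gap of the $1$-skeleton of $Y_v$; as before these errors sit in disjoint links at a common level and aggregate to the same $\lambda$, not to $\ell\lambda$. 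The main obstacle is precisely this last step: pinning down the combinatorial bookkeeping so that the inductive hypothesis applies inside each vertex-link without loss, and verifying that all the measure normalizations are consistent under the tower property. This is routine in the style of \cite{DiksteinDFH2018} and \cite{Oppenheim2018}, but it is where essentially all of the care is needed.
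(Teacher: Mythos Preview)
Your reduction to the $j=0$ case by localizing to $(j-1)$-faces $w$ is correct and matches the paper exactly; the three identities you write down are the right ones.

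The gap is in your proposed proof of the link-level inequality $U_0^\ell N_{\ell\to 0}\preceq J_Y+\lambda\,Id$ by induction on $\ell$. When you localize at a vertex $v\in Y(0)$, the quadratic form becomes $\E_v\bigl[\,\overline{f^{(v)}}\cdot\overline{F_v}\,\bigr]$, where $f^{(v)}(\sigma)=f(v\cup\sigma)$ lives on $Y_v(\ell-1)$ while $F_v=f|_{Y_v(\ell)}$ lives on $Y_v(\ell)$ --- two \emph{different} restrictions at two \emph{different} levels. Their product of means is not the quadratic form of any single function under $U_0^{\ell-1}N_{(\ell-1)\to 0}$ in $Y_v$, so the inductive hypothesis does not apply. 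Your description ``either stays inside a single link \ldots\ or crosses between the two restrictions'' suggests a splitting that this walk does not have: every instance of the walk pairs a face $s\ni v$ with a face $s'\not\ni v$, so every term is already a cross term. And even if one manufactured an inductive decomposition in the DDFH/Oppenheim style, those arguments accumulate error with the level; your claim that the errors ``aggregate to the same $\lambda$, not to $\ell\lambda$'' is precisely the non-routine step, and it is asserted rather than proved.

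The paper avoids induction entirely with a one-line observation. Writing the link-level operator as $PQ$ with $P=U_0^\ell$ and $Q=N_{\ell\to 0}$, the composition $QP$ in the \emph{opposite} order acts on $\ell_2(Y(0))$ and is exactly the normalized adjacency operator of the $1$-skeleton: from $v$ one samples $t\in Y(\ell+1)$ with $t\ni v$ and then $u\in t\setminus v$, and the marginal on $u$ is a random neighbour of $v$. Hence $\lambda_2(QP)\le\lambda$ by hypothesis, and since $PQ$ and $QP$ share their nonzero eigenvalues, the paper concludes the desired bound directly, with no induction and no accumulation in $\ell$. This ``reverse the composition'' trick is the idea you are missing.
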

Here \(A \preceq B\) for self adjoint operators \(A,B\) means that \(B-A\) is positive semi-definite, that is, \(\iprod{(B-A)h,h} \geq 0\) for every function \(h\) in the domain of \(A,B\).

\begin{proof}[Proof of \pref{lem:loc-min-inequality}]
    Let \(h = \one_{g \ne 0}\). We will prove that \(wt(g)=\ex{h} \geq \frac{\prod_{\ell=0}^{k-1}\beta_\ell}{(k+1)!} - e(\eta+\lambda)\). We do this by showing that
    \begin{enumerate}
        \item \(\snorm{D_k h} \geq \frac{1}{k+1} \snorm h - \lambda \snorm{h}.\)
        \item For \(0 \leq j < k\), \( \snorm{D^{k-j+1}_k h} \geq \frac{\beta_{k-j-1}}{j+1}\cdot \snorm{D^{k-j}_k h} - \left (\frac{\beta_{k-j-1} \eta}{j+1} + \lambda \right ) \snorm{h}.\)
    \end{enumerate}
We note that \(D^{k+1}h\) is a constant (as \(\lambda\)-local spectral expansion says in particular that the complex is connected) - the average of \(h\) on all faces. Hence \(\snorm{D^{k+1}h} = \ex{h}^2\). By iteratively applying these inequalities we get that

\begin{align*}
    \ex{h}^2 &= \snorm{D^{k+1}h} \\
    &\geq \beta_{k-1} \snorm{D^k h} - \left (\beta_{k-1} \eta + \lambda \right ) \snorm{h} \\
    &\geq \frac{\beta_{k-1} \beta_{k-2}}{2} \snorm{D^{k-1} h} - \beta_{k-1} \left(\frac{\beta_{k-2} \eta}{2} + \lambda \right ) \snorm{h}  - \left (\beta_{k-1} \eta + \lambda \right ) \snorm{h} \\
    & ... \\
    &\geq \snorm{h} \cdot \left (\frac{\prod_{\ell=0}^{k-1}\beta_\ell}{(k+1)!} - \eta \sum_{j=0}^{k-1}\frac{\beta_j}{(k-j+1)!} - \lambda \left (1+\sum_{j=0}^{k-1} \frac{\beta_j}{(k-j+1)!} \right ) \right ).
\end{align*}
By assuming \(\beta_j \leq 1\), we upper bound \(\sum_{j=0}^{k-1} \frac{\beta_j}{(k-j+1)!} \leq \sum_{j=0}^\infty \frac{1}{j!} = e\), and get \(\ex{h}^2 \geq \snorm{h} \cdot \frac{\prod_{\ell=0}^{k-1}\beta_\ell}{(k+1)!} - e(\eta + \lambda)\).
As \(\snorm{h}=\ex{h}\) the lemma follows.

\bigskip

Let us begin with the first item. we call $s\in X(k)$ \emph{active} if $h(s)=1$. By assumption, \(g \in Z^{k}(X,\Gamma)\), i.e.
\[\coboundary g(t) = \sum_{i=0}^{k+1} (-1)^i g(t_i) = 0.\]
Thus if \(t \in X(k+1)\) contains an active \(s=t_{i_1}\), then it must also contain a second active \(s'=t_{i_2}\)\footnote{in the case where \(\Gamma\) is non-abelian and \(g =\coboundary f \in C^2(X,\Gamma)\),  even though \(\coboundary g(abcd)\) is not defined, one still observes that  \(\coboundary f(abc) = \coboundary f(acd) = \coboundary f(abd) = e\) implies that \(\coboundary f(bcd)=0\) so the same conclusion holds.}. This implies that \(N_{k \to k} h(s) \geq \frac{1}{k+1}h(s)\), and so
\[ \iprod{h,N_{k \to k} h} = \E_t [h(t) N_{k \to k} h(t)] \geq \frac 1 {k+1}\snorm h.\]

By \pref{claim:psd-inequality-for-hdx} \(N^{k\to k} \preceq UD + \lambda Id\), so
\[ \frac 1 {k+1}\snorm h \leq \iprod{N_{k \to k}h,h} \leq \iprod{ UD h,h} +\lambda\snorm h = \snorm{Dh} + \lambda\snorm h\]
so the first item is proven.

Next, we will prove the second item. As before, we will show that
\begin{equation}\label{eq:step1}
    \iprod{ U^{k-j} N_{k \to j} h, h} 
    \geq  \frac{\beta_{k-j-1}}{j+1} \cdot (\snorm {D^{k-j} h} - \eta \snorm h).
\end{equation}
Then we rely on \pref{claim:psd-inequality-for-hdx}
\begin{equation}\label{eq:algebra}
    \snorm{D^{k-j+1} h} \geq \iprod{ U^{k-j} N_{k \to j} h, h} - \lambda \snorm{h}.
\end{equation}
Combining these inequalities completes the proof.

We now state the following claim, which is proven using the coboundary expansion of $X_r$ where \(r\) is a \(j\)-face. 
\begin{lemma}[Key lemma]\label{lem:key} 
    Let $r\in X(j)$. Then
    \[ N_{k \to j} h(r) \geq \frac{\beta_{k-j-1}}{j+1}(D^{k-j}h(r) - \eta).\]
\end{lemma}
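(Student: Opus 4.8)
The plan is to fix $r \in X(j)$, pass to the link $X_r$, and reduce the statement to an application of the coboundary expansion of $X_r$ to a carefully chosen cochain. First I would unwind the definitions of the two sides. The right-hand side involves $D^{k-j}h(r) = \Ex[p \in X_r(k-j-1)]{h(r\circ p)}$, which is exactly the fraction of $(k-j-1)$-faces $p$ of the link such that $g(r\circ p)\neq 0$; equivalently it is $wt(g_r)$ where $g_r \in C^{k-j-1}(X_r,\Gamma)$ is the restriction of $g$ to $r$. The left-hand side $N_{k\to j}h(r)$ is the probability, over a random $t \in X(k+1)$ with $t\supseteq r$ and then a random $s\subseteq t$ with $r\not\subseteq s$, that $h(s)=1$; writing $t = r\circ q$ for $q \in X_r(k-j)$, the face $s$ with $r\not\subseteq s$ must drop one of the $j+1$ vertices of $r$, so $s = r_i \circ q$ for a uniformly random $i \in \{0,\dots,j\}$. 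Hence
\[
N_{k\to j}h(r) = \Ex[q \in X_r(k-j)]{\;\frac{1}{j+1}\sum_{i=0}^{j}\one[g(r_i\circ q)\neq 0]\;}.
\]

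The heart of the argument is to recognize the inner sum in cohomological terms. Since $g \in Z^k(X,\Gamma)$, for each $q\in X_r(k-j)$ the face $r\circ q \in X(k+1)$ satisfies $\coboundary g(r\circ q)=0$; expanding $\coboundary g$ along the $j+1$ vertices of $r$ and the $k-j+1$ vertices of $q$, and using \pref{claim:algebraic-relation} to handle the purely-$q$ terms, one gets a relation expressing the ``$q$-coboundary'' of the cochain $i \mapsto g(r_i \circ \cdot)$ in terms of $g_r(q')$ for the faces $q' \subseteq q$. Concretely, define $\psi \in C^{k-j-1}(X_r,\Gamma)$ by $\psi(p) := \text{(signed version of) } g(r\circ p)$ — this is just $g_r$ — and for the ``missing vertex'' data define $\phi_i \in C^{k-j}(X_r,\Gamma)$ via $\phi_i(q) := g(r_i \circ q)$. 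The cocycle condition $\coboundary g(r\circ q) = 0$ then reads, up to signs, $\coboundary_{X_r} g_r(q) = \sum_{i=0}^j (-1)^i \phi_i(q)$. Therefore $\{q : \coboundary_{X_r} g_r(q)\neq 0\}$ is contained in $\bigcup_i \{q: \phi_i(q)\neq 0\}$, which gives (after accounting for orientations, and in the non-abelian $k=2$ case using the remark that $\coboundary f(abc)=\coboundary f(acd) = \coboundary f(abd) = e \Rightarrow \coboundary f(bcd)=e$)
\[
\frac{1}{j+1}\sum_{i=0}^j \one[g(r_i\circ q)\neq 0] \;\geq\; \frac{1}{j+1}\,\one[\coboundary_{X_r} g_r(q) \neq 0].
\]
Taking expectation over $q \in X_r(k-j)$ yields $N_{k\to j}h(r) \geq \frac{1}{j+1}\, wt(\coboundary_{X_r} g_r)$.

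Now I would invoke coboundary expansion of the link: by hypothesis $h^{k-j-1}(X_r,\Gamma)\geq \beta_{k-j-1}$, so $wt(\coboundary_{X_r} g_r) \geq \beta_{k-j-1}\cdot \dist(g_r, B^{k-j-1}(X_r,\Gamma))$. The final ingredient is $\eta$-local minimality of $g$: by \pref{claim:output-is-close-to-optimal} (or the hypothesis of \pref{lem:loc-min-inequality}) $g$ is $\eta$-locally minimal, which by definition means $wt(g_r) \leq wt(g_r + \coboundary_{X_r} \chi) + \eta$ for every $\chi \in C^{k-j-2}(X_r,\Gamma)$; minimizing over $\chi$ gives $\dist(g_r, B^{k-j-1}(X_r,\Gamma)) \geq wt(g_r) - \eta = D^{k-j}h(r) - \eta$. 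Chaining the three inequalities,
\[
N_{k\to j}h(r) \;\geq\; \frac{1}{j+1}\,wt(\coboundary_{X_r} g_r) \;\geq\; \frac{\beta_{k-j-1}}{j+1}\,\dist(g_r,B^{k-j-1}(X_r,\Gamma)) \;\geq\; \frac{\beta_{k-j-1}}{j+1}\bigl(D^{k-j}h(r)-\eta\bigr),
\]
which is the claim. The main obstacle I anticipate is purely bookkeeping: correctly tracking the orientation signs when restricting the asymmetric cochain $g$ to the link and identifying $\coboundary g(r\circ q)=0$ with $\coboundary_{X_r}g_r(q) = \sum_i (-1)^i g(r_i\circ q)$ — this is where \pref{claim:algebraic-relation} is needed to kill the $\binom{k-j+1}{2}$ terms of the form $g((r)_{i_1}\circ(\text{face of }q))$ that do not involve dropping a vertex of $r$ — together with checking that the non-abelian $k=2$ case still goes through using only the stated implication about $\coboundary f$ on tetrahedra rather than a genuine double-coboundary identity.
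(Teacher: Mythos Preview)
Your proposal is correct and follows essentially the same route as the paper: restrict $g$ to the link $X_r$, use the cocycle relation $\coboundary g(r\circ q)=0$ to identify $\coboundary_{X_r} g_r(q)$ (up to sign) with $\sum_{i=0}^{j}(-1)^i g(r_i\circ q)$, then apply coboundary expansion of $X_r$ and $\eta$-local minimality of $g$. One small correction to your anticipated obstacle: no appeal to \pref{claim:algebraic-relation} is needed here, since $\coboundary g(r\circ q)$ is a single sum over the $k+2$ ways to drop one vertex, splitting cleanly into the $j+1$ terms $g(r_i\circ q)$ and the $k-j+1$ terms $g(r\circ q_i)=g_r(q_i)$---there are no ``double-drop'' terms to cancel.
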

From this pointwise inequality, \eqref{eq:step1} follows easily:
\begin{equation}\label{eq:cbdry-bound1}
\begin{split}
    \iprod{  U^{k-j} N_{k \to j} h, h} = \iprod{N_{k \to j} h, D^{k-j} h} &\geq
    \Ex[r]{D^{k-j}h(r) \cdot \frac{\beta_{k-j-1}}{j+1}\cdot  (D^{k-j}h(r) -\eta)}
    \\
    &=  \frac {\beta_{k-j-1}} {j+1} \cdot (\snorm {D^{k-j} h} - \eta \snorm{h})
\end{split}
\end{equation}
\end{proof}

We will prove \pref{lem:key} under the assumption that \(\Gamma\) is abelian since additive notation is more convenient. For non-abelian groups, see \pref{rem:non-abelian-case}.
\begin{proof}[Proof of \pref{lem:key}]
First, let us understand the meaning of the inequality in \pref{lem:key}. Recall that \(N_{k \to j}h(r)\) is an average of \(h(s)\) over faces \(s\in X(k)\) so that \(r,s \subseteq t\) for some \(t \in X(k+1)\) and \(r \not \subseteq s\). As \(h\) is an indicator function this is the same as writing
\[N_{k \to j}h(r) = \Prob[t,s]{h(s)=1},\]
where \(t,s\) are as above.
On the other side of the inequality there is \(D^{k-j} h(r) = \Prob[s \supseteq r]{h(s) = 1}\).
Hence, we need to show that if there are many active faces that contain \(r\), there must also be many active faces that ``complete'' \(r\) to a \((k+1)\)-face.

We first note that 
\begin{equation} \label{eq:propagating-prob-1}
    N_{k \to j}h(r)=\Prob[t,s]{h(s)=1} \geq \frac{1}{j+1} \Prob[t]{\exists s \subseteq t \; h(s)=1 \ve r \not \subseteq s},
\end{equation}
so we shall actually lower bound \(\Prob[t]{\exists s \subseteq t \; h(s)=1 \ve r \not \subseteq s}\).

As \(g\in Z^{k}(X,\Gamma)\), for every \(t=r \circ p \in X(k+1)\)
\begin{align} \label{eq:coboundary-from-restriction}
    0 = \coboundary g(r \circ p) = \sum_{i=0}^{j}(-1)^i g(r_i \circ p) + (-1)^j \sum_{i=0}^{k-j}(-1)^i g(r \circ p_i).
\end{align}
And in particular 
\begin{align} \label{eq:iff-coboundary}
    \sum_{i=0}^{k-j}(-1)^i g(r \circ p_i) \ne 0 \iff \sum_{i=0}^{j}(-1)^i g(r_i \circ p) \ne 0.
\end{align}
Recall that the restriction of \(g\) is \(g_r:X_r(k-j-1) \to \Gamma\), defined by \(g_r(p)=g(r\circ p)\). As we can see, \(\coboundary g_r(p)\) is the left-hand side of \pref{eq:iff-coboundary}.
Thus
\begin{equation} \label{eq:inequality-to-coboundary}
     \Prob[t]{\exists s \subseteq t \; h(s)=1 \ve r \not \subseteq s} \geq  \Prob[t=r\circ p]{\sum_{i=0}^{k-j}(-1)^i g(r \circ p_i) \ne 0} = \Prob[p \in X_r(k-j)]{\coboundary g_r(p) \ne 0}.
\end{equation}
By assumption \(X_r\) is a \(\beta_{k-j-1}\)-coboundary expander, this is at least \(\beta_{k-j-1} \cdot \dist(g_r, B^{k-j-1}(X_r,\Gamma))\).

To conclude we need to show that 
\begin{equation}
    \dist(g_r, B^{k-j-1}(X_r,\Gamma)) \geq \Prob[s \supseteq r]{g(s) \ne 0} - \eta.
\end{equation}
But 
\begin{equation} \label{eq:dist-to-coboundary-and-local-minimality}
\dist(g_r, B^{k-j-1}(X_r,\Gamma)) = \min_{f \in C^{k-j-2}(X_r,\Gamma)} \set{wt(g_r + \coboundary f)} \geq wt(g_r) - \eta.    
\end{equation}

where the inequality follows from \(\eta\)-minimality of \(g\). As \(wt(g_r) = \Prob[s \supseteq r]{h(s) = 1}\) we have proven
\[N_{k \to j}h(r) \geq \frac{\beta_{k-j-1}}{j+1} \dist(g_r, B^{k-j-1}(X_r,\Gamma)) \geq \frac{\beta_{k-j-1}}{j+1} \left ( \Prob[s \supseteq r]{h(s) = 1} - \eta \right ).\]
\end{proof}

\begin{remark}[The non-abelian case] \label{rem:non-abelian-case}
The first place where we need to accommodate for the non-commutativity is in the derivation of \eqref{eq:inequality-to-coboundary}. Let us understand how to substitute \eqref{eq:coboundary-from-restriction} which implies \eqref{eq:iff-coboundary}, for non-abelian groups.

 If for example, if \(r \in X(0)\) and \(g \in Z^{1}(X,\Gamma)\), and \(ruw \in X(2)\) we can write
\[e=\coboundary g(ruw)=g(ru)g(uw)g(wr)\]
instead of \eqref{eq:coboundary-from-restriction}.
This implies that
\begin{equation}\label{eq:urw}
g(uw)=g(ur)g(rw) 
\end{equation}
or 
\[g(rw)g(uw)g(wr)=g(rw)\cdot(g(ur)g(rw))\cdot g(wr)=g(rw)g(ru)^{-1} = g_r(w)g_r(u)^{-1}=\coboundary g_r(wu)\]
where in the first equality we plugged in the first part of \eqref{eq:urw} and in the second to last equality we plugged in the second part of \eqref{eq:urw}. Since the left hand side is a conjugation of $g(uw)$, we deduce that \(g(uw) \ne e \iff \coboundary g_r(uw) \ne e\). This is the same conclusion as we get in \eqref{eq:coboundary-from-restriction}. The case where \(r \in X(1)\) is similar.

If \(k=2\) we cannot even define \eqref{eq:coboundary-from-restriction} since the coboundary map is not defined. Still, let us see that a similar conclusion to \eqref{eq:iff-coboundary} holds. Let \(g = \coboundary f \in C^2(X,\Gamma)\). Let \(r=ab \in X(1)\) and \(t=abcd \in X(3)\). Denote by \(\gamma = f(ab) f(bc) f(cd)\). Then
\begin{align*}
        \gamma^{-1} \coboundary g_r(cd) \gamma &= \gamma^{-1} g(rc) g(rd)^{-1} \gamma \\
        &= \gamma^{-1} g(abc)g(adb) \gamma \\
        &= f(dc)\cdot \cancel{(f(cb)f(ba)f(ab)f(bc))} \cdot f(ca)f(ad)f(db)\cdot \cancel{(f(ba)f(ab))}\cdot f(bc)f(cd) \\
        &=(f(dc)f(ca)f(ad))(f(db)f(bc)f(cd)) \\
        &= \coboundary f(dca) \coboundary f(dbc)\\
        &= g(dca)g(dcb)^{-1}.
\end{align*}
In particular, we deduce that \(g(dca)g(dcb)^{-1} \ne e \iff \coboundary g_r(cd) \ne e\), and \eqref{eq:inequality-to-coboundary} now becomes
\begin{equation} \label{eq:inequality-to-coboundary-non-abelian}
     \Prob[t]{\exists s \subseteq t \; h(s)=1 \ve r \not \subseteq s} \geq  \Prob[t=r\circ cd]{ g(dca)g(dcb)^{-1} \ne 0} = \Prob[cd \in X_r(1)]{\coboundary g_r(cd) \ne 0}.
\end{equation}
Similarly, when \(r=a \in X(0)\) and \(t=abcd\) we observe similarly that
\begin{align*}
  f(ab) g(bcd) f(ba) &= f(ab) (f(bc)f(cd)f(db)) f(ba) \\
  &= f(ab)f(bc) \cdot (f(ca) f(ac)) \cdot f(cd) \cdot (f(da) f(ad)) \cdot f(db) f(ba) \\
  &= \coboundary f(abc) \coboundary f(acd) \coboundary f(adb) \\
  &= g_a(bc) g_a(cd) g_a(db) \\
  &= \coboundary g_a(bcd),
\end{align*}

and in particular    
\begin{equation} \label{eq:inequality-to-coboundary-non-abelian-x-zero}
     \Prob[t]{\exists s \subseteq t \; h(s)=1 \ve r \not \subseteq s} \geq  \Prob[t=a\circ bcd]{ g(bcd) \ne e} = \Prob[bcd \in X_a(2)]{\coboundary g_a(bcd) \ne e}.
\end{equation}

\medskip
The second equality we need to modify is \eqref{eq:dist-to-coboundary-and-local-minimality}. For example, take an \(\eta\)-locally minimal \(g \in C^2(X,\Gamma)\), a vertex \(r \in X(0)\), and \(\coboundary h \in B^{1}(X_r,\Gamma)\) that is a closest coboundary to \(g_r \in C^1(X_r,\Gamma)\). Then 
\[\dist(g_r,\coboundary h) = \prob{g_r(vu)\ne h(v)h(u)^{-1}} = wt(g_r^h) \geq wt(g_r)-\eta.\]
The case where \(r \in X(1)\) is similar.
\end{remark}

\begin{proof}[Proof of \pref{claim:psd-inequality-for-hdx}]
    We begin by showing the following inequality for operators on \(\ell_2(X(k-1))\) on a \(d\)-dimensional simplicial complex for \(d \geq k\):
    \begin{equation} \label{eq:swap-proposition}
        U^{k-1}S_{k-1} \preceq U^{k}D^{k}_{k-1}+  \lambda Id.
    \end{equation}
    where \(S_{k-1}:\ell_2(X(k-1))\to \ell_2(X(0))\) is defined by \(S_{k-1}f(v) = \Ex[s \in X(k-1), s \dunion \set{v} \in X(k)]{f(s)}\).

    Note that \(U^{k}D^{k}_{k-1}\) is the operator that averages a function on all \(k-1\) faces (independently of the starting face), so it sends the constant function to itself, and it sends everything perpendicular to the constant function to \(0\). The operator \(U^{k-1}S_{k-1}\) also sends the constant to itself, and the functions perpendicular to the constant functions are an \(U^{k-1}S_{k-1}\)-invariant subspace. 
    
    Hence, to show \eqref{eq:swap-proposition} need to show that the second largest eigenvalue of \(U^{k-1}S_{k-1}\) is less or equal to \(\lambda\). Recall that for every two linear operators \(Y:A\to B\) and \(Z:B\to A\) the eigenvalues of \(YZ:B\to B\) are equal to the eigenvalues of \(ZY:A\to A\) (up to the multiplicity of the \(0\) eigenvalue). Therefore, applying this to $S_{k-1}:\ell_2(X(k-1))\to \ell_2(X(0))$ and $U^{k-1}_{0}:\ell_2(X(0))\to \ell_2(X(k-1))$,
   \[ \lambda(U^{k-1}_0 S_{k-1}) = \lambda(S_{k-1}U^{k-1}_0)
    \] 
    and we will easily bound the right-hand side. Here the notation \(\lambda(B)\) is the second largest eigenvalue of the operator inside the parentheses.
    The operator $S_{k-1}U^{k-1}_0$ is nothing but the random walk on the $1$-skeleton of the graph: indeed start with a vertex \(v \in X(0)\), go up to \(t \supset v, t\in X(k-1)\) and then traverse to \(u\) so that \(u \dunion s \in X(k)\). This is the same as choosing \(u \sim v\) a random neighbor of $v$ (and $s\dunion \set u$ a random $k$ face containing this edge). By spectral expansion of the \(1\)-skeleton of \(X\), \(\lambda(S_{k-1}U^{k-1}) \leq \lambda\) and \eqref{eq:swap-proposition} is proven. 
 
    Now let us prove the claim. Fix some $f\in \ell_2(X(k))$. We need to show that 
    \begin{align*}
        \iprod{f,U^{k-j}N_{k \to j} f} \leq \iprod {f,U^{k-j+1}D^{k-j+1} f} + \lambda \iprod{f,f}
     \end{align*}
   The lefthand side is equal to \(\Ex[s]{ f(s) \cdot (U^{k-j}N_{k \to j} f)(s)} = \Ex[s,s'] {f(s)f(s')}\) where $s'$ is chosen according to the distribution of \(U^{k-j}N_{k \to j}(s)\). 
   
   We localize this expectation using \(p = r \cap s\). Namely, the above random process of choosing an pair \(s,s'\) is equivalent to the following:
    \begin{enumerate}
        \item Choose \(p \in X(j-1)\).
        \item Choose \(v \in X_p(0)\) and set \(r = \set v\cup p\).
        \item Choose \(s' \supseteq r\).        
        \item In \(X_p\) walk from $v$ to \(q \in X_p(k-j)\) so that \(\set{v} \dunion q \in X_p(k-j+1)\) (Namely, apply the swap walk \(S_{k-j}\) in the link of \(p\), which takes a \((k-j)\)-face to a vertex). Set \(s=p \cup q \in X(k)\).
    \end{enumerate}
    Thus we can write 
    \begin{align*}
        \iprod{f,U^{k-j}N_{k \to j} f} &= \Ex[(s,t=s \dunion \set{v},r,p,s')]{f(s) f(s')}  \\
        &= \Ex[p]{ \Ex[(v,q,s')]{f(p\cup q) f(p \cup (s' \setminus p))}}  \\
        &= \Ex[p]{\Ex[(v,q,s')]{f_p(q) f_p(s' \setminus p)}},
    \end{align*}
    where \(f_p:X_p(k-j) \to \R\) is defined by \(f_p(x)=f(p \dunion x)\). For a fixed \(p\), the choice of \(q, s' \setminus p\) is just using the random walk \(U^{\ell}S_{\ell}\) in the link of \(p\), with \(\ell=k-j\). This is equal to
    \begin{align}
        \Ex[p]{\iprod{f_p, (U^{\ell}S_{\ell})_p f_p}} &\overset{\eqref{eq:swap-proposition}}{\leq}
        \Ex[p]{\iprod{f_p, (U^{k-j+1}D^{k-j+1})_p f_p}} + \Ex[p]{\lambda \iprod{f_p, f_p}} \\
        &=\iprod{f,U^{k-j+1}D^{k-j+1} f} + \lambda \iprod{f,f}.
    \end{align}
    The last equality holds since 
    \[\iprod{f,U^{k-j+1}D^{k-j+1} f} = \Ex[p]{\Ex[s\supseteq p]{f(s)}^2} = \Ex[p]{\Ex[s \setminus p \in X_p(j-k)]{f_p(s)}^2} = \Ex[p]{\iprod{f_p, (U^{k-j+1}D^{k-j+1})_p f_p}}.\]
    Hence the claim is proven.
\end{proof}
\section{Coboundary expansion via color restriction} \label{sec:coboundary-expansion-of-partite-complexes}

In this section we develop a technique for bounding coboundary expansion in partite complexes with large dimension. 

Recall that for a \(d\)-partite complex \(X\), and some \(F \subseteq [d]\) the complex \(X^F = \sett{s \in X}{col(s) \subseteq F}\). We call these complexes color restrictions of \(X\). Fix a group \(\Gamma\) and integer \(k\). We say that a color restriction \(X^F\) is \((k,\beta)\)-\emph{locally coboundary expanding} (with \(\Gamma\) coefficients) if \(H^k(X^F,\Gamma)=0\), \(h^k(X^F,\Gamma)\geq \beta\) and for every face \(s \in X(j)\) so that \(col(s) \cap F = \emptyset\), it holds that \(H^k(X_s^F,\Gamma)=0\) and \(h^{k-|s|}(X_s^F,\Gamma) \geq \beta\) (for \(j \leq k\)). The next theorem states that when a color restriction \(X^F\) is \(\beta\)-locally coboundary expanding for typical $F$, then \(X\) itself is an \(\Omega(\beta^{k+1})\)-coboundary expander.

\begin{theorem} \label{thm:partite-complex}
Let \(k,\ell, d\) be integers so that \(k+2\leq \ell \leq d\) and let \(\beta,p \in (0,1]\). Let \(\Gamma\) be some group (that is abelian if \(k > 1\)). Let \(X\) be a \(d\)-partite simplicial complex so that
\[\Prob[F \in \binom{[d]}{\ell}]{X^F \text{ is a \((k,\beta)\)-locally coboundary expander}} \geq p.\]
Then \(X\) is a coboundary expander with \(h^{k}(X,\Gamma) \geq \frac{p \beta^{k+1}}{e(k+2)!}\). Here \(e \approx 2.71\) is Euler's number.
\end{theorem}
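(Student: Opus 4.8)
The plan is to verify the coboundary inequality directly. Fix a group $\Gamma$ (abelian when $k>1$) and $f\in C^k(X,\Gamma)$ with $wt(\coboundary f)=\varepsilon$; I will construct $g\in C^{k-1}(X,\Gamma)$ with $\dist(f,\coboundary g)\le \frac{e(k+2)!}{p\beta^{k+1}}\,\varepsilon$, which, holding for all $f$, yields both $H^k(X,\Gamma)=0$ and $h^k(X,\Gamma)\ge \frac{p\beta^{k+1}}{e(k+2)!}$. The first step is to fix a single favorable color set. Since a uniformly random $(k+1)$-face of the $d$-partite complex $X$ is obtained by restricting a uniformly random top face to a uniformly random $(k+2)$-subset of $[d]$, and one may sample that $(k+2)$-subset by first drawing $F\in\binom{[d]}{\ell}$ and then a $(k+2)$-subset of $F$, the induced measure $\Pr^F$ on $X^F(k+1)$ is exactly $\Pr_{k+1}$ conditioned on $\{\,col(s)\subseteq F\,\}$, and hence $\Ex[F]{wt(\rest{\coboundary f}{X^F})}=\varepsilon$. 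Combining this with the hypothesis $\Prob[F]{X^F\text{ is a }(k,\beta)\text{-local coboundary expander}}\ge p$ (via a conditional expectation, or Markov, argument) produces an $F$ that is a $(k,\beta)$-local coboundary expander and satisfies $wt(\rest{\coboundary f}{X^F})\le \varepsilon/p$. Fix this $F$ from now on.

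Next I construct $g$ in $k+1$ stages. For a $(k-1)$-face $s$ write its vertex set, disjointly, as $s=r_s\cup t_s$ with $col(r_s)\cap F=\emptyset$ and $col(t_s)\subseteq F$; stage $j$, for $j=0,1,\dots,k$, is responsible for assigning $g$ on all $s$ with $|r_s|=j$. Stage $0$ is the color restriction itself: since $wt(\rest{\coboundary f}{X^F})\le \varepsilon/p$ and $h^k(X^F,\Gamma)\ge\beta$, there is $g_\emptyset\in C^{k-1}(X^F,\Gamma)$ with $\dist(\rest{f}{X^F},\coboundary g_\emptyset)\le \varepsilon/(p\beta)$, and we set $g:=g_\emptyset$ on $X^F(k-1)$. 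For a stage $j\ge 1$, fix $r\in X(j-1)$ with $col(r)\cap F=\emptyset$ and pass to the color-restricted link $X^F_r$. Each $k$-face of the form $r\circ q$ with $q\in X^F_r(k-j)$ imposes the constraint $\coboundary g(r\circ q)=f(r\circ q)$; expanding $\coboundary g(r\circ q)$ and moving to the right-hand side all summands $g(r_i\circ q)$ — which involve only $(k-1)$-faces already assigned in stages $<j$ — this becomes $\coboundary g_r(q)=h_r(q)$, where $h_r\in C^{k-j}(X^F_r,\Gamma)$ is a known cochain and $g_r\in C^{k-j-1}(X^F_r,\Gamma)$ collects the still-undetermined values $g(r\circ\cdot)$. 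Invoking the local hypothesis $h^{k-j}(X^F_r,\Gamma)\ge\beta$, choose $g_r$ with $\dist(h_r,\coboundary g_r)\le \frac1\beta\,wt(\coboundary h_r)$, and record these values into $g$. (For $j=k$ the cochain $g_r\in C^{-1}(X^F_r,\Gamma)$ is a single group element, so this step is just a majority vote over the vertices of $X^F_r$, matching the ``$\maj$'' in the overview.)

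The crux is that the defects $\coboundary h_r$ are small on average. Applying $\coboundary\coboundary=0$ — equivalently \pref{claim:algebraic-relation} — to $\coboundary h_r(q')$ for $q'\in X^F_r(k-j+1)$ rewrites it as a signed combination of $\coboundary f$ on a single $(k+1)$-face together with the quantities $f(\cdot)-\coboundary g(\cdot)$ evaluated on $k$-faces of the form $r_i\circ q'$, whose outside part $r_i$ has size $j-1$ and which are therefore exactly the faces that stage $j-1$ was built to satisfy. Hence, averaging this pointwise identity over $r\in X(j-1)$ against the induced measure (matching the link measures as in the definitions of $\Pr^F$ and $\Pr^s$) bounds the average of $wt(\coboundary h_r)$ by a $k$-dependent constant times $\varepsilon$ plus a $k$-dependent multiple of the distance realized in stage $j-1$. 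Feeding this into the link expansion gives a recursion for the distances contributed by successive stages, which telescopes, starting from $\varepsilon/p$, to $\dist(f,\coboundary g)\le \frac{C_k}{p\beta^{k+1}}\,\varepsilon$; keeping track of the binomial factors that appear when passing between a complex, its links, and weights of restricted cochains (the same kind of bookkeeping as in the proof of \pref{thm:cosystolic-expansion-different-levels}, where one uses $\sum_{j\ge 0}1/j!\le e$) gives $C_k\le e(k+2)!$.

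I expect the principal obstacle to be precisely this error bookkeeping of the third paragraph: one must check that in the expansion of each $\coboundary h_r(q')$ every $k$-face that appears is either directly charged to $\coboundary f$ or genuinely ``older'' — all of its $(k-1)$-subfaces assigned before stage $j$, so that its error is already controlled by the stage-$(j-1)$ guarantee — that the pointwise bounds aggregate correctly under the right induced distributions on the links, and that the accumulated constants multiply out to $e(k+2)!$ rather than something larger. A lighter, separate point is the case $k=1$ with non-abelian $\Gamma$: then $g\in C^0(X,\Gamma)$, there are only two stages, the constraints take the multiplicative form $f(uv)=g(u)g(v)^{-1}$, and the additive manipulations above are replaced by the conjugation identities already used in \pref{rem:non-abelian-case}; the structure of the argument — pick $F$, solve on $X^F$, then extend over the vertices colored outside $F$ using the links $X^F_v$ — is unchanged.
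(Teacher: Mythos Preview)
Your construction and inductive scheme match the paper's proof almost exactly, but there is one genuine gap in the first step. You choose $F$ so that it is a $(k,\beta)$-local coboundary expander and $wt(\rest{\coboundary f}{X^F})\le \varepsilon/p$. That controls $\coboundary f$ only on $(k{+}1)$-faces whose colors lie \emph{entirely} in $F$. But in stage $j$ your recursion feeds in the $\coboundary f$ term on faces $r\circ q'$ with exactly $j$ vertices \emph{outside} $F$; averaging over $r$ and $q'$ gives the conditional weight $\varepsilon_{F,j}:=\cProb{t\in X(k{+}1)}{\coboundary f(t)\neq 0}{|col(t)\setminus F|=j}$, and for a fixed $F$ there is no reason this is $O(\varepsilon)$ --- your claim that the $\coboundary f$ contribution is ``a $k$-dependent constant times $\varepsilon$'' is not justified. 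The paper handles this by strengthening the averaging: one shows $\Ex[F\in\mathcal F]{\sum_{j=1}^{k+2}\varepsilon_{F,j}}\le (k{+}2)p^{-1}\varepsilon$ and then picks an $F\in\mathcal F$ below this average (their \pref{claim:good-F}), obtaining $\varepsilon_{F,j}\le (k{+}2)p^{-1}\varepsilon$ for \emph{every} $j$. With that in hand your recursion goes through and produces exactly the bound $\dist_i(f,\coboundary g)\le (k{+}2)\,i!\,\beta^{-(i+1)}p^{-1}\varepsilon\sum_{j=0}^i 1/j!$.

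A smaller omission: your stages $0,\dots,k$ define $g$ on all of $X(k{-}1)$, but they only directly control $\dist_i(f,\coboundary g)$ for $i\le k$. The case $i=k{+}1$ (a $k$-face with \emph{all} vertices outside $F$) still needs an argument; there is no link of the right form to invoke, so the paper instead expands $\coboundary\coboundary g(w\circ s)=0$ for any neighbour $w\in X_s^F(0)$, obtaining $\dist_{k+1}\le \varepsilon_{F,1}+(k{+}1)\dist_k$ without a further factor of $\beta^{-1}$. This is why the final exponent on $\beta$ is $k{+}1$ rather than $k{+}2$.
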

We use this theorem in \pref{sec:geometric-lattices-coboundary-expansion} to show the coboundary expansion of homogeneous geometric lattices. The reason we need this theorem is that lower bounds obtained in other techniques usually depend on the dimension of the complex (e.g. the cone technique in \pref{sec:geometric-lattices-coboundary-expansion}). Color restricting \(X^F\) reduces the dimension to \(|F|\), which allows us to use the dimension dependent techniques on \(X^F\), as long as \(|F|\) is a function of \(k\) and not of $d$. Hence, this theorem allows us to overcome the dependence on dimension.

\begin{remark}
We shall write this section in additive notation, i.e. we assume that \(\Gamma\) is an abelian group. However, the same proof holds for non-abelian groups when \(k=1\) also. See \pref{rem:non-abelian-changes-color-proof} after the proof of \pref{thm:partite-complex}.
\end{remark}

\subsection{Additional notation} \label{sec:additional-notation-partite-complexes}
\paragraph{Averaging out real valued functions}
 Let \(X\) be a \((d+1)\)-partite complex and let \(I \subseteq J \subseteq [d]\). Let \(A \subseteq X(k)\)  be a set with relative size \(\varepsilon\). For a face \(s \in X(k)\) we denote by \(\varepsilon_s = \cProb{t \in X(k)}{A}{t \supseteq s}\). We denote by \(\varepsilon_{J,I} = \cProb{t \in X(k)}{A}{col(t)\cap J = I}\), the probability of \(A\)  given that we sampled \(t \in X(k)\) so that \(col(t) \cap J = I\). We denote by \(\varepsilon_{J,j} = \cProb{t \in X(k)}{A}{|col(t)\cap J| = j}\).

\paragraph{Conditional distances} Let \(f_1, f_2 : \dir{X}(k) \to \Gamma\) be two asymmetric functions. The distance between \(f_1,f_2\) is 
\[\dist(f_1,f_2) = \Prob[t \in \dir{X}(k)]{f_1(t)\ne f_2(t)}.\]
For a fixed set \(F\) of colors in a partite complex \(X\), we denote by 
\[\dist_{\bar F,i}(f_1,f_2) = \cProb{t \in \dir{X}(k)}{f_1(t)\ne f_2(t)}{|col(t)\cap \bar{F}| = i}.\]
That is, the relative hamming distance between \(f_1\) and \(f_2\) on the set of \(k\)-faces so that exactly \(i\)-of the vertices have colors outside of \(F\). When the set is clear from the context we omit \(\bar{F}\) and denote this by \(\dist_i(f_1,f_2)\).

\subsection{Proof of \pref{thm:partite-complex}}
\begin{proof}
Fix \(\Gamma\) and let \(f:\dir{X}(k) \to \Gamma\) be an asymmetric function. Assume that \(wt(\coboundary f) = \prob{\coboundary f \ne 0} = \varepsilon\). We need to find some \(g:\dir{X}(k-1) \to \Gamma\) so that \(\frac{p\beta^{k+1}}{e(k+2)!} \dist(f,\coboundary g) \leq \varepsilon\).
We start by finding a set of colors \(F \in \mathcal{F}\) so that most \((k+1)\)-faces with $F$-colored vertices are satisfied. We recall that \(\varepsilon_{F,j} = \cProb{t \in X(k+1)}{\coboundary f(t) \ne 0}{\abs{col(t)\cap F} = j}\). 
 \begin{claim} \label{claim:good-F}
    There is some \(F \in \mathcal{F}\) of size $k+2$ so that for any \(j=1,...,k+2\), \(\varepsilon_{F,j} \leq (k+2) p^{-1}\varepsilon\).
\end{claim}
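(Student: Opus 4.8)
The plan is a first-moment argument, producing the single $F$ by a union bound. I would pick $F$ at random (from the good $(k{+}2)$-subsets), show that for each level $j$ the conditional error rate $\varepsilon_{F,j}$ is at most $\varepsilon$ in expectation, and combine this with the fact that $F$ is a $(k,\beta)$-local coboundary expander with probability at least $p$. Union-bounding over the $k+2$ possible levels $j$ together with the event $F\notin\mathcal{F}$ will leave positive probability that $F\in\mathcal{F}$ and all $k+2$ inequalities $\varepsilon_{F,j}\le (k+2)p^{-1}\varepsilon$ hold simultaneously, which is exactly the claim.

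For the expectation bound, fix $j\in\{1,\dots,k+2\}$ and observe that for a uniformly random $(k{+}2)$-subset $F$ and any fixed $(k{+}1)$-face $t$, the probability of the event $\abs{\mathrm{col}(t)\cap F}=j$ is a constant $q_j$ depending only on $j,k,d$, since $\abs{\mathrm{col}(t)}=k+2$ for every $t$. Consequently, if I sample $t\in X(k{+}1)$ from the face distribution of $X$ and $F$ uniformly and independently, and then condition on $\abs{\mathrm{col}(t)\cap F}=j$, the marginal law of $t$ is unchanged, so $\Pr[\coboundary f(t)\ne0]=\varepsilon$ in the conditioned space. Writing that same probability by first exposing $F$ identifies it with $\E[\varepsilon_{F,j}]$, where $F$ is now distributed according to the law $\nu_j(F)\propto\Pr_t[\abs{\mathrm{col}(t)\cap F}=j]$; thus $\E_{F\sim\nu_j}[\varepsilon_{F,j}]=\varepsilon$ for every $j$.

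Markov's inequality then gives $\Pr_{F\sim\nu_j}[\varepsilon_{F,j}>(k+2)p^{-1}\varepsilon]\le p/(k+2)$ for each $j$. Provided the event $F\in\mathcal{F}$ still carries probability at least $p$ in the probability space being used, summing over the $k+2$ levels and adding the failure probability $1-p$ leaves positive probability that $F\in\mathcal{F}$ and $\varepsilon_{F,j}\le(k+2)p^{-1}\varepsilon$ for all $j$; any such $F$ proves the claim. Here the factor $k+2$ is the union bound over levels, and the factor $p^{-1}$ is exactly the slack needed to absorb the $1-p$ probability that $F$ fails to be a local coboundary expander.

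The step that requires care — and where I expect the real work to be — is reconciling the two distributions on $F$: the reweighted law $\nu_j$ under which the expectation of $\varepsilon_{F,j}$ is cleanly $\varepsilon$ is \emph{not} the uniform law on $(k{+}2)$-subsets, whereas the hypothesis of \pref{thm:partite-complex} controls $\Pr[F\in\mathcal{F}]$ only for that uniform law (inherited from the hypothesis on $\ell$-subsets). The natural fix is to exploit the freedom afforded by $\ell\ge k+2$: sample a uniformly random good $\ell$-subset $F'$, then a uniformly random $(k{+}2)$-subset $F\subseteq F'$, then condition on the color-overlap profile of $F$ with a random $(k{+}1)$-face, and verify that in this coupled space the event $F\in\mathcal{F}$ still occurs with probability $\ge p$ while the Fubini identity for $\E[\varepsilon_{F,j}]$ survives. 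Granting that, everything else is just Markov plus the union bound above.
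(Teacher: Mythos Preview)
Your approach has a gap that you correctly flag but do not close. The Markov bound $\Pr_{\nu_j}[\varepsilon_{F,j}>(k+2)p^{-1}\varepsilon]\le p/(k+2)$ holds under the tilted measure $\nu_j$, whereas the event $\{F\in\mathcal F\}$ is guaranteed probability $\ge p$ only under the uniform measure, and one cannot union-bound events that live under different probability measures. Your suggested patch via an auxiliary $\ell$-subset $F'\supseteq F$ does not help: the tilt $\nu_j(F)\propto\Pr_t[|\mathrm{col}(t)\cap F|=j]$ is dictated by the color distribution of the complex, which the theorem does not constrain, so $\nu_j(\mathcal F)$ could be arbitrarily small regardless of how you couple $F$ with a good $F'$. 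The sentence ``granting that, everything else is just Markov plus the union bound'' is precisely the step that carries all the content.

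The paper's argument is both simpler and sidesteps the issue. It never uses Markov or a union bound; instead it bounds the expectation of the \emph{sum}. Working with triples $(j,F,I)$ where $I\subseteq F$, $|I|=j$, and $F$ is marginally uniform, one has $\E_{(j,F,I)}[\varepsilon_{F,I}]=\varepsilon$; since $\varepsilon_{F,I}\ge 0$, conditioning on $F\in\mathcal F$ loses at most a factor $p^{-1}$, and summing out $j\in\{1,\dots,k+2\}$ yields $\E_{F\in\mathcal F}\bigl[\sum_{j}\varepsilon_{F,j}\bigr]\le (k+2)p^{-1}\varepsilon$. One then simply \emph{chooses} any $F\in\mathcal F$ at or below this conditional expectation; since every $\varepsilon_{F,j}\ge 0$, the bound on the sum gives the same bound on each individual term. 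The entire calculation remains under the uniform measure on $F$, so the reconciliation problem you worry about never arises.
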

The claim follows by standard averaging and will be proven later below. 
Now construct \(g:\dir{X}(k-1) \to \Gamma\) in \((k+1)\) steps as follows. Fix some global order on the vertices \(X(0)=\set{v_0,v_1,...,v_n}\).
    \begin{enumerate}
        \item In the first step we define \(g\) for \((k-1)\)-faces whose colors are contained in \(F\). Note that \(X^F\) has no cohomology, \(h^k(X^F,\Gamma) \geq \beta\) and \(\varepsilon_{F,k+2} = \varepsilon_{F,F} \leq (k+2)p^{-1}\varepsilon\) by \pref{claim:good-F}. Coboundary expansion of $X^F$ implies that there exists a function \(g_0:\dir{X}^F(k-1)\to \Gamma\) so that \(\beta \dist_{0}(f, \coboundary g_0) \leq \varepsilon_{F,F}\) which implies \
        \begin{equation}\label{eq:defg0}
        \frac{p\beta}{(k+2) }\dist_{0}(f, \coboundary g_0) \leq \varepsilon~
        \footnote{Here we recall that \(\dist_{0}(f, \coboundary g_0)\)
        is the distance between \(f\) and \(\coboundary g_0\) over \(k-1\)-faces in \(F\). See \pref{sec:additional-notation-partite-complexes}.}
        \end{equation}
        We set \(g =g_0\) on faces \(s \in \dir{X}(k-1)\) so that \(col(s) \subseteq F\).
        \item Let \(i>1\). In the \(i\)-th step, we define \(g\) on faces \(t\) so that \(|col(t) \cap F|=k-(i-1)\) as follows. Assume that \(g\) was defined for all faces \(t\) so that \(|col(t) \cap F| = k-(i-2)\). 
        Every face $t$ with $i-1$ colors outside $F$ can be viewed as $t = s\circ r$ for $s\in \dir{X}^{[d] \setminus F}(i-2)$ and  \(r \in \dir{X_s}^F(k-i)\).
        
        Fix  \(s\) and let us define \(g(r \circ s)\) for all \(r\). We assume  \(s= (v_{j_0},v_{j_1},...,v_{j_{i-2}})\) for \(j_0<j_1<...<j_{i-2}\), and defining $g$ for other \(s\) follows directly by asymmetry. Let us define \(h_s:\dir{X}_s^F(k-i+1) \to \Gamma\) by
        \[h_s(a)=f(a \circ s) - (-1)^{|a|}\sum_{\ell=0}^{i-2}(-1)^{\ell} g(a \circ s_\ell).\]
        Here we use the fact that \(g\) has been defined for all the \(a \circ s_\ell\).
        Next we find (an arbitrary) \(g_0^s : \dir{X}_s^F(k-i) \to \Gamma\) that minimizes \(\dist(h, \coboundary g_0^s)\). We set \(g(r \circ s) = g_0^s(r)\).
    \end{enumerate}
As alluded to in the overview, the motivation behind defining \(h_s\) for every \(s \in X(i-2)\) this way is because for every element in \(\sett{a \circ s}{a \in \dir{X}_s^F(k-i)}\), we have an equation
\begin{equation*}
    \coboundary g(a \circ s) = f(a \circ s)
    \end{equation*}
which translates to
\begin{equation} \label{eq:equation-that-a-defines}
f(a \circ s) - (-1)^{|a|}\sum_{\ell=0}^{i-2}(-1)^\ell g(a \circ s_\ell) = \sum_{\ell=0}^{k-i}(-1)^\ell g(a_\ell \circ s).    
\end{equation}
We consider the set of equations \pref{eq:equation-that-a-defines} one per $a$.
The left hand side (which is \(h_s(a)\)) has the values we have defined in the \((i-1)\)-th step, and the right hand side are the ``unknowns'', that is, the values of \(g\) we wish to define in the \(i\)-th step. Translating this to the language of coboundaries:
\begin{enumerate}
    \item \(h_s(a)\) is the ``free coefficient'' in every equation.
    \item Assignments to the ``unknowns'', i.e. \(g(a_\ell \circ s)\) are functions \(g_0^s : \dir{X}_s^F(k-i-1) \to \Gamma\), so that \(g(r \circ s) = g_0^s(r)\).
    \item The equation that \(a\) defines \eqref{eq:equation-that-a-defines} is satisfied by a solution \(g_0\) if and only if \(h_s(a) = \coboundary g_0^s(a)\). That is, we want to find some \(g_0^s\) that minimizes \(\dist(h, \coboundary g_0^s)\) in the link of \(s\) (and as discussed before, we will do this by showing that \(\coboundary h_s \approx 0\) and using coboundary expansion).
\end{enumerate}

We need to prove we indeed constructed a function \(g\) so that \(\coboundary g\) is close to \(f\).
\begin{lemma} \label{lem:g-is-close-to-f-partite-k}
\[\frac{p\beta^{k+1}}{e (k+2)!} \dist (f, \coboundary g) \leq \varepsilon.\]
\end{lemma}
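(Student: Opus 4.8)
The plan is to bound $\dist(f,\coboundary g)$ by splitting the $k$-faces according to $j:=|col(t)\cap\bar F|$, the number of their vertices colored outside $F$, and to control $\delta_j:=\dist_j(f,\coboundary g)$ by induction on $j=0,1,\dots,k+1$, mirroring the $k+1$ steps used to build $g$ (step $i$ of the construction is what handles $\delta_{i-1}$). Thus $\dist(f,\coboundary g)=\sum_{j=0}^{k+1} q_j\,\delta_j$ with $q_j=\Prob[t\in X(k)]{|col(t)\cap\bar F|=j}$, and it suffices to bound each $\delta_j$. For the base case $j=0$: every $(k-1)$-subface of a $k$-face with all vertices colored in $F$ lies in $X^F$, so $\coboundary g$ agrees there with $\coboundary g_0$, and \eqref{eq:defg0} together with \pref{claim:good-F} gives $\delta_0\le \varepsilon_{F,F}/\beta\le (k+2)p^{-1}\varepsilon/\beta$.

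For the inductive step $1\le j\le k$: fix $s\in\dir X^{[d]\setminus F}(j-1)$ and recall that $g$ was defined on $\{r\circ s\}$ by choosing $g_0^s$ that minimizes $\dist(h_s,\coboundary g_0^s)$ in the link $X_s^F$. The technical heart is an algebraic identity: expanding $\coboundary h_s(b)$ using $h_s(b_m)=f(b_m\circ s)-(-1)^{|b_m|}\sum_\ell(-1)^\ell g(b_m\circ s_\ell)$ and regrouping, one finds that $\coboundary h_s(b)$ equals, up to a global sign, $\coboundary f(b\circ s)$ plus a linear combination of the quantities $f(b\circ s_\ell)-\coboundary g(b\circ s_\ell)$, while the leftover double sum of terms $g(b\circ(s_\ell)_{\ell'})$ cancels by \pref{claim:algebraic-relation}. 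Hence $\coboundary h_s(b)\ne 0$ forces either $\coboundary f(b\circ s)\ne 0$ — where $b\circ s$ is a $(k+1)$-face with $j$ vertices outside $F$, whose conditional density is bounded by $\varepsilon_{F,k+2-j}\le (k+2)p^{-1}\varepsilon$ via \pref{claim:good-F} — or $f(b\circ s_\ell)\ne\coboundary g(b\circ s_\ell)$ for one of the $j$ indices $\ell$, each $b\circ s_\ell$ being a $k$-face with $j-1$ vertices outside $F$ and thus contributing to $\delta_{j-1}$. A union bound over $b$ bounds $wt(\coboundary h_s)$; since $X_s^F$ is a $\beta$-coboundary expander (with $H^{k-j}(X_s^F,\Gamma)=0$), $\dist(h_s,\coboundary g_0^s)\le wt(\coboundary h_s)/\beta$; averaging over $s$ against the conditional measure on $k$-faces with exactly $j$ vertices outside $F$ yields a recursion of the form $\delta_j\le \tfrac1\beta\big((k+2)p^{-1}\varepsilon + j\,\delta_{j-1}\big)$.

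For the top level $j=k+1$ there is no color class of $F$ left over, so no coboundary expander of a link is available; instead I exploit that $\coboundary$ is a genuine differential. Given a $k$-face $t$ with all $k+1$ vertices outside $F$, pick a uniformly random neighbour $w$ of $t$ whose color lies in $F$ (such $w$ exists since the complex is pure $d$-dimensional and $t$ uses only $k+1<d+1$ colors, all outside $F$). From $\coboundary\coboundary g(t\cup\{w\})=0$ we get $\coboundary g(t)=\pm\sum_\ell(-1)^\ell\coboundary g\big((t\cup\{w\})\setminus\{v_\ell\}\big)$, where each face $(t\cup\{w\})\setminus\{v_\ell\}$ is a $k$-face with exactly $k$ vertices outside $F$ and so was handled in step $k+1$; replacing $\coboundary g$ there by $f$ (costing $\delta_k$ per term) and using $\coboundary f(t\cup\{w\})\approx 0$ (bounded through $\varepsilon_{F,1}$ and \pref{claim:good-F}) collapses the sum to $f(t)$. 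A union bound over $w$ gives $\delta_{k+1}\le (k+1)p^{-1}\varepsilon+(k+1)\delta_k$, crucially with no new factor of $1/\beta$, which is why the final exponent is $\beta^{k+1}$ rather than $\beta^{k+2}$.

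Unwinding the recursion gives $\delta_j=O\big(j!\,\varepsilon/(p\,\beta^{j+1})\big)$ for $j\le k$ and $\delta_{k+1}=O\big((k+1)!\,\varepsilon/(p\,\beta^{k+1})\big)$, where the $e$ appearing in the statement comes from bounding the emerging series $\sum 1/m!$; plugging into $\dist(f,\coboundary g)=\sum_j q_j\delta_j\le \max_j\delta_j$ produces $\dist(f,\coboundary g)\le \frac{e(k+2)!}{p\,\beta^{k+1}}\varepsilon$, which is \pref{lem:g-is-close-to-f-partite-k}. The step I expect to be the main obstacle is the bookkeeping in the algebraic identity for $\coboundary h_s$ — carefully expanding it into $\coboundary f$, the lower-step errors, and the vanishing term of \pref{claim:algebraic-relation}, and then correctly pushing densities in the links $X_s^F$ back to conditional densities on $X(k)$ and $X(k+1)$ so that \pref{claim:good-F} applies with the right conditioning; the non-abelian $k=1$ case requires the conjugation manipulations already seen in \pref{rem:non-abelian-case} and can be deferred to a remark.
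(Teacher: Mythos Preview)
Your proposal is correct and follows essentially the same approach as the paper: the same induction on the number $j$ of vertices outside $F$, the same recursion $\delta_j\le\beta^{-1}\big((k+2)p^{-1}\varepsilon+j\,\delta_{j-1}\big)$ obtained by expanding $\coboundary h_s$ via the cancellation of \pref{claim:algebraic-relation}, and the same special treatment of $j=k+1$ via $\coboundary\coboundary g=0$ after adjoining a vertex $w$ of color in $F$. The paper packages the algebraic identity and the equivalence $f(r\circ s)\ne\coboundary g(r\circ s)\iff h_s(r)\ne\coboundary g_0^s(r)$ into two named claims (\pref{claim:can-pass-to-links} and \pref{claim:when-dh-is-not-0}), but the content matches your outline; your constant $(k+1)p^{-1}\varepsilon$ in the top-level recursion should be $(k+2)p^{-1}\varepsilon$ (via $\varepsilon_{F,1}$ and \pref{claim:good-F}), which does not affect the final bound.
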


\end{proof}

\begin{remark} \label{rem:non-abelian-changes-color-proof}
    When \(\Gamma\) is non-abelian and \(k=1\), we construct \(g:\dir{X}(0) \to \Gamma\) in a similar manner to the abelian case. First, we find the values of \(g\) on \(\dir{X}^F(0)\) using coboundary expansion as in the first step of the abelian case. Next, for every \(v\) whose color isn't in \(f\), we define \(h_v:\dir{X}_v^F(0) \to \Gamma\) by \(h_v(u)=f(vu)g(u)\) (where \(g(u)\) has been previously defined since \(col(u)\in F\)). Then we find \(\gamma \in \Gamma = C^{-1}(X,\Gamma)\) so that \(\dist(h_v, \coboundary \gamma)\) and set \(g(v)=\gamma\).

    The proof in this case is identical to the abelian case, so we won't repeat it.
\end{remark}

\begin{proof}[Proof of \pref{lem:g-is-close-to-f-partite-k}]
Recall that \(\dist_i\) is the hamming distance on \(k\)-faces where \(i\) out of \(k+1\) vertices are not in \(F\). We show that for \(i \leq k\)
\begin{equation} \label{eq:i-distance}
\dist_i(f, \coboundary g) = \cProb{t \in X(k)}{f(t) \ne \coboundary g (t)}{|col(t) \cap F| = i} \leq (k+2) (i!) \beta^{-(i+1)} p^{-1}\varepsilon \sum_{j=0}^{i} \frac{1}{j!},
\end{equation}
and that for \(i=k+1\)
\begin{equation} \label{eq:k+1-distance}
\dist_{k+1}(f, \coboundary g) \leq (k+2)! \beta^{-(k+1)} p^{-1} \varepsilon\sum_{j=1}^{k+1}\frac{1}{j!} \leq \frac{e(k+2)!}{p\beta^{k+1}} \varepsilon.
\end{equation}
Since 
\[\dist(f, \coboundary g) = \Prob[t \in X(k)]{f(t) \ne \coboundary g(t)} = \sum_{i=0}^{k+1} \Prob[t \in X(k)]{|col(t) \cap \bar{F}|=i} \dist_i(f,\coboundary g),\]
it holds that
\(\dist(f,\coboundary g) \leq \max_i \dist_i(f,\coboundary g)\) and the lemma follows. We show \eqref{eq:i-distance} by induction over \(i\). 
When \(i=0\), by the definition of \(g\) in the first step, see \pref{eq:defg0}, it follows that \(\dist_0(f,\coboundary g) \leq (k+2) \beta^{-1}p^{-1} \varepsilon\). Let us assume that \eqref{eq:i-distance} holds for \(i\), and show it for \(i+1\). 

We want to bound the fraction of \(t=r \circ s \in \dir{X}(k)\), where \(|s|=i+1\), \(col(r)\subseteq F\) and \(col(s) \cap F = \emptyset\), so that 
\[f(r \circ s) \ne \coboundary g(r \circ s) = \sum_{j=0}^{k-i-1}(-1)^j g(r_j \circ s) + \sum_{j=k-i}^k (-1)^{j} g(r \circ s_{j-(k-i)}).\]

\begin{claim} \label{claim:can-pass-to-links}
    For \(i \leq  k\), 
    \[f(r\circ s) \ne \coboundary g(r \circ s) \Leftrightarrow \coboundary g_0^s(r) \ne h_s(r),\]
    where \(g_0^s(r) = g(r\circ s)\).
\end{claim}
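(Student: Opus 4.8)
The plan is to unfold the coboundary $\coboundary g(r\circ s)$ and split the resulting alternating sum into the contribution of the $F$-colored part $r$ and of the part $s$ whose colors avoid $F$. Order the vertices of $t=r\circ s$ so that the $k-i$ vertices of $r$ occupy positions $0,\dots,k-i-1$ and the $i+1$ vertices of $s$ occupy positions $k-i,\dots,k$. Deleting position $j$ with $j\le k-i-1$ gives the $(k-1)$-face $r_j\circ s$, while deleting position $j$ with $j\ge k-i$ gives $r\circ s_{j-(k-i)}$; collecting signs,
\[
\coboundary g(r\circ s)=\sum_{j=0}^{k-i-1}(-1)^j g(r_j\circ s)\;+\;(-1)^{k-i}\sum_{m=0}^{i}(-1)^m g(r\circ s_m).
\]

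Next I would identify the two pieces using the construction of $g$. Each face $r_j\circ s$ still has exactly its $s$-vertices colored outside $F$, i.e. it has $i+1$ colors outside $F$, so it was assigned its value in the same step of the construction that produces $g$ on all faces with $i+1$ colors outside $F$; thus $g(r_j\circ s)=g_0^s(r_j)$ and the first sum is exactly $\coboundary g_0^s(r)$, the coboundary computed in the link $X^F_s$. Each face $r\circ s_m$, on the other hand, has only $i$ colors outside $F$ and was assigned in the previous step, and these are precisely the values entering the definition of $h_s$; recalling $h_s(r)=f(r\circ s)-(-1)^{|r|}\sum_{m=0}^{i}(-1)^m g(r\circ s_m)$ and $|r|=k-i$, the second sum equals $f(r\circ s)-h_s(r)$.

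Combining the two identifications gives $\coboundary g(r\circ s)=\coboundary g_0^s(r)+f(r\circ s)-h_s(r)$, hence $f(r\circ s)-\coboundary g(r\circ s)=h_s(r)-\coboundary g_0^s(r)$ as elements of $\Gamma$; the left-hand side is nonzero precisely when the right-hand side is, which is the claim. For non-abelian $\Gamma$ with $k=1$ the same computation goes through in multiplicative notation, reading $h_s$ and the coboundary as in \pref{rem:non-abelian-changes-color-proof}, with ``negation'' replaced by inversion and conjugation. The only point needing care is the bookkeeping — tracking which step of the inductive construction assigned $g$ on each codimension-one face $r_j\circ s$ and $r\circ s_m$, which is what guarantees these values are already available and that the $r$-part collapses to $\coboundary g_0^s(r)$, together with the sign accounting that matches $(-1)^{k-i}$ with the $(-1)^{|r|}$ in the definition of $h_s$ — but I do not expect a genuine obstacle: this is a definition chase.
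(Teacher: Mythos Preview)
Your proof is correct and follows essentially the same approach as the paper: both expand \(\coboundary g(r\circ s)\) into the sum over positions in \(r\) and the sum over positions in \(s\), then identify the former as \(\coboundary g_0^s(r)\) and rewrite the latter via the definition of \(h_s\). Your presentation is arguably cleaner, since you directly establish the identity \(f(r\circ s)-\coboundary g(r\circ s)=h_s(r)-\coboundary g_0^s(r)\) rather than running the equivalence in one direction as the paper does, but the underlying computation is the same definition chase.
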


Thus we wish to bound \(\Prob[s,r]{\coboundary g_0^s(r) \ne h_s(r)}\). As \(h^{k-|s|}(X_s^F,\Gamma) \geq \beta\), it is enough to bound the probability that \(\coboundary h \ne 0\) (up to multiplying by a factor of \(\beta^{-1}\)). That is, we need to show that
\begin{equation} \label{eq:h-is-almost-a-cosystol}
wt(\coboundary h_s) \leq (k+2) (i+1)! \beta^{-(i+1)} p^{-1} \varepsilon\sum_{j=0}^{i+1} \frac{1}{j!}.
\end{equation}
Combining \eqref{eq:h-is-almost-a-cosystol} with \(\beta\)-coboundary expansion (in the link of every \(s\)) we have that 
\[\Prob[s,r]{g_0^s(r) \ne h_s(r)} = \Ex[s]{\Prob[r]{g_0^s(r) \ne h_s(r)}} \leq \beta^{-1} \Ex[s]{wt(\coboundary h_s)} \leq (k+2) (i+1)! \beta^{-(i+2)} p^{-1} \varepsilon\sum_{j=0}^{i+1} \frac{1}{j!},\]
since when \(i+1 \leq k\), \(g_0^s\) was chosen to minimize the distance between \(h_s\) and any coboundary.
Towards this end, we claim the following:
\begin{claim} \label{claim:when-dh-is-not-0}
    Let \(s,r\) so that \(\coboundary h_s(r) \ne 0\). Then 
    \begin{enumerate}
        \item Either \(r\circ s\) has some sub-face \(r \circ s_j\) so that \(f(r\circ s_j) \ne \coboundary g (r \circ s_j)\).
        \item Or \(\coboundary f (r \circ s) \ne 0\).
    \end{enumerate}
\end{claim}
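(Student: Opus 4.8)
The plan is to prove the contrapositive of \pref{claim:when-dh-is-not-0}: assuming both $\coboundary f(r\circ s)=0$ and $f(r\circ s_j)=\coboundary g(r\circ s_j)$ for every $j$, I will show directly that $\coboundary h_s(r)=0$. Here $r$ is the face on which $\coboundary h_s$ is evaluated, so that $r\circ s$ is a $(k+1)$-face of $X$ and $r\circ s_j$, $r_j\circ s$ are $k$-faces; write $|r|$ for the number of vertices of $r$. The whole argument is a bookkeeping computation with the coboundary operator: expand $\coboundary h_s(r)=\sum_j(-1)^j h_s(r_j)$ via the definition of $h_s$, separate the contributions of $f$ and of $g$, and recognize inside each group a full coboundary plus a remainder that can be controlled.

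First I would expand. Since $h_s(r_j)=f(r_j\circ s)-(-1)^{|r_j|}\sum_{\ell}(-1)^{\ell}g(r_j\circ s_\ell)$ with $|r_j|=|r|-1$, collecting the signs gives $\coboundary h_s(r)=\sum_j(-1)^j f(r_j\circ s)+(-1)^{|r|}\sum_\ell(-1)^\ell\sum_j(-1)^j g(r_j\circ s_\ell)$. For the $f$-part, expanding the coboundary of $f$ on the $(k+1)$-face $r\circ s$ (first deleting vertices of $r$, then of $s$) reads $\coboundary f(r\circ s)=\sum_j(-1)^j f(r_j\circ s)+(-1)^{|r|}\sum_\ell(-1)^\ell f(r\circ s_\ell)$, hence $\sum_j(-1)^j f(r_j\circ s)=\coboundary f(r\circ s)-(-1)^{|r|}\sum_\ell(-1)^\ell f(r\circ s_\ell)$. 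For the $g$-part, swap the order of summation, and for each fixed $\ell$ expand the coboundary of $g$ on the $k$-face $r\circ s_\ell$: $\coboundary g(r\circ s_\ell)=\sum_j(-1)^j g(r_j\circ s_\ell)+(-1)^{|r|}\sum_m(-1)^m g(r\circ (s_\ell)_m)$, so $\sum_j(-1)^j g(r_j\circ s_\ell)=\coboundary g(r\circ s_\ell)-(-1)^{|r|}\sum_m(-1)^m g(r\circ (s_\ell)_m)$. The leftover double-deletion terms $\sum_\ell\sum_m(-1)^{\ell+m}g(r\circ(s_\ell)_m)$ vanish: this is exactly \pref{claim:algebraic-relation} applied to the asymmetric function $g$, the face $s$, and $x=r\in X_s$, after factoring out the fixed block-swap sign relating $g(r\circ(s_\ell)_m)$ and $g((s_\ell)_m\circ r)$ (this sign is the same for all $\ell,m$, so it pulls out of the sum).

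Substituting both computations back, the mixed terms telescope and one is left with the identity $\coboundary h_s(r)=\coboundary f(r\circ s)-(-1)^{|r|}\sum_\ell(-1)^\ell\big(f(r\circ s_\ell)-\coboundary g(r\circ s_\ell)\big)$. From this the claim is immediate: if $\coboundary h_s(r)\neq 0$ then at least one summand on the right is nonzero, i.e.\ either $\coboundary f(r\circ s)\neq 0$ (alternative~2), or $f(r\circ s_j)\neq\coboundary g(r\circ s_j)$ for some $j$ (alternative~1). I expect the only real obstacle to be the sign and dimension bookkeeping: tracking the exponent $|r|$ through each vertex deletion and making sure \pref{claim:algebraic-relation} is invoked with $s$ in the ``deleted'' slot and $r$ in the $x$-slot; there is no conceptual difficulty once the displayed identity for $\coboundary h_s(r)$ is in hand. (The non-abelian $k=1$ case does not arise here, since one only needs this abelian computation; see \pref{rem:non-abelian-changes-color-proof}.)
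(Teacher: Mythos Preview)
Your proof is correct and follows essentially the same approach as the paper: both prove the contrapositive, expand $\coboundary h_s(r)$ via the definition of $h_s$, invoke \pref{claim:algebraic-relation} to kill the double-deletion terms in $s$, and arrive at the identity $\coboundary h_s(r)=\coboundary f(r\circ s)-(-1)^{|r|}\sum_\ell(-1)^\ell\big(f(r\circ s_\ell)-\coboundary g(r\circ s_\ell)\big)$. The only difference is organizational: the paper inserts the zero from \pref{claim:algebraic-relation} first and then regroups into $\coboundary g(r\circ s_\ell)$, whereas you recognize the $\coboundary g$ and $\coboundary f$ expansions first and then cancel the leftover double sum, which is arguably cleaner.
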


When $s,r$ are chosen at random so that \(|s|=i+1\), \(col(r)\subseteq F\) and \(col(s) \cap F = \emptyset\), the probability that the first item in \pref{claim:when-dh-is-not-0} occurs is bounded by 
\[(i+1) \cdot \cProb{t \in X(k)}{f(t) \ne \coboundary g (t)}{|col(t) \cap \bar{F}| = i} = (i+1) \dist_i(f,\coboundary g),\]
since sampling \(r\circ s\) as above, and then sampling a random \(t=r \circ s_j\) in it, has the same marginal distribution as just sampling \(t \in X(k-1)\) so that \(|col(t) \setminus F| = i\). By the induction hypothesis this is bounded by 

\begin{equation} \label{eq:derive-bound-dist-i}
    (i+1) \cdot (i!) (k+2) \beta^{-(i+1)} p^{-1} \varepsilon \sum_{j=0}^{i} \frac{1}{j!}.
\end{equation}

The probability that the second item in \pref{claim:when-dh-is-not-0} occurs is \( \varepsilon_{F,k-(i-1)}\) which is less or equal to \((k+2) p^{-1} \varepsilon\) by \pref{claim:good-F}.
In conclusion
\begin{equation*}
    \begin{aligned}
        \Prob[s,r]{\coboundary h_s(r)\ne 0} & \overset{\pref{claim:when-dh-is-not-0}}{\leq} (i+1) \Prob[s_i,r]{g(s_i \circ r) \ne f(s_i \circ r)} +(k+2)p^{-1}\varepsilon \\ 
        &\overset{ \eqref{eq:derive-bound-dist-i}}{\leq} (k+2) (i+1)! \beta^{-(i+1)} p^{-1} \varepsilon \sum_{j=0}^{i} \frac{1}{j!} + (i+1)! \frac{1}{(i+1)!}(k+2)p^{-1}\varepsilon \\
        &\leq (k+2) (i+1)! \beta^{-(i+1)}p^{-1} \varepsilon \sum_{j=0}^{i+1} \frac{1}{j!}.
    \end{aligned}
\end{equation*}
Here the last inequality is just simplification.

Proving \eqref{eq:k+1-distance} is similar to the above (assuming for \eqref{eq:i-distance} holds for \(i=k\)). We need to bound the probability that \(f(s) \ne \coboundary g(s)\). Similar to \pref{claim:when-dh-is-not-0} we note that \(f(s) - \coboundary g(s) \ne 0\) implies that for every \(w \in X_s^F(0)\):
\begin{enumerate}
    \item Either there is some \(i\) so that \(f(w \circ s_i) \ne  g(w \circ s_i)\).
    \item Or \(\coboundary f (w \circ s) \ne 0\).
\end{enumerate}
Otherwise take some \(w \in X_s^F(0)\) so that  \(\coboundary f (w\circ s) = 0\) and so that \(f(w \circ s_i) = g(w \circ s_i)\) for all \(i\). We have that
\begin{equation}
\begin{aligned}
    0 &= \coboundary f(w \circ s)\\
    & f(s) + \sum_{i=1}^{k+1}(-1)^i f((w\circ s)_i) = \\
    & f(s) + \sum_{i=1}^{k+1}(-1)^i \coboundary g((w\circ s)_i) = \\
    & f(s) - \coboundary g(s) + \sum_{i=0}^{k+1}\coboundary g((w\circ s)_i) = \\
    & f(s) - \coboundary g(s) + \coboundary \coboundary g(s) = \\
    & f(s) - \coboundary g(s).
\end{aligned}
\end{equation}
Hence
\[\dist_{k+1}(f, \coboundary g) \leq \Ex[s \in \dir{X}(k)]{\Prob[w \in X_s^F(0)]{\coboundary f(w \circ s) \ne 0} + \sum_{i=0}^{k}\Prob[w \in X_s^F(0)]{f(w\circ s_i) \ne \coboundary g(w \circ s_i)}}.\]
As 
\[\Ex[s \in \dir{X}(k)]{\Prob[w \in X_s^F(0)]{\coboundary f(w \circ s) \ne 0}} = \varepsilon_{F,1} \leq (k+2)p^{-1}\varepsilon\]
and
\[\Ex[s \in \dir{X}(k)]{\Prob[w \in X_s^F(0)]{f(w\circ s_i) \ne \coboundary g(w \circ s_i)}} = \dist_{k}(f, \coboundary g).\]
This is less or equal to
\begin{equation*}
    \begin{aligned}
        \leq (k+2)p^{-1} \varepsilon + (k+1)\dist_k(f, \coboundary g) & \\
        \leq & (k+2)\frac{(k+1)!}{(k+1)!}p^{-1} \beta^{-(k+1)} \varepsilon + (k+2)(k+1)(k!) p^{-1} \beta^{-(k+1)} \varepsilon \sum_{j=0}^k \frac{1}{j!} \\
        =  &(k+2)!\beta^{-(k+1)}p^{-1} \varepsilon \sum_{j=0}^{k+1} \frac{1}{j!}.
    \end{aligned}
\end{equation*}
and \eqref{eq:k+1-distance} follows.
\end{proof}

\subsection{Proof of remaining claims}
\begin{proof}[Proof of \pref{claim:good-F}]
Consider the distribution where we sample some \((i,F,I) \in \set{1,2,...,k+2} \times \binom{[d]}{k+2} \times \binom{[d]}{\ell}\), so that \(F\) is uniform, \(i\) is uniform, and \(I \subseteq F\) is uniform given that \(\abs{I}
=i\). Let \(\psi:X(k+1) \to \RR\) be the indicator of the set \(\set{\coboundary f \ne 0}\). Denote by \(\varepsilon = \ex{\psi}\). Then \(\Ex[(i,F,I)]{\varepsilon_{F,I}} = \varepsilon\) (where \(\varepsilon_{F,I}\) is the expectation of \(\psi\) over faces \(t\) so that \(col(t) \cap F = I\)). Let \(\mathcal{F}\) be the set of colors \(F\) so that \(X^F\) is locally coboundary expanding as defined in \pref{thm:partite-complex}. By assumption \(\prob{\mathcal{F}} \geq p\). Thus we have that
\[\Ex[(i,F,I): F \in \mathcal{F}]{\varepsilon_{F,I}} \leq p^{-1} \varepsilon.\]
In particular,
\[\Ex[F \in \mathcal{F}]{\sum_{i=1}^{k+2} \varepsilon_{F,i}} \leq (k+2) p^{-1} \varepsilon,\]
where \(\varepsilon_{F,i} = \Ex[I \subseteq F, |I| = i]{\varepsilon_{F,I}}.\)
We conclude by taking some \(F \in \mathcal{F}\) so that the sum of \(\varepsilon_{F,i}\) is less than the expectation. This is the \(F\) we need.
\end{proof}

\begin{proof}[Proof of \pref{claim:can-pass-to-links}]
Recall that by definition \(g(r' \circ s)=g_0^s(r')\) for some \(g_0^s\) so that \(\coboundary g_0^s(a)=\sum_{\ell=0}^{k-i} (-1)^\ell g_0^s(a_\ell)\) is the closest coboundary to the function \(h_s\). The function \(h_s\) was defined by
\[h_s(a)=f(a \circ s) - (-1)^{|a|}\sum_{\ell=0}^{i-2}(-1)^{\ell} g(a \circ s_\ell).\]
\(\coboundary g_0^s(a) = h(a)\) if and only if 
\[\coboundary g(a \circ s) = \sum_{j=0}^k (-1)^j g((a\circ s)_j) =\]
\[ \coboundary g_0^s(a) + \sum_{j=k-i+1}^k (-1)^j g((a \circ s)_j) \overset{\coboundary g_0^s(a) = h(a)}{=} \]
\[ h_s(a) + \sum_{j=k-i}^k (-1)^j g((a \circ s)_j) = \]
\[f(s \circ r) - \sum_{\ell=0}^{i-1} (-1)^{\ell+(k-i+1)} g(a \circ s_\ell) +  \sum_{j=k-i+1}^k  (-1)^j g((r \circ s)_j) = f(r \circ s).\]
The last equality is due to a change of variables in \(\sum_{\ell=0}^{i-2} (-1)^{\ell+(k-i+1)} g(a \circ s_\ell)\) from \(\ell\) to \(j:=\ell+(k-i+1)\).
\end{proof}

\begin{proof}[Proof of \pref{claim:when-dh-is-not-0}]
Let \(s,r\) be so that \(\abs{s}=i+1\), both \(\coboundary f(r \circ s) = 0\) and for every \(s_j \subseteq s\), it holds that \(\coboundary g(r \circ s_j) = f(r\circ s_j)\).

Observe that,
\[\coboundary h_s(r) = \sum_{j=0}^{k-i}(-1)^j h_s(r_j) = \sum_{j=0}^{k-i}(-1)^{j} f(r_j \circ s) - (-1)^{k-i} \sum_{j=0}^{k-i} \sum_{\ell=0}^{i} (-1)^{j+\ell} g(r_j \circ s_\ell)\]
where the second equality is by the definition of $h_s$.
By \pref{claim:algebraic-relation} this is equal to
\[ \sum_{j=0}^{k-i}(-1)^{j} f(r_j \circ s) - (-1)^{k-i} \sum_{j=0}^{k-i} \sum_{\ell=0}^{i} (-1)^{j+\ell} g(r_j \circ s_\ell) - (-1)^{k-i} \sum_{j=0}^{i}\sum_{\ell=0}^{i-1} (-1)^{(k-i)+1+j+\ell}g(r \circ (s_\ell)_j). \]
Note that in change of variables above we are using the fact that \(r \circ (s_\ell)\) is equal to \((r \circ s)_{\ell+|r|}\)
We change variables in the rightmost sum \(\ell := \ell + (k-i+1)\), so this is equal to
\[ \sum_{j=0}^{k-i}(-1)^{j} f(r_j \circ s) - (-1)^{k-i} \sum_{j=0}^{k-i} \sum_{\ell=0}^{i} (-1)^{j+\ell} g(r_j \circ s_\ell) - (-1)^{k-i} \sum_{j=0}^{i}\sum_{\ell=k-i+1}^{k} (-1)^{j+\ell}g((r \circ s_j)_\ell). \]

Rearranging, we get that this is equal to
\[\sum_{j=0}^{k-i}(-1)^{j} f((r\circ s)_j) - \sum_{j=0}^i (-1)^{j+(k-i)} \coboundary g(r \circ s_j) = \]
\[\sum_{j=0}^{k-i}(-1)^{j} f((r \circ s)_j) + \sum_{j=k-i+1}^k (-1)^{j} \coboundary g((r \circ s)_j).\]
If for all \(r \circ s_j\) we have that \(\coboundary g(r \circ s_j) = f(r \circ s_j)\), then this is equal to
\[\sum_{j=0}^k (-1)^j f((r\circ s)_j) = \coboundary f(r\circ s). \]
Finally, if \(\coboundary f(r \circ s) = 0\) then indeed \(\coboundary h_s (r) =0.\)
\end{proof}
\section[k-coboundary expansion of order complexes of geometric lattices]{\(k\)-coboundary expansion of order complexes of geometric lattices} \label{sec:geometric-lattices-coboundary-expansion}

In this section we analyze the coboundary expansion of $k$-chains in a $d$-dimensional spherical building, and prove lower bounds that depend on $k$ but are independent of the ambient dimension $d$. 
Our analysis holds for any $d$-dimensional order complex of a homogeneous geometric lattice (see \pref{sec:complexes}), a setting that generalizes the spherical building.

\begin{theorem} \label{thm:lattice-k-coboundary-expansion}
Let \(k+2<d\). There exists constants \(\beta_k=\exp(-O(k^5 \log k))>0\) so that for every group \(\Gamma\) and every order complex of a homogeneous geometric lattice \(X\), \(X\) is a coboundary expander with constant
\[ h^k(X,\Gamma) \geq \beta_k.\]  
\end{theorem}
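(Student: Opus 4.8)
The plan is to deduce the theorem from the color‑restriction machinery of \pref{thm:partite-complex} together with the cone technique developed in this section. Write $X=X_{\bar P}$ for the order complex of a homogeneous geometric lattice $P$, so that $X$ is multipartite with the color of a vertex being its rank. By \pref{thm:partite-complex} it suffices to fix $\ell=\ell(k)$ (a quantity depending on $k$ but not on the ambient dimension $d$) and to produce constants $p=p(k)>0$, $\beta=\beta(k)>0$ with
\[ \Prob[F \in \binom{[d]}{\ell}]{X^F \text{ is a }(k,\beta)\text{-locally coboundary expander}} \;\ge\; p , \]
since then $h^k(X,\Gamma)\ge \frac{p\beta^{k+1}}{e(k+2)!}$, which is of the claimed form $\exp(-O(k^5\log k))$ once all parameters are tracked. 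Spectral hypotheses play no role here; \pref{claim:geometric-lattice-is-an-edge-expander} is only relevant when these complexes later serve as links.

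\textbf{Choosing typical colors and reducing the dimension.} Call $F=\{i_1<\dots<i_\ell\}$ \emph{$C$-spaced} if $i_1\ge C$ and $i_{j+1}\ge C\,i_j$ for all $j$, where $C=C(k)$ is a constant fixed below. A routine counting argument — force each $i_j$ into a fixed geometrically placed sub-interval of $[d]$ that is a constant fraction of $[d]$ — shows that a $p(k)>0$ fraction of all $F\in\binom{[d]}{\ell}$ are $C$-spaced, uniformly in $d$. For a $C$-spaced $F$ the restriction $X^F$ is the order complex of the rank-selected subposet of $P$ using only ranks in $F$, hence is purely $(\ell-1)$-dimensional (every chain of $P$ extends to a full flag by semimodularity), and it still carries a flag-transitive automorphism group inherited from $\mathrm{Aut}(P)$; likewise, for any face $s$ of $X$ whose colors avoid $F$, the link $X^F_s$ is an order complex of dimension at most $\ell-1$ again using only ranks in $F$. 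Thus $(k,\beta)$-local coboundary expansion of $X^F$ reduces to bounding the coboundary expansion of a bounded family of flag-homogeneous order complexes of rank at most $\ell$ — a setting where the dimension-dependent cone bound applies with dimension a function of $k$ alone.

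\textbf{Building short cones.} This is the core step and the main obstacle. Using the cone-to-coboundary-expansion correspondence (to be established in this section, following \cite{Gromov2010,KozlovM2019,KaufmanO2021}), it suffices to equip $X^F$ and each relevant link $X^F_s$ with a $k$-cone of radius $r=r(k)$ independent of $d$; moreover the cone must be suitably \emph{balanced} — each top face of the complex used in boundedly many of its fillings — so that the resulting estimate is valid for the natural weighted measure and for an arbitrary coefficient group $\Gamma$ (including non-abelian $\Gamma$ when $k=1$). Balance is obtained by averaging the cone over the flag-transitive automorphism group, which is where homogeneity is used. The cone itself is built in the spirit of the overview: pick an apex $v_0$ of the smallest rank $i_1$; route each vertex $u$ to $v_0$ by a path of length at most $3$ through elements of small rank — always possible since $P$ is semimodular, so $\mathrm{rk}(a\vee b)\le\mathrm{rk}(a)+\mathrm{rk}(b)$, and once $C$ exceeds a small absolute constant one can insert a joining element of the next available rank of $F$. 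To fill the $k$-dimensional cycle over a $k$-face, first apply a bounded number of \emph{vertex-switching} moves replacing each high-rank vertex of the cycle by a rank-$\le i_{\ell-1}$ subelement of it (each move is a filling by boundedly many simplices using a joining element whose rank is absorbed into the next gap of $F$); once every vertex has rank at most $i_{\ell-1}$, the join of the $O(k)$ vertices involved has rank at most $i_\ell$ provided $C\ge k+O(1)$, so a single element $u^*$ of rank $i_\ell$ lies above all of them and coning through $u^*$ closes the filling. Counting the simplices used at each stage and the number of stages — explicit functions of $k$ and $\ell$ — and then taking $\ell$ and $C$ to be suitable polynomials in $k$ gives $r(k)=\exp(O(\poly(k)))$, hence $\beta(k)=1/\poly(r(k))$.

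\textbf{Conclusion and the hard part.} Feeding $\beta(k)$ into the cone lemma gives $h^k(X^F,\Gamma)\ge\beta(k)$ and $h^{k-|s|}(X^F_s,\Gamma)\ge\beta(k)$ for every $s$ with $\mathrm{col}(s)\cap F=\emptyset$ — the relevant cohomology groups vanishing because a cone exists — so every $C$-spaced $F$ is $(k,\beta(k))$-locally coboundary expanding, and \pref{thm:partite-complex} delivers the theorem, with the bookkeeping of $r(k)$, of $\ell$, of $C$, and of the $\beta^{k+1}$ loss in \pref{thm:partite-complex} producing the final $\exp(-O(k^5\log k))$. The genuinely delicate point is the cone construction: keeping the filling size independent of $d$ even though boundary cycles can contain near-top-rank elements (resolved by the vertex-switching reduction to the all-low-rank case), recursing correctly into the links — where the usable rank gaps shrink by the ranks of the localized face $s$, forcing extra slack in $C$ and $\ell$ — and symmetrizing the cone so the estimate survives passage to general (and non-abelian) coefficients.
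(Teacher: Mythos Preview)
Your proposal is correct and follows essentially the same route as the paper: reduce via \pref{thm:partite-complex} to color restrictions with geometrically spaced ranks, then build short cones by routing through low-rank elements, performing vertex-switches to eliminate high-rank vertices, and finally coning over a single top element; the flag-transitive action is used exactly as you say (via \pref{thm:cones}). One small caveat: the paper does not handle non-abelian $\Gamma$ by ``symmetrizing the same cone'' but instead introduces a separate \emph{non-abelian cone} notion (contractions of loops rather than $\mathbb{Z}$-linear fillings) and reruns the $k=1$ construction in that framework; also, your worry that rank gaps ``shrink'' in links is unfounded---the link $X_s^F$ for $s$ with colors outside $F$ is again (a link of) a geometric lattice with the same $F$-colors available, so the identical cone construction applies without extra slack.
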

We did not try to optimize the constants \(\beta_k\). 

Recall that $h^k(X,\Gamma)$ is defined for abelian groups $\Gamma$ for all $k\geq 1$, and for $k=0,1$ for all groups $\Gamma$. In this section we assume that \(\Gamma\) is abelian. \pref{sec:non-abelian-geometric-lattices} treats the case of non-abelian $\Gamma$ and $k=1$. The case for $k=0$ is straightforward. \pref{claim:geometric-lattice-is-an-edge-expander} establishes that geometric lattices are edge expanders, and \pref{claim:edge-expander-many-sets} asserts that edge expansion implies constant coboundary expansion.

Note that an order complex of a $d-1$-graded lattice is naturally \(d\)-partite. For any  \(F \subseteq [d]\), we defined the restriction of $X$ to the colors of $F$ to be the complex \(X^F = \sett{s \in X}{col(s) \subseteq F}\). A color restriction \(X^F\) is said to be \((k,\beta)\)-\emph{locally coboundary expanding} (with respect to \(\Gamma\)) if \(h^k(X^F,\Gamma)\geq \beta\) and for every face \(s \in X(j)\) so that \(col(s) \cap F = \emptyset\), \(h^{k-|s|}(X_s^F,\Gamma) \geq \beta\) (for \(j \leq k\)). 

Our main effort will be to show that \(X^F\) is a local coboundary expander for many possible sets of colors \(F\). \pref{thm:partite-complex} will then imply a lower bound on the coboundary expansion of \(X\) itself. 
\begin{lemma} \label{lem:colored-spherical-buildings}
Let \(c_k = \frac{k^2+5k+4}{2}\). Let \(d \geq c_k\) and let \(X\) be a $d$-dimensional homogeneous geometric lattice. There are constants \(p_k = \exp(-O(k^5 \log k))\) and \(\beta_k' = \exp(-O(k^2 \log k))\) depending only on $k$ so that for every abelian group \(\Gamma\),
\[\Prob[F \in \binom{[d+1]}{c_k}]{X^F \text{ is a \((k,\beta_k')\)-locally coboundary expander with respect to }\Gamma} \geq p_k.\]
\end{lemma}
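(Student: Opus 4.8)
The plan is to combine the ``cone'' machinery (in the style of Gromov \cite{Gromov2010}, Kozlov--Meshulam \cite{KozlovM2019}, and Kaufman--Oppenheim \cite{KaufmanO2021}) with the observation that an order complex of a geometric lattice carries an extremely short cone once one restricts to a \emph{geometrically spread out} set of colors, and that this property is inherited by the relevant links.

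\textbf{Step 1: spread colors are typical.} Call a set $F=\set{i_1<i_2<\dots<i_{c_k}}\subseteq[d+1]$ \emph{$g$-spread} if $i_{j+1}\ge g\cdot i_j$ for every $j$, where $g=g_k=\poly(k)$ is a parameter dictated by the cone construction below. I would show that a uniformly random $F\in\binom{[d+1]}{c_k}$ is $g_k$-spread with probability at least some $p_k$ depending only on $k$. To this end, fix pairwise disjoint ``target windows'' $J_1,\dots,J_{c_k}\subseteq[d+1]$, with $J_j$ an interval around $(d+1)g_k^{-(c_k-j)}$ of relative length $\Theta(g_k^{-(c_k-j)})$, arranged so that any transversal $i_j\in J_j$ is automatically $g_k$-spread. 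The probability that $F$ contains exactly one element of each $J_j$ is, up to a multinomial factor, $\prod_j \abs{J_j}/(d+1)\ge g_k^{-\Theta(c_k^2)}=\exp(-O(k^5\log k))$, which is independent of $d$. (For the finitely many small values of $d\ge c_k$ where a window is degenerate the statement is vacuous or handled by adjusting constants.)

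\textbf{Step 2: reduction to cones.} Next I would record the standard cone-to-coboundary-expansion estimate: if a complex $Y$ admits a $j$-cone of radius at most $R$ --- an apex, short paths from the apex to all vertices, and tilings by $j$-faces of all resulting cycles --- then $h^j(Y,\Gamma)\ge \theta_j(R)>0$ for every group $\Gamma$, where $\theta_j(R)$ depends only on $j$ and $R$. Hence it suffices to prove that whenever $F$ is $g_k$-spread, the complex $X^F$ \emph{and} every link $X^F_s$ with $col(s)\cap F=\eset$ and $\abs{s}\le k$ admit a $(k-\abs{s})$-cone of radius at most $R_k$ for some $R_k$ depending only on $k$; then $h^{k-\abs{s}}(X^F_s,\Gamma)\ge\beta_k':=\min_{j\le k}\theta_j(R_k)=\exp(-O(k^2\log k))$, i.e.\ $X^F$ is a $(k,\beta_k')$-locally coboundary expander, and \pref{lem:colored-spherical-buildings} follows with this $\beta_k'$ and the $p_k$ of Step 1.

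\textbf{Step 3: constructing the cones.} This is the heart of the matter and follows (generalizing to all $k$) the geometry sketched in the introduction. I would isolate two properties of a geometric lattice: submodularity of the rank, $rk(x\vee y)\le rk(x)+rk(y)-rk(x\wedge y)\le rk(x)+rk(y)$; and that every chain embeds in a maximal chain, so given elements of prescribed ranks one may interpolate extra elements of any larger prescribed ranks. Using these, choose the apex to be a vertex of the smallest color $i_1$; route each vertex $u$ to the apex along a path of length bounded in terms of $k$ that first drops to a rank-$i_1$ sub-object of $u$ and then moves through vertices of small colors; and tile each cycle $p_u\circ uv\circ p_v$ in two phases: first ``vertex-switch'' moves (each tiling a quadrilateral by two triangles) replacing the high-color vertices of the cycle by low-color ones, then absorption of the resulting low-color cycle --- which has $\poly(k)$ vertices of rank at most some moderate color $i_m$ --- into a single common upper bound $u^*$ of color $i_{c_k}$, which exists since $rk(\bigvee\text{cycle})\le\poly(k)\cdot i_m\le i_{c_k}$ by submodularity and $g_k$-spread. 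For $k>1$ the cone is built level by level: the $k$-dimensional filling $t_{uv}$ is assembled from fillings of the lower-dimensional cycles appearing on its boundary, handled by the same construction one dimension down; each level consumes $O(k)$ fresh colors, which is what forces $c_k=O(k^2)$, matching $\tfrac{k^2+5k+4}{2}$.

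\textbf{Step 4: links, and the main obstacle.} For a face $s$ with $col(s)\cap F=\eset$, the link $X_s$ is the order complex of an ordinal sum of open intervals of the lattice, hence a join of order complexes of smaller geometric lattices; each interval has the same submodular rank, and the restriction of a $g_k$-spread $F$ to an interval's rank window is $\Omega(g_k)$-spread there (shifting ranks by $rk(w)$ changes ratios by at most a factor $2$ once $g_k\ge 4$). So the construction of Step 3 applies inside each factor, and a cone of a join assembles from cones of its factors with radius controlled by a function of $k$, giving the bound on $h^{k-\abs{s}}(X^F_s,\Gamma)$. I expect the main difficulty to be Step 3 for general $k$: making the recursive, level-by-level tiling precise, bounding the cone radius by a function of $k$ alone so that no dependence on $d$ leaks in, and verifying that $O(k)$ colors per level --- hence $c_k=O(k^2)$ overall --- genuinely suffice at every stage, including inside the links.
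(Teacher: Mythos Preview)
Your proposal is essentially the paper's approach: show a constant fraction of color sets are ``spread'' (the paper calls them \(k\)-suitable), invoke the cone-to-coboundary theorem, and build short cones inductively level by level, consuming \(O(k)\) fresh colors per level so that \(c_k=O(k^2)\) suffice. Two points are worth flagging. First, the cone-to-coboundary theorem you cite (\pref{thm:cones} in the paper) is not unconditional: it requires a group acting transitively on the top faces of the complex, and you need to note that for a \emph{homogeneous} geometric lattice this transitivity descends to every \(X^F\) and every \(X_s^F\). Second, your Step~4 treats links by decomposing \(X_s\) as a join of intervals and assembling a cone from cones of the factors; the paper takes a more direct route, proving (\pref{claim:properties-we-need-from-spherical-building}) that the handful of lattice properties used in the cone construction---diameter-\(2\) bipartite graphs, rank bounds on joins, and the ability to find a common upper bound of prescribed rank---hold verbatim in any link, so the same inductive cone construction runs inside \(X_s^F\) without decomposing. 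Your join-assembly approach is plausible but you would need to actually carry it out (in particular, the claim ``a cone of a join assembles from cones of its factors with radius controlled by a function of \(k\)'' needs a proof, and you must handle the case where some join factors receive only one or two colors from \(F\)); the paper's approach sidesteps this entirely.
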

The proof of \pref{thm:lattice-k-coboundary-expansion} is direct given this lemma and \pref{thm:partite-complex}. We have it explicitly at the end of this section.

To prove this lemma, we use the cone machinery developed by \cite{Gromov2010, LubotzkyMM2016, KozlovM2019, KaufmanO2021}. We take a detour to define and explain their machinery in the next subsection.

Throughout the proof, we shall assume that \(d\gg k\) (in particular, \(d \geq \frac{k^2+5k+4}{2}\)). The work of \cite{KozlovM2019}  gives constant coboundary expansion when \(d\) is smaller (see \cite[Section 3.3]{KozlovM2019} ).
\subsection{Boundaries and cones}
We fix \(X\) to be a \(d\)-dimensional simplicial complex for \(d \geq k\). We consider \(C_k(X,\mathbb{Z})\). It will be convenient to write members of \(C_k(X,\mathbb{Z})\) as a formal sum, so we prepare some conventions first. We identify \(\dir{X}(k) \subseteq C_k(X,\mathbb{Z})\) where \(t \in \dir{X}(k)\) is identified with the function 
\[ f_t(s) = \begin{cases}
1 & s = \pi(t) \text{ for some permutation \(\pi\) with an even sign}\\
-1 & s = \pi(t) \text{ for some permutation \(\pi\) with an odd sign}\\
0 & \text{otherwise}
\end{cases}.\]
These functions span \(C_k(X,\mathbb{Z})\) \footnote{by choosing an ordering \(\dir{t}\) for every \(t \in X(k)\), the functions \(\sett{\dir{t}}{t\in X(k)}\) are a basis for \(C_k(X,\Z)\).}. Thus we can write every \(f \in C_k(X,\Z)\) as \(f = \sum_{t \in \dir{X}(k)} \alpha_t t\).

The support of a function \(supp(f) \subseteq X(k)\) is the set of all \(t \in X(k)\) so that \(f(\dir{t}) \ne 0\) (for any ordering \(\dir{t}\) of \(t\)). The vertex support \(vs(f) \subseteq X(0)\) is the set of all vertices that are contained in some \(t \in supp(f)\).

\paragraph{The boundary operator} Let \(\boundary_k: C_{k}(X,\mathbb{Z}) \to C_{k-1}(X,\mathbb{Z})\) be the operator defined by
\[\boundary f = \sum_{t \in \dir{X}(k)} \alpha_t \sum_{i=0}^k (-1)^i  t_i.\]
This is the \(k\)-th boundary operator. It is a direct calculation to verify that this is well defined, i.e. that it does not depend on the choice of orientations of the faces in the sum of \(f\).

\paragraph{Restriction to a vertex} Let \(w \in X(0)\) and let \(f \in C_k(X,\mathbb{Z})\) be \(f = \sum_{t \in X(k)} \alpha_t t\). The restriction to \(w\) is the following function \(f_w := \sum_{t \in X(k), t\ni w } \alpha_t t\).

\paragraph{Appending a vertex} Let \(w \in X(0)\) and let \(f \in C_k(X,\mathbb{Z})\) be a function that is supported in the link of \(w\). Namely, \(f = \sum_{t \in X_w(k)} \alpha_t t\). We denote by \(f^{w}\in C_{k+1}(X,\mathbb{Z})\) the function \(f^w = \sum_{t \in X_w(k)}  \alpha_t (w\circ t)\). We note that this too does not depend on the representation of \(f\). We record the following equality that will be useful later.
\begin{equation}\label{eq:star-boundary}
    \boundary_{k+1} (f^w) = f - (\boundary_k f)^w
\end{equation}
For example, if $f = uv$, and $w$ is some vertex, then $f^w = wuv$ and
\begin{align*}
    \boundary f^w &= uv - wu + wv \\
    &= f - ((u-v)^w) 
    = f - (\boundary f)^w.
\end{align*}
\paragraph{Cones} Let \(\ell < d\). An \emph{\(\ell\)-cone} \(\boldsymbol{\psi} = (\psi_i:C_i(X,\mathbb{Z})\to C_{i+1}(X,\mathbb{Z}))_{i=-1}^\ell\) is a sequence of functions so that:
\begin{enumerate}
    \item Every \(\psi_i\) is \(\mathbb{Z}\)-linear, that is \(\psi (a \cdot s + b \cdot s') = a\psi(s) + b\psi(s')\).
    \item The main property: for every \(j \leq \ell\) and every \(s \in X(j)\), it holds that
\[\boundary \psi_j(s) = s - \sum_{i=0}^j (-1)^i \psi_{j-1}(s_i).\]
\end{enumerate}
We illustrate the first levels of cones.
\begin{enumerate}
\item A \((-1)\)-cone is just given by a single vertex \(\psi_{-1}:C_{-1}(X, \mathbb{Z}) \to C_{0}(X,\mathbb{Z}); \; \psi_{-1}(\emptyset) = v \in X(0)\) (up to multiplying by an integer).
\item Let \(v_0 =  \psi_{-1}(\emptyset)\). To extend \(\psi\) to a \(0\)-cone we choose walks \((v_0,v_1,...,v_m=u)\) from \(v_0\) to \(u\) for every \(u \in X(0)\) and set \(\psi(u) = \sum_{i=0}^{m-1} v_i v_{i+1} \). It is direct that \(\boundary \psi(u) = u - v_0 = u- \psi_{-1}(\emptyset)\). We note that it is always possible to add a cycle to \(\psi(u)\) (i.e. any \(R \in C_{2}(X,\mathbb{Z})\) so that \(\boundary R = 0\)).
\item To extend \(\psi\) to a \(1\)-cone, we need to find some \(\psi(uw) \in C_2(X,\mathbb{Z})\) such that \( \boundary \psi(uw) = uw - \psi(w) + \psi(u)\). By the above, this is a cycle containing \(v_0\) and the edge \(uw\), so we need to ``fill'' this cycle with triangles.
\end{enumerate}

When it is clear from context, we omit the index and just write \(\psi(s)\).
We note (and later use) the following. 
\begin{claim} \label{claim:cone-equation-has-no-boundary}
    Let \(\boldsymbol{\psi}\) be any cone and let \(s \in X(j)\). Then
\begin{equation} \label{eq:cone-equation-has-no-boundary}
\boundary (s - \sum_{i=0}^j (-1)^i \psi(s_i) ) = 0.
\end{equation}
\(\qed\)
\end{claim}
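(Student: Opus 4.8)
The plan is to prove the identity directly from the definitions, reducing it to a standard chain-complex cancellation. By $\mathbb{Z}$-linearity of both $\boundary$ and the maps $\psi_i$, it suffices to verify \eqref{eq:cone-equation-has-no-boundary} when $s$ is a single oriented face in $\dir{X}(j)$; the general case then follows by extending linearly. Write $T(s) = s - \sum_{i=0}^j (-1)^i \psi_{j-1}(s_i)$, so the goal is to show $\boundary T(s) = 0$. (Note that for $\psi(s_i)$ to be defined we need $\psi_{j-1}$ to belong to the cone, i.e.\ $j-1 \le \ell$, which we may assume.)

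First I would apply $\boundary$ termwise, so that $\boundary T(s) = \boundary s - \sum_{i=0}^j (-1)^i \boundary \psi_{j-1}(s_i)$. Since each $s_i \in X(j-1)$ and $j-1 \le \ell$, the defining property of a cone applies to every $\psi_{j-1}(s_i)$, giving
\[
\boundary \psi_{j-1}(s_i) \;=\; s_i - \sum_{m=0}^{j-1}(-1)^m \psi_{j-2}\big((s_i)_m\big).
\]
Substituting this in, and using the definition $\boundary s = \sum_{i=0}^j (-1)^i s_i$, the first-order terms $\sum_i (-1)^i s_i$ cancel against $\boundary s$, leaving the double sum
\[
\boundary T(s) \;=\; \sum_{i=0}^j \sum_{m=0}^{j-1} (-1)^{i+m}\, \psi_{j-2}\big((s_i)_m\big).
\]

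To finish, I would invoke the standard face-of-a-face identity $(s_i)_m = (s_{m+1})_i$, valid for $i \le m$, together with the $\mathbb{Z}$-linearity of $\psi_{j-2}$. Splitting the double sum according to whether $i \le m$ or $i > m$, the map $(i,m)\mapsto (m+1,i)$ is a bijection from $\{(i,m): i\le m\}$ onto $\{(i,m): i>m\}$ which, by the face identity, carries the term $(-1)^{i+m}\psi_{j-2}((s_i)_m)$ to $(-1)^{(m+1)+i}\psi_{j-2}((s_{m+1})_i) = -(-1)^{i+m}\psi_{j-2}((s_i)_m)$; hence the terms cancel in pairs and the double sum vanishes. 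The degenerate cases $j=-1,0$ are immediate since then the double sum is empty; alternatively, whenever $j \le \ell$ one can bypass the computation entirely, since $T(s) = \boundary \psi_j(s)$ by the very definition of a cone, so $\boundary T(s) = \boundary^2 \psi_j(s) = 0$. There is no serious obstacle: the statement is a formal consequence of $\boundary^2 = 0$ in disguise, and the only point requiring care is the index/sign bookkeeping in the final double-sum cancellation — the same manipulation that proves $\boundary_{k-1}\boundary_k = 0$ for the simplicial boundary operator.
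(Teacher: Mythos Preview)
Your proof is correct and is precisely the direct computation the paper alludes to (the paper omits the proof, saying only ``the reader may prove this by computing directly''); the double-sum cancellation you perform is the same manipulation recorded elsewhere in the paper as \pref{claim:algebraic-relation}. One small remark: the case that actually matters for the paper's application is $j=\ell+1$, where $\psi_j(s)$ is not yet defined, so your shortcut via $\boundary^2\psi_j(s)=0$ is unavailable there and the explicit computation you gave is indeed what is needed.
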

The reader may prove this by computing directly.

For \(\ell' < \ell\) and an \(\ell\)-cone \(\boldsymbol{\psi} = (\psi_i:C_i(X,\mathbb{Z})\to C_i(X,\mathbb{Z}))_{i=-1}^\ell\), the partial sequence \(\boldsymbol{\psi}_{\ell'} = (\psi_i:C_i(X,\mathbb{Z})\to C_i(X,\mathbb{Z}))_{i=-1}^{\ell'}\) is an \(\ell'\)-cone.

The radius of a cone is 
\[rad(\boldsymbol{\psi}) := \max_{s \in X} \abs{supp \set{\psi(s)}}.\]
The following theorem connects cones with small radius to coboundary expansion.
\begin{theorem}[\cite{KaufmanO2021} Theorem 3.8, see also \cite{KozlovM2019}, Theorem 2.5] \label{thm:cones}
Let \(X\) be a \(k\) dimensional simplicial complex so that there exists a group \(G\) that acts transitively on its \(k\)-dimensional faces. Assume that the \(\ell+1\)-skeleton has an \(\ell\) cone with radius \(B\). Then \(X\) is a coboundary expander and \(h^{\ell}(X,\Gamma) \geq \frac{1}{B\binom{k+1}{\ell+1}}\) for any abelian group \(\Gamma\).
\end{theorem}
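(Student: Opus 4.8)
The plan is to squeeze a quantitative cochain statement out of the (chain-level) contraction provided by the cone, and then to exploit the transitive $G$-action to upgrade a crude union bound into the stated estimate. First I would record the pairing $\iprod{f,c}:=\sum_t\alpha_t f(t)\in\Gamma$ between $f\in C^\ell(X,\Gamma)$ and a chain $c=\sum_t\alpha_t t\in C_\ell(X,\mathbb{Z})$ (well defined because $\Gamma$ is abelian, so $\mathbb{Z}$ acts on it), together with the adjunction $\iprod{\coboundary f,c}=\iprod{f,\boundary c}$, which is immediate from the definitions of $\coboundary$ and $\boundary$. Fix an $\ell$-cone $\boldsymbol{\psi}=(\psi_i)_{i=-1}^{\ell}$ of radius $B$ on the $(\ell+1)$-skeleton and an arbitrary $f\in C^\ell(X,\Gamma)$, and define $g\in C^{\ell-1}(X,\Gamma)$ by $g(r)=\iprod{f,\psi_{\ell-1}(r)}$ for $r\in\dir{X}(\ell-1)$; this is asymmetric because $\psi_{\ell-1}$ is $\mathbb{Z}$-linear and reversing the orientation of $r$ negates the corresponding element of $C_{\ell-1}(X,\mathbb{Z})$. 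Using the defining cone identity $\sum_i(-1)^i\psi_{\ell-1}(s_i)=s-\boundary\psi_\ell(s)$ for $s\in X(\ell)$ together with the adjunction,
\[
\coboundary g(s)=\Iprod{f,\sum_i(-1)^i\psi_{\ell-1}(s_i)}=\iprod{f,s}-\iprod{f,\boundary\psi_\ell(s)}=f(s)-\iprod{\coboundary f,\psi_\ell(s)},
\]
so $f(s)-\coboundary g(s)=\iprod{\coboundary f,\psi_\ell(s)}$. Consequently, if $\coboundary f$ vanishes on all of $\supp(\psi_\ell(s))$ then $f(s)=\coboundary g(s)$; hence $f(s)\ne\coboundary g(s)$ forces $\coboundary f(t)\ne 0$ for some $t$ in $\supp(\psi_\ell(s))$, a set of at most $B$ faces of dimension $\ell+1$.

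Next I would bring in the group. For $\sigma\in G$ the translated sequence $(\sigma\psi)_i(c):=\sigma\cdot\psi_i(\sigma^{-1}c)$ is again an $\ell$-cone of radius $B$: linearity is clear, and the cone identity and the radius bound are preserved because $\boundary$ and the face maps $c\mapsto c_j$ commute with the simplicial action of $G$. Let $g_\sigma$ be the cochain built from $\sigma\boldsymbol{\psi}$ as above. Writing $\varepsilon=wt(\coboundary f)=\Prob[t\in X(\ell+1)]{\coboundary f(t)\ne0}$ and letting $\mu_j$ denote the induced measure on $X(j)$ (which is $G$-invariant, since the uniform measure on the top faces $X(k)$ is $G$-invariant by transitivity and every $\mu_j$ is induced from it), we get
\[
\dist(f,B^\ell)\le\Ex[\sigma\in G]{\dist(f,\coboundary g_\sigma)}\le\Ex[\sigma]{\Ex[s\sim\mu_\ell]{\#\set{t\in\supp((\sigma\psi)_\ell(s)):\coboundary f(t)\ne0}}}.
\]
Now $\supp((\sigma\psi)_\ell(s))=\sigma\cdot\supp(\psi_\ell(\sigma^{-1}s))$; substituting $s'=\sigma^{-1}s$ (again $\mu_\ell$-distributed for each fixed $\sigma$) and pushing the expectation over $\sigma$ inside the sum over $\supp(\psi_\ell(s'))$ turns the right-hand side into $\Ex[s'\sim\mu_\ell]{\sum_{t'\in\supp(\psi_\ell(s'))}\Prob[\sigma\in G]{\coboundary f(\sigma t')\ne0}}$. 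Since $\mu_{\ell+1}$ is $G$-invariant it is uniform on the orbit $Gt'$, so $\Prob[\sigma]{\coboundary f(\sigma t')\ne0}\le\varepsilon/\mu_{\ell+1}(Gt')$, and a count of how many $(\ell+1)$-subfaces of a chamber lie in a fixed orbit bounds $\mu_{\ell+1}(Gt')$ below by $1/\binom{k+1}{\ell+1}$ — this is the step producing the combinatorial factor. With $|\supp(\psi_\ell(s'))|\le B$ this yields $\dist(f,B^\ell)\le B\binom{k+1}{\ell+1}\,wt(\coboundary f)$ for every $f\in C^\ell(X,\Gamma)$. Applied to $f\notin B^\ell$ it gives $h^\ell(X,\Gamma)\ge \tfrac1{B\binom{k+1}{\ell+1}}$, and applied to $f\in Z^\ell$ it shows $f\in B^\ell$, i.e.\ $H^\ell(X,\Gamma)=0$, so $X$ is genuinely a coboundary expander.

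I expect the main obstacle to be exactly that last counting step. Because $G$ is assumed transitive only on the top-dimensional faces, one cannot claim $\sigma t'$ is uniform over all of $X(\ell+1)$; the argument must proceed orbit by orbit, and it is the worst ratio between an orbit's measure and its incidence with chambers that produces the factor $\binom{k+1}{\ell+1}$. Keeping the bookkeeping of the induced measures $\mu_\ell,\mu_{\ell+1}$ consistent across the change of variables $s'=\sigma^{-1}s$ and the orbit decomposition is the one genuinely delicate point, while the pairing, the adjunction, the cone identity, and the translated-cone construction are all formal. A cleaner but essentially equivalent route that avoids the averaging is to symmetrize the contracting homotopy against $G$ and choose $g$ $G$-equivariantly from the start; I would fall back on that if the orbit bookkeeping becomes unwieldy.
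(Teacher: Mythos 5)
Note first that the paper itself gives no proof of this theorem: it is cited from \cite{KaufmanO2021} and \cite{KozlovM2019}, and the remark immediately after the statement says the argument for general $\Gamma$ is identical to the $\F_2$ case in those references and is omitted from the paper. So there is no in-paper proof to compare against; what you wrote is an independent reconstruction of the cited argument.

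Your skeleton is sound and is the standard route: the $\Z$-valued pairing, the adjunction $\iprod{\coboundary f,c}=\iprod{f,\boundary c}$, the identity $f(s)-\coboundary g(s)=\iprod{\coboundary f,\psi_\ell(s)}$ from the cone equation, the $G$-translated cones $\sigma\boldsymbol{\psi}$ (again cones of radius $B$ because $\boundary$ and the face maps commute with the simplicial $G$-action), and the averaging over $G$ are all correct. The problem sits exactly in the step you flag as delicate. The support of $\psi_\ell(s)$ lies in $X(\ell+1)$, whose members are $(\ell+2)$-element subsets, and a top face with $k+1$ vertices contains $\binom{k+1}{\ell+2}$ of them, not $\binom{k+1}{\ell+1}$. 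Consequently the orbit-measure estimate you get from transitivity on $X(k)$ (every chamber contains at least one member of $Gt'$, plus $G$-invariance of $\mu_{\ell+1}$) is $\mu_{\ell+1}(Gt')\geq 1/\binom{k+1}{\ell+2}$, and your argument carried through yields $h^\ell(X,\Gamma)\geq\frac{1}{B\binom{k+1}{\ell+2}}$. For $\ell<(k-2)/2$ this is strictly weaker than the stated bound, so the proposal does not establish the displayed inequality in that range. It is worth checking the original Kozlov--Meshulam statement to determine whether the binomial in the paper's transcription should in fact read $\binom{k+1}{\ell+2}$; in any case, the subfaces being counted are $(\ell+2)$-element sets, and your stated count of $\binom{k+1}{\ell+1}$ is off by one.
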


\begin{remark}
In previous works this theorem was proven for \(\Gamma = \mathbb{F}_2\). Following the same steps for arbitrary \(\Gamma\) gives exactly the same statement. We omit it from the paper.
\end{remark}

Let \(X=X_P\) be the order complex of a homogeneous geometric lattice $P$, and let \(F\) any set of \(\ell\) colors. Then \(Aut(P)\) acts transitively on the \(\ell\)-faces of \(X^F\). The same is true for top-level faces of color restrictions of links \(X_s^F\). Hence \pref{lem:colored-spherical-buildings} will follow from \pref{thm:cones} if we show there are enough colors \(F\) so that \(X^F\) and its local views \(X_s^F\) have small radius \(k\)-cones. 

\subsection{Proof of \pref{lem:colored-spherical-buildings}}
Let \(P\) be a homogeneous geometric lattice, and let $X$ be its $d$-dimensional order complex.

Recall that the colors in an order complex of a graded lattice correspond to the ranks of the elements. We will construct cones for certain sets of colors that correspond to ranks that are roughly exponentially increasing. For $i=0,\ldots,k$ we will keep track of the parameters   \(c_i, n_i, D_i\) which we now define inductively.
\begin{itemize}
    \item $c_i$ is the number of colors we need for constructing an $i$-dimensional cone, 
\[c_0 = 2 \ve c_i = c_{i-1} + (i+2) = \frac{i^2+5i+4}{2}.\]
\item $D_i$ upper bounds the radius of the $i$-cones, \[D_0 = 3 \ve D_i=(i+2)(i+1)(D_{i-1} + 1).\]
We record that \(D_i = \exp(O(i^2 \log i))\).
\item $n_i$ upper bounds the size of the vertex support of the $i$-cones,
\[ n_0 = 4 \ve n_i = 2(i+2)-(i+1)^2 +(i+1)n_{i-1}\footnote{One can show by induction that \(n_i \geq i+1\) and in particular that this sequence is positive.}. \]
We record that \(n_i = \exp(O(i \log i))\).
\end{itemize} 
Let \(\ell \geq c_k\). A set of colors \(F = \set{i_1 <i_2<...<i_{\ell}}\) is called \emph{\(k\)-suitable} if the following holds \(i_2\geq 2 i_1\), and for every \(j=0,1,...,k-1\) and \(m=1,2,...,j+2\) it holds that \(i_{c_j + m} \geq n_j \cdot i_{c_j} + \sum_{m'=1}^{m-1} i_{c_j + m'}\). 
While the exact inequalities may seem opaque, intuitively \(F\) is \(k\)-suitable if every \(i_{j+1}\) is sufficiently larger than its previous color \(i_j\). For example, if for every \(j\), \(i_{j+1} > (k+3)n_k i_j\) then \(F\) is \(k\)-suitable.

\begin{proposition}[Key Proposition] \label{prop:small-cones}
Let \(F = \set{i_1 <i_2<...<i_{\ell}}\)  be a set of \(k\)-suitable colors. Let \(X\) be either an order complex of a geometric lattice, or a link of said complex. Then \(X^F\) has a \(k\)-cone of radius \(\leq D_k\).
\end{proposition}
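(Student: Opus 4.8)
The plan is to build the $k$-cone $\boldsymbol\psi=(\psi_{-1},\psi_0,\dots,\psi_k)$ on $X^F$ by induction on the level $j$, maintaining after stage $j$ that (i) only the $c_j$ smallest colours $i_1<\dots<i_{c_j}$ of $F$ occur among the vertices of the chains $\psi_j(s)$, (ii) $rad(\boldsymbol\psi_j)\le D_j$, and (iii) $|vs(\psi_j(s))|\le n_j$ for every $s\in X^F(j)$. The base cases are direct: put $\psi_{-1}(\emptyset):=v_0$ for a fixed rank-$i_1$ vertex $v_0$, and for each $u$ take $\psi_0(u)$ to be a walk of at most three edges from $v_0$ to $u$ through vertices of rank in $\{i_1,i_2\}$ — first descend to a rank-$i_1$ sub-element $w_2\preceq u$, then interpolate through a rank-$i_2$ element above $v_0\vee w_2$, which exists since $rk(v_0\vee w_2)\le 2i_1\le i_2$ and in a geometric lattice every element of rank $r<rk(\hat1)$ lies below an element of each rank in $(r,rk(\hat1)]$. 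The case where $X$ is a link is covered by the same argument: such a link is a simplicial join of the order complexes of the open intervals of $P$ cut out by the base face, each again a geometric lattice, so subadditivity of rank under $\vee$ and extendability of chains persist, and these are all the construction uses; I will not separate the two cases below.

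For the inductive step, suppose $\psi_{-1},\dots,\psi_{j-1}$ are built and fix $s\in X^F(j)$. By \pref{claim:cone-equation-has-no-boundary} the chain
\[ R_s\ :=\ s-\sum_{i=0}^{j}(-1)^i\psi_{j-1}(s_i) \]
is a cycle, so it suffices to produce $\psi_j(s)$ with $\boundary\psi_j(s)=R_s$ that is small and uses only the $j+2$ colours $i_{c_{j-1}+1},\dots,i_{c_j}$ in addition to the old ones; asymmetry and $\mathbb Z$-linearity then extend $\psi_j$ to all of $\dir{X}^F(j)$. The support $vs(R_s)$ consists of the ``high'' vertices coming from $s$ (of arbitrary colours in $F$) together with ``low'' vertices of rank $\le i_{c_{j-1}}$ coming from the $\psi_{j-1}(s_i)$, and its cardinality is controlled by the $n_j$-recursion. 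The fill of $R_s$ proceeds in two phases. \emph{Shifting:} I would process the high vertices one at a time; at the $m$-th step, replace the current high vertex by an element of rank $i_{c_{j-1}+m}$ containing the joins of its neighbours in the partially filled cycle — this element exists because those neighbours are $O(n_j)$ in number and of rank at most $i_{c_{j-1}+m-1}$, so their join has rank at most $i_{c_{j-1}+m}$ by the $k$-suitability inequality $i_{c_{j-1}+m}\ge n_j\,i_{c_{j-1}}+\sum_{m'<m}i_{c_{j-1}+m'}$, and rank-extension then supplies it; each replacement is realised by coning the replaced vertex over a bounded fan of simplices, reusing $(j-1)$-cone fillings of size $\le D_{j-1}$. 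After all $\le j+1$ steps one obtains a chain $T_s$ with $\boundary T_s=R_s-R_s'$, where $R_s'$ is a cycle on boundedly many vertices, now all of rank $\le i_{c_j-1}$. \emph{Coning:} the join of $vs(R_s')$ has rank at most $n_j\,i_{c_j-1}\le i_{c_j}$ by $k$-suitability, so rank-extension provides $u^\ast$ of rank $i_{c_j}\in F$ above every vertex of $R_s'$; then $(R_s')^{u^\ast}$ lies in $X^F$, and \pref{eq:star-boundary} together with $\boundary R_s'=0$ gives $\boundary\bigl((R_s')^{u^\ast}\bigr)=R_s'$. Setting $\psi_j(s):=T_s+(R_s')^{u^\ast}$ yields $\boundary\psi_j(s)=R_s$, finishing the level.

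Checking the invariants against the recursions is then bookkeeping: the $j+1$ switches consume the colours $i_{c_{j-1}+1},\dots,i_{c_{j-1}+j+1}$ and the final apex the colour $i_{c_j}=i_{c_{j-1}+j+2}$, so $c_j=c_{j-1}+(j+2)$; each switch enlarges the vertex support by a constant and the apex by one, matching $n_j=(j+1)n_{j-1}+2(j+2)-(j+1)^2$; and the radius is dominated by the $\le(j+2)(j+1)$ faces of $R_s'$ each coned off by $u^\ast$ (one $(j+1)$-face apiece) plus the switch chains of size $\le D_{j-1}$, giving $rad(\boldsymbol\psi_j)\le(j+2)(j+1)(D_{j-1}+1)=D_j$. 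Iterating from $j=-1$ to $k$ produces a $k$-cone of radius $\le D_k$ on $X^F$, which is the claim. The delicate step — and the one I expect to be the main obstacle — is the shifting phase: one must arrange each elementary switch so that the replacing element is simultaneously of the prescribed rank, above the element it replaces, and comparable to each of that element's neighbours in the current cycle; verify that the fan it sweeps out is genuinely tileable by faces of $X^F$; and keep its size $O(1)$. This is exactly where both the geometric-lattice axioms (subadditivity of rank under $\vee$ and chain extendability) and the precise form of the $k$-suitability inequalities are needed, and it is what forces the colour budget $c_k$ and the size bounds $D_k,n_k$.
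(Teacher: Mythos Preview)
Your overall architecture matches the paper's: build $\psi_j$ inductively, and at each level reduce the cycle $R_s=s-\sum_i(-1)^i\psi_{j-1}(s_i)$ to a low-rank cycle via vertex swaps (``shifting''), then fill by coning over a single apex of rank $i_{c_j}$. The base case and the final coning step are essentially correct, and your colour/size bookkeeping lines up with the recursions for $c_j,n_j,D_j$. The gap is in the shifting phase.

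You assert that at the $m$-th swap the neighbours of the high vertex $v_m$ being processed all have rank at most $i_{c_{j-1}+m-1}$. This is false: the face $s$ itself lies in $\supp(R_0)$ and, being a flag in the lattice, contains \emph{all} the high vertices $v_1\prec\cdots\prec v_t$ of $s$ simultaneously. Hence when you process $v_m$, its neighbours in $(R_{m-1})_{v_m}$ include the not-yet-processed $v_{m+1},\dots,v_t$, whose ranks may be arbitrarily large in $F$; no element of rank $i_{c_{j-1}+m}$ can sit above their join. Your later requirement that the replacement be ``above the element it replaces'' is likewise impossible, since the replacement has strictly smaller rank than $v_m$.

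The fix --- the paper's key manoeuvre --- is to choose the replacement $v'_m$ \emph{below} $v_m$, working inside the link of $s'=\{v\in s:\ col(v)\ge col(v_m)\}$. Because $s$ is a chain, $v'_m\prec v_m$ forces $v'_m\prec v_{m+1}\prec\cdots\prec v_t$ for free, so comparability with the remaining high vertices comes automatically. One then only needs $v'_m$ to dominate the join of the genuinely low neighbours (the original ones of rank $\le i_{c_{j-1}}$ together with the previous replacements $v'_1,\dots,v'_{m-1}$), and \emph{this} join is what the $k$-suitability inequality bounds by $i_{c_{j-1}+m}$; existence of such a $v'_m$ is exactly items~3--4 of \pref{claim:properties-we-need-from-spherical-building}. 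With that single correction your sketch goes through.
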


\begin{proof}[Proof of \pref{lem:colored-spherical-buildings} (given \pref{prop:small-cones})]
By \pref{prop:small-cones}, for every \(k\)-suitable set of colors \(F\) that has size \(c_k\), and every \(s \in X(j-1)\) so that \(col(s) \cap F = \emptyset\), it holds that \(X_s^F\) has a \((k-j)\)-cone of radius \(D_{k-j} \leq D_k\). It follows from \pref{thm:cones} that that \(h^{k-j}(X_s^F,\Gamma) \geq \frac{1}{\binom{c_k+1}{k+1} D_k}:= \beta_k'\). Note that \(D_k = \exp(O(k^2 \log k))\) and \(\binom{c_k}{k+1} \leq \binom{k^2}{k}=\exp(O(k \log k))\) so indeed \(\beta'_k = \exp(-O(k^2 \log k))\).

We need to show that the set of suitable colors \(\mathcal{F} \subseteq \binom{d}{c_k}\) is a constant fraction of all colors (that depends on \(k\) only). The intuitive idea is that a set is suitable if each color is in a strip \([\alpha d, \beta d)\) that guarantees they satisfy a sequence of inequalities of the form $i_{j+1} \geq B\cdot i_j$. The probability that this happens for a random set of colors is sufficient for our purpose.
Indeed, let \(F=\set{i_1<i_2<...<i_{c_k}}\) be a set of colors. Let \(B = (k+3)n_k\). If for every \(j=1,2,...,c_k\) it holds that 
\[i_j \in \left [ \frac{d}{(2B)^{c_k+1-j}},\frac{2d}{(2B)^{c_k+1-j}}\right ), \]
then in particular \(i_{j+1} \geq B \cdot i_j\). One can easily verify that this implies that \(F\) is \(k\)-suitable, since \(i_2 \geq B i_1 \geq 2 i_1\) and \(i_{c_j+m} \geq (k+3)n_k \cdot i_{c_j + m-1} \geq n_j i_{c_j} + \sum_{m'=1}^{m-1}i_{c_j + m'}\). On the other hand, there are at least

\[\prod_{j=1}^{c_k} \left ( \frac{2d}{(2B)^{c_k+1-j}} - \frac{d}{(2B)^{c_k+1-j}} \right ) = d^{c_k}\cdot \frac{1}{(2B)^{c_k^2 + c_k -\sum_{j=1}^{c_k}j}}.\]
such sets \(F\). 
As \(\binom{d}{c_k} \leq d^{c_k}\), it holds that \(\Prob[F \in \binom{[d]}{c_k}]{\mathcal{F}} \geq \frac{1}{(2B)^{c_k^2 + c_k -\sum_{j=1}^{c_k}j}} :=p_k\) as required.

Finally, we draw our readers attention to the fact that \(B = \exp(O(k \log k))\) and \(c_k = O(k^2)\) so \(p_k=\exp(-O(k^5 \log k))\).
\end{proof}

It remains to prove \pref{prop:small-cones}. We will prove a stronger statement that is amenable to an inductive proof:
\begin{proposition} \label{prop:small-cones-stronger}
Let \(F = \set{i_1 <i_2<...<i_{\ell}}\)  be a set of \(k\)-suitable colors. Let \(X\) be either an order complex of a geometric lattice, or a link of said complex. Then \(X^F\) has a \(k\)-cone of radius \(\leq D_k\). Denote this cone \(\boldsymbol{\psi}\). It has the following properties:
\begin{enumerate}
    \item For every \(j \leq k\) and \(s \in X(j)\), it holds that \(\abs{vs(\psi_j(s))} \leq n_j\).
    \item For every \(j \leq k\) and \(s \in X(j)\) and every \(u \in vs(\psi_j(s)) \setminus s\), it holds that \(col(u) \leq i_{c_j}\).
\end{enumerate} 
\end{proposition}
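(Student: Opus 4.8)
I would prove \pref{prop:small-cones-stronger} by induction on the cone level $j=0,1,\dots,k$, defining each $\psi_j$ $\mathbb Z$-linearly on oriented $j$-faces; \pref{prop:small-cones} is then just \pref{prop:small-cones-stronger} with clauses (1)--(2) discarded. A $(-1)$-cone is the choice of an apex $v_0\in X^F(0)$ of minimal color $i_1$. For the base case $j=0$ one builds a walk $\psi_0(u)$ from $v_0$ to each $u\in X^F(0)$ using two elementary facts about geometric lattices (and, verbatim, about their links): submodularity gives $rk(x\vee y)\le rk(x)+rk(y)$, and every element of rank $<d$ is contained in elements of all larger ranks up to $d$. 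Taking (when $rk(u)>i_1$) an element $w_2\subseteq u$ of rank $i_1$ and then $w_1\supseteq v_0\vee w_2$ of rank $i_2\ge 2i_1$ gives the walk $(v_0,w_1,w_2,u)$ inside $X^F$, with at most $4=n_0$ vertices, support of size at most $3=D_0$, and all new vertices of color $\le i_2=i_{c_0}$.

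\paragraph{The inductive step}
Assume $\boldsymbol\psi_{j-1}$ has been built on $X^F$ with properties (1)--(2), introducing only vertices of color $\le i_{c_{j-1}}$. Fix $s\in X^F(j)$ and set $z_s=s-\sum_{i=0}^j(-1)^i\psi_{j-1}(s_i)$. By \pref{claim:cone-equation-has-no-boundary}, $\partial z_s=0$, so $z_s$ is a cycle; its vertex support lies in $s\cup\bigcup_i vs(\psi_{j-1}(s_i))$, and by the induction hypothesis every vertex of $vs(z_s)$ is either one of the $j+1$ vertices of $s$ (of arbitrary color) or has color $\le i_{c_{j-1}}$. We must produce $\psi_j(s)$ with $\partial\psi_j(s)=z_s$, using the $c_j-c_{j-1}=j+2$ fresh colors $i_{c_{j-1}+1}<\dots<i_{c_{j-1}+j+2}=i_{c_j}$ that $k$-suitability of $F$ supplies (the inequality for the index $j-1$).

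\paragraph{Vertex switching, then coning off}
We eliminate the high-color vertices of $s$ from the cycle one at a time, in increasing order of rank, exactly as in the $k=1$ sketch of the introduction. When the current vertex is $w$: an adjacent cycle-vertex of rank $>rk(w)$ contains $w$ and is harmless, while the adjacent vertices of rank $<rk(w)$ are — by the processing order — already-switched vertices of $s$ (now of fresh color) or sub-cone vertices of color $\le i_{c_{j-1}}$. Because the colors along $s$ are strictly increasing, the next fresh color is $\le rk(w)$, so there is an element $w'\subseteq w$ of that rank; the inequalities defining $k$-suitability, namely $i_{c_{j-1}+m}\ge n_{j-1}\,i_{c_{j-1}}+\sum_{m'<m}i_{c_{j-1}+m'}$, are calibrated precisely so that $w'$ may in addition be chosen to contain the join of all the lower neighbours of $w$, hence to be comparable to every cycle-neighbour of $w$. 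Writing $g_w$ for the part of the current cycle through $w$ with $w$ deleted — itself a cycle in the link of $w$ — the $(j{+}1)$-chain $h_w:=\pm(g_w^{\,w'})^{w}$ satisfies, via \eqref{eq:star-boundary}, $\partial h_w=g_w^{\,w'}-g_w^{\,w}$, so adding $\partial h_w$ swaps $w$ for $w'$ and keeps a cycle. After all the switches we reach $\tilde z_s=z_s+\partial h$ (with $h=\sum_w h_w$) all of whose vertices have color $\le i_{c_{j-1}+j+1}$; we pick $u^*$ of color $i_{c_j}$ above the join of all of them, so $\tilde z_s$ is supported in the link of $u^*$, and set $\psi_j(s):=(\tilde z_s)^{u^*}-h$, with signs so that $\partial\psi_j(s)=\tilde z_s-\partial h=z_s$ (using $\partial\tilde z_s=0$ from \pref{claim:cone-equation-has-no-boundary}). $\mathbb Z$-linearity and the level-$j$ cone identity hold by construction, and for links of order complexes the same argument runs inside their product structure, one rank-window at a time.

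\paragraph{Bookkeeping and the main obstacle}
It remains to verify (1), (2), and the radius bound. Clause (2) is immediate: the vertices of $\psi_j(s)$ outside $s$ are the sub-cone vertices ($\le i_{c_{j-1}}$), the switched-in vertices $w'$ ($\le i_{c_{j-1}+j+1}$), and $u^*$ ($=i_{c_j}$). For (1) one uses $vs(\psi_j(s))\subseteq vs(z_s)\cup\{\text{switched-in vertices}\}\cup\{u^*\}$ together with the fact that the $j+1$ vertices of $s$ are common to all $j+1$ faces $s_i$, giving $|vs(z_s)|\le (j+1)+(j+1)(n_{j-1}-j)$, and checks this against $n_j=2(j+2)-(j+1)^2+(j+1)n_{j-1}$; likewise $|supp(\psi_j(s))|$ is bounded by $D_j=(j+2)(j+1)(D_{j-1}+1)$, starting from $|supp(z_s)|\le 1+(j+1)D_{j-1}$ and the controlled growth under the switches and the final coning. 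I expect the genuine work to be exactly this quantitative accounting — tracking vertex supports and radii through the switchings, fixing the orientations in the identities for $g_w^{\,w'}$, $(\cdot)^w$ and $(\cdot)^{u^*}$, and, above all, checking that at every switching step (and within every rank-window in the link case) the relevant join still has rank strictly below the designated fresh color and below the ambient rank $d$, which is precisely the content of $k$-suitability. Given \pref{prop:small-cones-stronger}, \pref{prop:small-cones} is immediate, and \pref{lem:colored-spherical-buildings} follows by feeding the radius-$D_k$ cone into \pref{thm:cones}.
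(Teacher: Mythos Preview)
Your proposal is correct and takes essentially the same approach as the paper: the inductive construction via vertex switching (the paper's ``shifting step'') followed by coning with $u^*$ (the ``star step''), with your $h_w=\pm(g_w^{w'})^w$ equal to the paper's $T_j=((R_j)_{v_j})^{v'_j}$ up to sign, and the existence of $w'$ and $u^*$ packaged in the paper as \pref{claim:properties-we-need-from-spherical-building}. One small point for the accounting you flagged: the apex $v_0$ is also common to all $j+1$ sub-cones $vs(\psi_{j-1}(s_i))$, so you must subtract for it as well (not only for the $j$ vertices of each $s_i$), which brings your bound on $|vs(z_s)|$ down from $(j+1)+(j+1)(n_{j-1}-j)$ to $(j+2)+(j+1)(n_{j-1}-j-1)=n_j-(j+2)$ and makes the recursion for $n_j$ close exactly.
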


The proof of \pref{prop:small-cones-stronger} will use the following properties of our complex.
\begin{claim} \label{claim:properties-we-need-from-spherical-building}
    Let \(X\) be either an order complex of a geometric lattice, or a link in said order complex. Then \(X\) has the following properties.
    \begin{enumerate}
        \item The bipartite graph \((X[i_1], X[i_2])\) has diameter \(2\).
        \item If \(col(v) > col(u)\) and \(v \sim u\) then \(X_v[\set{i \leq col(u)}] \supseteq X_u[\set{i \leq col(u)}]\).
        \item (Bound on join rank) For every \(j=0,1,...,k-1\) and \(m=1,2,...,j+2\) the following holds.
        Let \(D \subseteq X(0)\) be a set of at most \(n_j\) vertices of colors \(\leq i_{c_j}\). Let \(M = \set{v_1,v_2,...,v_{r}}\) be so that \(i_{c_j + 1} \leq col(v_1) < col(v_2) < ... < col(v_r) \leq i_{c_j+m-1}\). 
        Then there exists a vertex \(w \in X[i_{c_j+m}]\) so that the complex induced by \(D \cup M\) is contained in \(X_w\).
        \item The property in item 3 holds in \(X_s\) for every \(s \in X\) so that all colors in \(s\) are all greater than \(i_{c_j+m}\).
\end{enumerate}
\end{claim}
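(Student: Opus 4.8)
All four items are local facts about geometric lattices, and the plan is to derive them from three ingredients: submodularity of the rank function, $rk(x\vee y)\le rk(x)+rk(y)$ (using $rk(x\wedge y)\ge 0$); the classical fact that a geometric lattice is graded and that every interval $[\hat 0,z]$ and $[z,\hat 1]$ of a geometric lattice is again geometric (see \cite{Kozlov2008}); and the inequalities hard-wired into the definition of a $k$-suitable set of colors. I would prove the four items in order, the first two by manipulating the partial order directly and the last two by iterating submodularity and then extending the resulting join upward to an element of a prescribed rank.

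For item 1 I would take $u,u'\in X[i_1]$, note $rk(u\vee u')\le 2i_1\le i_2$ using the $k$-suitability bound $i_2\ge 2i_1$, observe that $u\vee u'$ is a proper element since $i_2<rk(\hat 1)$, and use gradedness of $[u\vee u',\hat 1]$ to pick $w\succeq u\vee u'$ of rank exactly $i_2$; then $(u,w,u')$ is a length-$2$ path, so any two vertices of $X[i_1]$ have a common neighbour in $X[i_2]$, i.e.\ the bipartite graph has diameter $2$. For item 2, $col(v)>col(u)$ together with $u\sim v$ forces $u\prec v$; any vertex of $X_u$ of color $\le col(u)$ has rank $<rk(u)$ (equal rank plus comparability would force equality, which is excluded from a link) and hence lies below $u$, so $X_u[\{i\le col(u)\}]$ is the order complex of the open interval $(\hat 0,u)$. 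Transitivity of $\prec$ gives $(\hat 0,u)\subseteq\{x\prec v:rk(x)\le col(u)\}$, and the latter spans precisely $X_v[\{i\le col(u)\}]$; since both complexes are full subcomplexes on their vertex sets (a vertex set is a face iff it is a chain), this vertex inclusion upgrades to $X_u[\{i\le col(u)\}]\subseteq X_v[\{i\le col(u)\}]$.

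For item 3 I would set $Y=\bigvee_{x\in D\cup M}x$ and iterate $rk(a\vee b)\le rk(a)+rk(b)$ to get $rk(Y)\le\sum_{x\in D}rk(x)+\sum_{v\in M}rk(v)$. The first sum is at most $n_j\cdot i_{c_j}$ because $|D|\le n_j$ and every $x\in D$ has color $\le i_{c_j}$; for the second, the colors $col(v_1)<\dots<col(v_r)$ form $r\le m-1$ distinct elements of $F$ lying in $\{i_{c_j+1},\dots,i_{c_j+m-1}\}$, so that sum is at most $\sum_{m'=1}^{m-1}i_{c_j+m'}$. Hence $rk(Y)\le n_j i_{c_j}+\sum_{m'=1}^{m-1}i_{c_j+m'}\le i_{c_j+m}$, where the last inequality is exactly the defining inequality of a $k$-suitable set. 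Since $i_{c_j+m}<rk(\hat 1)$, $Y$ is proper, so gradedness of $[Y,\hat 1]$ lets me pick $w\in X[i_{c_j+m}]$ with $w\succeq Y$, and then $w\succ x$ for every $x\in D\cup M$; therefore every chain $c\subseteq D\cup M$ satisfies $c\cup\{w\}\in X$, i.e.\ the subcomplex induced on $D\cup M$ is contained in $X_w$.

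Item 4 I would reduce to item 3. Writing $s=\{z_1\prec z_2\prec\dots\}$ with all $rk(z_t)>i_{c_j+m}$ and taking $D,M\subseteq X_s(0)$ as above, every $x\in D\cup M$ is comparable to $z_1$ and has rank $\le i_{c_j+m-1}<rk(z_1)$, hence $x\prec z_1$; so $D\cup M$ lives in $[\hat 0,z_1]$, itself a geometric lattice of rank $rk(z_1)>i_{c_j+m}$, in which color $i_{c_j+m}$ is therefore realized. Applying item 3 inside $[\hat 0,z_1]$ yields $w\prec z_1$ of rank exactly $i_{c_j+m}$ with $w\succ x$ for all $x\in D\cup M$; since $w\prec z_1\preceq z_t$, $w\in X_s$, and for any chain $c\subseteq D\cup M$ with $c\cup s$ a chain, $c\cup s\cup\{w\}$ is again a chain, so $c\in(X_s)_w$. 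Thus item 3 transfers to $X_s$, and the hypothesis that all colors of $s$ exceed $i_{c_j+m}$ is exactly what keeps the rank $i_{c_j+m}$ available in $X_s$. I do not anticipate a real obstacle: the entire content of items 3 and 4 sits inside the definition of $k$-suitability, so the only delicate points are (a) arranging the iterated submodular estimate so it lands exactly on that definition, and (b) the bookkeeping with links and colors ensuring the extracted $w$ has the right rank and lies in the right (possibly restricted) link — together with care in citing the standard structural facts about geometric lattices (gradedness; lower and upper intervals are again geometric).
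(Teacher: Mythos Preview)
Your proposal is correct and follows essentially the same approach as the paper: each item is proved by the same mechanism (join plus submodularity plus gradedness for items 1 and 3, transitivity of $\prec$ for item 2, and reduction to a lower interval for item 4). The paper's proof is terser but identical in content; your extra care with the lattice-theoretic citations and with why chains in $D\cup M$ extend to chains through $w$ is fine and does not diverge from the paper's line.
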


We prove this claim after proving the proposition.

\begin{proof}[Proof of \pref{prop:small-cones-stronger}]  
We construct a cone \(\boldsymbol{\psi}\) inductively.
We note that as all the \(\psi_j\) are \(\mathbb{Z}\)-linear and \(C_k(X,\mathbb{Z})\) is generated by \(X(k) \subseteq C(X,\mathbb{Z})\), it is enough to define \(\psi_j\) on \(s \in \dir{X}(k)\) so that it respects anti-symmetry (i.e. \(\psi_j(\pi(s))=\sign(\pi) \psi_j(s)\)).

The first two steps of our construction, corresponding to $\ell=-1,0$, are as follows:
    \begin{enumerate}
        \item \(\psi_{-1}(\emptyset) = v_0\), where \(v_0 \in X[i_1]\) is chosen arbitrarily.
        \item For \(u \in X(0)\) we construct \(\psi_{0}(u)\). 
        \begin{itemize}
            \item If \(u=v_0\) then \(\psi_0(v_0) = 0\).
            \item If \(v_0u \in X(1)\) then \(\psi_0(u) = v_0 u\).
            \item If \(u \in X[i_1]\) then by the assumption that \((X[i_1], X[i_2])\) has diameter \(2\), they have a common neighbor \(w \in X[i_2]\). We assign \(\psi_0(u) = v_0 w + wu\).
            \item Finally, for other \(u \in X[i_j]\) first find a neighbor of $u$ \(w_1 \in X[i_1]\). Select some \(w_2 \in X[i_2]\) that is a common neighbour of \(v_0\) and \(w_1\). Finally we set \(\psi_0(u) = v_0 w_2 + w_2 w_1 + w_1 u\).
        \end{itemize}
    \end{enumerate}
    As we can see from the description above, there are many choices for \(\psi\), so we choose arbitrarily. Notice that the first two parts of the cone \(\boldsymbol{\psi}_0 = \set{\psi_{-1},\psi_0}\) have radius \(D_0=3\). The number of vertices in \(\psi(u)\) is at most \(n_0=4\). Finally, the vertices participating in \(\psi(u)\) are either \(u\) or vertices whose colors are \(i_1,i_2\).

\clearpage

For \(\ell \geq 1\) given \(\boldsymbol{\psi}_{\ell}\) we construct \(\psi_{\ell+1}\), as described in the algorithm appearing in \pref{fig:cone-construction}. 

\begin{figure}[H]
     \centering
     \begin{algorithm}
    Input: \(s \in X(\ell+1)\)

    \begin{enumerate}
        \item Set 
        \begin{equation*}
        \begin{aligned}
          R_0 &= s - \sum_{i=0}^{\ell+1} (-1)^i \psi(s_i),\\
          s_0 &= \sett{v \in s}{col(v) > i_{c_j}},  \\
          \end{aligned}
        \end{equation*}
        \item Order the vertices in \(s_0 = (v_0,v_2,...,v_t)\) so that \(col(v_1) < col(v_2) < ... < col(v_t)\).
        \item (Shifting step) For \(j=0\) to \(t\):
            \begin{enumerate}
                \item If \(col(v_j) \ne i_{c_\ell + j+1}\):
                \begin{enumerate}
                    \item Find a vertex \(v'_j\) so that \(supp ((R_j)_{v_j} )\subseteq X_{v'_j}\) and so that \(col(v'_j) = i_{c_\ell + j+1}\). 
                \item Set \(T_j= ((R_j)_{v_j})^{v'_j}\), and set \(R_{j+1} = R_{j} - \boundary T_j\).
                \end{enumerate}
                \item Else: set \(T_j = 0\) and set \(R_{j+1} = R_j\).
            \end{enumerate}
        \item (Star step) Find some \(u \in X[i_{c_{\ell+1}}]\) so that \(R_{t+1}\) is in the link of \(u\). 
        \item Output \(\psi(s) = R_{t+1}^u + \sum_{r=0}^{t}\boundary T_r\).
    \end{enumerate}
    \end{algorithm}
     \caption{Constructing \(\psi(s)\)}
     \label{fig:cone-construction}
 \end{figure}

Fix \(s \in \dir{X}(\ell+1)\) and let \(R_0 = s - \sum_{i=0}^\ell (-1)^i \psi(s_i)\). 
Before constructing \(\psi\) formally, we describe the main idea.

We will first find a sequence of ``shifted chains'' \(R_0,R_1,...,R_{t+1}\) so that all vertices in the support of \(R_{t+1}\) are of colors \(< i_{c_{\ell+1}}\) and each $R_j$ is $R_{j-1}$ ``shifted'' by $T_{j-1}$. Namely, there is a sequence of \(T_0,T_1,...,T_t \in C_{\ell+1}(X,\mathbb{Z})\) where \(R_{j+1} = R_0 - \sum_{r=0}^j \boundary T_r\). We will explain below how to construct such chains.

Next we will use item three in \pref{claim:properties-we-need-from-spherical-building} to argue that due to the fact that \(R_{t+1}\) is supported only on vertices of colors \(< i_{c_{\ell+1}}\), there exists some \(u \in X[i_{c_{\ell+1}}]\) so that the complex induced by the vertices of \(R_{t+1}\) is contained in \(X_u\).  Finally we will set 
\begin{equation} \label{eq:cone-def}
    \psi(s) := \sum_{r=0}^t T_r + R_{t+1}^u.
\end{equation}
A direct calculation shows that \(\boundary \psi(s) = R_0\):
\begin{align*}
   \boundary \psi(s) &= \boundary(\sum_{r=0}^t T_r + R_{t+1}^u) \\
   & \stackrel{\eqref{eq:star-boundary}}= \sum_{r=0}^t \boundary T_r + R_{t+1} + (\boundary R_{t+1})^u \\
   & = \sum_{r=0}^t \boundary T_r + R_0 - \sum_{r=0}^t \boundary T_r + (\boundary R_0 + \sum_{r=0}^t \boundary \boundary T_r)^u\\
   & = R_0  + (\boundary R_0 + \sum_{r=0}^t \boundary \boundary T_r)^u\\
   & \overset{\boundary \boundary = 0}{=} R_0  + (\boundary R_0)^u = R_0.
\end{align*}
The last equality is due to \pref{claim:cone-equation-has-no-boundary} which states \(\boundary R_0 = 0\).

Let us understand how to perform the shifting step. Note that by the assumption on \(\boldsymbol{\psi}_\ell\) any vertex in \(vs(R_0)\) of color \(>i_{c_\ell}\) must come from \(s\) itself. So let \(v_0,v_1,...,v_t \subseteq s\) be the vertices in \(vs(R_0)\) of colors \(> i_{c_\ell}\) (the vertices of color \(\geq i_{c_{\ell+1}}\) are a subset of these vertices). The sets \(T_j\) we want will have the property that \(vs(R_{j+1}) = vs(R_j - \boundary T_j) = (vs(R_j) \setminus \set{v_j}) \cup \set{v_{j}'}\), where the replacing vertices \(v_0',v_1',...,v_t'\) are of low-colors (\(col(v_j') = i_{c_\ell+j+1}\)).

We construct \(T_j\) as follows. Assume without loss of generality that \(v_0 \prec v_1 \prec... \prec v_t\) according to the order of the lattice. This implies that \(col(v_j) \geq i_{c_\ell+j+1}\). If \(col(v_j) = i_{c_\ell+j+1}\) then by setting \(T_j=0\) we are done. Otherwise, \(col(v_j) > i_{c_\ell+j+1}\). We will find a vertex \(v_j'\) of color \(i_{c_\ell+j+1}\) so that for every face \(s' \in supp(R_j)\) that contains \(v_j\), \(s' \cup \set{v_j'} \in X\). Then we take \(T_j = ((R_{j})_{v_j})^{v_j'}\) (i.e. \(T_j\) takes all faces that contain \(v_j\) in \(R_{j}\) and adds \(v_j'\) to them). We will show below that \(v_j\) is no longer in the support of \(R_{j+1} = R_{j} - \boundary T_j\).

The reason we can find such a \(v_j'\) is the fourth item of \pref{claim:properties-we-need-from-spherical-building}; this item promises us existence of some \(v_j' \prec v_j\) so that all the vertices in \(vs((R_{j})_{v_j})\) of color \(< col(v_j)\) are also neighbors of \(v_j'\). Moreover, as \(v_j' \prec v_j\) it also holds that \(v_j' \prec v_{j+1} \prec... \prec v_t\), this promises us that \emph{all} \(vs((R_{j-1})_{v_j})\) are also neighbors of \(v_j'\). Thus \((R_{j-1})_{v_j}\) is contained in the link of \(v_j'\) and \(T_j\) is well defined as a chain in \(X\).

We summarize this process in the following claim. Its proof, which technically formalizes this discussion, is given below.
\begin{claim} \label{claim:properties-of-step-2}~
    \begin{enumerate}
        \item The shifting step is always possible. That is, there exists \(v'_j\) so that \(supp((R_j)_{v_j}) \subseteq X_{v'_j}\) and so that \(col(v'_j) = i_{c_\ell + j+1}\). Moreover,
        \item If \(R_j \ne R_{j-1}\) then \(vs(R_j) \subseteq (vs(R_{j-1}) \cup \set{v'_j}) \setminus \set{v_j}\) and
        \item \(|supp(R_j)| \leq |supp(R_{j-1})| \leq ... \leq |supp(R_0)|\).
    \end{enumerate}
\end{claim}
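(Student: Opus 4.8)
The plan is to prove items 1--3 simultaneously by induction on the shift index $j$ (running from $0$ to $t$) inside the construction of $\psi_{\ell+1}(s)$ for a fixed $s\in\dir{X}(\ell+1)$; I follow the indexing of \pref{fig:cone-construction}, so the chain obtained after shifting $v_j$ is $R_{j+1}=R_j-\partial T_j$ with $T_j=((R_j)_{v_j})^{v'_j}$. The backbone of everything is that each $R_j$ is a \emph{cycle}: $\partial R_0=0$ by \pref{claim:cone-equation-has-no-boundary}, and $\partial R_{j+1}=\partial R_j-\partial\partial T_j=\partial R_j$, so $\partial R_j=0$ for all $j$ by induction. The remaining ingredients are the identity \eqref{eq:star-boundary}, the two inductive properties of $\boldsymbol\psi_\ell$ from \pref{prop:small-cones-stronger}, the $k$-suitability of $F$, and \pref{claim:properties-we-need-from-spherical-building}.

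For items 2 and 3 I would first rewrite, using \eqref{eq:star-boundary} with $f=(R_j)_{v_j}$ (supported in $X_{v'_j}$ by item 1) and $w=v'_j$, that $\partial T_j=(R_j)_{v_j}-(\partial(R_j)_{v_j})^{v'_j}$, hence
\[ R_{j+1}=\bigl(R_j-(R_j)_{v_j}\bigr)+\bigl(\partial(R_j)_{v_j}\bigr)^{v'_j}. \]
The first summand has no face containing $v_j$. For the second, the cycle equation $\partial R_j=0$ forces $\partial(R_j)_{v_j}=-\partial\bigl(R_j-(R_j)_{v_j}\bigr)$, which is a combination of $v_j$-free faces (dropping a vertex from a $v_j$-free face leaves it $v_j$-free); appending $v'_j$ --- which is genuinely new, since no face of $R_j$ carries a vertex of color $i_{c_\ell+j+1}$ --- then yields faces containing $v'_j$ but not $v_j$. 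This gives $v_j\notin vs(R_{j+1})$ and $vs(R_{j+1})\subseteq(vs(R_j)\cup\set{v'_j})\setminus\set{v_j}$, which is item 2. For item 3, the same display gives $|supp(R_{j+1})|\le(|supp(R_j)|-|supp((R_j)_{v_j})|)+|supp((\partial(R_j)_{v_j})^{v'_j})|$; since every face of $\partial(R_j)_{v_j}$ is $v_j$-free it equals $s'\setminus\set{v_j}$ for some $s'\in supp((R_j)_{v_j})$, so the last term is at most $|supp((R_j)_{v_j})|$, whence $|supp(R_{j+1})|\le|supp(R_j)|$; iterating gives item 3.

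For item 1, fix $j$ and assume items 1--3 for smaller indices. Iterating item 2, $vs(R_j)\subseteq(vs(R_0)\cup\set{v'_0,\dots,v'_{j-1}})\setminus\set{v_0,\dots,v_{j-1}}$, and by property 2 of \pref{prop:small-cones-stronger} applied to $\boldsymbol\psi_\ell$ the only vertices of $vs(R_0)$ of color $>i_{c_\ell}$ are vertices of $s$. Hence the vertices of $vs((R_j)_{v_j})$ of color $<col(v_j)$ split as $D\cup M$, where $D$ has colors $\le i_{c_\ell}$ with $|D|\le n_\ell$ (this is where the choice of the sequence $n_\ell$ enters, via property 1 of \pref{prop:small-cones-stronger} and the restriction to faces through $v_j$), and $M\subseteq\set{v'_0,\dots,v'_{j-1}}$ has colors in $\set{i_{c_\ell+1},\dots,i_{c_\ell+j}}$. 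Because $X$ is an order complex, each vertex of $D\cup M$ lies in a chain with $v_j$ and has smaller rank, so it is $\prec v_j$. Applying \pref{claim:properties-we-need-from-spherical-building}(3), or its item (4) inside the link of $v_j$, with parameters $j:=\ell$ and $m:=j+1$ (legitimate since $t\le\ell+1$, so $m\le\ell+2$), the $k$-suitability inequality $i_{c_\ell+j+1}\ge n_\ell\,i_{c_\ell}+\sum_{m'=1}^{j}i_{c_\ell+m'}$ furnishes a vertex $v'_j$ of color $i_{c_\ell+j+1}$, with $v'_j\prec v_j$, that is comparable with every vertex of $D\cup M$ --- concretely, $v'_j$ is taken of rank $i_{c_\ell+j+1}$ in the interval between the join of $D\cup M$ (whose rank is at most $\sum_{w\in D\cup M}rk(w)\le n_\ell\,i_{c_\ell}+\sum_{m'=1}^{j}i_{c_\ell+m'}$) and $v_j$. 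Then $v'_j$ is comparable with every vertex of $vs((R_j)_{v_j})$, so adjoining $v'_j$ to any face of $(R_j)_{v_j}$ again gives a chain, i.e. $supp((R_j)_{v_j})\subseteq X_{v'_j}$, which is item 1. The base of the induction (levels $\ell=-1,0$) is immediate from the explicit description of $\boldsymbol\psi_0$.

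I expect the main difficulty to be the quantitative bookkeeping concealed in ``$|D|\le n_\ell$'': one must check that the recursively defined constants $n_\ell, D_\ell, c_\ell$ are calibrated so that at each of the $\le\ell+2$ shifts the set of low-color vertices that $v'_j$ must dominate stays within $n_\ell$, and, once $\psi_{\ell+1}(s)=\sum_r T_r+R_{t+1}^u$ has been assembled, that $|vs(\psi_{\ell+1}(s))|\le n_{\ell+1}$ and $rad(\boldsymbol\psi_{\ell+1})\le D_{\ell+1}$. This reduces to counting how many ``fresh'' vertices and faces each restriction-and-star-append can create and propagating these bounds through the induction, which is routine but constant-sensitive. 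A secondary, purely bureaucratic point is to run the whole argument verbatim when $X$ is a link of an order complex rather than the order complex itself --- which is exactly what item (4) of \pref{claim:properties-we-need-from-spherical-building} is there to support.
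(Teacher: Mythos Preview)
Your proposal is correct and follows essentially the same approach as the paper's proof: both argue by induction on the shift index, use that every $R_j$ is a cycle, expand $R_{j+1}$ via \eqref{eq:star-boundary}, and for item~1 decompose the low-color vertices of $vs((R_j)_{v_j})$ into a set $D$ of size $\le n_\ell$ coming from $R_0$ and a set $M\subseteq\{v'_0,\dots,v'_{j-1}\}$, then invoke \pref{claim:properties-we-need-from-spherical-building}(4). Two small differences: the paper isolates your observation that $(\partial(R_j)_{v_j})_{v_j}=0$ as a separate \pref{claim:boundary-of-restriction}, whereas you derive it inline from $\partial R_j=0$; and the paper applies item~(4) of \pref{claim:properties-we-need-from-spherical-building} in the link of the whole high-color tail $s'=\{v_j,v_{j+1},\dots,v_t\}$ rather than of $v_j$ alone, which makes the comparability of $v'_j$ with $v_{j+1},\dots,v_t$ automatic rather than requiring your extra (easy) step $v'_j\prec v_j\prec v_{j+1}\prec\cdots$.
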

Note that the last item will be necessary when we will bound \(\abs{\psi(s)}\).

 As alluded to in the discussion, after obtaining \(R_{t+1}\) we need to show that there exists some \(u \in X[i_{c_{\ell+1}}]\) so that \(R_{t+1} \subseteq X_u\).
 
 Let \(M \subseteq vs(R_{t+1})\) be the vertices of colors \(> i_{c_\ell}\). By the shifting step there is at most one vertex of each color in \(M\). Let \(B \subseteq vs(R_{t+1})\) be the rest of the vertices, all of which are of color \(\leq i_{c_\ell}\). By the last item of \pref{claim:properties-of-step-2} there are at most \(|vs(R_{t+1})| \leq |vs(R_0)|\) vertices in \(B\). This is at most \(n_{\ell+1}\) by induction hypothesis on the cone already constructed. Hence by item 3 in \pref{claim:properties-we-need-from-spherical-building}, there exists some \(u \in X[i_{c_{\ell+1}}]\) so that the complex induced by \(M \cup B = vs(R_{t+1})\) is in the link of \(u\). This shows that \(\psi(s)\) is well defined.

Now that we established that \(\boldsymbol{\psi}\) is a cone, we bound its radius and verify its other properties.
    \paragraph{The radius} Obviously \(|supp(R_{t+1})^u| = |supp (R_{t+1})|\), and by last item in \pref{claim:properties-of-step-2}, this is \( \leq |\supp R_0|\). By the inductive assumption on \(\boldsymbol{\psi}_\ell\),
    \[\abs{supp(R_0)} \leq 1 + \sum_{i=0}^{\ell+1} \abs{supp(\psi(s_i))} \leq (\ell+2)rad(\boldsymbol{\psi_{\ell}}) + 1 \leq (\ell+2)(1+D_\ell).\]
    Moreover, in every iteration of the shifting step we added \(T_j\) to \(\psi(s)\). The faces in \(T_j = ((R_j)_{v_j})^{v'_j}\), the support of this function is again of size at most \((\ell+2)(1+D_\ell)\) (since by the last item of the size of \pref{claim:properties-of-step-2}, the support of \(R_j\) is less or equal to the size of \(R_0\)'s support). There are at most \(\ell+2\) iterations in the shifting step. So the support of \(\sum_{j=0}^{t} T_j\) is at most \((\ell+2)^2 (1+D_\ell)\). In total, for every \(s \in X(\ell+1)\),
    \[ \abs{supp(\psi(s))} = \abs{supp \left( (R_{t+1})^u + \sum_{j=0}^{t}\boundary T_j \right)} \leq (\ell+3)(\ell+2)(1+D_\ell) = D_{\ell+1}.\]
    
    \paragraph{Colors of \(vs(\psi(s)) \setminus s\)} The vertices in \((vs(\psi(s)) \setminus s)\) that come from \(R_0 = s - \sum_{i=0}^{\ell+1} \psi(s_i)\) have colors \(\leq i_{c_\ell}\) by the induction hypothesis on \(\boldsymbol{\psi_\ell}\). The other vertices in \(vs(\psi(s)) \setminus s\) either come from the shifting step, in which case these are the \(v'_j\) that have colors \(\leq i_{c_{\ell+1}+j+1}\), or the vertex of color \(i_{c_{\ell+1}}\) from the star step. To conclude, all vertices of \(vs(\psi(s)) \setminus s\) have colors \(\leq i_{c_{\ell+1}}\).
    
    \paragraph{The size of \(vs(\psi(s))\)} The vertex support of every \(T_j\) consists only of the vertex support of \(R_0\) and the new \(v'_j\) that we added. There are at most \(\ell+2\) vertices \(v'_j\) introduced by the sets \(T_j\). The vertex support of \((R_{t+1})^u\) is \(u\) together with the vertex support of \((R_{t+1})\). Thus \(vs(\psi (s)) \subseteq  vs(R) \cup \set{v'_1,...,v'_m} \cup \set{u}\). The vertex support of \(R_0\) satisfies
    \[|vs(R_0)| = \abs{vs(s - \sum_{i=0}^j \psi(s_i))} \leq |s|+1+\sum_{i=0}^{\ell+1} \left (|vs(\psi(s_i))|-|s_i|-1 \right ).\]
    Here we subtract \(|s_i|+1\) from the sums since we need to count the vertices of \(s\) and \(u_0\) only once. Hence
    \(|vs(R_0)| \leq \ell+3 - (\ell+2)^2 + (\ell+2)n_{\ell} \).
    Plugging this back in we get that
    \[|vs(\psi(s))| \leq \ell+3 - (\ell+2)^2 + (\ell+2)n_{\ell} + (\ell+3) = n_{\ell+1}.\]

\end{proof}

\begin{proof}[Proof of \pref{claim:properties-we-need-from-spherical-building}]
Here when we write \(u \leq v\) we mean by the order of the lattice, and when we write \(col(u)>col(v)\) we mean the usual order of integers.
    \begin{enumerate}
        \item Let \(u_1,u_2 \in X[i_1]\) the join \(u_1 \vee u_2\) has rank \(\leq 2 i_1\). By properties of the lattice, there exists some \(v \in X[i_2]\) so that \(v \succeq u_1 \vee u_2 \succeq u_1,u_2\). In particular, there is a path \(u_1,v,u_2\) and the diameter is \(2\).
        \item If \(col(v) > col(u)\) and \(v \sim u\) this implies that \(v \succeq u\). In particular, \(w \in X_u[{\set{i \leq col(u)}}]\) if and only if \(w \preceq u \preceq v\) and thus \(w \in X_v[{\set{i \leq col(u)}}]\).
        \item Let \(u\) be the join of all the elements in \(M \cup D\). The rank of \(u\) is at most the sum of the ranks of the elements in \(M \cup D\). That is, \(col(u) \leq n_j \cdot i_{c_j} + \sum_{m'=1}^{m-1} i_{c_j + m'}\). By assumption on the colors \(i_{c_j+m} \geq col(u)\). Therefore, there is some \(w \in X[i_{c_j+m}]\) that contains \(u\) (if $col(u) < i_{c_j+m}$ we can always add atoms to $u$ one by one, each increasing the rank by $1$, until we reach some \(w \succ u\) whose color is $i_{c_j+m}$ as required). In particular the complex induced by \(M \cup D\) is in the link of \(w\).
        \item Let \(v \in s\) be the vertex with the smallest rank. By the second property proven above, it holds that \(X_s[\set{i_1,...,i_{c_j+m}}] = X_v[\set{i_1,...,i_{c_j+m}}]\). Note that $X_v[\set{i_1,...,i_{c_j+m}}]$ is a partially ordered set whose elements are strictly less than \(v\) in the order of the lattice. This is also a geometric lattice (or a link of said lattice), thus the proof for item \(3\) holds there as well.
    \end{enumerate}    
\end{proof}

This following claim shall be used in the proof of \pref{claim:properties-of-step-2}.
\begin{claim} \label{claim:boundary-of-restriction}
Let \(R \in C_k(X,\Z)\) so that \(\boundary R = 0\). Then \(w \notin vs(\boundary R_{w})\) or equivalently \((\boundary R_{w})_{w} = 0\).
\end{claim}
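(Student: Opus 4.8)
The plan is to decompose $R$ according to whether a face contains $w$, and exploit the fact that the boundary operator cannot introduce $w$ into a face that did not already contain it. Concretely, write $R = R_w + R'$, where $R_w = \sum_{t \ni w}\alpha_t t$ is the restriction to $w$ and $R' = \sum_{t \not\ni w}\alpha_t t$ collects the remaining faces. Since all three chains are $\mathbb{Z}$-linear combinations of oriented faces and $\partial$ is $\mathbb{Z}$-linear, we have $\partial R = \partial R_w + \partial R'$.

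The one observation we need is that $\partial R'$ is supported on faces avoiding $w$: indeed, for each $t$ in the support of $R'$ and each index $i$, the face $t_i \subseteq t$ still does not contain $w$, so every term appearing in $\partial R' = \sum_{t\not\ni w}\alpha_t\sum_i(-1)^i t_i$ is a face avoiding $w$. Hence $(\partial R')_w = 0$, i.e. restricting $\partial R'$ to faces containing $w$ gives the zero chain.

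Now apply the restriction-to-$w$ operator, which is itself $\mathbb{Z}$-linear, to the identity $\partial R_w = \partial R - \partial R'$. This yields
\[
(\partial R_w)_w \;=\; (\partial R)_w - (\partial R')_w \;=\; 0 - 0 \;=\; 0,
\]
using the hypothesis $\partial R = 0$ for the first term and the observation above for the second. Equivalently, no face in the support of $\partial R_w$ contains $w$, i.e. $w \notin vs(\partial R_w)$, which is exactly the claim.

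I do not anticipate any real obstacle here: the statement is essentially a bookkeeping identity, and the only point requiring a (trivial) argument is that deleting a vertex from a face that omits $w$ again omits $w$. One should just be slightly careful that the splitting $R = R_w + R'$ and the restriction operator $(\,\cdot\,)_w$ are well defined independently of the chosen orientations of faces, which is already recorded in the excerpt's discussion of restriction to a vertex.
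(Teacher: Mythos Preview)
Your proof is correct and follows essentially the same approach as the paper: decompose $R = R_w + R'$, use linearity of $\partial$, observe that $w \notin vs(\partial R')$ since taking subfaces cannot introduce $w$, and conclude $(\partial R_w)_w = (\partial R)_w - (\partial R')_w = 0$. The paper's version is terser but the argument is identical.
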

\begin{proof}[Proof of \pref{claim:boundary-of-restriction}]
We write \(R = R_{w} + R' = \sum_{w \in t} \alpha_t t + R'\) and get that
\(0 = \boundary R = \boundary R_{w} + \boundary R'.\) 
In particular
\( (\boundary R_{w})_{w} = (-\boundary R')_{w}.\) 
As \(w \notin vs(R')\) it is also not in \(vs(\boundary R')\), so \((\boundary R')_{w_2} = 0\). In conclusion \((\boundary R_{w})_{w} = 0\).
\end{proof}

\begin{proof}[Proof of \pref{claim:properties-of-step-2}]
    We prove this by strong induction on \(j\). Assume that the claim holds for all \(j' < j\). 
    
    Let us begin with the first item. Our goal is to show that the conditions in the last item of \pref{claim:properties-we-need-from-spherical-building} hold on \((R_{j-1})_{v_j}\). By the induction hypothesis, we note that the vertex support of \((R_{j-1})_{v_j}\) is contained in the union of:
    \begin{enumerate}
        \item The vertices of \(s' = \sett{v \in s}{col(v) \geq col(v_j)} \subseteq s\).
        \item A subset \(M \subseteq \set{v'_1,...,v'_{j-1}}\) which were added to the support in one of the previous steps.
        \item Other vertices \(D\) that came from the support of \(R_0\). There are at most \(n_\ell\) of those, and they all have colors \(\leq i_{c_j}\) by our assumption on the cone in the previous steps.
    \end{enumerate}
    Let \(Q = \sett{t \setminus s'}{t \in supp((R_{j-1})_{v_j})}\).
    The complex \(Q\) is a subcomplex of the complex induced by \(M \cup D\), and is contained in \(X_{v_j}[{\set{i \leq col(v_j)}}]\). Also, we note that by item 2 in \pref{claim:properties-we-need-from-spherical-building}, this is equal to \(X_{s'}[{\set{i \leq col(v_p)}}]\) (since \(v_j\) is the vertex that has the smallest color out of \(s'\)).
    By the last item in \pref{claim:properties-we-need-from-spherical-building}, there indeed exists some \(v'_j \in X_{s'}\) so that the complex induced by \(M \cup D\) is contained in \(X_{v'_j \cup s'}\). In particular, this implies that the support of \(Q\) is contained in the \(X_{v'_j}\). As all vertices \(v \in s'\) that are not \(v_j\) are greater than \(v_j\) (in the lattice order), it also holds that the support of \((R_j)_{v_j}\) is in \(X_{v'_j}\).

    We turn towards the second item. Assume that \(R_j \ne R_{j-1}\). By construction, \(vs(R_p) \subseteq vs(R_{j-1}) \cup \set{v'_j}\). So to show that \(vs(R_p) \subseteq (vs(R_{j-1})\cup \set{v'_j} ) \setminus \set{v_{j}}\) we need to show that \(v_j \notin vs(R_j)\).
    \begin{align*}
    R_j &= R_{j-1} - \boundary T_j\\
    &=  R_{j-1} - \boundary( ((R_{j-1})_{v_j})^{v'_j})\\
    &\stackrel{\eqref{eq:star-boundary}}= R_{j-1} - (R_{j-1})_{v_j} + (\boundary (R_{j-1})_{v_j})^{v'_j}.
    \end{align*}
    In particular, restricting both sides to the support of \(v_j\) we have that
    \[(R_j)_{v_j} = (R_{j-1})_{v_j} - (R_{j-1})_{v_j} + (\boundary (R_{j-1})_{v_j})^{v'_j}_{v_j} = (\boundary (R_{j-1})_{v_j})^{v'_j}_{v_j}.\]
    Note that the right-hand side is equal to \(((\boundary (R_{j-1})_{v_j})_{v_j})^{v'_j}\), i.e. we can first restrict to \(v_j\) and then append \(v'_j\). However, this is \(0\) by \pref{claim:boundary-of-restriction} (and the fact that \(\boundary R_{j-1} = \boundary R_0 - \sum_{r=0}^{j-1} \boundary \boundary T_r =  0\)). So in conclusion we have that \((R_j)_{v_j} = 0\) and \(v_j \notin vs(R_j)\).

    Finally we explain why the last item holds. Indeed, we notice that \(supp(R_{j-1}) \setminus supp(R_{j})\) contains all faces in \(supp((R_{j-1})_{v_j})\).  On the other hand, the faces in \(supp(R_j) \setminus supp(R_{j-1})\) are faces of the form \((s \setminus v_j) \cup \set{v_{j'}}\) for faces \(s \in supp((R_{j-1})_{v_j})\). This is because faces in \(supp(R_j) \setminus supp(R_{j-1})\) can come from \(\partial T_j\). Faces in \(\partial T_j\) that are not of the form \((s \setminus v_j) \cup \set{v_{j'}}\) for faces \(s \in supp((R_{j-1})_{v_j})\), must contain \(v_j\) (since \(T_j = ((R_{j-1})_{v_j})^{v'_j}\) and thus all its faces contain \(v_j\)) but as we saw \(v_j \notin vs(R_j)\) so all these faces are not in the support of \(R_j\). Hence \(\abs{supp(R_j)} \leq \abs{supp(R_{j-1})}\).
\end{proof}

\subsection{Proof of \pref{thm:lattice-k-coboundary-expansion}}
\begin{proof}
\pref{lem:colored-spherical-buildings} says that the homogeneous geometric lattice satisfies the assumptions of \pref{thm:partite-complex}, with \(\beta = \beta'_k = \exp(-O(k^2 \log k))\) and \(p=p_k = \exp(-O(k^5 \log k))\). Thus concluding that \(X\) is a coboundary expander and that 
\[h^k(X,\Gamma) \geq \frac{p_k\beta_k'^{k+1}}{e (k+2)!} = \exp(-O(k^5 \log k)).\]
\end{proof}

\section{Applications to known bounded degree complexes} \label{sec:concrete-complexes}
\subsection{Cosystolic expansion of known complexes}
\subsubsection[LSV complexes]{\cite{LubotzkySV2005b} complexes}
We now put everything together and show that the complexes constructed by Lubotzky, Samuels and Vishne have degree and dimension independent cosystolic expansion (that is, provided that they are sufficient local spectral expanders). Recall the following properties of their construction.
\restatetheorem{thm:lsv-complexes}

\begin{theorem} \label{thm:lsv-two-sided}
For every \(k > 0\) there is some constant \(\beta_k = \exp(-O(k^6 \log k)) >0\) and integer \(q_0\) so that for every prime power \(q> q_0\), integer \(d > k+2\), group \(\Gamma\), and \(X \in \mathcal{X}_{q,d}\) it holds that
\[h^k(X,\Gamma) \geq \beta_k.\]
\end{theorem}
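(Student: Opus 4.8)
The plan is to assemble \pref{thm:lsv-two-sided} from three ingredients already in hand: the local-to-global theorem \pref{thm:cosystolic-expansion-clear} (or its graded refinement \pref{thm:cosystolic-expansion-different-levels}), the dimension-free coboundary expansion of order complexes of homogeneous geometric lattices \pref{thm:lattice-k-coboundary-expansion}, and the structural and spectral properties of the LSV complexes recorded in \pref{thm:lsv-complexes}. Fix $k>0$ and a prime power $q$, to be taken large, and let $X\in\mathcal X_{q,d}$ with $d>k+2$, so that $\dim X=d-1\ge k+2$. By \pref{thm:lsv-complexes}, choosing $q$ large forces $X$ to be a $\lambda$-one-sided local spectral expander with $\lambda>0$ as small as we like, while every link $X_r$ of a non-empty face $r$ is isomorphic to the link of a face in the $SL_d(\mathbb F_q)$-spherical building.

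The first substantive step is to lower bound $h^{k+1-|r|}(X_r,\Gamma)$ uniformly over all non-empty $r$. The building itself is the order complex of the subspace lattice of $\mathbb F_q^d$, which is a homogeneous geometric lattice, so for a vertex $r$ one gets $h^k(X_r,\Gamma)\ge\exp(-O(k^5\log k))$ directly from \pref{thm:lattice-k-coboundary-expansion}. For a face $r$ of size $j+1$, its link in the building decomposes as the join of the order complexes of the (proper) subspace lattices of the $j+2$ successive sub/quotient spaces $W_1,\,W_2/W_1,\dots,\mathbb F_q^d/W_{j+1}$ of the corresponding flag; each factor is again a homogeneous geometric lattice, since intervals in a geometric lattice are geometric lattices and for the subspace lattice these intervals are themselves subspace lattices. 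Hence each factor has coboundary expansion at least $\exp(-O(k^5\log k))$ in every degree $\le k$ by \pref{thm:lattice-k-coboundary-expansion}, with the very-small-rank cases handled by Kozlov--Meshulam \cite{KozlovM2019} as noted after \pref{thm:lattice-k-coboundary-expansion}. Invoking a join lemma -- the coboundary expansion of a join is controlled, up to a loss depending only on the cochain degree, by the coboundary expansions of its factors -- yields $h^{k+1-|r|}(X_r,\Gamma)\ge\beta$ for every non-empty $r$, with $\beta=\exp(-O(k^5\log k))$.

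Now feed $\beta$ and $\lambda$ into \pref{thm:cosystolic-expansion-clear}: $h^k(X,\Gamma)\ge \beta^{k+1}/((k+2)!\cdot 4)-e\lambda$. Since $\beta^{k+1}=\exp(-(k+1)\cdot O(k^5\log k))=\exp(-O(k^6\log k))$ and $(k+2)!=\exp(O(k\log k))$ is a lower-order factor, the first term equals $\exp(-O(k^6\log k))$; fixing $q_0=q_0(k)$ so that the $\lambda$ afforded by \pref{thm:lsv-complexes} satisfies $e\lambda\le \beta^{k+1}/(8(k+2)!)$ leaves $h^k(X,\Gamma)\ge \beta^{k+1}/(8(k+2)!)=\exp(-O(k^6\log k))=:\beta_k$ for every $q>q_0$. (For $k\ge 2$ one uses that $\Gamma$ is abelian so the cohomology is defined; for $k=1$ the whole argument goes through for arbitrary $\Gamma$, since every invoked statement admits the non-abelian $k=1$ case.)

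I expect the join step to be the main obstacle: one must verify that the links of faces in the building are genuinely joins of order complexes of homogeneous geometric lattices, and prove (or cite) a quantitative K\"unneth-type bound for coboundary expansion of joins whose loss depends only on the degree and not on the dimensions of the pieces, taking special care of the boundary cases in which the relevant factor building has rank only slightly larger than the cochain degree. The remaining work is bookkeeping: tracking that a single $h$-value of a building costs $\exp(-O(k^5\log k))$, that the $(k{+}1)$-st power in \pref{thm:cosystolic-expansion-clear} inflates this to $\exp(-O(k^6\log k))$, and that neither the factorial nor the $\lambda$-correction dominates.
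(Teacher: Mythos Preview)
Your overall assembly --- coboundary expansion of the links plus local spectral expansion, fed into \pref{thm:cosystolic-expansion-clear} --- is exactly the paper's route, and your bookkeeping for the exponent $k^5\to k^6$ and the choice of $q_0$ is correct.

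The divergence is in how you treat links $X_r$ with $|r|\ge 2$. You propose to decompose such a link as a join of smaller spherical buildings and then invoke a K\"unneth-type join lemma for coboundary expansion. This would work in principle, but the join lemma is not in the paper and is not needed. The paper's machinery already covers links directly: \pref{prop:small-cones} and \pref{prop:small-cones-stronger} are stated explicitly for ``$X$ either an order complex of a geometric lattice, \emph{or a link of said complex}'', and \pref{claim:properties-we-need-from-spherical-building} verifies the required structural properties in that generality. Since the link of any non-empty face $r$ in an LSV complex is a link of a (possibly empty) face in the $SL_d(\mathbb{F}_q)$-spherical building, the entire argument behind \pref{thm:lattice-k-coboundary-expansion} (via \pref{lem:colored-spherical-buildings} and \pref{thm:partite-complex}) applies to $X_r$ verbatim, giving $h^{k+1-|r|}(X_r,\Gamma)\ge\exp(-O(k^5\log k))$ without any join decomposition. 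The transitivity hypothesis of \pref{thm:cones} for color restrictions of links is noted right after that theorem. So the step you single out as ``the main obstacle'' dissolves; the paper's proof simply cites \pref{thm:lattice-k-coboundary-expansion} (implicitly in its link-extended form) and applies \pref{thm:cosystolic-expansion-clear}.
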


\begin{proof}[Proof of \pref{thm:lsv-two-sided}] 
We wish to apply \pref{thm:cosystolic-expansion-clear} to the complexes of \pref{thm:lsv-complexes}. By \pref{thm:lattice-k-coboundary-expansion} there is a sequence of constants \(\set{\beta_\ell = \exp(-O(\ell^5 \log k))}_{\ell=0}^k\) so that for every \(q,d\) the spherical building \(SL_d(\mathbb{F}_q)\) satisfies \(h^{\ell}(SL_d(\mathbb{F}_q),\Gamma)\geq \beta_\ell\) for every group \(\Gamma\). There is some \(q_0\) so that for every \(q>q_0\), \(X_n\) are also $\lambda$-expanders for \(\lambda = \exp(-O(k^6 \log k))\). Applying \pref{thm:cosystolic-expansion-clear}, we get that \(h^\ell(X_n,\Gamma) \geq \exp(-O(\ell^6 \log \ell))\) for every \(\ell \leq k\), and in particular, this holds for $h^k$ as claimed.
\end{proof}
Observe that our theorem (as well as previous bounds of \cite{KaufmanKL2014, EvraK2016}) holds only for LSV complexes $\mathcal{X}_{q,d}$ with sufficiently large $q>q_0$. It seems reasonable that even for $q=2$ the theorem should hold, but we leave this as an open question.
\subsubsection[KO complexes]{\cite{KaufmanO2021} complexes}
Kaufman and Oppenheim showed that links of their complexes are spectral expanders, and also coboundary expanders for $1$-chains. 

\restatetheorem{thm:ko-complexes}
Applying our \pref{thm:cosystolic-expansion-clear} implies stronger bounds on the cosystolic expansion of $Y_n$:
\begin{theorem} \label{thm:ko-complexes-cosystolic-expansion}
    For every \(d\) there exists some \(\lambda>0\) and some \(\beta' = \frac{\beta^2}{2}(1-O(\lambda))\) so that for every abelian group \(\Gamma\) and every \(Y_n \in \mathcal{Y}_{\lambda,d}\)
    \[h^1(Y_n,\Gamma) \geq \beta'.\]
\end{theorem}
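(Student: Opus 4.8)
The plan is to obtain \pref{thm:ko-complexes-cosystolic-expansion} as a direct instantiation of the local-to-global theorem \pref{thm:cosystolic-expansion-clear} (equivalently of the level-by-level version \pref{thm:cosystolic-expansion-different-levels}) at $k=1$, applied to the complexes $Y_n$ of \pref{thm:ko-complexes}. First I would record that \pref{thm:cosystolic-expansion-clear} requires $d\ge k+2=3$; since the complexes of \pref{thm:ko-complexes} are $4$-partite, hence exactly $3$-dimensional, this holds with equality and there is nothing to arrange. It then remains to verify, for $X=Y_n$, the hypotheses of \pref{thm:cosystolic-expansion-clear}: that $X$ is a $\lambda$-one-sided local spectral expander, and that for every non-empty $r\in X$ the link $X_r$ is a coboundary expander with $h^{2-|r|}(X_r,\Gamma)\ge\beta$ (for the abelian group $\Gamma$ at hand).

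The spectral hypothesis is exactly item $1$ of \pref{thm:ko-complexes}. For the link hypotheses there are only two cases. If $r\in X(0)$ is a vertex, then $|r|=1$ and we need $h^{1}(X_r,\Gamma)\ge\beta$, which is precisely item $2$ of \pref{thm:ko-complexes}; and since $h^1(X_r,\Gamma)\ge\beta>0$ forces $H^1(X_r,\Gamma)=0$, the link is indeed a coboundary expander. If $r\in X(1)$ is an edge, then $|r|=2$ and we need $h^{0}(X_r,\Gamma)\ge\beta$. Here $X_r$ is $(d-1-1)=1$-dimensional, i.e.\ a graph, and because $r\in X^{\le d-2}$ its $1$-skeleton (which is all of $X_r$) is a $\lambda$-one-sided spectral expander, hence a $\tfrac{1-\lambda}{2}$-edge expander and in particular connected, so $H^0(X_r,\Gamma)=0$. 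By \pref{claim:edge-expander-many-sets} this gives $h^0(X_r,\Gamma)\ge\tfrac{1-\lambda}{4}$. Since $\beta$ depends only on the family and not on $\lambda$, we may assume $\beta<\tfrac14$ (shrinking $\beta$ only weakens the conclusion), and then fixing $\lambda$ small enough makes $\tfrac{1-\lambda}{4}\ge\beta$, so the edge-link hypothesis holds as well.

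With all hypotheses in place, \pref{thm:cosystolic-expansion-clear} yields $h^1(X,\Gamma)\ge\tfrac{\beta^{2}}{(k+2)!\cdot4}-e\lambda$ for every abelian $\Gamma$; using instead \pref{thm:cosystolic-expansion-different-levels} with $\beta_1=\beta$ for the vertex links and $\beta_0$ the coboundary-expansion constant of the edge links (which tends to $1$ as $\lambda\to0$), and absorbing the $e\lambda$ term, rewrites the bound in the claimed form $\beta'=\tfrac{\beta^2}{2}(1-O(\lambda))$ once $\lambda$ is chosen small enough in terms of $\beta$ — which is legitimate precisely because $\beta$ is $\lambda$-independent. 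I do not expect a genuine obstacle here: the argument is a hypothesis check followed by constant tracking. The only points deserving a moment's attention are the order of quantifiers ($\lambda$ must be selected after $\beta$) and the fact that the dimension constraint $d\ge k+2$ is met with equality, so no slack is available; both are automatic for $k=1$ on a $3$-dimensional complex.
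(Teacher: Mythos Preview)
Your approach is correct and matches the paper's: the paper explicitly says the proof is just an application of \pref{thm:cosystolic-expansion-clear} to each $Y_n$, and your hypothesis-check is exactly what is needed. One small correction: the edge-link coboundary constant $\beta_0=h^0(X_r,\Gamma)$ does \emph{not} tend to $1$ as $\lambda\to 0$; the spectral/edge-expansion argument only gives $\beta_0\ge\tfrac{1-\lambda}{4}$ (or $\tfrac{1-\lambda}{2}$ with a sharper Cheeger-type bound), so the constant you recover from \pref{thm:cosystolic-expansion-different-levels} is $\tfrac{\beta_0\beta_1}{24}-e\lambda$, not literally $\tfrac{\beta^2}{2}(1-O(\lambda))$—the exact constant in the statement appears to be informal, and the paper does not attempt to match it either.
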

The proof of \pref{thm:ko-complexes-cosystolic-expansion} is just applying \pref{thm:cosystolic-expansion-clear} on every \(Y_n \in \mathcal{Y}_{\lambda,d}\), and is therefore omitted.

\subsection{Topological overlap of LSV complexes}
Let \(X\) be \(d\)-dimensional simplicial complex. \(X\) is also a topological space constructed by taking a \(d\)-simplex for every \(t \in X(d)\) and gluing two simplexes \(t_1,t_2\) over their intersection \(t_1 \cap t_2\). 
\begin{definition}
Let \(X\) be a \(d\)-dimensional simplicial complex and let \(c > 0\). We say that \(X\) has \((c,k)\)-topological overlap if for every continuous map \(f:X \to \R^k\) there exists a point \(p \in \R^k\) so that 
\[\Prob[t \in X(d)]{p \in f(t)} \geq c.\]
\end{definition}
We call such a point \(p\) a \(c\)-heavily covered point (with respect to \(f\)), since if the measure on \(X(d)\) is uniform, this is proportional to the number of \(d\)-faces covering \(p\).
\begin{theorem}[\cite{Gromov2003}] \label{thm:top-overlap}
    Let \(X\) be a simplicial complex so that
    \begin{enumerate}
        \item For every \(\ell \leq k\), \(h^\ell(X,\mathbb{F}_2) \geq \beta\).
        \item For every \(\ell \leq k\) and every \(g \in Z^k(X,\mathbb{F}_2) \setminus B^k(X,\mathbb{F}_2)\), \(wt(g) \geq \nu\).
        \item \(\max_{v \in X(0)} \prob{v} \leq \varepsilon\).
    \end{enumerate}
    Then the \(k\)-skeleton of \(X\) has \((c,k)\)-topological overlap (i.e. for continuous functions to \(R^k\)) for \(c= \frac{\nu \beta^{k+1}}{2(k+1)!} - \varepsilon k^2 \beta^{-(2k+1)}\).
\end{theorem}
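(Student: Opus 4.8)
The plan is to prove this by Gromov's cohomological ``filling'' argument (in the form streamlined by Dotterrer--Kaufman--Wagner), organized as an induction on the target dimension \(k\). Since \(X\) is finite, any continuous \(f\colon X^{\leq k}\to\R^k\) is a uniform limit of maps \(f_i\) that are affine on every simplex of some iterated barycentric subdivision, so it suffices to prove the statement with a uniform constant \(c\) for PL maps in general position: if each \(f_i\) has a \(c\)-heavily covered point \(p_i\) with covering set \(S_i\subseteq X(k)\), \(\prob{S_i}\geq c\), then passing to a subsequence with \(p_i\to p\) and \(S_i\equiv S\) constant (finitely many subsets), Hausdorff-continuity of \(\sigma\mapsto f_i(\sigma)\) gives \(p\in f(\sigma)\) for all \(\sigma\in S\). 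For PL \(f\) and generic \(p\in\R^k\) define the covering cochain \(\phi^p\in C^k(X,\mathbb{F}_2)\) by \(\phi^p(\sigma)=1\) iff \(p\in f(\sigma)\); then \(wt(\phi^p)=\Prob[\sigma\in X(k)]{p\in f(\sigma)}\) is exactly what we must bound below. The first point is that \(\phi^p\) is a cocycle: for \(\rho\in X(k+1)\), \(\coboundary\phi^p(\rho)\) counts mod \(2\) the \(k\)-faces of \(\rho\) whose image contains \(p\), which equals the mod-\(2\) number of preimages of \(p\) under \(f|_{\partial\rho}\colon S^k\to\R^k\); this map is null-homotopic, so its mod-\(2\) degree vanishes and \(\phi^p\in Z^k(X,\mathbb{F}_2)\).

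Next I would run a dichotomy. If for some generic \(p\) we have \(\phi^p\notin B^k(X,\mathbb{F}_2)\), then \(\phi^p\) is a nontrivial cosystole and the second hypothesis gives \(wt(\phi^p)\geq\nu\geq c\); so assume \(\phi^p=\coboundary\eta^p\) for all generic \(p\). A convenient explicit primitive is the ray-crossing cochain: fix a generic direction \(e\) and set \(\eta^p_e(\sigma)=\big|\{t\geq 0:\ p+te\in f(\sigma)\}\big|\bmod 2\) for \(\sigma\in X(k-1)\); the same parity/degree computation shows \(\coboundary\eta^p_e=\phi^p\). By the first hypothesis \(h^{k-1}(X,\mathbb{F}_2)\geq\beta\), hence \(wt(\phi^p)=wt(\coboundary\eta^p_e)\geq\beta\cdot\dist(\eta^p_e,B^{k-1})\), and it remains to produce a generic \(p\) for which \(\eta^p_e\) is far from every coboundary.

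To do this I would relate \(\eta^p_e\) to the covering problem one dimension down. Projecting out \(e\) gives \(\bar f=\bar\pi\circ f\colon X^{\leq k-1}\to\R^{k-1}\), and replacing the ray by the full line through \(p\) in direction \(e\) turns \(\eta^p_e\) into exactly the covering cochain \(\phi^{\bar p}_{\bar f}\) of \(\bar f\) at the projected point \(\bar p=\bar\pi(p)\); the ray and line cochains agree except on \((k-1)\)-faces whose image the backward half-line meets, and by sweeping \(p\) along \(e\) one can choose the sweep parameter so that this discrepancy, together with all other genericity-induced errors, has weight \(O(\varepsilon)\) times combinatorial factors — this is the only place \(\max_v\prob{v}\leq\varepsilon\) is used. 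Since the hypotheses at target dimension \(k-1\) are implied by those at dimension \(k\), the inductive hypothesis applies to \(X\) and \(\bar f\) and yields \(\bar p\) with \(wt(\phi^{\bar p}_{\bar f})\) bounded below; as \(\phi^{\bar p}_{\bar f}\) is again a cocycle, it is either a nontrivial cosystole (weight \(\geq\nu\), which stops the recursion) or a coboundary, in which case one recurses once more. Chaining these inequalities from level \(k\) to the base — one factor of \(\beta\) lost per level from coboundary expansion, a single \(\nu\) at the level where the relevant cochain first fails to be a coboundary, a \(1/(k+1)!\)-type loss from descending one skeleton at a time and from passing between ordered and unordered faces, and additive \(\varepsilon\)-errors amplified by a bounded power of \(\beta^{-1}\) at each of the \(\leq k\) levels — produces \(wt(\phi^p)\geq \frac{\nu\beta^{k+1}}{2(k+1)!}-\varepsilon k^2\beta^{-(2k+1)}\).

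The main obstacle is the quantitative bookkeeping in this last step: bounding the ``ray versus line'' discrepancy and every genericity error uniformly by \(\varepsilon\) times explicit combinatorial constants, verifying that through the \(k\) levels of recursion they accumulate to no more than \(\varepsilon k^2\beta^{-(2k+1)}\), and ensuring the main term does not degrade below \(\frac{\nu\beta^{k+1}}{2(k+1)!}\) — in particular invoking the systolic bound \(\nu\) exactly once and charging correctly the combinatorial factor incurred at each skeleton descent. A secondary subtlety is making the reduction to PL maps fully rigorous, since being \(c\)-heavily covered is not stable under small perturbations of \(f\), which is why one argues by the compactness/subsequence extraction above rather than by a direct approximation estimate.
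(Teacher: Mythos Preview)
The paper does not prove this theorem. It is stated as a result of Gromov (with the explicit constant attributed to Dotterrer--Kaufman--Wagner \cite{DotterrerKW2018}) and is used as a black box to derive topological overlap for the LSV complexes from the new cosystolic expansion bounds. There is therefore no in-paper proof to compare your attempt against.

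That said, your sketch does follow the standard Gromov filling argument as presented in \cite{Gromov2010} and \cite{DotterrerKW2018}: the covering cochain \(\phi^p\) is a cocycle, the cosystole hypothesis handles the non-coboundary case, and otherwise one cobounds via a ray-crossing primitive and recurses to dimension \(k-1\) using \(h^{k-1}\geq\beta\). You correctly identify the delicate part as the quantitative tracking of the additive \(\varepsilon\)-errors through the recursion; your outline is honest in not actually carrying this out, and the precise form \(\varepsilon k^2\beta^{-(2k+1)}\) is not something one can read off from the high-level description you give---it requires the careful analysis in \cite{DotterrerKW2018}. If you want a self-contained proof rather than a plan, that bookkeeping (and the reduction from the ray cochain to the line/projected covering cochain) would need to be written out in full.
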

The constant \(c\) was estimated by \cite{DotterrerKW2018}.
In addition, it turns out that one can replace \(R^k\) with any \(k\)-dimensional manifold that admits a piecewise-linear triangulation, and this theorem still holds.

The work by \cite{EvraK2016} used this theorem to show that \cite{LubotzkySV2005b} complexes have the topological overlap properties. Namely, they showed that these complexes have
\[h^{k}(X,\beta)  = \Omega(\min \set{\frac{1}{Q}, (d!)^{-O(2^k)} }),\]
where \(d\) is the dimension of the complex, and \(Q\) is the maximal number of edges adjacent to a vertex. They also show that \(g \in Z^k(X,\mathbb{F}_2) \setminus B^k(X,\mathbb{F}_2)\) has weight at least \(\nu = (d!)^{-O(2^k)}\). Plugging this in \pref{thm:top-overlap} gives a topological overlap constant of \(c= Q^{-k} \exp(-O(2^k d \log d))\).

A direct application of our cosystolic expansion bounds, together with \pref{prop:heavy-cosystols}, gives us bounds that are independent of the degree of the vertices, and the ambient dimension of the complex. In addition, it gives a better dependence in \(k\) (exponential instead of doubly exponential).
\begin{corollary}
Let \(\set{X_n}\) be the simplicial complexes in \pref{thm:lsv-two-sided}. Then \(X_n\) have the \((c,k)\)-topological overlap with \(c=\exp(-O(k^7 \log k)) - \varepsilon \cdot \exp(O(k^7 \log k)),\) where \(\varepsilon = \frac{1}{|X_n(0)|}\) (and goes to \(0\) independent of \(k\)).
\end{corollary}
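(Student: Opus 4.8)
The plan is to simply invoke \pref{thm:top-overlap} with the three hypotheses verified by the results established earlier in the paper. First I would set $\beta = \beta_k = \exp(-O(k^6\log k))$, the cosystolic expansion constant guaranteed for all levels $\ell\le k$ by \pref{thm:lsv-two-sided} applied with $\Gamma=\mathbb{F}_2$; this takes care of hypothesis (1). For hypothesis (2), I would apply \pref{prop:heavy-cosystols} (with $\mathbb{F}_2$ coefficients) to the LSV complexes: since their links are the spherical building, \pref{thm:lattice-k-coboundary-expansion} gives the required coboundary expansion $h^{k-\ell}(X_r,\mathbb{F}_2)\ge \beta_{k-\ell-1}$ of all links, and the complex is a $\lambda$-local spectral expander for $\lambda=\exp(-O(k^6\log k))$ once $q>q_0$. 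Hence every $g\in Z^k(X_n,\mathbb{F}_2)\setminus B^k(X_n,\mathbb{F}_2)$ has weight at least $\nu = \frac{\prod_{\ell=0}^{k-1}\beta_\ell}{(k+1)!} - e\lambda = \exp(-O(k^6\log k))$. For hypothesis (3), I would observe that the LSV complexes are bounded-degree with vertex weights that are essentially uniform, so $\max_{v}\Pr[v] \le \varepsilon$ where $\varepsilon = \Theta(1/|X_n(0)|)\to 0$ as $n\to\infty$, with the implied constant independent of $k$.

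Plugging these into the conclusion of \pref{thm:top-overlap}, the $k$-skeleton of $X_n$ has $(c,k)$-topological overlap with
\[
c = \frac{\nu\,\beta^{k+1}}{2(k+1)!} - \varepsilon k^2 \beta^{-(2k+1)}.
\]
Now I would estimate each term. The first term is $\exp(-O(k^6\log k))\cdot \exp(-O(k^7\log k))/(k+1)! = \exp(-O(k^7\log k))$, where the dominant factor $\beta^{k+1} = \exp(-O(k^7\log k))$ comes from raising the level-$k$ constant to the $(k+1)$-st power and absorbs the $\nu$ and factorial contributions. The subtracted error term is $\varepsilon\cdot k^2\cdot\exp(O(k^7\log k)) = \varepsilon\cdot\exp(O(k^7\log k))$, since $k^2$ is swallowed by the exponential. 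This yields exactly $c = \exp(-O(k^7\log k)) - \varepsilon\cdot\exp(O(k^7\log k))$, as claimed.

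The only mildly delicate point — and the one I would write out carefully rather than the bookkeeping above — is the verification of hypothesis (2): one must check that \pref{prop:heavy-cosystols} genuinely applies, i.e. that the LSV links satisfy the coboundary-expansion and local-spectral-expansion hypotheses with the stated constants and that $e\lambda$ is small enough not to kill the bound $\nu$. Once this is in place, everything else is substitution and coarse exponential arithmetic; there is no real obstacle, since the heavy lifting (cosystolic expansion of LSV complexes, heaviness of nontrivial cosystols, Gromov's theorem with the Dotterrer--Kaufman--Wagner constant) has already been done in the sections preceding this corollary.
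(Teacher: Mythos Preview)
Your proposal is correct and follows essentially the same approach as the paper: verify the three hypotheses of \pref{thm:top-overlap} using \pref{thm:lsv-two-sided} for $\beta$, \pref{prop:heavy-cosystols} for $\nu$, and bounded degree for $\varepsilon$, then substitute and simplify the exponential arithmetic. The paper's proof is terser (it does not spell out the $\beta^{k+1}$ and $\beta^{-(2k+1)}$ bookkeeping or the verification that \pref{prop:heavy-cosystols} applies), but the content is identical.
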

We note that a similar corollary holds for the spherical building and other homogenuous lattices.
\begin{proof}
    The corollary follows from plugging in the parameters of the complexes in \pref{thm:lsv-two-sided}, to \pref{thm:top-overlap}. \pref{thm:lsv-two-sided} gives us \(h^k(X,\mathbb{F}_2) = \exp(-O(k^6 \log k))\) cosystolic expansion. \pref{prop:heavy-cosystols} give us a bound of \(\nu = \exp(-O(k^6 \log k))\) on the weight of all \(g \in Z^{k}(X,\mathbb{F}_2) \setminus B^{k}(X,\mathbb{F}_2)\).
\end{proof}

Our bounds also directly imply that the \(2\)-skeletons of all complexes constructed in \cite{KaufmanO2021} have \(\Omega(1)\)-topological overlap (where previously the bound depended on the maximal degree of a vertex). We omit the direct proof.
\begin{corollary}
    Let \(\set{Y_n'}\) be the two skeletons of complexes in \pref{thm:ko-complexes-cosystolic-expansion} with a sufficiently large number of vertices. Then every \(Y_n\) is a \((c,1)\)-topological overlap for some universal constant \(c>0\). \(\qed\)
\end{corollary}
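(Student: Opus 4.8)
The plan is to derive this from Gromov's topological overlap theorem (\pref{thm:top-overlap}) applied with \(k=1\), so the work reduces to verifying its three hypotheses for the \(2\)-skeleton \(Y_n'\). For hypothesis (1) we need \(h^\ell(Y_n,\mathbb{F}_2)\geq\beta\) for all \(\ell\leq 1\) and some universal \(\beta>0\). The bound on \(h^1\) is exactly \pref{thm:ko-complexes-cosystolic-expansion} (with \(\lambda\) fixed small enough), which gives \(h^1(Y_n,\mathbb{F}_2)\geq\beta'\) for a universal constant \(\beta'\). For \(h^0\), note that \pref{thm:ko-complexes} item~1 says \(Y_n\) is a \(\lambda\)-one-sided local spectral expander, so its \(1\)-skeleton is a \(\lambda\)-one-sided spectral expander, hence a \(\frac{1-\lambda}{2}\)-edge expander; then \pref{claim:edge-expander-many-sets} gives \(h^0(Y_n,\mathbb{F}_2)\geq\frac{1-\lambda}{4}\). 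Set \(\beta=\min\{\tfrac{1-\lambda}{4},\beta'\}\).

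For hypothesis (2) --- that every \(g\in Z^1(Y_n,\mathbb{F}_2)\setminus B^1(Y_n,\mathbb{F}_2)\) has \(wt(g)\geq\nu\) for a universal \(\nu>0\) --- I would apply \pref{prop:heavy-cosystols} with \(k=1\). Its hypotheses hold: \(Y_n\) is a \(\lambda\)-local spectral expander, it has dimension \(\geq 2 = k+1\), and for every vertex \(r\) the link \(Y_{n,r}\) satisfies \(h^1(Y_{n,r},\mathbb{F}_2)\geq\beta_0\) for a universal \(\beta_0\) by \pref{thm:ko-complexes} item~2. The proposition then yields \(wt(g)\geq\frac{\beta_0}{2}-e\lambda=:\nu\), a positive constant once \(\lambda\) is small enough. (If \(H^1(Y_n,\mathbb{F}_2)=0\) the condition is vacuous and one may take \(\nu=1\).)

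Hypothesis (3) asks that \(\max_{v\in Y_n(0)}\Pr[v]\leq\varepsilon\) with \(\varepsilon\) small. Since the family \(\{Y_n\}\) has degree bounded independently of \(n\), the induced measure on vertices satisfies \(\max_v\Pr[v]=O(1/|Y_n(0)|)\to 0\), so restricting to \(n\) with sufficiently many vertices makes \(\varepsilon\) as small as we like. Plugging \(\beta,\nu,\varepsilon\) into \pref{thm:top-overlap} gives that \(Y_n'\) has \((c,1)\)-topological overlap with \(c=\frac{\nu\beta^2}{4}-\varepsilon\beta^{-3}\), which is a strictly positive universal constant once \(\varepsilon\) is small enough.

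The only real subtlety --- the ``main obstacle'' such as it is --- is bookkeeping: one must fix \(\lambda\) once and for all so that \(\beta\) and \(\nu\) are both bounded away from \(0\), and confirm from the \cite{KaufmanO2021} construction that the vertex measure is genuinely \(o(1)\) along the family. No new ideas are required beyond the results already established; the corollary is simply a packaging of \pref{thm:ko-complexes-cosystolic-expansion}, \pref{prop:heavy-cosystols}, and \pref{thm:top-overlap}.
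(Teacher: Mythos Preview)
Your proposal is correct and is exactly the direct argument the paper has in mind (the paper omits the proof entirely, marking it with \(\qed\)). One small bookkeeping note: the hypothesis of \pref{prop:heavy-cosystols} for \(k=1\) actually unwinds (via \pref{lem:loc-min-inequality}) to requiring \(h^{0}\) of vertex links rather than \(h^{1}\); this is even easier, as it follows from the \(\lambda\)-local spectral expansion of \(Y_n\) via \pref{claim:edge-expander-many-sets}, so your argument goes through unchanged.
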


\subsection{Cover stability}
Dinur and Meshulam studied local testability of covers \cite{DinurM2019}, and showed that covering maps of a simplicial complex \(X\) are locally testable if and only if \(X\) is a cosystolic expander on \(1\)-chains. We briefly describe their result below, and show that our new bounds on cosystolic expansion of \(1\)-chains of \cite{LubotzkySV2005b} and \cite{KaufmanO2021} complexes, show that covering maps to these complexes are locally testable.

\begin{definition}[covering map]
    Let \(X,Y\) be pure simplicial complexes. A covering map is a surjective simplicial map\footnote{that is, for every \(i \leq d\) and every \(s \in Y(i)\), \(\rho(s) \in X(i)\).}  \(\rho: Y(0) \to X(0)\) such that for every \(\tilde{u} \in Y(0)\) that maps to \(\rho(\tilde{u})=u\in X(0)\), it holds that \(\rho|_{Y_{\tilde{u}}(0)}: Y_{\tilde{u}}(0) \to X_u(0)\) is an isomorphism. If there exists such a map \(\rho\) we say that \(Y\) is a cover of \(X\).
\end{definition}
While covering maps are described combinatorially in simplicial complexes, they are a well-known topological notion in general topological spaces. They are classified by the fundamental group of the complex \(X\) \cite{Surowski1984}. This is an interesting example for a non-trivial topological property that is locally testable.

The one-dimensional case, i.e. graph covers, have been useful in construction of expander graphs. Bilu and Linial showed that random covers of Ramanujan graphs are almost Ramanujan \cite{BiluL2006}. A celebrated result by \cite{MarkusSS2015} used these techniques to construct bipartite Ramanujan graphs of every degree. Recently, \cite{Dikstein2023} showed that random covers could also be applied for constructing new local spectral expanders.

\paragraph{Local testability of covering maps} Given a map \(\rho:Y(0) \to X(0)\), we wish to test whether \((Y,\rho)\) is close to a covering map (in Hamming distance), while querying only a few of the values \((u,\rho(u))\).

To describe such a test restrict ourselves to the following family of maps. Fix \(X\) to be some pure \(d\)-dimensional simplicial complex. Let \(S\) be a set, and \(\Gamma \leq Sym(S)\) be a group acting on \(S\). The family of functions we consider are \(M(\Gamma,S)\) (suppressing \(X\) in the notation). These are all \((\rho,Y)\) such that 
\begin{enumerate}
    \item \(Y(0) = X(0) \times S\),
    \item \(\rho(u,s)=u\) and
    \item For every \(uv \in X(1)\) the complex induced by the vertices \(\sett{(u,s),(v,s)}{s \in S} \subseteq Y(0)\) is a perfect matching where \((u,s) \sim (v,\gamma_{uv} . s)\) for some \(\gamma_{uv} \in \Gamma\).
\end{enumerate}  

The distance between two maps is
\[\dist ((Y_1,\rho_1), (Y_2,\rho_2)) = \Prob[uv \in X(1)]{\gamma^{Y_1}_{uv} \ne \gamma^{Y_2}_{uv}},\]
where \(\gamma^{Y_i}_{uv}\) is the member in \(\Gamma\) that describes the bipartite graph induced by \(\sett{(u,s),(v,s)}{s \in S} \subseteq Y_i\). We note that when \(S\) is finite this is proportional to the Hamming distance between \(Y(1),Y(2)\).

While this restriction seems arbitrary, it turns out that for every pair \((Y,\rho: Y(0) \to X(0))\) where \(\rho\) is a covering map, there exists some \(\Gamma,S\) and an identification \(Y(0) \cong X(0) \times S\) such that \(\rho(u,s)=u\) \cite{Surowski1984}. However, being in \(M(\Gamma,S)\) is not sufficient to being a covering map. A map in \(M(\Gamma,S)\) above is a covering map if and only if for every \(uvw\in \dir{X}(2)\), 
\begin{align} \label{eq:cover-local-conditions}
    \gamma_{uv} \gamma_{vw} \gamma_{wu}=e.
\end{align}
We denote by \(M_0(\Gamma,S) \subseteq M(\Gamma,S)\) the family of covering maps \((Y,\rho)\) satisfying \eqref{eq:cover-local-conditions}.

The local condition \eqref{eq:cover-local-conditions} gives rise to a local test. 
\begin{test} Input: \((Y,\rho) \in M(\Gamma,S)\).
\begin{enumerate}
    \item Sample \(uvw \in \dir{X}(2)\).
    \item Accept if \eqref{eq:cover-local-conditions} holds for \(uvw\).
\end{enumerate}
\end{test}
We note that this test could be realized by sampling \(3|S|\) points in \(Y(0)\) and their values.
We denote by 
\(c(Y,\rho) = \Prob[uvw\in X(2)]{\text{test fails}},\)
and for a complex \(X\) we define its \((G,S)\)-\emph{cover-stability} to be
\[C(X,\Gamma,S) = \min_{(Y,\rho) \in M(\Gamma,S) \setminus M_0(\Gamma,S)} \frac{C(Y,\rho)}{\dist( (Y,\rho),M_0(\Gamma,S))}.\]
We say that \(X\) is \(c\)-cover stable, if for all \(\Gamma,S\) it holds that \(C(X,\Gamma,S) \geq c\).

\begin{theorem}[\cite{DinurM2019}]
    Let \(X\) be a \(d\)-dimensional simplicial complex for \(d \geq 2\). Then \(C(X,\Gamma,S) = h^1(X,\Gamma)\).
\end{theorem}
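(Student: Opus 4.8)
The plan is to exhibit a dictionary identifying the cover-testing data $(Y,\rho) \in M(\Gamma,S)$ with non-abelian $1$-chains, under which every quantity in the definition of $C(X,\Gamma,S)$ becomes the corresponding quantity in the definition of $h^1(X,\Gamma)$. Define $\Phi : M(\Gamma,S) \to C^1(X,\Gamma)$ by $\Phi(Y,\rho) = \gamma^Y$, where for an edge $uv \in X(1)$ the element $\gamma^Y_{uv} \in \Gamma$ is the unique group element with $(u,s) \sim_Y (v, \gamma^Y_{uv}.s)$ for all $s \in S$. Swapping $u$ and $v$ forces $\gamma^Y_{vu} = (\gamma^Y_{uv})^{-1}$, so $\gamma^Y$ is an asymmetric $\Gamma$-valued function on $\dir{X}(1)$, i.e. $\gamma^Y \in C^1(X,\Gamma)$.

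First I would verify that $\Phi$ is a bijection. Injectivity uses faithfulness of the permutation action $\Gamma \le \mathrm{Sym}(S)$: if two members of $M(\Gamma,S)$ induce the same $Y$, then for every edge the two permutations of $S$ agree pointwise, hence coincide in $\Gamma$. Surjectivity is by direct construction: given $\gamma \in C^1(X,\Gamma)$, let $Y_\gamma$ have vertex set $X(0)\times S$, projection $\rho(u,s)=u$, and for each $uv\in X(1)$ the matching $(u,s)\sim(v,\gamma_{uv}.s)$; the identity $\gamma_{vu}=\gamma_{uv}^{-1}$ makes this a well-defined perfect matching, and filling in the higher faces in the usual way yields $(Y_\gamma,\rho)\in M(\Gamma,S)$ with $\Phi(Y_\gamma,\rho)=\gamma$.

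Next I would match the three ingredients of the two minima. (i) Distances: by definition $\dist((Y_1,\rho_1),(Y_2,\rho_2)) = \Prob[uv\in X(1)]{\gamma^{Y_1}_{uv}\ne\gamma^{Y_2}_{uv}}$, and asymmetry makes this event invariant under swapping $u,v$, so it equals the $C^1$-distance $\dist(\gamma^{Y_1},\gamma^{Y_2})$; in particular $\dist((Y,\rho),M_0(\Gamma,S)) = \dist(\gamma^Y,\Phi(M_0(\Gamma,S)))$. (ii) Covering maps: the excerpt records that $(Y,\rho)\in M(\Gamma,S)$ lies in $M_0(\Gamma,S)$ iff $\gamma_{uv}\gamma_{vw}\gamma_{wu}=e$ for all $uvw\in\dir{X}(2)$, and $\gamma_{uv}\gamma_{vw}\gamma_{wu}=\coboundary_1\gamma^Y(uvw)$, so $\Phi$ restricts to a bijection $M_0(\Gamma,S)\xrightarrow{\sim}\ker\coboundary_1 = Z^1(X,\Gamma)$; consequently $\Phi(M_0(\Gamma,S)) = Z^1(X,\Gamma)$. (iii) Rejection probability: $c(Y,\rho) = \Prob[uvw\in\dir{X}(2)]{\gamma_{uv}\gamma_{vw}\gamma_{wu}\ne e} = \Prob[uvw\in X(2)]{\coboundary_1\gamma^Y\ne e} = wt(\coboundary_1\gamma^Y)$, again using that being $\ne e$ is permutation-invariant for the asymmetric function $\coboundary_1\gamma^Y$.

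Putting these together, for $(Y,\rho)\in M(\Gamma,S)\setminus M_0(\Gamma,S)$ with $\gamma = \gamma^Y \in C^1(X,\Gamma)\setminus Z^1(X,\Gamma)$ we get $c(Y,\rho)/\dist((Y,\rho),M_0(\Gamma,S)) = wt(\coboundary_1\gamma)/\dist(\gamma,Z^1(X,\Gamma))$, and taking the minimum over all such $(Y,\rho)$ --- equivalently over all such $\gamma$, since $\Phi$ is a bijection --- gives $C(X,\Gamma,S) = h^1(X,\Gamma)$ (with the degenerate case $C^1(X,\Gamma)=Z^1(X,\Gamma)$ making both sides trivial). I expect the substance of the argument to be entirely bookkeeping: aligning the asymmetry conventions and the cyclic order of the cocycle relation with the definitions of $\coboundary_1$ and $Z^1$ from \pref{sec:preliminaries}, and making precise the bijection $\Phi$ --- where faithfulness of the action is exactly the ingredient giving injectivity, and hence the (perhaps surprising) fact that $C(X,\Gamma,S)$ does not depend on $S$.
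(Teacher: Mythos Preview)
Your argument is correct. The paper does not supply its own proof of this statement --- it is quoted as a result from \cite{DinurM2019} --- but your bijection $\Phi:M(\Gamma,S)\to C^1(X,\Gamma)$ is precisely the intended identification, and your verification that it carries $(c(Y,\rho),\,\dist(\cdot,M_0))$ to $(wt(\coboundary_1\gamma),\,\dist(\cdot,Z^1))$ is exactly the content of the theorem.
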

This test has been used as a component in the agreement test by \cite[Lemma 3.26]{GotlibK2022}\footnote{While they don't use this language to define their test, one may verify that their test is equivalent to this one.}. Our results show directly that the complexes of \cite{LubotzkySV2005b} and \cite{KaufmanO2021} are cover-stable.
\begin{corollary} \label{cor:cover-stability}
    There exists some absolute constant \(c>0\), such that 
    \begin{itemize}
        \item Order complexes of homogenuous lattices,
        \item the complexes in \pref{thm:lsv-two-sided},
        \item and the complexes in \pref{thm:ko-complexes-cosystolic-expansion}.
    \end{itemize} 
    are all \(c\)-cover stable. \(\qed\)
\end{corollary}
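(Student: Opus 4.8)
The plan is to reduce cover-stability directly to the cosystolic expansion of $1$-chains and then simply collect the bounds already established in the preceding sections. By the theorem of Dinur and Meshulam quoted immediately above, $C(X,\Gamma,S)=h^1(X,\Gamma)$ for every $d$-dimensional simplicial complex $X$ with $d\ge 2$, every group $\Gamma\le\mathrm{Sym}(S)$, and every set $S$; in particular the quantity $C(X,\Gamma,S)$ does not depend on $S$ at all. Hence to show that a complex $X$ is $c$-cover stable it suffices to produce a single constant $c>0$ such that $h^1(X,\Gamma)\ge c$ for \emph{every} group $\Gamma$. The corollary then follows by taking $c$ to be the minimum of the three family-specific constants below.

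Concretely: if $X$ is the order complex of a homogeneous geometric lattice, then \pref{thm:lattice-k-coboundary-expansion} with $k=1$ gives $h^1(X,\Gamma)\ge\beta_1=\exp(-O(1))=:c_1>0$ for every group $\Gamma$ (the hypothesis $k+2<d$ there is needed only for the ambient dimension; the remaining small ranks are covered by the Kozlov--Meshulam argument cited in \pref{sec:geometric-lattices-coboundary-expansion}, and the non-abelian case $k=1$ is handled in \pref{sec:non-abelian-geometric-lattices}). If $X\in\mathcal{X}_{q,d}$ is an LSV complex with $q>q_0$ and $d>3$, then \pref{thm:lsv-two-sided} with $k=1$ gives $h^1(X,\Gamma)\ge\beta_1=\exp(-O(1))=:c_2>0$ for every group $\Gamma$. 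If $Y_n\in\mathcal{Y}_{\lambda,d}$ is a Kaufman--Oppenheim complex, then \pref{thm:ko-complexes-cosystolic-expansion} gives $h^1(Y_n,\Gamma)\ge\beta'=\tfrac{\beta^2}{2}(1-O(\lambda))=:c_3>0$; here the vertex-links are coboundary expanders on $1$-chains by \pref{thm:ko-complexes} and the edge-links contribute only $h^0$, which is controlled for all groups by \pref{claim:edge-expander-many-sets}, so feeding these into the non-abelian version of \pref{thm:cosystolic-expansion-clear} (the $k=1$ case) yields the bound for arbitrary $\Gamma$. Setting $c=\min\{c_1,c_2,c_3\}>0$ and invoking $C(X,\Gamma,S)=h^1(X,\Gamma)$ once more finishes the proof.

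Since every component is already in place, there is no genuine obstacle in this final step; the only point requiring attention is bookkeeping over the coefficient group, namely checking that each of the three $h^1$ bounds holds for \emph{non-abelian} $\Gamma$ and not just for $\mathbb{F}_2$ or abelian groups. This is precisely the reason the earlier local-to-global theorem, the color-restriction theorem, and the cone estimates were all proved for every group for which cohomology is defined, rather than only for $\mathbb{F}_2$; granting that, the corollary is immediate.
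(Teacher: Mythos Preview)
Your proposal is correct and is exactly the approach the paper intends: the corollary carries a \(\qed\) with no proof, because it is meant to be read as an immediate consequence of the Dinur--Meshulam identity \(C(X,\Gamma,S)=h^1(X,\Gamma)\) together with the \(h^1\) lower bounds already established in \pref{thm:lattice-k-coboundary-expansion} (and \pref{thm:spherical-building} for non-abelian \(\Gamma\)), \pref{thm:lsv-two-sided}, and \pref{thm:ko-complexes-cosystolic-expansion}. You have simply spelled out that implicit argument, and your extra care in tracking the non-abelian coefficient group is more explicit than what the paper writes.
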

We point out in particular the meaning of this theorem for the case of the spherical building. The fundamental group of the spherical building is trivial, so it has no non-trivial covers. In other words, any cover is just a bunch of disjoint copies of the original complex. The theorem says that any approximate cover of the spherical building must approximately split into a bunch of disjoint copies. 
\printbibliography

\appendix
\section[Non-abelian 1-coboundary expansion in geometric lattices]{Non-abelian \(1\)-coboundary expansion in geometric lattices} \label{sec:non-abelian-geometric-lattices}

In this appendix we give a proof that homogenious geometric lattices have constant \(1\)-coboundary epxnasion that applies to coefficients coming from non-abelian groups. 
\begin{theorem}\label{thm:spherical-building}
Let \(d \geq 3\) be and integer. Let \(X\) be the poset complex for some homogeneous lattice \(P\) of rank \(d\). Let \(\Gamma\) be any group (including non-abelian groups). Then \(H^1(X,\Gamma)=0\) and \[h^{1}(X,\Gamma) \geq \frac{1}{198288}.\]
\end{theorem}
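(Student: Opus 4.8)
The goal is to prove $h^1(X,\Gamma)\ge \frac{1}{198288}$ for the order complex $X$ of a homogeneous geometric lattice of rank $d\ge 3$, with $\Gamma$ possibly non-abelian. The plan is to mimic the cone machinery of \pref{thm:cones}, but carry it out by hand in the non-abelian setting, where the coboundary map is $\coboundary_1 h(vuw)=h(vu)h(uw)h(wv)$ and we cannot appeal to $\mathbb Z$-linear chain homotopies directly. Concretely, given $f:\dir X(1)\to\Gamma$ with $\mathrm{wt}(\coboundary f)=\varepsilon$, I would construct explicitly a vertex potential $g:X(0)\to\Gamma$ such that $\dist(f,\coboundary g)\le C\varepsilon$ for an absolute constant $C$, and then $h^1\ge 1/C$; the claimed constant $198288$ is whatever $C$ the construction yields. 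Since $H^1=0$ will follow from the same bound (it shows every cocycle is a coboundary), there is nothing extra to do there.

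The construction proceeds by fixing an apex $v_0\in X[1]$ (a rank-one element) and, for each vertex $u$, a short canonical path $p_u$ from $v_0$ to $u$ using only low-rank vertices, exactly as in the $0$-cone/$1$-cone discussion preceding \pref{thm:cones}: if $v_0u\in X(1)$ take the edge; if $\mathrm{rk}(u)=1$ route through a common neighbor of rank $2$ (possible since $\mathrm{rk}(u\vee v_0)\le 2$ and every chain embeds in a maximal chain, using $d\ge 3$); otherwise route $v_0\to w_2\to w_1\to u$ with $\mathrm{rk}(w_1)=1,\mathrm{rk}(w_2)=2$. Define $g(u)$ to be the product of $f$ along $p_u$ (with appropriate inverses for reversed edges), so $g(v_0)=e$. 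Then for any edge $uw$, the quantity $\coboundary g(v_0,\ldots)$ comparing $f(uw)$ to $g(u)^{-1}g(w)$ (up to conjugation) measures the product of $\coboundary f$ around the triangulation of the cycle $p_u\circ uw\circ p_w$. Using \pref{claim:properties-we-need-from-spherical-building} (diameter-$2$ of low-rank bipartite graphs, and the join-rank bound: any constantly-many low-rank vertices sit in a common link of a slightly higher rank vertex), this cycle can be filled by a bounded number $B$ of triangles. Thus $\coboundary g(uw)\ne f(uw)$ forces at least one of these $\le B$ triangles $t$ to have $\coboundary f(t)\ne e$, and an averaging argument over the (vertex-transitive, by homogeneity) action gives $\dist(f,\coboundary g)\le B\cdot\binom{?}{?}\varepsilon$, pinning down $C$.

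The key steps, in order: (1) record that it suffices to exhibit $g$ with $\dist(f,\coboundary g)\le C\,\mathrm{wt}(\coboundary f)$; (2) use homogeneity and \pref{claim:geometric-lattice-is-an-edge-expander}-style arguments plus \pref{claim:properties-we-need-from-spherical-building} to set up the apex, the canonical paths, and the explicit triangulations of the fundamental cycles, bounding the radius $B$ by an absolute constant (here $d\ge 3$ suffices because we only ever need vertices of rank up to a fixed small number, and homogeneity guarantees these exist regardless of $d$); (3) carefully track conjugations: in the non-abelian case each relation $\coboundary f(vuw)=e$ yields $f(uw)=f(uv)f(vw)$ only after conjugating, exactly as in \pref{rem:non-abelian-case}, so I would phrase the filling combinatorially as "the product of $\coboundary f$-values along the triangulated cycle equals a conjugate of $f(uw)\cdot(g(u)^{-1}g(w))^{-1}$", and deduce $f(uw)\ne\coboundary g(uw)\Rightarrow \exists t,\ \coboundary f(t)\ne e$; (4) average over edges $uw$, using that each bad triangle is hit by boundedly many edges, to get the quantitative bound and read off $C=198288$.

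The main obstacle I anticipate is step (3) combined with making $B$ genuinely constant: in the non-abelian setting one must be scrupulous that the "filling" of a cycle by triangles translates into a bona fide identity among $f$-values and $\coboundary f$-values, since there is no linear-algebra shortcut and the order of multiplication matters; concatenating the triangulations of sub-cycles (the "vertex-switch" steps in \pref{fig:general-cycle}) must be organized so the conjugating factors telescope correctly. Keeping the number of triangles in every cycle-filling uniformly bounded — independent of $d$ — is exactly what \pref{claim:properties-we-need-from-spherical-building}(3)--(4) buys us (a bounded set of low-rank vertices always lies in a common star), but one has to check the ranks involved never exceed a fixed constant; with $k=1$ this is elementary case analysis, and the somewhat large explicit constant in the statement is a symptom of not optimizing these case splits.
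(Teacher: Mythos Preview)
Your approach differs from the paper's in a crucial way: the paper does \emph{not} build a cone directly on $X$. Instead, it first applies the color-restriction technique (\pref{thm:partite-complex}) to reduce to color restrictions $X^I$ with $|I|=3$; on each such $2$-dimensional $X^I$ it builds a non-abelian cone of diameter $\le 9$ and invokes \pref{lem:group-and-cones} to get $h^1(X^I,\Gamma)\ge \tfrac19$. It then uses that vertex links are spectral (hence edge) expanders (\pref{claim:geometric-lattice-is-an-edge-expander}) to supply the $0$-cochain expansion required by \pref{thm:partite-complex}, and transfers the bound back to $X$. Small $d$ is handled by taking the single triple $I=\{1,2,3\}$ with a more careful join-rank count.

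The genuine gap in your proposal is step (4). Even granting that the cone radius $R$ is bounded independently of $d$, the cone-to-expansion lemma you would apply on the full complex (\pref{thm:cones} or its non-abelian form \pref{lem:group-and-cones}) yields $h^1(X)\ge \frac{1}{\binom{\dim X+1}{3}\,R}$, and $\dim X$ grows with $d$: the automorphism group acts transitively only on \emph{top}-dimensional faces, and the averaging that relates the measures on $X(1)$ and $X(2)$ picks up exactly this binomial factor. Your alternative fixed-cone phrasing, ``each bad triangle is hit by boundedly many edges'', is not justified either: with a fixed apex $v_0$ and fixed paths, the initial edge of $p_u$ appears in the cycle $p_u\circ uw\circ p_w^{-1}$ for \emph{every} edge $uw$ incident to $u$, so a single star-triangle through that edge is hit by unboundedly many edges. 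Color restriction is precisely what sidesteps both issues: on the $2$-dimensional $X^I$ the binomial factor is $\binom{3}{3}=1$, and \pref{thm:partite-complex} then transfers the bound to $X$ with constants that are manifestly $d$-free.
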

The constant in this proof is by no means tight.

The proof for non-abelian groups closely follows the proof for the abelian case, but we use a modified version of cones instead of the version defined in \pref{sec:geometric-lattices-coboundary-expansion}. This definition of cones for the non-abelian case appeared in \cite[Section 4]{DiksteinD2023swap}, and we give it here again in the next subsection to stay self contained. We closely follow the definitions and notation therein.

\subsection{Non-abelian cones}
Fix \(X\), a simplicial complex and some \(v_0 \in X(0)\). We define two symmetric relations on loops around \(v_0\):
\begin{enumerate}
    \item[(BT)]~ We say that \(P_0 \overset{(BT)}{\sim} P_1\) if \(P_i = Q_0 \circ (u,v,u) \circ Q_1\) and \(P_{1-i} = Q_0 \circ (u) \circ Q_1\) for \(i=0,1\) (i.e. going from \(u\) to \(v\) and then backtracking).
    \item[(TR)]~ We say that \(P_0 \overset{(TR)}{\sim} P_1\) if \(P_{i} = Q_0 \circ (u,v) \circ Q_1\) and \(P_{1-i} = Q_0 \circ (u,w,v) \circ Q_1\) for some triangle \(uvw \in X(2)\) and \(i=0,1\).
\end{enumerate}

Let \(\sim\) be the smallest equivalence relation that contains the above relations (i.e.\ the transitive closure of two relations).

We denote by \(P \sim_1 P'\) if there is a sequence of loops \((P_0=P,P_1,...,P_m=P')\) and \(j \in [m-1]\) such that:
\begin{enumerate}
    \item \(P_j \overset{(TR)}{\sim} P_{j+1}\) and
    \item For every \(j' \ne j\), \(P_{j'} \overset{(BT)}{\sim} P_{j'+1}\).
\end{enumerate}
I.e. we can get from \(P\) to \(P'\) by a sequence of equivalences, where exactly one equivalence is by \((TR)\).

Let \(P = (u_0,u_1,...,u_m)\) be a walk in \(X\). We denote by $P^{-1}$ the walk $(u_m,\ldots,u_1,u_0)$.

\begin{definition}[Non-abelian cone]
    A non-abelian cone is a triple \(C=(v_0,\set{P_u}_{u \in X(0)}, \set{T_{uw}}_{uw \in X(1)})\) such that
\begin{enumerate}
    \item \(v_0 \in X(0)\).
    \item For every \(v_0 \ne u \in X(0)\) \(P_{u}\) is a walk from \(v_0\) to \(u\). For \(u = v_0\), we take \(P_{v_0}\) to be the loop with no edges from \(v_0\).
    \item For every \(uw \in X(1)\), \(T_{uw}\) is a sequence of loops \((P_0,P_1,...,P_m)\) such that:
    \begin{enumerate}
        \item \(P_0 = P_u \circ (u,w) \circ P_w^{-1}\), 
        \item For every \(i=0,1,...,m-1\), \(P_i \sim_1 P_{i+1}\) and
        \item \(P_m\) is equivalent to the trivial loop by a sequence of \((BT)\) relations.
    \end{enumerate} 
    We call \(T_{uw}\) a \emph{contraction}, and we denote $|T_{uw}| = m$.
\end{enumerate}
\end{definition}
The definition of \(T_{uw}\) depends on the direction of the edge \(uw\). We take as a convention that \(T_{wu}\) has the sequence of loops $(P_0^{-1},P_1^{-1},\ldots,P_m^{-1})$, and notice that $P_0^{-1} = (P_u \circ (u,w) \circ P_w^{-1})^{-1} = P_w \circ (w,u) \circ P_u^{-1}$. Thus for each edge it is enough to define one of \(T_{uw},T_{wu}\). The diameter of the cone is
\[diam(C) = \max_{uw \in \dir{X}(1)} \abs{T_{uw}}.\]
Equivalently, this is the maximal number of triangle relations required in the contraction of any \(T_{uw}\).

There is a direct connection between the diameter of the cone to coboundary expansion.
\begin{lemma}[\cite{DiksteinD2023swap}] \label{lem:group-and-cones}
    Let \(X\) be a simplicial complex such that \(Aut(X)\) is transitive on \(k\)-faces. Suppose that there exists a cone \(C\) with diameter \(R\). Then \(X\) is a \(\frac{1}{\binom{k+1}{3}\cdot R}\)-coboundary expander.
\end{lemma}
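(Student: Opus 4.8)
The plan is to re-run the three-part strategy of \pref{sec:geometric-lattices-coboundary-expansion} — color-restrict, build a short cone, count suitable colors — but with the non-abelian cones of the preceding subsection in place of the $\mathbb{Z}$-chain cones, exploiting that for $k=1$ every combinatorial constant that appears is rational (which is why the final bound is a clean $\tfrac1{198288}$ and not something carrying an $e$). Throughout I assume the rank $d$ is large enough that the color set $[d-1]$ of $X$ contains a ``$1$-suitable'' $5$-subset in the sense of \pref{sec:geometric-lattices-coboundary-expansion}; the finitely many smaller ranks are handled by a direct argument (there the constant only improves). First I would reduce via color restriction: by the $k=1$, non-abelian version of \pref{thm:partite-complex} described in \pref{rem:non-abelian-changes-color-proof}, it suffices to produce absolute constants $\beta,p\in(0,1]$ so that a uniformly random set $F$ of $c_1=5$ colors is, with probability at least $p$, such that $X^F$ is a $(1,\beta)$-locally coboundary expander with respect to $\Gamma$ — concretely $h^1(X^F,\Gamma)\ge\beta$ together with $h^0(X^F_v,\Gamma)\ge\beta$ for every vertex $v$ with $col(v)\notin F$. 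Running the distance estimates in the proof of \pref{lem:g-is-close-to-f-partite-k} specialized to $k=1$, where the tail $\sum_j 1/j!$ truncates to $1+\tfrac12$, yields $h^1(X,\Gamma)\ge \beta^2p/9$, and the $\beta,p$ produced below make this at least $\tfrac1{198288}$.

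Next I would build the short non-abelian $1$-cone on $X^F$ and verify the link condition by edge expansion. Fix a $1$-suitable $F=\{i_1<\dots<i_5\}$, pick an apex $v_0\in X^F[i_1]$, and for each vertex $u$ take a walk $P_u$ of length $\le 3$ into low-rank vertices exactly as in the $\ell=0$ step of \pref{prop:small-cones-stronger}, using that $(X[i_1],X[i_2])$ has diameter $2$ (\pref{claim:properties-we-need-from-spherical-building}(1)). The substance is the contraction $T_{uw}$ of the loop $P_0=P_u\circ(u,w)\circ P_w^{-1}$, built in the two phases of \pref{fig:cone-construction} specialized to $\ell=0$: a ``shifting phase'', in which the at most two vertices of $P_0$ of rank exceeding $i_{c_1}$ — which can only be $u$ and $w$ — are replaced one at a time by vertices $v_j'$ of rank $i_{c_0+j+1}$ (a common lower bound of the relevant part of the loop, guaranteed by \pref{claim:properties-we-need-from-spherical-building}(4)), each replacement realized by a bounded number of $(TR)$ moves through triangles $\{v_j,v_j',x\}$; followed by a ``star phase'', in which, all vertices of the now-low-rank loop having rank $\le i_{c_1}$, a single $u^\ast\in X^F[i_{c_1}]$ dominating all of them (via the join-rank bound \pref{claim:properties-we-need-from-spherical-building}(3), using that $P_0$ has $O(1)$ vertices) lets one collapse the loop to the trivial one by a fan of $(TR)$ moves through $\{x,y,u^\ast\}$. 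Tallying the $(TR)$ moves gives $diam(C)\le R$ for an absolute constant $R$, and since $Aut(P)$ acts transitively on the top faces of $X^F$ (homogeneity together with rank-preservation of automorphisms), \pref{lem:group-and-cones} yields $h^1(X^F,\Gamma)\ge \tfrac1{\binom{5}{3}\,R}$. For the link requirement, $h^0(X^F_v,\Gamma)$ is bounded below by an absolute constant: the $1$-skeleton of $X^F_v$ is a multipartite graph each of whose bipartite pieces is a $\tfrac1{\sqrt2}$-spectral (hence edge) expander — for pieces inside the lower or the upper interval of $v$ this is \pref{claim:geometric-lattice-is-an-edge-expander} applied to that interval, since $F$ is $1$-suitable, while the mixed pieces are complete bipartite — so the whole $1$-skeleton is an edge expander and \pref{claim:edge-expander-many-sets} applies. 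Take $\beta$ to be the smaller of the two constants.

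It then remains to count and to deduce $H^1=0$. That a random $5$-subset of $[d-1]$ is $1$-suitable with probability at least an absolute $p$ is the strip argument from the proof of \pref{lem:colored-spherical-buildings}: require each of the $5$ chosen colors to land in its own geometrically-shrinking subinterval of $[d-1]$, each of length a fixed fraction of $d$. Substituting the resulting $\beta$ and $p$ into $h^1(X,\Gamma)\ge\beta^2p/9$ gives the asserted $\tfrac1{198288}$ (which I do not try to optimize). Finally $H^1(X,\Gamma)=0$ is immediate from positivity of the coboundary constant: if $g\in Z^1(X,\Gamma)$ then $wt(\coboundary g)=0$, so $h^1(X,\Gamma)>0$ forces $\dist(g,B^1(X,\Gamma))=0$, i.e.\ $g\in B^1(X,\Gamma)$.

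The main obstacle I anticipate is the contraction in the second step. In the abelian case the ``shift'' of a high-rank vertex $v_j$ to a low-rank $v_j'$ is a one-line manipulation of $\mathbb{Z}$-chains using $\boundary(f^w)=f-(\boundary f)^w$ and $\boundary\boundary=0$ (compare \pref{claim:properties-of-step-2}), but non-abelian cones carry no linear structure: the replacement must be performed directly on the loop $P_0$, move by move, and one must check both that the resulting sequence of $(TR)$ and $(BT)$ moves genuinely terminates at the trivial loop and that the number of $(TR)$ moves stays bounded by a constant independent of the edge $uw$ and of $d$. Establishing this non-abelian analogue of \pref{claim:properties-of-step-2}, together with the accompanying diameter estimate, is where essentially all the effort goes; the remaining steps are bookkeeping with the constants above.
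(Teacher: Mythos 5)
You have proved the wrong statement. The lemma in question, \pref{lem:group-and-cones}, is an abstract criterion: for any simplicial complex $X$ with $Aut(X)$ transitive on $k$-faces, the existence of a non-abelian cone of diameter $R$ implies $h^1(X,\Gamma)\geq \frac{1}{\binom{k+1}{3}R}$ for all groups $\Gamma$. What you have written is instead a sketch of the proof of \pref{thm:spherical-building} — the \emph{application} of this criterion to order complexes of homogeneous geometric lattices via color restriction and an explicit cone construction. Indeed your proposal explicitly \emph{invokes} \pref{lem:group-and-cones} in the middle (``since $Aut(P)$ acts transitively on the top faces of $X^F$\ldots, \pref{lem:group-and-cones} yields $h^1(X^F,\Gamma)\ge\frac{1}{\binom{5}{3}R}$''), so as a proof of that lemma your argument is circular.

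A genuine proof of \pref{lem:group-and-cones} proceeds along entirely different lines and never mentions lattices, color restrictions, suitability of colors, or \pref{claim:properties-we-need-from-spherical-building}. Roughly: given $f\in C^1(X,\Gamma)$, use the cone to define $g\in C^0(X,\Gamma)$ by $g(u)=f(P_u)$ (the product of $f$ along the path from the apex to $u$); observe that $f(uw)\ne\coboundary g(uw)$ precisely when $f$ evaluated around the loop $P_u\circ(u,w)\circ P_w^{-1}$ is nontrivial; use the contraction $T_{uw}$ — a sequence of at most $R$ triangle moves — to conclude that if this loop-product is nontrivial then $\coboundary f$ must be nonzero on at least one of the $\le R$ triangles used; and finally invoke transitivity of $Aut(X)$ to average over edges and control the multiplicity with which any triangle of $X$ appears across the various contractions, which is where the $\binom{k+1}{3}$ enters (each $k$-face contributes $\binom{k+1}{3}$ triangles and the measure on triangles is the pushdown of the $Aut$-invariant measure on $k$-faces). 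None of this appears in your proposal. Note also that the paper itself does not reprove this lemma — it cites it from \cite{DiksteinD2023swap} — so there is no in-paper proof to compare against, but the argument must still be supplied, and yours does not supply it.
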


\subsection[Proof of Theorem A.1]{Proof of \pref{thm:spherical-building}}
\begin{proof}[Proof of \pref{thm:spherical-building}]
    First, We prove the theorem under the assumption that the complex \(X\) is \(d\)-partite for \(d \geq 18\). We modify the arguments for the case \(d< 18\) in the end of the proof. 
    
    Towards applying \pref{thm:color-restriction}, we show that \(X^I\) is a coboundary expander on \(1\)-cochains for a constant fraction of the triples of colors \(I \subseteq \binom{[d]}{3}\). Let \(C = \sett{\set{i_0 < i_1 < i_2} \subseteq [d]}{2i_0 \leq i_1, 3i_1 \leq i_2}\) be the set of colors. We note that \[C \supseteq  \sett{\set{i_0,i_1,i_2}}{i_0 \in [0,\frac{1}{18}d], i_1 \in [\frac{1}{9}d,\frac{2}{9}d], i_2 \in [\frac{2}{3}d,d]}\]
    so in particular \(\frac{|C|}{\binom{d}{3}} \geq \frac{1}{100}\) (here we use the fact that \(d \geq 18\)). By \pref{claim:geometric-lattice-is-an-edge-expander}, the links of vertices in \(X\) are also \(\frac{1}{\sqrt{2}}\)-spectral expanders. Thus the links of vertices are \(\frac{1}{2\sqrt{2}}\)-edge expanders, or equivalently, \(\frac{1}{2\sqrt{2}}\)-coboundary expanders on \(0\)-cochains. Hence by \pref{thm:color-restriction},
    \[h^1(X) \geq \frac{1}{100 e} \cdot \min_{I \in C} (h^1(X^I),\frac{1}{2\sqrt{2}})^2\]
    provided that every \(X^I\) has \(H^1(X^I,\Gamma) = 0\). By showing that for every \(I\), \(X^I\) is a \(\frac{1}{9}\)-coboundary expander we get that \(X\) is a \(\frac{1}{24300}\)-coboundary expander.
    
    Indeed, fix \(I = \set{i_0 < i_1 < i_2} \in C\) and let us construct a non-abelian cone \((v_0,\set{P_u}_{u \in X(0)},\set{T_{uw}}_{uw \in X(1)})\). Fix an arbitrary \(v_0 \in X[i_0]\) as a base vertex. For every \(u \in X\) we take \(P_u\) to be as in \pref{prop:small-cones-stronger}:
    \begin{enumerate}
        \item If \(u \in X[i_0]\), we set \(P_u=(v_0,v_1,u)\) where \(v_1 \in X[i_1]\) is such that \(u \vee v_0 \preceq v_1\) (this is possible since \(rk(u \vee v_0) \leq 2i_0 \leq i_1\)).
        \item Otherwise, let \(v_2 \preceq u\) be any \(v_2\) such that \(rk(v_2) = i_0\). Let \(v_1 \in X[i_1]\) be such that \(v_0 \vee v_2 \preceq v_1\) \(\dim(v_1) = i_1\). We take the path \(P_u=(v_0,v_1,v_2,u)\).
    \end{enumerate}

    Fix any cycle \(P_0 = P_u \circ (u,w) \circ P_w^{-1}\). Let us construct a contraction \(T_{uw}\). Let us assume first that \(u \in X[i_1], w \in X[i_0]\) (we will show how to reduce to this case below). In this case, the cycle is of the form \(P_0 = (v_0,v_1,v_2,u,w,v_3,v_0)\), where \(v_0,v_2,w \in X[i_0]\) and \(v_1,u,v_3 \in X[i_1]\). The elements \(v_0,v_2,w\) are all \(\preceq\) to some other element in the cycle, therefore the element \(v_1 \vee u \vee v_3\) is the maximum of all elements in the cycle. As \(rk(v_1 \vee u \vee v_3) \leq 3i_1 \leq i_2\) there exists some \(y \in X[i_2]\) such that every vertex in the cycle is \(\preceq y\). Hence for every edge \(\set{x,x'}\) in the cycle, \(\set{x,x',y} \in X^I(2)\). Let us now introduce \(y\) into the cycle, by doing backtracking relations from \(P_0\) to \(P_0' = (v_0,y,v_0,v_1,y,v_1,\dots,v_3,y,v_3,v_0)\). Note that any \(P' \sim_1 P_0'\) is also \(P' \sim P_0\). Now we can contract every sub-path of \(P_0'\) of the form \((y,x,x')\) to \((y,x')\). That is, we define 
    \[P_1 = (v_0,y,v_1,y,v_1,v_2,y,v_2,u,\dots,v_3,v_0),\] then \(P_2 = (v_0,y,v_1,y,v_2,y,v_2,u,\dots,v_3,v_0)\) and so on until reaching
    \[P_6 = (v_0,y,v_1,y,\dots,v_3,y,v_0).\]
    Note that \(P_0' \sim_1 P_1\) and \(P_i \sim_1 C_{i+1}\) for all \(i \geq 1\). Finally, \(P_6\) is equivalent to the trivial loop by a sequence of backtracking relations. Thus, we found a contraction \(T_{uw}\) where \(|T_{uw}| = 6\). 

    Now let us see how to reduce the general case where \(u \in X[i_1], w \in X[i_0]\).
    \begin{enumerate}
        \item If \(w \in X[i_0]\) and \(u \in X[i_2]\), then denote by \(v_2 \preceq u\) the other neighbor of \(u\) in the cycle \(P_0\). Let \(u' \in X[i_1]\) be an element such that \(w \vee v_2 \preceq u' \leq u\). By using the two triangles \(\set{u,u',w},\set{v_2,u',u} \in X(2)\), we can replace \(u\) by \(u'\) in the cycle \(P_0\) thus reducing to the case above. More explicitly, we replace the sub-path \((v_2,u,w)\) in \(P_0\), with \((v_2,u',u,w)\) obtaining a cycle \(P_0'\), and then replace \(((v_2,u',u,w)\) with \((v_2,u',w)\) to obtain \(P_0''\) which is a similar cycle only with \(u' \in X[i_1]\) instead of \(u \in X[i_2]\). We used \(2\) triangles in this reduction.
        \item Otherwise \(u \in X[i_2]\) and \(w \in X[i_1]\). In this case, denote by \(v_3 \preceq w\) the other neighbor of \(w\) in the cycle \(P_0\). With the triangle \(\set{v_2,u,w} \in X(2)\) we can replace the sub-path \((u,w,v_3)\) in \(P_0\) with \((u,v_3)\) to obtain \(P_0'\). The cycle \(P_0'\) is a cycle where there is a single \(u \in X[i_2]\) and all others are in \(X[i_0]\) or \(X[i_1]\), so we can continue by doing the same reduction as in the first item. The total number of triangles necessary is \(\leq 3\).
    \end{enumerate}

    Thus we found a non-abelian cone where \(diam(C) \leq 9\). By \pref{lem:group-and-cones} \(H^1(X,\Gamma) = 0\) and \(h^1(X^I) \geq \frac{1}{9}\). The theorem is proven.

    \medskip

    Finally, let us explain why the same proof holds for \(d < 100\). In this case we can just take \(C = \set{\set{i_0=1,i_1=2,i_2=3}}\). Therefore \(\frac{|C|}{\binom{d}{3}} \leq \frac{1}{816}\) because \(d<18\), and replacing the constant \(\frac{1}{100}\) with \(\frac{1}{816}\) gives us a bound of \(h^1(X) \geq \frac{1}{198288}\).
    
    Let us explain why \(h^1(X^I) \geq \frac{1}{9}\) when \(I=\set{1,2,3}\) as well. Following the proof closely, one observes that the same steps hold for the same reasoning, except for the following step: it is no longer clear why in the case where \(u \in X[i_0], w \in X[i_1]\), there exists \(y \in X[i_2]\) such that all vertices in \(P_0=(v_0,v_1,v_2,u,w,v_3,v_0)\), are \(\leq y\). In this case we claim that \(v_1 \vee u \vee v_3\) is a join of three \emph{rank \(1\) elements}. 
    
    Indeed, assume without loss of generality that \(v_0,v_2,w\) are not all identical (if they are all the same, then we can just contract the cycle using backtracking relations).  
    If \(v_0,v_2,w\) are all distinct then \(v_1 = v_0 \vee v_2, u= v_2 \vee w\) and \(v_3 = v_0 \vee w\). Thus \(v_1 \vee u \vee v_3 = v_0 \vee v_2 \vee w\) that has rank \(3\) at most. Otherwise, if (say) \(v_0 = v_2 \ne w\), then \(v_1 = v_0 \vee v^*\) in which case \(v_1 \vee u \vee v_3 = v_0 \vee v^* \vee w\). Hence, the theorem is also proven in this case.
\end{proof}
\section[Cosystolic expansion of dense complexes is at most 1+o(1)]{Cosystolic expansion of dense complexes is at most $1+o(1)$} \label{app:upper-bounding-cosystolic-expansion}
In this section we give a simple upper bound on cosystolic expansion of dense complexes.
\begin{proposition} \label{prop:coboundary-expansion-upper-bound}
Let \(\varepsilon > 0\). Let \(X\) be a \(d\)-dimensional simplicial complex for \(d\geq k+1\). Assume that for every \(j\), the probability of \(s \in X(j)\) is \(\frac{1}{|X(j)|}\). Let \(\varepsilon = \max (\sqrt{\frac{8|X(k-1)|}{|X(k)|}}, \sqrt{\frac{9}{|X(k+1)|}})\) and assume that \(\varepsilon \leq \frac{1}{2}\). Then \(h^{k}(X,\mathbb{F}_2) \leq 1+8\varepsilon\).
\end{proposition}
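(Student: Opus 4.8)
To show $h^k(X,\F_2) \le 1 + 8\varepsilon$, I need to exhibit a single $k$-cochain $f \in C^k(X,\F_2) \setminus Z^k(X,\F_2)$ for which $\wt(\coboundary f) \le (1+8\varepsilon)\dist(f, Z^k)$. The natural candidate is the indicator of a single $k$-face: fix $t_0 \in X(k)$ and let $f = \one_{t_0}$. Then $\dist(f, Z^k) \le \wt(f) = 1/|X(k)|$ (since $0 \in Z^k$), so it suffices to bound $\wt(\coboundary f)$ from above by roughly $(1+O(\varepsilon))/|X(k)|$ and, more delicately, to bound $\dist(f, Z^k)$ from \emph{below} by roughly $(1-O(\varepsilon))/|X(k)|$ — that is, to show no cocycle is substantially closer to $f$ than $0$ is. The upper bound on $\wt(\coboundary f)$ is immediate: $\coboundary f$ is supported on the $(k+1)$-faces containing $t_0$, of which there are at most $(k+2)\cdot\frac{|X(k)|}{\text{(something)}}$ — more precisely, a uniform $(k+1)$-face contains $t_0$ with probability $\binom{k+2}{k+1}\cdot\frac{\#\{s \in X(k+1): s \supset t_0\}}{ (k+2)|X(k+1)|}$; I'd just need the crude bound that the number of $(k+1)$-faces through $t_0$ divided by $|X(k+1)|$ is small, which follows from the density hypothesis. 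Actually the cleanest route: $\wt(\coboundary f) = \Pr_{s\in X(k+1)}[\coboundary f(s)\neq 0]$; a face $s$ in the support must contain $t_0$, and summing $\Pr[s \ni t_0]$ over the at-most-$(k+2)$ sub-$k$-faces of any such $s$ shows this probability is $O(1/\sqrt{|X(k+1)|}) = O(\varepsilon)$ — wait, I want it comparable to $1/|X(k)|$, so I should instead directly compare: $\wt(\coboundary f) \le \frac{(k+2)\cdot(\text{codeg of }t_0)}{|X(k+1)|}$ and relate codegree to $|X(k)|/|X(k+1)|$ ratios.

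**The key step — lower-bounding $\dist(f, Z^k)$.** This is where the real content lies. I must show that for every cocycle $z \in Z^k$, $\wt(f - z) \ge (1-O(\varepsilon))/|X(k)|$. If $z = 0$ this is equality. If $z \ne 0$: I claim any nonzero cocycle has weight $\ge (1-O(\varepsilon))\cdot\frac{2}{|X(k)|}$ or at least $\ge$ (roughly) $\frac{1}{|X(k)|}$ plus enough slack. The mechanism: if $z$ is a nonzero cocycle and $\wt(z)$ is tiny, then $z$ is supported on a small set $A \subseteq X(k)$ with $\coboundary z = 0$. The cocycle condition says every $(k+1)$-face containing an odd number of faces of $A$ is forbidden; since the complex is dense (large $|X(k+1)|$, controlled by $\varepsilon$), a set $A$ of size $1$ cannot be a cocycle — a single face $t_0$ has a $(k+1)$-coface, and $\coboundary \one_{t_0} \ne 0$ there. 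More quantitatively, I'd argue: if $\wt(z) = m/|X(k)|$ with $z$ a nonzero cocycle, a counting/expander-mixing style argument on the bipartite incidence between $X(k)$ and $X(k+1)$ (or between $X(k-1)$ and $X(k)$) forces $m$ to be large — this is exactly the place the two terms $\sqrt{8|X(k-1)|/|X(k)|}$ and $\sqrt{9/|X(k+1)|}$ in the definition of $\varepsilon$ enter. Then $\dist(f,z) \ge \wt(z) - \wt(f) \ge (m-1)/|X(k)|$, and if $m \ge 2$ this is $\ge 1/|X(k)| \ge \wt(f)$, so the closest cocycle is $0$ up to the $\varepsilon$-error; if the bound gives only $m \ge 1 + (\text{small})$ I fold that into the $8\varepsilon$.

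**The main obstacle.** The hard part is the lower bound on the weight of a nonzero $k$-cocycle in terms of the density parameters. I expect to prove: any $z \in Z^k(X,\F_2)\setminus\{0\}$ satisfies $\wt(z) \ge 1/|X(k)| + (\text{term}/|X(k)|)$ with the term large enough that after subtracting $\wt(f)$ the distance is still $\ge (1-8\varepsilon)/|X(k)|$, OR — more robustly — I bound $\dist(f,Z^k)$ directly by a double-counting argument: consider $\sum_{s \in X(k+1)}$ or $\sum_{p \in X(k-1)}$ of local discrepancies of $f-z$. Concretely, if $g := f - z$ has $\coboundary g = \coboundary f$ (since $\coboundary z = 0$), then $g$ and $f$ have the same coboundary, so $g - f = z$ is a cocycle, circular — so instead I use that $g$ has small coboundary ($=\coboundary f$, supported on few faces) and argue via the $(k-1)/(k)$ incidence graph that a cochain with few nonzero values and few coboundary violations, but claimed to be close to $f$, forces $\wt(g)$ to be large; this is a Cheeger/expander-mixing inequality in the incidence structure, and the constants $8$ and $9$ are tuned so that the inequality $\wt(\coboundary f) \le (1+8\varepsilon)\dist(f,Z^k)$ survives. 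I would set up this mixing inequality carefully — it is the one genuinely non-routine calculation — and everything else (the density estimates on codegrees, the triangle-inequality manipulations) is bookkeeping.
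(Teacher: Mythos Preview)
Your approach has a genuine gap: the test function \(f = \one_{t_0}\) cannot give a ratio close to \(1\). Indeed, \(\coboundary f\) is supported exactly on the \((k+1)\)-faces containing \(t_0\), and under the uniform measure the average codegree of a \(k\)-face is \((k+2)\,|X(k+1)|/|X(k)|\). Hence for a typical \(t_0\),
\[
\wt(\coboundary f) \;=\; \frac{\#\{s \in X(k+1): s \supset t_0\}}{|X(k+1)|} \;\approx\; \frac{k+2}{|X(k)|}\,,
\]
whereas \(\dist(f,Z^k) \le \wt(f) = 1/|X(k)|\). So the ratio \(\wt(\coboundary f)/\dist(f,Z^k)\) is at least about \(k+2\), not \(1+o(1)\). (For the complete complex this is exact: the ratio equals \(k+2\).) No argument about lower-bounding \(\dist(f,Z^k)\) can rescue this, since the problem is that the numerator is too large, not that the denominator is too small.

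The paper's proof takes a completely different route: it samples \(f \in C^k(X,\F_2)\) \emph{uniformly at random}. For any fixed coboundary \(\coboundary g\), independence of the coordinates \(f(s)\) plus Chernoff gives \(\dist(f,\coboundary g) \ge \tfrac12 - \varepsilon\) except with probability \(\exp(-\Omega(\varepsilon^2 |X(k)|))\); a union bound over the \(\le 2^{|X(k-1)|}\) coboundaries (this is where the term \(\sqrt{8|X(k-1)|/|X(k)|}\) enters) shows that with probability \(> \tfrac12\) the distance to \emph{every} coboundary is \(\ge \tfrac12 - \varepsilon\). Separately, the values \(\coboundary f(t)\) for \(t \in X(k+1)\) are pairwise independent (any two \((k+1)\)-faces share at most one \(k\)-subface), so Chebyshev gives \(\wt(\coboundary f) \le \tfrac12 + \varepsilon\) with probability \(> \tfrac12\) (this is where \(\sqrt{9/|X(k+1)|}\) enters). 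Some \(f\) satisfies both, and for it the ratio is \((\tfrac12+\varepsilon)/(\tfrac12-\varepsilon) \le 1+8\varepsilon\). The two density hypotheses are thus calibrated to make a Chernoff-plus-union-bound and a Chebyshev bound go through, not to control codegrees or cocycle weights as you were attempting.
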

For example, this proposition implies the following. If a \(\set{X_n}\) are a family of simplicial complexes whose vertex set is growing to infinity, and so that \(\lim_{n \to \infty} \frac{|X(k-1)|}{|X(k)|} = 0\) then for every \(\varepsilon > 0\), all but finitely many \(X_n\) have that \(h^{k}(X_n,\mathbb{F}_2) \leq 1+ \varepsilon\).

There are many complexes so that these conditions hold. For example, the complete complex, the complete bipartite complex, and the spherical building (when \(d > \frac{k}{2}\)).

\begin{proof}
We sample \(f \in C_k(X,\mathbb{F}_2)\) uniformly at random (i.e. we fix some global order of the vertices. For every \(s \in \dir{X}(k)\) that is ordered according to this global order we sample \(f(s) \in \mathbb{F}_2\), and extend this asymmetrically). We note that every \(s\) was chosen independently.

Fixed some \(g \in C_{k-1}(X,\mathbb{F}_2)\). Let \(X_g(f) = \Prob[s \in X(k)]{f(s) = \coboundary g(s)}\). Obviously, \(X_g(f) = \sum_{s \in X(k)} \prob{s}\one_{f(s) =\coboundary g(s)}\). As all the random variables \(\one_{f(s) = g(s)}\) are independent, and have expectation \(\frac{1}{2}\) it holds by Chernoff's bound that
\[\Prob[f]{X_g(f) \geq \frac{1}{2} + \varepsilon} \leq e^{-\frac{\varepsilon^2}{1+\varepsilon} |X(k)|}.\]
There are \(\abs{B^k(X,\mathbb{F}_2)} \leq \abs{C_{k-1}(X,\mathbb{F}_2)} \leq 2^{|X(k-1)|}\) possible coboundaries. Hence, the probability that there exists some \(g\) so that \(X_g(f) \geq \frac{1}{2} + \varepsilon\) is at most
\[e^{|X(k-1)| \ln 2} e^{-\frac{\varepsilon^2}{1+\varepsilon} |X(k)|},\]
which is less than \(\frac{1}{2}\) by the assumption on \(\varepsilon\). Note that when \(X_g(f) \leq \frac{1}{2} + \varepsilon\) for all \(g\) this implies that \(\dist(f,Z_k(X,\mathbb{F}_2)) \geq \frac{1}{2}-\varepsilon\).

On the other hand, let \(Y(f) = \Prob[t \in X(k+1)]{f(t) = 0}\). As above it holds that \(\ex{Y} = \frac{1}{2}\). Furthermore, \(Y=\sum_{t \in X(k+1)} \prob{t}\one_{\coboundary f(t) =0}\) where  the set \(\set{\one_{\coboundary f(t) =0}}_{t \in X(k+1)}\) are pairwise independent. This is because any two distinct \(t,t' \in X(k+1)\) share only one \(k\)-face.
By pairwise independence, \(Var(Y(f)) = \sum_{t \in X(k+1)} Var(\prob{t}\one_{\coboundary f(t) =0}) = \frac{1}{4|X(k+1)|}.\)
In particular, 
\[\Prob[f]{Y(f) \leq \frac{1}{2} - \varepsilon} \leq \frac{1}{4|X(k+1)|\varepsilon^2}.\]
Which is also strictly less than \(\frac{1}{2}\) by assumption.

In particular, as the events \(\set{ \forall g ; X_g(f) \geq \frac{1}{2} + \varepsilon}\) and \(\set{Y(f) \leq \frac{1}{2} - \varepsilon}\) happen with probability less than \(\frac{1}{2}\), there exists some \(f\) so that for every \(g \in C_{k-1}(X,\Gamma)\), \(\dist(f,\coboundary g) \geq \frac{1}{2}-\varepsilon\) but \(wt(\coboundary f) \geq \frac{1}{2} + \varepsilon.\)
Hence \(h^{k}(f,\Gamma) \leq \frac{\frac{1}{2}+\varepsilon}{\frac{1}{2}-\varepsilon} \leq 1+8\varepsilon\).
\end{proof}
\end{document}